\numberwithin{equation}{section}
\newtheorem{thm}{Theorem}
\newtheorem{lemma}{Lemma}
\newtheorem{cor}{Corollary}
\newtheorem{prop}{Proposition}
\newtheorem{defi}{Definition}
\newcommand{\rn}{{\mathbb{R}}^n}
\newcommand{\Om}{\Omega}
\newcommand{\begincal}{\begin{eqnarray*}}
\newcommand{\fincal}{\end{eqnarray*}}
\newcommand{\txie}{\tilde{x}_{i,\alpha}}
\newcommand{\nue}{\nu_\alpha}
\newcommand{\rie}{r_{i,\alpha}}
\newcommand{\Rie}{R_{i,\alpha}}
\newcommand{\raa}{R_\alpha}
\def \Va {V_\alpha}
\def \Ua {U_\alpha}
\def \xaun {x_{1, \alpha}}
\def \yaun {y_{1, \alpha}}
\def \Uie {U_{i,\alpha}}
\def \Uje {U_{j,\alpha}}
\def \Uke {U_{\kappa,\alpha}}
\def \Psiie {\Psi_{i,\alpha}}
\def \Psije {\Psi_{j,\alpha}}
\def \Thetaeps {\Theta_\alpha}
\def \Omai {\Omega_{i,\alpha}}
\def \Omaj {\Omega_{j,\alpha}}
\def \etae {\eta_\alpha}
\def \vphiepsgamma {\varphi_{\gamma,\alpha}}
\def \waR {W_{\alpha,R}}
\def \psie {\psi_\alpha}
\def \Leps {L_\alpha}
\def \xkappae {x_{\kappa,\alpha}}
\def \betae {\beta_\alpha}
\def \psikappae {\Psi_{\kappa,\alpha}}
\def \rhoeps {\rho_\alpha}
\def \omegaeps {\Omega_\alpha}
\def \tomegaie {\tilde{\Omega}_{i,\alpha} }
\def \omegaie {\Omega_{i,\alpha} }
\def \nukappa {\nu_{\kappa,\alpha}}
\def \mukappa {\mu_{\kappa,\alpha}}
\def \mukappaun {\mu_{\kappa+1,\alpha}}
\def \Rkappaeps {R_{\kappa,\alpha}}
\def \Rkappauneps {R_{\kappa+1,\alpha}}
\def \petitrkappauneps {r_{\kappa+1,\alpha}}
\def \epstozero {\alpha\to +\infty}
\def \epspositif {\alpha>0}
\def \bue {\bar{u}_\alpha}
\def \Ua {U_\alpha}
\def \tUe {\tilde{U}_\alpha}
\def \va {V_\alpha}
\def \rr {\mathbb{R}}
\def \rn {\mathbb{R}^n}
\def \rnm {\mathbb{R}^n_-}
\def \nn {\mathbb{N}}
\def \omegabar {\overline{\Omega}}
\def \crit {2^\star}
\def \tg {\tilde{g}}
\def \tpip {\tilde{\pi}_\varphi}
\def \pip {\pi_\varphi}
\def \ua {u_\alpha}
\def \ra {r_\alpha}
\def \ma {\mu_\alpha}
\def \xa {x_\alpha}
\def \ya {y_\alpha}
\def \za {z_\alpha}
\def \tv {\tilde{v}}
\def \tue {\tilde{u}_\alpha}
\def \ma {\mu_{\alpha}}
\def \tve {\tilde{v}_\alpha}
\def \twe {\tilde{w}_\alpha}
\def \tga{\tilde{g}_\alpha}
\def \bua {\bar{u}_\alpha}
\def \tua {\tilde{u}_\alpha}
\def \tva {\tilde{v}_\alpha}
\def \va {v_\alpha}
\def \wa {w_\alpha}
\def \ea {\epsilon_\alpha}
\def \eps {\epsilon}
\def \tuai {\tilde{u}_{i,\alpha}}
\def \xai {x_{i,\alpha}}
\def \mai {\mu_{i,\alpha}}
\def \tpi {\tilde{\pi}}
\def \vai {v_{i,\alpha}}
\def \tvai {\tilde{v}_{i,\alpha}}
\def \Uai {U_{i,\alpha}}
\def \tUai {\tilde{U}_{i,\alpha}}
\def \Vai {V_{i,\alpha}}
\def \xaj {x_{j,\alpha}}
\def \maj {\mu_{j,\alpha}}
\def \Vaj {V_{j,\alpha}}
\def \xak {x_{k,\alpha}}
\def \mak {\mu_{k,\alpha}}
\def \sai {s_{i,\alpha}}
\def \saj {s_{j,\alpha}}
\begin{document}
\title[Lin-Ni's problem]
{The Lin-Ni's problem for mean convex domains}
\author{Olivier Druet}
\address{\noindent Olivier Druet. Ecole normale sup\'erieure de Lyon, D\'epartement de
Math\'ematiques - UMPA, 46 all\'ee d'Italie, 69364 Lyon cedex 07,
France}
\email{Olivier.Druet@umpa.ens-lyon.fr}

\author{Fr\'ed\'eric Robert}
\address{\noindent Fr\'ed\'eric Robert. Institut \'Elie Cartan, Universit\'e Henri Poincar\'e Nancy 1,
B.P. 239, F-54506 Vandoeuvre-l\`es-Nancy Cedex, France}
\email{frobert@iecn.u-nancy.fr}

\author[J.Wei]{Juncheng Wei}
\address{\noindent Juncheng Wei. Department of Mathematics,
Chinese University of Hong Kong, Shatin, Hong Kong}
\email{wei@math.cuhk.edu.hk}

\date{February 26th 2011. Mathematics Subjet Classification 2010: 35J20, 35J60}

\begin{abstract}
We prove some refined asymptotic estimates for postive blowing up solutions to $\Delta u+\epsilon u=n(n-2)u^{\frac{n+2}{n-2}}$ on $\Omega$, $\partial_\nu u=0$ on $\partial\Omega$; $\Omega$ being a smooth bounded domain of $\rn$, $n\geq 3$. In particular, we show that concentration can occur only on boundary points with nonpositive mean curvature when $n=3$ or $n\geq 7$. As a direct consequence, we prove the validity of the Lin-Ni's conjecture in dimension $n=3$ and $n\geq 7$ for mean convex domains and with bounded energy. Recent examples by Wang-Wei-Yan \cite{wwy2} show that the bound on the energy is a necessary condition.
\end{abstract}

\maketitle

\centerline{\it Fr\'ed\'eric Robert dedicates this work to Cl\'emence Climaque}

\tableofcontents

\section{Introduction}
Let $\Omega$ be a smooth bounded domain of $\rn$, $n\geq 2$. In \cite{linnitakagi}, Lin, Ni and Takagi got interest in solutions $u\in C^2(\omegabar)$ to the elliptic problem
$$\left\{\begin{array}{ll}
\Delta u+\epsilon u=n(n-2) u^{q-1} & \hbox{ in }\Omega\\
u>0& \hbox{ in }\Omega\\
\partial_\nu u=0 & \hbox{ on }\partial\Omega
\end{array}\right.\eqno{(E_q)}$$
where $\epsilon>0$ is a parameter and $q>2$. Here and in the sequel, $\Delta:=-\hbox{div}(\nabla)$ is the Laplace operator with minus-sign convention. This problem has its origins in the analysis of the Gierer-Meinhardt model in mathematical biology: this model is a system of nonlinear evolution equations of parabolic type, and the stationary problem with infinite diffusion constant splits into two equations like $(E_q)$. We refer to the surveys \cite{nisurvey,weisurvey} for the justifications of the model and its simplification. (It can also be considered as stationary system to the chemotaxis  Keller-Segel model with logarithmic sensitivity function. See \cite{linnitakagi}.)

\medskip\noindent Problem $(E_q)$ enjoys a variational structure, since its solutions are critical points of the functional
$$u\mapsto \frac{1}{2}\int_\Omega|\nabla u|^2\, dx+\frac{\epsilon}{2}\int_\Omega u^2\, dx-\frac{1}{q}\int_\Omega |u|^q\, dx,$$
a functional that is defined for all $u\in H_1^2(\Omega)\cap L^q(\Omega)$, where $H_1^2(\Omega)$ is the standard Sobolev space of $L^2-$functions with derivatives also in $L^2$ endowed with the norm $\Vert\cdot\Vert_2+\Vert\nabla\cdot\Vert_2$. In particular, it follows from Sobolev's embedding theorem that $H_1^2(\Omega)\hookrightarrow L^q(\Omega)$ continuously in case $2<q\leq \crit$ where $\crit:=\frac{2n}{n-2}$ (we assume here that $n\geq 3$): therefore the functional above is defined on $H_1^2(\Omega)$ when $2<q\leq \crit$. Moreover, the Sobolev embedding above is compact in case $q<\crit$.

\medskip\noindent The system $(E_q)$ enjoys at least a solution, namely the constant solution $u\equiv\left( \frac{\epsilon}{n(n-2)}\right)^{\frac{1}{q-2}}$. In a series of seminal works, Lin-Ni-Takagi \cite{linnitakagi} and Ni-Takagi \cite{nitakagicpam,nitakagiduke} got interest in the potential existence of nonconstant solutions to $(E_q)$. In particular, it is showed in \cite{nitakagicpam,nitakagiduke} that for $\epsilon$ large and $q <\crit$, least energy solution concentrates at boundary points of maximum mean curvature. It turns out  that when $\epsilon $ is large  there exist an abundance of solutions to $(E_q)$. This case has become a hot research topic in the last fifteen years.  In the subcritical case $q<\crit$, we refer to Gui-Wei \cite{GW}, Gui-Wei-Winter \cite{GWW}, Dancer-Yan \cite{DY},  Lin-Ni-Wei \cite{LNW} and the references therein. In the critical case $q=\crit$, we refer to Adimurthi-Mancini-Yadava \cite{amy}, Adimurthi-Pacella-Yadava \cite{APY}, del Pino-Musso-Pistoia \cite{DMP},  Ghoussoub-Gui \cite{GG}, Gui-Lin \cite{guilin}, Ni-Pan-Takagi \cite{NPT}, Rey-Wei \cite{RW}, Wei-Yan \cite{WY} and the references therein.  In the present article, we restrict our attention to that case when {\bf $\epsilon>0$ is small}. In case $2<q<\crit$, variational techniques and the compactness of the embedding imply that for small positive $\epsilon$, the constant solution is the sole solution to $(E_q)$. This uniqueness result incited Lin and Ni to conjecture the extension of this result to the critical case $q=\crit$:

\medskip\noindent{\bf Question (Lin-Ni \cite{linni}):} {\it Is the constant solution the only solution to $(E_{\crit})$ when $\epsilon>0$ is small?}

\medskip\noindent The mathematical difficulty of this question comes from the conformal invariance of $(E_{\crit})$ and its associated unstability. Indeed, for $\mu>0$ and $x_0\in\rn$, define
\begin{equation}\label{def:U}
U_{x_0,\mu}(x):=\left(\frac{\mu}{\mu^2+|x-x_0|^2}\right)^{\frac{n-2}{2}}\hbox{ for all }x\in\rn.
\end{equation}
The scalar curvature equation for the pulled back of the spherical metric via the stereographic projection (or direct computations) yields $\Delta U_{x_0,\mu}=n(n-2)U_{x_0,\mu}^{\crit-1}$ in $\rn$. Therefore, there is an abundance of solutions to $\Delta u=u^{\crit-1}$, some of them being peaks blowing-up to infinity since $\lim_{\mu\to 0}U_{x_0,\mu}(x_0)=+\infty$: in this sense, the equation is unstable since it enjoys many solutions that are far from each other. There are no such solutions in the subcritical case $q<\crit$ (see Caffarelli-Gidas-Spruck \cite{cgs}). This conformal dynamic transfers on the Lin-Ni's problem and it follows from the famous Struwe decomposition \cite{struwe} that families of solutions $(u_\epsilon)_{\epsilon>0}$ to $(E_{\crit})$ with bounded energy may develop some peaks like \eqref{def:U} when $\epsilon\to 0$: more precisely, there exists $N\in\nn$ such that for any $i\in\{1,...,N\}$, there exists sequences $(x_{i,\epsilon})_{\epsilon}\in\rn$, $(\mu_{i,\epsilon})_i\in \rr_{>0}$ such that $\lim_{\epsilon\to 0}\mu_{i,\epsilon}=0$ and, up to the extraction of a subfamily,
\begin{equation}\label{dec:struwe}
u_\epsilon=\sum_{i=1}^N U_{x_{i,\epsilon}, \mu_{i,\epsilon}}+R_\epsilon
\end{equation}
where $\lim_{\epsilon\to 0}R_\epsilon=0$ in $H_1^2(\Omega)$. This decomposition is refered to as the integral decomposition. When there is at least one peak, then there are nonconstant solutions. Conversely, in case there is no peak, elliptic estimates and simple integrations by parts (see Section \ref{sec:prelim}) yield the sole constant solution for small $\epsilon$.

\medskip\noindent In the radial case, that is when $\Omega$ is a ball and when $u$ is radially symmetrical, Adimurthi-Yadava solved the problem in \cite{adimurthiyadava91,adimurthiyadava97}: when $n=3$ or $n\geq 7$, the answer to Lin-Ni's question is positive, and it is negative for $n\in \{4,5,6\}$.  In the asymmetric case, the complete answer is not known yet, but there are a few results. When $n=3$, it was proved independently by Zhu \cite{zhu} and Wei-Xu \cite{weixu} that the answer to Lin-Ni's question is positive when $\Omega$ is convex. When $n=5$, Rey-Wei \cite{ReyWei} constructed solutions to $(E_{\crit})$ as a sum of peaks like \eqref{def:U} for $\epsilon\to 0$. In the present paper, we concentrate on the localization of the peaks in the general case.

\medskip\noindent Let $(\ea)_{\alpha\in\nn}\in (0,1]$ be a sequence such that
$$\lim_{\alpha\to +\infty}\ea=0.$$
We consider a sequence $(\ua)_{\alpha\in\nn}\in C^2(\omegabar)$ such that
\begin{equation}\label{eq:ua}
\left\{\begin{array}{ll}
\Delta\ua+\ea\ua=n(n-2)\ua^{\crit-1} & \hbox{ in }\Omega\\
\ua>0& \hbox{ in }\Omega\\
\partial_\nu\ua=0 & \hbox{ on }\partial\Omega
\end{array}\right.\end{equation}
We assume that there existe $\Lambda>0$ such that
\begin{equation}\label{bnd:nrj}
\int_\Omega \ua^{\crit}\, dx\leq \Lambda
\end{equation}
for all $\alpha\in\nn$.
\begin{defi}\label{def:sing} We say that $x\in \overline{\Omega}$ is a non-singular point of $(\ua)$ if there exists $\delta>0$ and $C>0$ such that
$$\Vert \ua\Vert_{L^\infty(B_\delta(x)\cap\Omega)}\leq C$$
for all $\alpha\in\nn$. We say that $x\in \overline{\Omega}$ is a singular point if it is not a non-singular point.
\end{defi}
The singular points are exactly the points where the peaks are located. In the sequel, $H(x)$ denotes the mean curvature at $x\in\partial\Omega$ of the canonically oriented boundary $\partial\Omega$. With our sign convention, the mean curvature of the canonically  oriented boundary of the unit ball is positive. We prove the following theorem:
\begin{thm}\label{th:sing:bndy} Let $(\ua)_{\alpha\in\nn}\in C^2(\omegabar)$ and $\epsilon>0$ such that \eqref{eq:ua} and \eqref{bnd:nrj} hold. Let ${\mathcal S}$ denote the (possibly empty) set of singular points for $(\ua)$. Assume that $n=3$ or $n\geq 7$: then ${\mathcal S}$ is finite and
$${\mathcal S}\subset\{x\in \partial\Omega/\, H(x)\leq 0\}.$$
\end{thm}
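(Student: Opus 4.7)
The plan is to combine the Struwe integral decomposition with a refined Pohozaev identity applied on small (half-)balls centered at the concentration points, in order to extract the sign of the mean curvature. I would proceed in three main steps.

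First, the Struwe decomposition \eqref{dec:struwe} gives, up to a subsequence, $\ua = \sum_{i=1}^{N} U_{\xai,\mai} + R_\alpha$ with $R_\alpha\to 0$ in $H_1^2(\Omega)$, and the energy bound \eqref{bnd:nrj} forces $N \le \Lambda/K_n$ where $K_n := \int_{\rn} U_{0,1}^{\crit}$. Since each singular point carries at least one bubble, ${\mathcal S}$ is finite. To rule out interior singular points, suppose $x_0 \in \Omega$ is singular, pick $r>0$ with $\overline{B_r(x_0)} \subset \Omega$, and apply the Pohozaev identity
$$\ea \int_{B_r(x_0)} \ua^2\, dx = {\mathcal B}_r(\ua),$$
where ${\mathcal B}_r$ is the standard boundary integrand on $\partial B_r(x_0)$. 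Outer elliptic estimates and a comparison of $\ua$ with the Neumann Green's function yield $\ua(x)=O(\sum_i \mai^{(n-2)/2}/|x-\xai|^{n-2})$ on $\partial B_r$, so ${\mathcal B}_r(\ua)$ has a definite sign at leading order; meanwhile the left-hand side is of order $\ea \mai^{a(n)}$ with $a(n)=2$ for $n\ge 5$ and $a(3)=1$. Matching scales contradicts the sign of the boundary integrand for $n=3$ or $n\ge 7$, so ${\mathcal S}\subset\partial\Omega$.

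Second, I would fix $x_0\in{\mathcal S}\cap\partial\Omega$ and a concentration sequence $\xai\to x_0$ with scale $\mai := \ua(\xai)^{-2/(n-2)}\to 0$. Introduce Fermi coordinates $\phi$ flattening $\partial\Omega$ at $x_0$, so that $\partial\Omega$ becomes $\{y_n=0\}$ and the volume-form Jacobian expands as $1-(n-1)H(x_0)\, y_n + O(|y|^2)$. The rescaling $\tua(y):=\mai^{(n-2)/2}\ua(\phi^{-1}(\phi(\xai)+\mai y))$ converges in $C^2_{\mathrm{loc}}$ to the standard bubble $U_{0,1}$ on $\rnm$ with Neumann condition on $\{y_n=0\}$. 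An induction on the finitely many bubbles clustering at $x_0$, starting from the smallest scale, then yields a pointwise control of $\ua$ locally near $x_0$ by a sum of bubbles plus a Green-type remainder sharp enough to feed into the next step.

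Third, I would apply the Pohozaev identity to $\ua$ on the half-ball $\Omega\cap B_{r_\alpha}(\xai)$ for a radius with $\mai\ll r_\alpha\to 0$. The Neumann boundary condition annihilates the normal-derivative contribution along $\partial\Omega\cap B_{r_\alpha}$, while the Fermi Jacobian produces an extra boundary integrand whose leading term is $d_n H(x_0)\,\mai$, with $d_n>0$ a purely dimensional constant built from a bubble integral. The $\ea$ contribution reads $c_n \ea \mai^{a(n)}$ with $c_n>0$, and the boundary integrand on $\partial B_{r_\alpha}(\xai)$ is a lower-order remainder by Step 2. Dividing by $\mai$ and passing to the limit one gets
$$d_n H(x_0) + \lim_{\alpha\to+\infty} c_n \ea \mai^{a(n)-1} \le 0,$$
so $H(x_0)\le 0$.

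The main obstacle is Step 2: when several bubbles of widely different scales cluster at the same boundary point, one must run a genuine iterated blow-up/bubble-tree argument to get pointwise estimates sharp enough to make the boundary integrand on $\partial B_{r_\alpha}(\xai)$ negligible against $H(x_0)\mai$. The dimensional restriction $n=3$ or $n\ge 7$ appears precisely where the $\ea \int \ua^2$ term and the mean curvature term separate cleanly in the Pohozaev expansion: in dimensions $n\in\{4,5,6\}$ intermediate corrections (from the next order of the Fermi expansion, or from logarithmic divergences when $n=4$) become comparable to one of the two principal contributions and prevent an unambiguous conclusion on the sign of $H(x_0)$.
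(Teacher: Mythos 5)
Your overall strategy (bubble-tree pointwise estimates plus Pohozaev identities on small half-balls, with the mean curvature entering through the boundary geometry) is in the spirit of the paper, but two of your key steps would fail as written. First, your outer estimate $\ua(x)=O\bigl(\sum_i \mai^{\frac{n-2}{2}}|x-\xai|^{2-n}\bigr)$ away from the concentration points is false for this Neumann problem: integrating the equation gives $\mai^{\frac{n-2}{2}}=O(\ea\bua)=o(\bua)$ (see \eqref{upp:bua} and \eqref{eq2.4bis}), so the constant background $\bua$ \emph{dominates} the bubble tails, and the correct sharp control is $\ua\le C\bigl(\bua+\sum_i\Uie\bigr)$ as in \eqref{est:pointwise}. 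This is not a technicality: the competition between $\bua$ and the bubbles is precisely what sets the interaction scale $\sai$ in \eqref{def:sai}, and it is where the restriction $n=3$ or $n\ge 7$ actually enters. Consequently your Step 1 (ruling out interior singular points by a Pohozaev identity on a fixed ball with a ``definite sign'' of the boundary term) does not go through; in the paper this exclusion is the content of Theorem \ref{th:asymp:int} (no interior blow-up at all when $n=3$) and of Step 2 of Section \ref{sec:proof:th} (for $n\ge 7$, every singular point carries boundary concentration points, via $\ea\sai^{n-2}\asymp\mai^{n-4}$ combined with $\mai^{\frac{n-2}{2}}=O(\ea\bua)$ and $\bua=O(\ea^{\frac{n-2}{4}})$). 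Your attribution of the failure in $n\in\{4,5,6\}$ to Fermi-expansion corrections or logarithms is therefore misplaced: the boundary Pohozaev analysis (Theorem \ref{th:asymp:bndy}) is valid for all $n\ge 3$; what goes wrong in intermediate dimensions is that interior bubbles driven by the background $\bua$ become admissible (cf.\ Rey--Wei for $n=5$).

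Second, in your Step 3 the accounting of the Pohozaev identity on $\Omega\cap B_{r_\alpha}(\xai)$ is wrong: the sphere term on $\partial B_{r_\alpha}(\xai)$ is \emph{not} a lower-order remainder, it is the principal positive term. At the interaction scale $r_\alpha=\delta\sai$ the rescaled solution converges to a singular harmonic function $\lambda_i'|x|^{2-n}+\tilde\psi_i(x)$ with $\tilde\psi_i(0)>0$ (Propositions \ref{prop:cv:sai:1} and \ref{prop:cv:sai:2}), and the sphere term produces $c_i\bigl(\mai/\sai\bigr)^{n-2}$ with $c_i>0$, while the term $\ea\int\ua^2$ is only $o(\mai)$ for $n\ge 4$ (and $O(\mai)$ for $n=3$). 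The identity then reads
$$\left(\frac{\mai}{\sai}\right)^{n-2}\bigl(c_i+o(1)\bigr)+\bigl(d_iH(x_0)+o(1)\bigr)\mai=o(\mai),$$
and it is the balance between the interaction term and the curvature term—not between the $\ea$-term and the curvature term—that forces $H(x_0)\le 0$ (and yields the rate \eqref{rate:sai:bnd}). With your accounting, discarding the sphere term and keeping $\ea\mai^{a(n)}$ as principal, the argument does not close: you cannot choose $r_\alpha$ freely, you must identify $r_\alpha$ with $\sai$, and controlling the sphere term requires exactly the sharp estimates with the $\bua$ background that your Step 2 omits.
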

As a consequence, we get the following:
\begin{thm}\label{th:conj}[Lin-Ni's conjecture for mean convex domains] Let $\Omega$ be a smooth bounded domain of $\rn$, $n=3$ or $n\geq 7$. Assume that $H(x)>0$ for all $x\in\partial\Omega$. Then for all $\epsilon>0$, there existe $\epsilon_0(\Omega,\Lambda)>0$ such that for all $\epsilon\in (0,\epsilon_0(\Omega,\Lambda))$ and for any $u\in C^2(\omegabar)$, we have that
$$\left\{\begin{array}{ll}
\Delta u+\epsilon u=n(n-2)u^{\crit-1} & \hbox{ in }\Omega\\
u>0& \hbox{ in }\Omega\\
\partial_\nu u=0 & \hbox{ on }\partial\Omega\\
\int_\Omega u^{\crit}\, dx\leq\Lambda
\end{array}\right\}\;\Rightarrow\; u\equiv \left(\frac{\epsilon}{n(n-2)}\right)^{\frac{n-2}{4}}.$$
\end{thm}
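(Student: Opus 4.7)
The plan is to argue by contradiction. Suppose the conclusion fails: then there exist $\Lambda>0$, a sequence $\ea\to 0$ and solutions $\ua\in C^2(\omegabar)$ to \eqref{eq:ua} with $\int_\Omega\ua^{\crit}\,dx\leq\Lambda$ but $\ua\not\equiv c_\alpha:=\bigl(\ea/(n(n-2))\bigr)^{(n-2)/4}$. Under the mean convexity assumption $H(x)>0$ on $\partial\Omega$ and the dimensional hypothesis $n=3$ or $n\geq 7$, Theorem~\ref{th:sing:bndy} forces the singular set of $(\ua)$ to be contained in $\{x\in\partial\Omega:H(x)\leq 0\}=\emptyset$; therefore $(\ua)$ is uniformly bounded in $L^\infty(\Omega)$.

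From this $L^\infty$-bound, standard elliptic regularity for the Neumann problem yields a $C^2(\omegabar)$-convergent subsequence, still labeled $(\ua)$, with limit $u_0\geq 0$ satisfying $\Delta u_0=n(n-2)u_0^{\crit-1}$ in $\Omega$ and $\partial_\nu u_0=0$ on $\partial\Omega$. Integrating this equation and using the Neumann boundary condition gives $\int_\Omega u_0^{\crit-1}\,dx=0$, so $u_0\equiv 0$ and $\|\ua\|_\infty\to 0$. To compare $\ua$ with $c_\alpha$, I rescale $v_\alpha:=\ua/\|\ua\|_\infty$, which satisfies
\[
\Delta v_\alpha+\ea v_\alpha=n(n-2)\|\ua\|_\infty^{\crit-2}v_\alpha^{\crit-1},\qquad \partial_\nu v_\alpha=0.
\]
Since the right-hand side tends uniformly to $0$, Schauder estimates produce a $C^2$-limit $v_0$ harmonic in $\Omega$, with $\partial_\nu v_0=0$ and $\|v_0\|_\infty=1$; hence $v_0\equiv 1$. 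Integrating \eqref{eq:ua} and passing to the limit gives $\ea\sim n(n-2)\|\ua\|_\infty^{\crit-2}$, i.e.\ $\|\ua\|_\infty/c_\alpha\to 1$, so $\ua/c_\alpha\to 1$ uniformly on $\omegabar$.

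The final step is a linearization of \eqref{eq:ua} around the constant solution $c_\alpha$. Set $w_\alpha:=\ua-c_\alpha\not\equiv 0$; then $\|w_\alpha\|_\infty/c_\alpha\to 0$. Subtracting the equations satisfied by $\ua$ and $c_\alpha$ and expanding $(c_\alpha+w_\alpha)^{\crit-1}-c_\alpha^{\crit-1}$ to first order, the normalization $\tilde w_\alpha:=w_\alpha/\|w_\alpha\|_\infty$ satisfies
\[
\Delta\tilde w_\alpha+\ea\tilde w_\alpha=\bigl((\crit-1)\ea+o(\ea)\bigr)\tilde w_\alpha,\qquad\partial_\nu\tilde w_\alpha=0.
\]
Since $\ea\to 0$ and $\|\tilde w_\alpha\|_\infty=1$, Schauder estimates give $C^1(\omegabar)$-compactness and the limit $\tilde w_0$ is Neumann-harmonic with $\|\tilde w_0\|_\infty=1$; therefore $\tilde w_0$ is a nonzero constant. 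On the other hand, integrating the equation for $w_\alpha$ over $\Omega$ and keeping the second-order term in the Taylor expansion produces the identity
\[
\int_\Omega\tilde w_\alpha\,dx=-\frac{\crit-1}{2c_\alpha}\,\|w_\alpha\|_\infty\int_\Omega\tilde w_\alpha^2\,dx+\text{h.o.t.},
\]
whose right-hand side is $o(1)$ because $\|w_\alpha\|_\infty/c_\alpha\to 0$. This forces $\int_\Omega\tilde w_0\,dx=0$, contradicting the fact that $\tilde w_0$ is a nonzero constant. The main difficulty of this plan is the careful bookkeeping of the ratio $\|w_\alpha\|_\infty/c_\alpha$ in the linearization step; the conceptual heart is entirely contained in Theorem~\ref{th:sing:bndy}, which rules out concentration under mean convexity, after which the residual argument is routine.
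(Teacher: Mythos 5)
Your proof is correct, and its first half is exactly the paper's argument: argue by contradiction, invoke Theorem \ref{th:sing:bndy} to see that under $H>0$ the singular set is empty, and deduce from Definition \ref{def:sing} and compactness of $\omegabar$ a uniform $L^\infty$ bound on the sequence $(\ua)$. Where you diverge is in the residual rigidity step "uniformly bounded $\Rightarrow$ $\ua\equiv\left(\frac{\ea}{n(n-2)}\right)^{\frac{n-2}{4}}$ for large $\alpha$": this is precisely Proposition \ref{prop:claimA} of the paper (Section \ref{sec:prelim}), which you apparently did not notice and instead reproved from scratch. The paper's proof of that proposition is a short energy argument: test \eqref{eq1} against $\ua-\bua$, use $\Vert\ua\Vert_\infty\to 0$ (from \eqref{eq4} and elliptic theory) and Poincar\'e's inequality to absorb the right-hand side, conclude $\int_\Omega|\nabla\ua|^2\,dx=0$ for large $\alpha$, hence $\ua$ is constant and equal to the constant solution. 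Your route instead establishes $\ua/c_\alpha\to 1$ uniformly, sets $w_\alpha=\ua-c_\alpha$, and plays the linearized equation for $\tilde w_\alpha=w_\alpha/\Vert w_\alpha\Vert_\infty$ (whose $C^1$ limit is a nonzero constant) against the integrated second-order Taylor expansion (which forces zero mean); this is a legitimate, self-contained alternative, and the only point needing care — which you handle implicitly via $\Vert w_\alpha\Vert_\infty/c_\alpha\to 0$ — is that for $n\geq 7$ one has $\crit-1<2$, so the quadratic expansion of $t\mapsto t^{\crit-1}$ is only used in the regime $|w_\alpha|\leq c_\alpha/2$, where it is valid. The trade-off: the paper's Poincar\'e argument is shorter, quantitative, and avoids any compactness extraction beyond $\Vert\ua\Vert_\infty\to 0$, whereas your linearization argument is longer but makes the mechanism of rigidity (non-degeneracy of the constant branch at first order in $\ea$) more transparent; either completes the proof of Theorem \ref{th:conj}.
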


The method we use to prove Theorem \ref{th:sing:bndy} relies on a sharp control of the solutions to \eqref{eq:ua} in the spirit of Druet-Hebey-Robert \cite{dhr}, our first result being that (see Proposition \ref{prop:claimE} and \eqref{est:pointwise} in Section \ref{sec:asymp:c1})
\begin{equation}\label{ineq:co}
\ua\leq C\left(\bua+\sum_{i=1}^N U_{x_{i,\alpha}, \mu_{i,\alpha}}\right)
\end{equation}
where $\bua$ is the average of $\ua$ on $\Omega$ and the peaks are as in Struwe's decomposition \eqref{dec:struwe}. In particular, we pass from an integral description to a pointwise description. As in Druet \cite{druetjdg} (see also Ghoussoub-Robert \cite{GhRoIMRP} and Druet-Hebey \cite{druethebey}), this pointwise description allows us to determine exactly where two peaks may interact, and to describe precisely the behavior of $\ua$ there. The localization of the singular points then follows from a succession of Pohozaev identities.

\medskip\noindent These results appeal some remarks. In dimension $n=3$, our result must be compared to Zhu's result: in \cite{zhu}, no bound on the energy is assumed, but the convexity is required; in our result, we require the bound on the energy, but a weak convexity only is needed. The assumption on the energy \eqref{bnd:nrj} may seem technical for who is familiar with the Yamabe equation: indeed, in general, see Druet \cite{druetimrn}, Li-Zhu \cite{lizhu}, Schoen \cite{schoen} and Khuri-Marques-Schoen \cite{kms}, any solution to the Yamabe equation automatically satisfies a bound on the energy like \eqref{bnd:nrj}. For the Lin-Ni's problem, this is not the case: recently, it was proved that solutions to $(E_{\crit})$ may accumulate with infinite energy when the mean curvature is negative somewhere (see Wang-Wei-Yan \cite{wwy}) or when $\Omega$ is a ball (see Wang-Wei-Yan \cite{wwy2}), a domain with positive mean curvature: therefore, the answer to Lin-Ni's question is negative if one does not impose the bound \eqref{bnd:nrj}.

\medskip\noindent The influence of curvature is reminiscent in the asymptotic analysis of equations like \eqref{eq:ua}. For instance, in Druet \cite{druetjdg,druetimrn} and in Li-Zhu \cite{lizhu}, it is proved that for Yamabe-type equations, the peaks are located where the potential of the equation touches the scalar curvature; we refer to Hebey-Robert-Wen \cite{hrw} and Hebey-Robert \cite{hr} for the corresponding localization for fourth-order problems. In Ghoussoub-Robert \cite{GhRoGAFA, GhRoIMRP}, that is for a singular Dirichlet-type problem, the peaks are located where the mean curvature is nonnegative: in Theorem \ref{th:sing:bndy} above, that is for a Neumann problem, we conversely prove that the peaks are located at points of nonpositive mean curvature. For Neumann-type equations like \eqref{eq:ua}, the role of the mean curvature has been enlighted, among others, by Adimurthi-Mancini-Yadava \cite{amy}, Lin-Wang-Wei \cite{lww}, Gui-Lin \cite{guilin}, Ni-Pan-Takagi \cite{NPT}, Rey-Wei \cite{RW} and Wei-Yan \cite{WY}.

\medskip\noindent The present paper is devoted to the asymptotic analysis of solutions $(\ua)_\alpha$ of \eqref{eq:ua} satisfying \eqref{bnd:nrj} when $n\geq 3$. In Sections 2 to 7, we prove the pointwise control \eqref{ineq:co}. Section 8 is devoted to the convergence of the $(\ua)_\alpha$'s at the scale where  peaks interact. In Sections 9 and 10, we prove an asymptotic relation mixing the heights of the peaks, the distance between peaks and the mean curvature. Finally, we prove Theorems \ref{th:sing:bndy} and \ref{th:conj} in Section 11.

\medskip\noindent{\bf Notations:} in the sequel, we define $\rnm:=\{(x_1,x')\in\rn/\, x_1<0\}$ and we assimilate $\partial\rnm=\{(0,x')/\, x'\in\rr^{n-1}\}$ to $\rr^{n-1}$. Given two sequences $(a_\alpha)_\alpha\in\rr$ and $(b_\alpha)_\alpha\in\rr$, we say that $a_\alpha\asymp b_\alpha$ when $\alpha\to +\infty$ if $a_\alpha=O(b_\alpha)$ and $b_\alpha=O(a_\alpha)$ when $\alpha\to +\infty$. For $U$ an open subset of $\rn$, $k\in \nn$, $k\geq 1$, and $p\geq 1$, we define $H_k^p(U)$ as the completion of $C^\infty(\bar{U})$ for the norm $\sum_{i=1}^k\Vert\nabla^i\Vert_p$.

\medskip\noindent{\it Acknowledgements:} This work was initiated and partly carried out during the visits of F.Robert in Hong-Kong. He expresses his thanks J.Wei for the invitations and his gratitude for his friendly support in April 2010.
F.Robert was partially supported by the ANR grant ANR-08-BLAN-0335-01 and by a regional grant from Universit\'e Nancy 1 and R\'egion Lorraine. The research of J.Wei is partially supported by RGC of HK, HK/France Joint Grant, and ``Focused Research Scheme'' of CUHK.

\section{$L^\infty-$bounded solutions}\label{sec:prelim}
Let $\Omega\subset \rn$ be a smooth domain (see Definition \ref{def:smoothdomain} of Section \ref{sec:reflection} below), $n\geq 3$. We consider a sequence $\left(\ua\right)_{\alpha\in\nn}$ of positive solutions of
\begin{equation}\label{eq1}
\left\{\begin{array}{ll}
{\displaystyle \Delta \ua +\ea \ua = n(n-2)\ua^{2^\star-1}}&{\displaystyle \hbox{ in }\Omega} \\
{\displaystyle \ua>0}&{\displaystyle\hbox{ in }\Omega}\\
{\displaystyle \partial_\nu \ua = 0}&{\displaystyle \hbox{ on }\partial \Omega}
\end{array}\right.
\end{equation}
We assume in the following that
\begin{equation}\label{eq2}
\int_\Om \ua^{2^\star}\, dx \le \Lambda
\end{equation}
for some $\Lambda>0$.  We claim that
\begin{equation}\label{eq4}
\ua\rightharpoonup 0 \hbox{ weakly in }H_{1}^2\left(\Omega\right)\hbox{ as }\epstozero.
\end{equation}
We prove the claim. Indeed, after integrating \eqref{eq1} on $\Omega$, it follows from Jensen's inequality that

$$\left(\frac{1}{|\Omega|}\int_{\Om} \ua\, dx\right)^{2^\star-1}\leq \frac{1}{|\Omega|}\int_{\Om} \ua^{2^\star-1}\, dx=\frac{\ea \int_\Om \ua \, dx}{n(n-2)|\Omega|}$$
for all $\alpha\in\nn$. Then, we get that
\begin{equation}\label{upp:bua}
\bua\leq\left(\frac{\ea}{n(n-2)}\right)^{\frac{n-2}{4}}
\end{equation}
for all $\alpha\in\nn$, where, given $\bua:=\frac{1}{|\Omega|}\int_\Omega \ua\, dx$ denote the average of $\ua$ on $\Omega$. Multiplying \eqref{eq1} by $\ua$ and integrating on $\Omega$, we get that $(\ua)_\alpha$ is bounded in $H_1^2(\Omega)$. Therefore, up to a subsequence, $(\ua)_\alpha$ converges weakly. The convergence \eqref{eq4} then follows from \eqref{upp:bua}. This proves the claim.

\medskip\noindent We prove in this section the following:
\begin{prop}\label{prop:claimA} Assume that the sequence $\left(\ua\right)_\alpha$ is uniformly bounded in $L^\infty\left(\Om\right)$. Then there exists $\alpha_0>0$ such that $\ua\equiv \left(\frac{\ea}{n(n-2)}\right)^{\frac{n-2}{4}}$ for all $\alpha\geq \alpha_0$.
\end{prop}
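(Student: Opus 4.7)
The plan is to reduce $\ua$ to a small $C^2$-perturbation of the unique positive constant solution $c_\alpha := (\ea/(n(n-2)))^{(n-2)/4}$ of \eqref{eq1}, and then close the argument with a Poincar\'e--Wirtinger estimate on the equation satisfied by this perturbation. First, since $(\ua)$ is bounded in $L^\infty$, the right-hand side of \eqref{eq1} is bounded, so Neumann elliptic regularity gives a $C^{2,\gamma}(\omegabar)$-bound on $(\ua)$. Any $C^2$-subsequential limit $u\ge 0$ satisfies $\Delta u = n(n-2)u^{2^\star-1}$ with $\partial_\nu u = 0$; integrating kills the right-hand side, forcing $u\equiv 0$, so $\ua\to 0$ in $C^2(\omegabar)$. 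I then rescale by $M_\alpha:=\|\ua\|_\infty$: the function $v_\alpha:=\ua/M_\alpha$ still solves a Neumann problem whose coefficients tend to $0$, converges in $C^2$ to a nonnegative Neumann-harmonic function of supremum $1$, hence to the constant $1$. Feeding this into the integrated equation pins the scale as $M_\alpha/c_\alpha\to 1$, and therefore $\ua/c_\alpha\to 1$ in $C^2(\omegabar)$.

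Now set $\phi_\alpha:=\ua/c_\alpha-1$, so $\|\phi_\alpha\|_\infty\to 0$. Dividing \eqref{eq1} by $c_\alpha$ and using the identity $\ea = n(n-2)c_\alpha^{2^\star-2}$, Taylor's formula at $s=1$ applied to $s^{2^\star-1}$ yields
\[
\Delta\phi_\alpha - (2^\star-2)\ea\phi_\alpha = \ea R(\phi_\alpha),\qquad |R(\phi)|\le C\phi^2\ \text{for}\ |\phi|\le 1/2,
\]
with Neumann boundary condition. Split $\phi_\alpha = \bar\phi_\alpha + \phi_\alpha^\ast$ with $\int_\Omega \phi_\alpha^\ast = 0$. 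Testing against $\phi_\alpha$ and absorbing the $\ea C_P$ term via the Poincar\'e--Wirtinger inequality yields $\int|\nabla\phi_\alpha|^2\le C\ea\bar\phi_\alpha^2$ and $\int(\phi_\alpha^\ast)^2\le C\ea\bar\phi_\alpha^2$. Integrating the equation over $\Omega$ annihilates the Laplacian term and yields $|\bar\phi_\alpha|\le C\|\phi_\alpha\|_\infty\,|\Omega|^{-1}\int|\phi_\alpha|$; the triangle inequality plus Cauchy--Schwarz and the previous $L^2$-bound on $\phi_\alpha^\ast$ then turn this into the self-improving inequality $|\bar\phi_\alpha|\le C\|\phi_\alpha\|_\infty(1+\ea^{1/2})|\bar\phi_\alpha|$. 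Since the prefactor tends to $0$, we must have $\bar\phi_\alpha = 0$ for $\alpha$ large, whereupon $\int|\nabla\phi_\alpha|^2 = 0$ forces $\phi_\alpha\equiv 0$, i.e.\ $\ua\equiv c_\alpha$.

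The main obstacle lies in this last decoupling. The linearized operator $\Delta - (2^\star-2)\ea$ is negative on constants but becomes coercive on mean-zero functions once $\ea$ is small, so $\bar\phi_\alpha$ and $\phi_\alpha^\ast$ must be controlled by genuinely different mechanisms and the nonlinear remainder $R(\phi_\alpha)$ must be just small enough to close the estimate for $\bar\phi_\alpha$. This in turn rests on the preliminary pinning $\ua/c_\alpha\to 1$: without that scale identification, the quadratic bound $|R(\phi)|\le C\phi^2$ used at every step is not available.
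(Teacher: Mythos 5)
Your proposal is correct, but it takes a noticeably different and longer route than the paper. The paper never identifies the scale of $\ua$: it simply tests \eqref{eq1} with $\ua-\bua$, observes that $\int_\Omega\ua^{2^\star-1}(\ua-\bua)\,dx=\int_\Omega\bigl(\ua^{2^\star-1}-\bua^{2^\star-1}\bigr)(\ua-\bua)\,dx$ because the constant $\bua^{2^\star-1}$ integrates to zero against the mean-zero factor, bounds this by $C\bigl(\Vert\ua\Vert_\infty^{2^\star-2}+\bua^{2^\star-2}\bigr)\int_\Omega(\ua-\bua)^2\,dx$ via the mean value theorem, and absorbs it with Poincar\'e--Wirtinger together with $\Vert\ua\Vert_\infty\to 0$ (obtained from \eqref{eq4} and elliptic theory); this forces $\int_\Omega|\nabla\ua|^2\,dx=0$ for large $\alpha$, so $\ua$ is constant, and the value of the constant is then read off from the equation. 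You instead first pin the scale ($\ua/c_\alpha\to 1$ uniformly, via the blow-down $v_\alpha=\ua/\Vert\ua\Vert_\infty$ and the integrated equation), then linearize about the exact constant solution and close by treating mean and oscillation separately: Poincar\'e--Wirtinger plus absorption for $\phi_\alpha^{\ast}$, the integrated equation plus the quadratic remainder for $\bar\phi_\alpha$, and a self-improving inequality to kill $\bar\phi_\alpha$. Your version buys a more explicit structural picture (the linearized operator $\Delta-(2^\star-2)\ea$, the special role of the constant mode, the closeness $\ua/c_\alpha\to1$), which is the standard template for perturbative uniqueness statements; the paper's version buys economy, since testing with $\ua-\bua$ makes the mean mode disappear from the estimate, no Taylor expansion around a specific constant is needed, and only $\Vert\ua\Vert_\infty\to0$ in $L^\infty$ (not $C^2$ convergence or the blow-down step) is required. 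Both arguments ultimately rest on the same two pillars, smallness of the sup-norm and the Poincar\'e inequality for mean-zero functions on the connected domain $\Omega$, so your extra steps are sound but not needed for the conclusion.
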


\medskip\noindent{\it Proof of Proposition \ref{prop:claimA}:} Assume that there exists $M>0$ such that $\ua\le M$ in $\Om$ for all $\epspositif$.  By standard elliptic theory (see Theorem 9.11 in \cite{gt} together with Theorem \ref{app:th:existence} of Section \ref{sec:Green}), we deduce then thanks to (\ref{eq4}) that $\ua\to 0$ in $L^\infty\left(\Om\right)$. Multiplying equation (\ref{eq1}) by $\ua - \bue$  ($\bua$ is the average of $\ua$ defined above) and integrating by parts, we then get that
\begincal
&&\int_{\Om} \left\vert \nabla \ua\right\vert^2\, dx + \ea \int_{\Om} \left(\ua-\bue\right)^2\, dx\\ &&\quad =
n(n-2) \int_\Om \ua^{2^\star-1}\left(\ua-\bue\right)\, dx \\
&&\quad = n(n-2) \int_\Om \left(\ua^{2^\star-1}-\bue^{2^\star-1}\right)\left(\ua-\bue\right)\, dx\\
&&\quad =O\left( \left(\left\Vert \ua\right\Vert_\infty^{2^\star-2} + \bue^{2^\star-2}\right)\int_\Om \left(\ua-\bue\right)^2\, dx\right)\\
&&\quad = o\left(\int_\Om \left(\ua-\bue\right)^2\, dx\right) = o\left(\int_{\Om} \left\vert \nabla \ua\right\vert^2\, dx\right)
\fincal
when $\alpha\to +\infty$ thanks to Poincar\'e's inequality. This yields $\int_{\Om} \left\vert \nabla \ua\right\vert^2\, dx=0$ for $\alpha$ large and thus $\ua$ is a constant for $\alpha>\alpha_0$ for some $\alpha_0>0$. The constant is easily seen to be $\left(\frac{\ea}{n(n-2)}\right)^{\frac{n-2}{4}}$ thanks to equation (\ref{eq1}). This ends the proof of Proposition \ref{prop:claimA}.\hfill $\Box$

\medskip\noindent For the rest of the article, we assume that
\begin{equation}\label{eq3}
\lim_{\alpha\to +\infty}\left\Vert \ua\right\Vert_\infty =+\infty.
\end{equation}
Under this assumption, the sequence $\left(\ua\right)$ will develop some concentration points. In sections \ref{sec:exhaus} to \ref{sec:asymp:c1}, we provide sharp pointwise estimates on $\ua$ and thus describe precisely how the sequence $\left(\ua\right)$ behaves in $C^1\left(\bar{\Om}\right)$. In section \ref{sec:cv:sing} to \ref{sec:bnd}, we get precise informations on the patterns of concentration points which can appear. This permits to conclude the proof of the main theorems in section \ref{sec:proof:th}.

\section{Smooth domains and extensions of solutions to elliptic equations}\label{sec:reflection}
We first define smooth domains:
\begin{defi}\label{def:smoothdomain} Let $\Omega$ be an open subset of $\rn$, $n\geq 2$. We say that $\Omega$ is a smooth domain if for all $x\in \partial\Omega$, there exists $\delta_x>0$, there exists $U_x$ an open  neighborhood of $x$ in $\rn$, there exists $\varphi: B_{\delta_x}(0)\to U_x$ such that

$$\label{ppty:phi}\begin{array}{ll}
(i) & \varphi\hbox{ is a }C^\infty-\hbox{diffeomorphism}\\
(ii)& \varphi(0)=x\\
(iii)& \varphi(B_{\delta_x}(0)\cap \{x_1<0\})=\varphi(B_{\delta_x}(0))\cap\Omega\\
(iv) &\varphi(B_{\delta_x}(0)\cap \{x_1=0\})=\varphi(B_{\delta_x}(0))\cap\partial\Omega
\end{array}$$
\end{defi}
The outward normal vector is then defined as follows:
\begin{defi}\label{def:vecnormal}
Let $\Omega$ be a smooth domain of $\rn$. For any $x\in\partial\Omega$, there exists a unique $\nu(x)\in\rn$ such that $\nu(x)\in (T_{x}\partial\Omega)^\bot$, $\Vert \nu(x)\Vert=1$ and $(\partial_1\varphi(0),\nu(x))>0$ for $\varphi$ as in Definition \ref{def:smoothdomain}. This definition is independent of the choice of such a chart $\varphi$ and the map $x\mapsto\nu(x)$ is in $C^\infty(\partial\Omega,\rn)$.
\end{defi}

\medskip\noindent  Let $\Omega$ be a smooth bounded domain of $\rn$ as above. We consider the following problem:
\begin{equation}\label{NB}
\left\{\begin{array}{ll}
\Delta u=f &\hbox{ in }\Omega\\
\partial_\nu u=0&\hbox{ in }\partial\Omega
\end{array}\right.
\end{equation}
where $u\in C^2(\omegabar)$ and $f\in C^0(\omegabar)$. Note that the solution $u$ is defined up to the addition of a constant and that it is necessary that $\int_\Omega f\, dx=0$ (this is a simple integration by parts). It is useful to extend solutions to \eqref{NB} to a neighborhood of each point of $\partial\Omega$. For this, a variational formulation of \eqref{NB} is required: multiplying \eqref{NB} by $\psi\in C^\infty(\omegabar)$ and integrating by parts leads us to the following definition:
\begin{defi} We say that $u\in H_1^1(\Omega)$ is a weak solution to \eqref{NB} with $f\in L^1(\Omega)$ if
$$\int_\Omega (\nabla u,\nabla \psi)\, dx=\int_\Omega f\psi\, dx\hbox{ for all }\psi\in C^\infty(\omegabar).$$
\end{defi}
\noindent In case $u\in C^2(\omegabar)$, as easily checked, $u$ is a weak solution to \eqref{NB} iff it is a classical solution to \eqref{NB}.

\smallskip\noindent We let $\xi$ be the standard Euclidean metric on $\rn$ and we set
\begin{equation*}
\left\{\begin{array}{llll}
\tpi: & \rn & \to & \rn\\
     & (x_1,x')& \mapsto & (-|x_1|,x')
\end{array}\right.
\end{equation*}
Given a chart $\varphi$ as in Definition \ref{def:smoothdomain}, we define
$$\tpip:=\varphi\circ\tpi\circ\varphi^{-1}.$$
Up to taking $U_{x_0}$ smaller, the map $\tpip$ fixes $U_{x_0}\cap \omegabar$ and ranges in $\omegabar$. We prove the following useful extension lemma:
\begin{lemma}\label{app:lem:ext} Let $x_0\in\partial\Omega$. There exist $\delta_{x_0}>0$, $U_{x_0}$ and a chart $\varphi$ as in Definition \ref{def:smoothdomain} such that the metric $\tg:=\tpip^\star\xi=(\varphi\circ\tpi\circ\varphi^{-1})^\star\xi$ is in $C^{0,1}(U_{x_0})$ (that is Lipschitz continuous), $\tg_{|\Omega}=\xi$, the Christoffel symbols of the metric $\tg$ are in $L^\infty(U_{x_0})$ and $d\varphi_0$ is an orthogonal transformation. Let $u\in H_1^1(\Omega\cap U_{x_0})$ and $f\in L^1(\Omega\cap U_{x_0})$ be functions such that
\begin{equation}\label{app:eq:u:dist}
\int_\Omega (\nabla u,\nabla \psi)\, dx=\int_\Omega f\psi\, dx\hbox{ for all }\psi\in C^\infty_{c}(\omegabar\cap U_{x_0}).
\end{equation}
For all $v: \Omega\cap U_{x_0}\to \rr$, we define
$$\tilde{v}:=v\circ  \tpip\hbox{ in }U_{x_0}.$$
Then, we have that $\tilde{u}\in H_1^1(U_{x_0})$, $\tilde{u}_{|\Omega}=u$, $f\in L^1(U_{x_0})$ and
$$\Delta_{\tg}\tilde{u}=\tilde{f}\hbox{ in the distribution sense,}$$
where $\Delta_{\tg}:=-\hbox{div}_{\tg}(\nabla)$.
\end{lemma}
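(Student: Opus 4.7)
The plan is to carry out the classical reflection argument across $\partial\Omega$ through the chart $\varphi$, with three technical subtleties: the metric $\tilde g$ will only be Lipschitz, the map $\tilde\pi$ is Lipschitz with a jump in its differential across $\{x_1=0\}$, and the Neumann condition has to be ``hidden'' in the symmetrisation of the test functions.

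First, to construct the chart I would use that $\partial\Omega$ is locally a smooth graph: after a Euclidean translation and rotation sending $x_0$ to $0$ and $\nu(x_0)$ to $e_1$, there exists $h\in C^\infty$ with $h(0)=0$ and $\nabla h(0)=0$ such that $\Omega$ is locally $\{x_1<h(x')\}$. I would then set $\varphi(x_1,x'):=(x_1+h(x'),x')$ on a small ball $B_{\delta_{x_0}}(0)$; this chart satisfies the requirements of Definition~\ref{def:smoothdomain}, and since $\nabla h(0)=0$ one has $d\varphi_0=\mathrm{Id}$, which is orthogonal. The map $\tilde\pi$ is Lipschitz on $\rn$, smooth on each closed half-space $\{\pm x_1\geq 0\}$, and restricts to the identity on $\{x_1\leq 0\}$; hence $\tilde\pi_\varphi$ is Lipschitz on $U_{x_0}$, smooth on each of $\omegabar\cap U_{x_0}$ and $\overline{U_{x_0}\setminus\Omega}$, and coincides with the identity on $\omegabar\cap U_{x_0}$. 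Consequently $\tilde g=\tilde\pi_\varphi^\star\xi$ is bounded and Lipschitz, equals $\xi$ on $\Omega\cap U_{x_0}$, and its Christoffel symbols, being algebraic combinations of $\tilde g$, $\tilde g^{-1}$ and $\partial\tilde g$, lie in $L^\infty(U_{x_0})$.

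Set $\sigma:=\tilde\pi_\varphi|_{U_{x_0}\setminus\bar\Omega}$. By the very definition of $\tilde\pi$, $\sigma$ extends to a $C^\infty$ diffeomorphism $\varphi\circ R\circ\varphi^{-1}$ of $\omegabar\cap U_{x_0}$ onto $\overline{U_{x_0}\setminus\Omega}$ (with $R(x_1,x')=(-x_1,x')$) which fixes $\partial\Omega\cap U_{x_0}$ pointwise. A chain-rule computation on each side, together with matching traces along $\partial\Omega$, gives $\tilde u\in H_1^1(U_{x_0})$ with $\tilde u|_{\Omega}=u$, and $\tilde f\in L^1(U_{x_0})$. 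To prove the distributional identity I would take an arbitrary $\psi\in C^\infty_c(U_{x_0})$ and build the symmetrised test function
\[
\hat\psi(y):=\psi(y)+\psi\bigl(\sigma^{-1}(y)\bigr),\qquad y\in\omegabar\cap U_{x_0}.
\]
Because $\sigma^{-1}$ extends smoothly up to $\partial\Omega$, $\hat\psi\in C^\infty(\omegabar\cap U_{x_0})$ with compact support in $\omegabar\cap U_{x_0}$, and is therefore an admissible test function in \eqref{app:eq:u:dist}.

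Plugging $\hat\psi$ into \eqref{app:eq:u:dist} and performing the change of variable $y=\sigma(x)$, the relation $\tilde g=\sigma^\star\xi$ (i.e.\ $\sigma$ is an isometry from $(U_{x_0}\setminus\bar\Omega,\tilde g)$ onto $(\Omega\cap U_{x_0},\xi)$) turns the $(\psi\circ\sigma^{-1})$-terms on $\Omega\cap U_{x_0}$ into the corresponding $(\nabla\tilde u,\nabla\psi)_{\tilde g}$ and $\tilde f\psi$ integrals over $U_{x_0}\setminus\bar\Omega$. Combining these with the integrals over $\Omega\cap U_{x_0}$, on which $\tilde g=\xi$, $\tilde u=u$ and $\tilde f=f$, yields
\[
\int_{U_{x_0}}(\nabla\tilde u,\nabla\psi)_{\tilde g}\,dv_{\tilde g}=\int_{U_{x_0}}\tilde f\,\psi\,dv_{\tilde g},
\]
that is $\Delta_{\tilde g}\tilde u=\tilde f$ in $\mathcal D'(U_{x_0})$. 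The only conceptual difficulty — and what I expect to be the delicate step — is to understand why no singular distribution appears along the non-smooth set $\partial\Omega$ of $\tilde g$: this is encoded in the fact that $\hat\psi$ is symmetric under $\sigma$, which is exactly what causes the two would-be boundary integrals produced by integrating by parts on each side of $\partial\Omega$ to cancel, and this symmetry is the incarnation of the Neumann condition $\partial_\nu u=0$.
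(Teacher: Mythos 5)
Your symmetrisation argument in the second half (testing \eqref{app:eq:u:dist} against $\psi+\psi\circ\pi_\varphi^{-1}$ and changing variables through the isometry $\pi_\varphi$) is exactly the paper's computation and is fine. The genuine gap is in the construction of the chart. With the graph chart $\varphi(x_1,x')=(x_1+h(x'),x')$ the pulled-back metric $g:=\varphi^\star\xi$ has components $g_{11}=1$, $g_{1i}=\partial_i h(x')$, $g_{ij}=\delta_{ij}+\partial_ih\,\partial_jh$, and on $\{x_1>0\}$ the reflected metric is
\begin{equation*}
(\tilde\pi^\star g)_{11}(x)=g_{11}(-x_1,x'),\qquad (\tilde\pi^\star g)_{1i}(x)=-\,g_{1i}(-x_1,x'),\qquad (\tilde\pi^\star g)_{ij}(x)=g_{ij}(-x_1,x'),
\end{equation*}
because $d\tilde\pi=\mathrm{diag}(-1,1,\dots,1)$ there. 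Hence the $1i$ components jump by $2\partial_i h(x')$ across $\{x_1=0\}$, so unless $\nabla h\equiv 0$ near $x_0$ the metric $\tilde g$ is not continuous, let alone $C^{0,1}$, and its Christoffel symbols are not in $L^\infty$ (the distributional derivative of $\tilde g$ carries a surface measure). Your sentence ``consequently $\tilde g=\tilde\pi_\varphi^\star\xi$ is bounded and Lipschitz'' does not follow from the Lipschitz character of $\tilde\pi_\varphi$: the pullback involves $d\tilde\pi_\varphi$, which is only $L^\infty$ and jumps across $\partial\Omega$, and whether the components glue continuously depends on the chart. Since the Lipschitz regularity and bounded Christoffels are part of the statement (and are what the paper later uses to run elliptic theory for $\tilde u$ up to the boundary), this is not a cosmetic point.

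The fix is the paper's choice of boundary normal coordinates: starting from any chart $\hat\varphi$ of Definition \ref{def:smoothdomain}, set $\varphi(x_1,x'):=\hat\varphi(0,x')+x_1\,\nu(\hat\varphi(0,x'))$. Then $(\varphi^\star\xi)_{11}\equiv 1$ and $(\varphi^\star\xi)_{1i}\equiv 0$ on the whole chart domain, so the components that would change sign under the reflection vanish identically, $(\varphi\circ\tilde\pi)^\star\xi=(\varphi^\star\xi)\circ\tilde\pi$ off $\{x_1=0\}$, and the reflected metric extends Lipschitz-continuously with Christoffel symbols in $L^\infty$; after a linear map of the $\{x_1=0\}$ hyperplane one also gets $d\varphi_0$ orthogonal. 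With that replacement of your first paragraph, the remainder of your proof goes through as written.
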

Here, by "distribution sense", we mean that
$$\int_{U_{x_0}} (\nabla \tilde{u},\nabla \psi)_{\tg}\, dv_{\tg}=\int_{U_{x_0}} \tilde{f}\psi\, dv_{\tg}\hbox{ for all }\psi\in C^\infty_{c}(U_{x_0}),$$
where $dv_{\tg}$ is the Riemannian element of volume associated to $\tg$ and $(\cdot,\cdot)_{\tg}$ is the scalar product on $1-$forms.

\medskip\noindent{\it Proof of Lemma \ref{app:lem:ext}:} Given a chart $\hat{\varphi}$ at $x_0$ defined on $B_{\tilde{\delta}_{x_0}}(0)$ as in Definition \ref{def:smoothdomain}, we define the map
$$\left\{\begin{array}{llll}
\varphi: & B_{\tilde{\delta}_{x_0}}(0) & \to & \rn\\
     & (x_1,x')& \mapsto & x_1\nu(\hat{\varphi}(0,x'))+\hat{\varphi}(0,x')
\end{array}\right.$$
The inverse function theorem yields the existence of $\delta_{x_0}>0$ and $U_{x_0}\subset\rn$ open such that $\varphi: B_{\delta_{x_0}}(0)\to U_{x_0}$ is a smooth diffeomorphism being a chart at $x_0$ as in Definition \ref{def:smoothdomain}. Moreover, the pull-back metric satisfies the following properties:
$$(\varphi^\star\xi)_{11}=1,\; (\varphi^\star\xi)_{1i}=0\;\forall i\neq 1.$$
In particular, up to a linear transformation on the $\{x_1=0\}$ hyperplane, we can assume that $d\varphi_0$ is an orthogonal transformation. It is easily checked that $((\varphi\circ\tpi)^\star\xi)_{ij}=(\varphi^\star\xi)_{ij}\circ\tpi$ outside $\{x_1=0\}$ for all $i,j$, and then we prologate $(\varphi\circ\tpi)^\star\xi$ as a Lipschitz continuous function in $U_{x_0}$, and so is $\tg:=(\varphi\circ\tpi\circ\varphi^{-1})^\star\xi$. In addition, as easily checked, if $\tilde{\Gamma}_{ij}^k$'s denote the Christoffel symbols for the metric $\tg$, we have that $\tilde{\Gamma}_{ij}^k\in L^\infty$. Therefore, the coefficients of $\Delta_{\tg}$ are in $L^\infty$ and the principal part is Lipschitz continuous.\par

\medskip\noindent We fix $\psi\in C^\infty_c(U_{x_0})$. For convenience, in the sequel, we define $\pi:=\tilde{\pi}_{|\rn_+}$, that is
\begin{equation*}
\left\{\begin{array}{llll}
\pi: & \rn_+ & \to & \rn_-\\
     & (x_1,x')& \mapsto & (-x_1,x').
\end{array}\right.
\end{equation*}
Clearly, $\pi$ is a smooth diffeomorphism. As for $\tpip$, we define
$$\pip:=\varphi\circ\pi\circ\varphi^{-1}$$
that maps (locally) $\omegabar^c$ to $\omegabar$. With changes of variable, we get that
$$\int_{U_{x_0}} (\nabla \tilde{u},\nabla \psi)_{\tg}\, dv_{\tg}=\int_{\Omega\cap U_{x_0}}(\nabla u, \nabla (\psi+\psi\circ \pip^{-1}\circ\varphi^{-1}))\, dx$$
and
$$\int_{U_{x_0}} \tilde{f}\psi\, dv_{\tg}=\int_{\Omega\cap U_{x_0}}f(\psi+\psi \circ\pip^{-1})\, dx.$$
It then follows from \eqref{app:eq:u:dist} that $\Delta_{\tg} \tilde{u}=\tilde{f}$ in $U_{x_0}$ in the distribution sense. This ends the proof of Lemma \ref{app:lem:ext}.\hfill$\Box$

\medskip\noindent In the particular case of smooth solutions, we have the following lemma:
\begin{lemma}\label{lem:ext} Let $x_0\in\partial\Omega$. There exist $\delta_{x_0}>0$, $U_{x_0}$ and a chart $\varphi$ as in Definition \ref{def:smoothdomain} such that the metric $\tg:=(\varphi\circ\tpi\circ\varphi^{-1})^\star\xi$ is in $C^{0,1}(U_{x_0})$ (that is Lipschitz continuous), $\tg_{|\Omega}=\xi$, the Christoffel symbols of the metric $\tg$ are in $L^\infty(U_{x_0})$ and $d\varphi_0$ is an orthogonal transformation. We let $u\in C^2(\omegabar\cap U_{x_0})$ and all $f\in C^1_{loc}(\rr)$ be such that
$$\left\{\begin{array}{ll}
\Delta u=f(u) & \hbox{ in }\omegabar\cap U_{x_0}\\
\partial_\nu u=0& \hbox{ in }\partial\Omega\cap U_{x_0}
\end{array}\right.$$
and we define
$$\tilde{u}:=u\circ \varphi\circ \tpi\circ\varphi^{-1}\hbox{ in }U_{x_0}.$$
Then, in addition to the regularity of $\tg$, we have that
$$\tilde{u}\in C^{2}(U_{x_0}),\; \tilde{u}_{|\Omega}=u\hbox{ and }\Delta_{\tg}\tilde{u}=f(\tilde{u})\hbox{ for all } x\in U_{x_0},$$
where $\Delta_{\tg}:=-\hbox{div}_{\tg}(\nabla)$.
\end{lemma}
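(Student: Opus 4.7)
The plan is to invoke Lemma \ref{app:lem:ext} to produce a distributional solution, and then upgrade its regularity to $C^2$ by a direct computation in the chart. Take the chart $\varphi$ furnished by Lemma \ref{app:lem:ext}; the pulled-back metric $\tg$ is then Lipschitz with $L^\infty$ Christoffel symbols and coincides with $\xi$ on $\Omega\cap U_{x_0}$. Since $u\in C^2(\omegabar\cap U_{x_0})$ satisfies $\Delta u = f(u)$ with $\partial_\nu u = 0$ on $\partial\Omega\cap U_{x_0}$, a standard integration by parts shows that $u$ is a weak solution of \eqref{NB} in the sense of Lemma \ref{app:lem:ext} with right-hand side $f(u)\in C^0\subset L^1$. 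Applying that lemma yields $\tilde u\in H_1^1(U_{x_0})$ with $\tilde u_{|\Omega} = u$ and $\Delta_{\tg}\tilde u = f(\tilde u)$ in the distribution sense.

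It remains to prove $\tilde u\in C^2(U_{x_0})$; once this is done, the distributional equation immediately becomes the pointwise equation claimed in the statement. Away from the hypersurface $\partial\Omega\cap U_{x_0}$ the $C^2$ regularity is clear by composition: on $\Omega\cap U_{x_0}$ we have $\tilde u = u\in C^2$ by assumption, and on $U_{x_0}\setminus\omegabar$ the map $\tpip$ is a smooth diffeomorphism onto an open subset of $\omegabar$ on which $u$ is $C^2$. The real task is therefore to check $C^2$ regularity across $\partial\Omega\cap U_{x_0}$, and I would carry this out by a direct computation in the chart. Set $v := u\circ\varphi$ on $B_{\delta_{x_0}}(0)\cap\{x_1\leq 0\}$; then $\tilde u\circ\varphi$ is exactly the even reflection $\tilde v := v\circ\tilde\pi$ in the variable $x_1$. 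The chart produced in Lemma \ref{app:lem:ext} is designed so that $\partial_1\varphi(0,x') = \nu(\varphi(0,x'))$, and hence the Neumann condition $\partial_\nu u=0$ reads $\partial_1 v(0,x') = 0$ for every $x'$. Differentiating this identity in any tangential direction $x_i$, $i\neq 1$, gives $\partial_i\partial_1 v(0,x') = 0$.

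These two facts are exactly what is needed for $\tilde v$ to be $C^2$ across $\{x_1 = 0\}$: the identity $\partial_1 v(0,\cdot) = 0$ cancels the jump of $\partial_1\tilde v$ and gives $\tilde v\in C^1$; the identities $\partial_i\partial_1 v(0,\cdot) = 0$ for $i\neq 1$ cancel the jumps of the mixed partials $\partial_i\partial_1 \tilde v$; and the remaining second partials $\partial_1^2\tilde v$ and $\partial_i\partial_j\tilde v$ with $i,j\neq 1$ pass continuously through even reflection without any further condition. Pulling back by $\varphi$ gives $\tilde u\in C^2(U_{x_0})$, which concludes the proof. The main subtlety is this $C^2$-across-the-boundary step: it works precisely because one tangential differentiation of the scalar Neumann identity cancels exactly the mixed partials that would otherwise jump, and no higher regularity can be expected in general, consistent with the fact that $\tg$ is merely Lipschitz.
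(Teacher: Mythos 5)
Correct. The paper states Lemma \ref{lem:ext} without a written proof, as the smooth counterpart of Lemma \ref{app:lem:ext}, and your argument is exactly the intended one: the verification of the weak formulation needed to invoke Lemma \ref{app:lem:ext} is routine, and your chart computation — the Neumann identity $\partial_1 v(0,x')=0$ together with its tangential derivatives $\partial_i\partial_1 v(0,x')=0$ killing the only possible jumps of $\partial_1\tilde v$ and $\partial_i\partial_1\tilde v$ across $\{x_1=0\}$ — is the standard even-reflection proof of the $C^2$ regularity, after which the equation holds classically on each side of $\partial\Omega$ (on $\Omega$ since $\tg=\xi$, on the exterior by naturality of the pullback) and hence in the asserted sense on all of $U_{x_0}$.
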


\section{Exhaustion of the concentration points}\label{sec:exhaus}

We prove in this section the following~:

\begin{prop}\label{prop:claimB} Let $(\ua)_{\alpha\in\nn}\in C^2(\omegabar)$ and $\Lambda>0$ such that \eqref{eq:ua} and \eqref{bnd:nrj} hold for all $\alpha\in\nn$. Then there exists $N\in {\mathbb N}^\star$, $N$ sequences $\left(\xai\right)_{i=1,\dots,N}$  of points in $\overline{\Omega}$ and $N$ sequences $\mu_{1,\alpha} \geq \mu_{2,\alpha}\geq \dots \geq \mu_{N,\alpha}$ of positive real numbers such that, after passing to a subsequence, the following assertions hold~:

\smallskip (i) For any $1\le i\le N$, $\xai \to x_i$ as $\epstozero$ for some $x_i\in \bar{\Om}$ and $\mai\to 0$ as $\epstozero$. Moreover, either $\frac{d\left(\xai,\partial\Om\right)}{\mai}\to +\infty$ as $\epstozero$ or $\xai\in \partial \Om$.

\smallskip (ii) For any $1\le i<j\le N$,
$$\frac{\left\vert \xai-\xaj\right\vert^2}{\mai\maj}+\frac{\mai}{\maj}+\frac{\maj}{\mai}\to +\infty\hbox{ as }\epstozero\hskip.1cm.$$

\smallskip (iii) For any $1\le i\le N$, we define
$$\tuai:= \mai^{\frac{n-2}{2}} \ua\left(\xai+\mai \, .\,\right)\hbox{ if }\lim_{\alpha\to +\infty}\frac{d\left(\xai,\partial\Om\right)}{\mai}=+\infty,$$
and
$$\tuai:= \mai^{\frac{n-2}{2}} \tue\circ\varphi\left(\varphi^{-1}(\xai)+\mai \, .\,\right)\hbox{ if }\xai\in\partial\Omega\hbox{ for all }\alpha\in\nn$$
where $\tue$ is the extension of $\ua$ around $x_0:=\lim_{\alpha\to +\infty}\xai$ and $\varphi$ are as in Lemma \ref{lem:ext}. Then
\begin{equation}\label{cv:ua:ma:2}
\lim_{\alpha\to +\infty}\Vert\tuai-U_0\Vert_{C^1\left(K\cap \bar{\Om}_{i,\alpha}\right)}=0
\end{equation}
for all compact subsets $K\subset\subset\rn\setminus {\mathcal S}_i$ if $\xai\not\in\partial\Omega$ and $K\subset\subset\rn\setminus \left({\mathcal S}_i\cup\pi^{-1}({\mathcal S}_i)\right)$ if $\xai\in\partial\Omega$ where the function $U_0$ is given by
$$U_0(x):= \left(1+\vert x\vert^2\right)^{1-\frac{n}{2}}$$
and ${\mathcal S}_i$ is defined by
$${\mathcal S}_i := \left\{\lim_{\epstozero} \frac{\xaj-\xai}{\mai}\hbox{, }i< j\le N\right\}\hskip.1cm.$$
In the definition of ${\mathcal S}_i$, we allow the limit to be $+\infty$ (and in fact, we discard these points).

\smallskip (iv) We have that
$$\raa^{\frac{n-2}{2}} \left\vert \ua - \sum_{i=1}^N \Uie\right\vert \to 0\hbox{ in }L^\infty\left(\bar{\Om}\right)\hbox{ as }\epstozero$$
where
$$\raa(x):= \min_{1\le i\le N} \sqrt{\left\vert \xai-x\right\vert^2 +\mai^2}$$
and
$$\Uie(x):= \mai^{1-\frac{n}{2}} U_0\left(\frac{x-\xai}{\mai}\right)\hskip.1cm.$$
\end{prop}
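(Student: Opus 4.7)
The approach is an iterative selection procedure \emph{\`a la} Struwe/Druet-Hebey-Robert. I would pick bubbles one at a time, each located at the highest peak of a suitable weighted residual, and show that each rescaled profile converges to the Aubin-Talenti bubble $U_0$ via the Caffarelli-Gidas-Spruck classification \cite{cgs}. To start, let $y_\alpha\in\omegabar$ realize $\ua(y_\alpha)=\Vert\ua\Vert_\infty$ and set $\mu_\alpha^{(2-n)/2}:=\ua(y_\alpha)$, so $\mu_\alpha\to 0$ by \eqref{eq3}. If $d(y_\alpha,\partial\Omega)/\mu_\alpha\to+\infty$ set $\xai:=y_\alpha$ and $\tuai(x):=\mai^{(n-2)/2}\ua(\xai+\mai x)$; otherwise replace $y_\alpha$ by its orthogonal projection on $\partial\Omega$, which amounts to an $O(\mu_\alpha)$ translation absorbed by the blow-up. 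The rescaled equation is $\Delta\tuai+\mai^2\ea\tuai=n(n-2)\tuai^{\crit-1}$ on an expanding ball with $\Vert\tuai\Vert_\infty=1$, and local elliptic regularity combined with the $L^{\crit}$-bound from \eqref{bnd:nrj} yields $C^1_{\mathrm{loc}}$-convergence to a solution of $\Delta U=n(n-2)U^{\crit-1}$ on $\rn$ satisfying $U(0)=1=\Vert U\Vert_\infty$; \cite{cgs} identifies this with $U_0$. At boundary points I use Lemma \ref{lem:ext} to extend $\ua$ to $\tue$ across $\partial\Omega$, satisfying the same equation in the Lipschitz metric $\tg=(\varphi\circ\tpi\circ\varphi^{-1})^\star\xi$ with $d\varphi_0$ an isometry; the rescaled metric then converges in $C^{0,\beta}_{\mathrm{loc}}$ to the flat Euclidean metric on $\rn$, so the same blow-up analysis applies and (iii) follows (the exception set $\mathcal{S}_i\cup\pi^{-1}(\mathcal{S}_i)$ being the locus where other bubbles appear in the rescaled picture).

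\textbf{Induction.} Suppose bubbles $1,\dots,k$ have already been selected and set $R_\alpha^{(k)}(x):=\min_{1\le i\le k}\sqrt{|x-\xai|^2+\mai^2}$. If $(R_\alpha^{(k)})^{(n-2)/2}\bigl|\ua-\sum_{i=1}^k\Uai\bigr|\to 0$ in $L^\infty(\omegabar)$, the algorithm halts and (iv) is established. Otherwise pick $y_{k+1,\alpha}\in\omegabar$ almost realizing the supremum of this weighted residual, define $\mu_{k+1,\alpha}$ by $\mu_{k+1,\alpha}^{(2-n)/2}:=\ua(y_{k+1,\alpha})$, and project to $\partial\Omega$ if needed to obtain $x_{k+1,\alpha}$. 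The separation property (ii) against each previous $i$ is automatic from this choice: if both $|y_{k+1,\alpha}-\xai|/\mai$ and $\mai/\mu_{k+1,\alpha}$ stayed bounded, property (iii) for bubble $i$ would yield $\ua\approx\Uai$ at $y_{k+1,\alpha}$, contradicting the residual being of unit order there. Separation in turn makes every $\Uai$ with $i\le k$ negligible on balls of radius $R\mu_{k+1,\alpha}$ around $x_{k+1,\alpha}$ for any fixed $R$ eventually, so the blow-up analysis of the first step applies verbatim to the residual and produces $U_0$ again, giving (iii) for the new bubble.

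\textbf{Termination and main obstacle.} By (ii) and (iii) the rescaled bubbles have asymptotically disjoint supports of concentration and each contributes at least $\tfrac{1}{2}\int_\rn U_0^{\crit}\,dx>0$ to $\int_\Omega\ua^{\crit}\,dx$ (the factor $\tfrac{1}{2}$ accounting for boundary bubbles), so the energy bound \eqref{bnd:nrj} caps $N$ and the procedure must terminate after finitely many steps; one then relabels so that $\mu_{1,\alpha}\ge\dots\ge\mu_{N,\alpha}$ as required. The most delicate point, as I see it, is the boundary analysis: one must check that the chart neighbourhood $U_{x_0}$ of Lemma \ref{lem:ext} is much larger than $\mai$ so that $\tuai$ lives on balls of radius tending to infinity, that the rescaling intertwines cleanly with both the reflection $\tpip$ and the Lipschitz metric $\tg$, and that the limit equation is posed on all of $\rn$ (not merely on a half-space), so that \cite{cgs} genuinely applies. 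The secondary difficulty lies in the separation step: one must exclude the formation of ``satellite'' bubbles at the same scale as an earlier one but at bounded non-vanishing rescaled distance, which is precisely why the $C^1_{\mathrm{loc}}$-convergence in (iii) --- rather than mere $H_1^2$-convergence --- is essential to the argument.
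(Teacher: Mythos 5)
Your overall strategy---iterative extraction of bubbles at (near-)maxima of a weighted residual, classification of the rescaled limits via Caffarelli--Gidas--Spruck \cite{cgs}, reflection across the boundary through Lemma \ref{lem:ext}, and termination by energy quantization---is the same as the paper's (its induction on the property ${\mathcal P}_N$). However, two steps of your argument are genuinely incomplete.

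First, the boundary case $d(y_\alpha,\partial\Omega)=O(\mu_\alpha)$. You replace $y_\alpha$ by its orthogonal projection onto $\partial\Omega$ and assert that this $O(\mu_\alpha)$ translation is ``absorbed by the blow-up''. It is not: if $\rho:=\lim_\alpha d(y_\alpha,\partial\Omega)/\mu_\alpha\in(0,+\infty)$, a translation of size $\rho\mu_\alpha$ shifts the rescaled limit by a fixed amount, so after recentering on the boundary the limit would be a bubble centered at $(\rho,0)\neq 0$ in the chart, not $U_0$, and (iii) as stated would fail. The missing ingredient is the Neumann condition: one must pass to the limit in $\partial_1\tilde v_\alpha=0$ on $\partial\rnm$ and observe that the CGS profile $\bigl(1+|x-(\rho,0)|^2\bigr)^{1-\frac n2}$ satisfies $\partial_1 V=0$ on $\{x_1=0\}$ only when $\rho=0$; this forces $\rho=0$ (and a Rolle-type argument even shows the maximum point lies on $\partial\Omega$ for large $\alpha$). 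Without this, the dichotomy in (i) combined with the exact normalization of the limit in (iii) is not established.

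Second, the induction step. The new point maximizes the weighted residual, not $u_\alpha$, so the rescaled sequence $\nu_\alpha^{\frac{n-2}{2}}u_\alpha(y_{k+1,\alpha}+\nu_\alpha\,\cdot)$ is \emph{not} normalized to have maximum $1$ at the origin; its limit (after removing the singular set coming from the previous bubbles, using $H^2_{1,loc}$-boundedness) is a general bubble $\lambda^{\frac{n-2}{2}}U_0(\lambda x+x_0)$, and one must then redefine the new point and weight as $y_{k+1,\alpha}-\frac{\nu_\alpha}{\lambda}x_0$ and $\lambda^{-1}\nu_\alpha$ before (iii) holds for it; your claim that the first step applies ``verbatim'' and produces $U_0$ skips this re-centering and re-scaling. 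Relatedly, the local $L^\infty$-bounds away from the rescaled old concentration points do not follow from ``local elliptic regularity combined with the $L^{\crit}$ bound''---the exponent is critical, which is exactly why blow-up can occur; they come from the maximality of the weighted residual together with the vanishing of the weighted old bubbles at the chosen point (the paper's Steps \ref{prop:claimB}.3.1 and \ref{prop:claimB}.3.3), and only then do elliptic theory and removable-singularity arguments yield a smooth limit on all of $\rn$ to which \cite{cgs} applies. Both gaps are repairable, but as written the boundary normalization and the induction-step convergence are unproved.
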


\medskip\noindent{\it Proof of Proposition \ref{prop:claimB}:} For $N\geq 1$, we say that property ${\mathcal P}_N$ holds if there exist $N$ sequences $\left(\xai\right)_{i=1,\dots,N}$  of points in $\overline{\Omega}$ and $N$ sequences $\mu_{1,\alpha} \geq \mu_{2,\alpha}\geq \dots \geq \mu_{N,\alpha}$ of positive real numbers such that, after passing to a subsequence, assertions (i)-(ii)-(iii) of the claim hold for these sequences. We divide the proof of Proposition \ref{prop:claimB} in three steps.

\medskip\noindent{\bf Step  \ref{prop:claimB}.1:} We claim that there exists $N_{max}\geq 1$ such that $\left({\mathcal P}_N\right)$ can not hold for $N\geq N_{max}$.

\medskip\noindent {\it Proof of Step  \ref{prop:claimB}.1:} Let $N\geq 1$ be such that $\left({\mathcal P}_N\right)$ holds. Let $\left(\xai\right)_{i=1,\dots,N}$ be $N$ sequences of points in $M$ and $\mu_{1,\alpha} \geq \mu_{2,\alpha}\geq \dots \geq \mu_{N,\alpha}$ be $N$ sequences of positive real numbers such that the assertions (i)-(ii)-(iii) of Proposition \ref{prop:claimB} hold after passing to a subsequence. Let $R>0$ and set
$$\Omai\left(R\right) = B_{R\mai}(\xai) \setminus \bigcup_{i< j\le N} B_{\frac{1}{R}\mai}(\xaj)\hskip.1cm.$$
It easily follows from (ii) that
$$\Omai\left(R\right) \cap \Omaj\left(R\right)=\emptyset$$
for $\alpha$ large enough. Thus we can write that
$$\int_{\Om} \ua^{2^\star}\, dx \geq \sum_{i=1}^N\int_{\Omai\left(R\right)\cap \Om}\ua^{2^\star}\, dx$$
for $\alpha$ large enough. It follows then from (iii) that
$$\int_{\Om} \ua^{2^\star}\, dx \geq \frac{N}{2}\int_{\rn} U_0^{2^\star}\, dx -\eta(R)+o(1)$$
where $\eta(R)\to 0$ as $R\to +\infty$. Letting $R\to +\infty$ and thanks to (\ref{eq2}), we then get that
$$N\le \frac{2\Lambda}{\int_{\rn} U_0^{2^\star}\, dx}\hskip.1cm.$$
This ends the proof of Step  \ref{prop:claimB}.1. \hfill$\Box$

\medskip\noindent {\bf Step \ref{prop:claimB}.2:}  We claim that ${\mathcal P}_1$ holds.

\medskip\noindent{\it Proof of Step \ref{prop:claimB}.2.}  We let $\xa\in \bar{\Om}$ be such that
\begin{equation}\label{eqB.2.1}
\ua\left(\xa\right) = \max_{\bar{\Om}}\ua
\end{equation}
and we set
\begin{equation}\label{eqB.2.2}
\ua\left(\xa\right)=\ma^{1-\frac{n}{2}}\hskip.1cm.
\end{equation}
Thanks to (\ref{eq3}), we know that $\ma\to 0$ as $\epstozero$. We set
\begin{equation}\label{eqB.2.3}
\va\left(x\right) :=\ma^{\frac{n}{2}-1} \ua\left(\xa+\ma x\right)
\end{equation}
for $x\in \Omega_\alpha = \left\{x\in \rn\hbox{ s.t. } \xa+\ma x\in \Om\right\}$. It is clear that
$$\Delta \va +\ea\ma^2 \va = n(n-2)\va^{2^\star-1}\hbox{ in }\Omega_\alpha$$
with $\partial_\nu \va=0$ on $\partial \Omega_\alpha$ and
$$0\le \va \le \va(0)=1\hbox{ in }\Omega_\alpha\hskip.1cm.$$

\medskip\noindent{\bf Step \ref{prop:claimB}.2.1}: we assume that
\begin{equation}\label{case:b21}
\lim_{\epstozero}\frac{d\left(\xa,\partial \Om\right)}{\ma}=+\infty.
\end{equation}
It follows from standard elliptic theory (see \cite{gt}) that, after passing to a subsequence,
$$\va \to v\hbox{ in }C^2_{loc}\left(\rn\right)\hbox{ as }\epstozero$$
where $v\in C^2(\rn)$ is such that
$$\Delta v= n(n-2)v^{2^\star-1}$$
and
$$0\le v\le v(0)=1\hskip.1cm.$$
By the classification result of Caffarelli-Gidas-Spruck \cite{cgs}, we then get that $v=U_0$. This proves ${\mathcal P}_1$ in case \eqref{case:b21}. This ends Step \ref{prop:claimB}.2.1.

\medskip\noindent{\bf Step \ref{prop:claimB}.2.2}: we assume that there exists $\rho\geq 0$ such that
\begin{equation}\label{case:b22}
\lim_{\epstozero}\frac{d\left(\xa,\partial \Om\right)}{\ma}=\rho.
\end{equation}
We let $x_0:=\lim_{\epstozero}\xa$. We then have $x_0\in \partial\Omega$ and we choose $\varphi$ and $\delta_{x_0}>0$, $U_{x_0}$ as in Lemma \ref{lem:ext}. Let $\delta\in (0,\delta_{x_0})$. Denoting by $\tue\in C^{2}(U_{x_0})$ the local extension of $\ua$ on $U_{x_0}$ with respect to $\varphi$, we then have that
\begin{equation}\label{eq:tua}
\Delta_{\tg} \tue+\ea\tue=\tue^{\crit-1}\hbox{ in }U_{x_0}.
\end{equation}
Since $d\varphi_0$ is an orthogonal transformation, we have that
\begin{equation}\label{dist:phi}
d(\varphi(x),\partial\Omega)=(1+o(1))|x_1|
\end{equation}
for all $x\in B_{\delta_0}(0)\cap \overline{\rnm}$, where $\lim_{x\to 0}o(1)=0$ uniformly locally. We let $(x_{\alpha,1},\xa')\in \{x_1\leq 0\}\times\rr^{n-1}$ be such that $\xa:=\varphi(x_{\alpha,1},\xa')$ for all $\alpha\in\nn$. It follows from \eqref{case:b22} and \eqref{dist:phi} that
\begin{equation}\label{lim:rho}
\lim_{\epstozero}\frac{|x_{\alpha,1}|}{\ma}=\rho.
\end{equation}
We define
\begin{equation}\nonumber
\tve(x):=\ma^{\frac{n-2}{2}}\tue(\varphi((0,\xa')+\ma x))\hbox{ for all }x\in B_{\delta/\ma}(0).
\end{equation}
It follows from \eqref{eq:tua} that
\begin{equation}\label{eq:tva}
\Delta_{\tga} \tve+\ea\ma^2\tve=n(n-2)\tve^{\crit-1}\hbox{ in }B_{\delta/\ma}(0),
\end{equation}
where $\tga(x)=(\varphi^\star \tg)((0,\xa')+\ma x)=((\varphi^{-1}\circ \tilde{\pi})^\star\xi)((0,\xa')+\ma x)$. Since $0<\tve\leq \tve(\rhoeps,0)=1$ and \eqref{lim:rho} holds, it follows from standard elliptic theory (see Theorem 9.11 in \cite{gt}) that there exists $V\in C^1(\rn)$ such that
\begin{equation}\label{lim:tva}
\lim_{\epstozero}\tve=V\hbox{ in }C^{1}_{loc}(\rn),
\end{equation}
where $0\leq V\leq V(\rho,0)=1$. Passing to the limit $\epstozero$ in \eqref{eq:tva} and using that $d\varphi_0$ is an orthogonal transformation, we get that $\Delta V=n(n-2)V^{\crit-1}$ weakly in $\rn$. Since $V\in C^1(\rn)$, one gets that $V\in C^2(\rn)$ and it follows from Caffarelli-Gidas-Spruck \cite{cgs} that
$$V(x)=\left(\frac{1}{1+|x-(\rho,0)|^2}\right)^{\frac{n-2}{2}}$$
for all $x\in\rn$. The Neumann boundary condition $\partial_\nu\ua=0$ rewrites $\partial_1\tve=0$ on $\partial\rnm$. Passing to the limit, one gets that $\partial_1 V=0$ on $\partial\rnm$, and therefore $\rho=0$ and $V\equiv U_0$. In particular, we have that
$$\lim_{\epstozero}\frac{x_{\alpha,1}}{\ma}=0.$$
Taking $\tilde{x}_{\alpha}:=\varphi(0, \xa')$, we can then perform the above analysis of Step \ref{prop:claimB}.2.2 with $\tilde{x}_{\alpha}\in\partial\Omega$ instead of $\xa$. This proves ${\mathcal P}_1$ in case \eqref{case:b22}. This ends Step \ref{prop:claimB}.2.2.

\medskip\noindent Steps \ref{prop:claimB}.2.1 and Step \ref{prop:claimB}.2.2 prove that ${\mathcal P}_1$ holds. Step \ref{prop:claimB}.2 is proved. \hfill $\Box$

\medskip\noindent{\bf Remark:} For ${\mathcal P}_1$, we can be a little more precise and prove the following claim:
\begin{equation}\label{loc:xa:bnd}
\xa\in\partial\Omega\hbox{ for }\alpha\in\nn\hbox{ large.}
\end{equation}
We prove the claim by contradiction and assume that $\xa\not\in\Omega$ for a subsequence. Define $\rhoeps:=\frac{x_{\alpha,1}}{\ma}$. Then $\rhoeps<0$ for $\alpha$ large. Since $(\rhoeps,0)$ is a maximum point of $\tve$, we have that $\partial_1\tve(\rhoeps,0)=0$. Since $\partial_1\tve(0)=0$ (Neumann boundary condition), it then follows from Rolle's Theorem that there exists $\tau_\alpha\in (0,1)$ such that $\partial_{11}\tve(\tau_\alpha\rhoeps,0)=0$. Letting $\epstozero$, we get that $\partial_{11}U_0(0)=0$: a contradiction. This proves the claim.

\medskip\noindent{\bf Step \ref{prop:claimB}.3:} Assume that ${\mathcal P}_N$ holds for some $N\geq 1$. Let $\left(\xai\right)_{i=1,\dots,N}$  be $N$ sequences of points in $\overline{\Omega}$ and $\mu_{1,\alpha} \geq \mu_{2,\alpha}\geq \dots \geq \mu_{N,\alpha}$ be $N$ sequences of positive real numbers such that assertions (i)-(ii)-(iii) of the claim hold. We claim that if assertion (iv) of Proposition \ref{prop:claimB} does not hold for this sequence of points, then ${\mathcal P}_{N+1}$ holds.

\medskip\noindent{\it Proof of Step \ref{prop:claimB}.3:} We assume that (iv) does not hold for these sequences. In other words assume that there exists $\epsilon_0>0$ such that
\begin{equation}\label{eqB.3.1}
\max_{\bar{\Om}} \left(\raa^{\frac{n-2}{2}} \left\vert \ua - \sum_{i=1}^N \Uie\right\vert \right)\geq \eps_0
\end{equation}
for all $\alpha\in\nn$ where
$$\raa(x)^2:= \min_{1\le i\le N} \bigl(\left\vert \xai-x\right\vert^2 +\mai^2\bigr)$$
and
$$\Uie(x): = \mai^{1-\frac{n}{2}} U_0\left(\frac{x-\xai}{\mai}\right)\hskip.1cm.$$
We let $\ya\in \bar{\Om}$ be such that
\begin{equation}\label{eqB.3.2}
\max_{\bar{\Om}} \left(\raa^{\frac{n-2}{2}} \left\vert \ua - \sum_{i=1}^N \Uie\right\vert \right)
= \raa\left(\ya\right)^{\frac{n-2}{2}} \left\vert \ua\left(\ya\right) - \sum_{i=1}^N \Uie\left(\ya\right)\right\vert
\end{equation}
and we set
\begin{equation}\label{eqB.3.3}
\ua\left(\ya\right)= \nue^{1-\frac{n}{2}}\hskip.1cm.
\end{equation}

\medskip\noindent{\bf Step \ref{prop:claimB}.3.1:} We claim that
\begin{equation}\label{eqB.3.4}
\raa\left(\ya\right)^{\frac{n-2}{2}} \Uie\left(\ya\right)\to 0\hbox{ as }\epstozero\hbox{ for all }1\le i\le N\hskip.1cm.
\end{equation}
Indeed, assume on the contrary that there exists $1\le i\le N$ such that
\begin{equation}\label{eqB.3.5}
\raa\left(\ya\right)^{\frac{n-2}{2}} \Uie\left(\ya\right)\geq \eta_0
\end{equation}
for some $\eta_0>0$. This means that
\begin{equation}\label{eqB.3.6}
\frac{\raa\left(\ya\right)}{\mai}\geq \eta_0^{\frac{2}{n-2}}  \left(1+\frac{\vert \ya-\xai\vert^2}{\mai^2}\right)\hskip.1cm.
\end{equation}
Since $\raa\left(\ya\right)^2\le \left\vert \ya-\xai\right\vert^2 + \mai^2$, we get in particular that, up to a subsequence,
\begin{equation}\label{eqB.3.6bis}
\frac{\vert \ya-\xai\vert}{\mai}\to R\hbox{ as }\epstozero
\end{equation}
for some $R>0$. Coming back to (\ref{eqB.3.6}), we can also write that
\begin{equation}\label{eqB.3.6ter}
\frac{\left\vert \xaj-\ya\right\vert^2}{\mai^2}+\frac{\maj^2}{\mai^2} \geq \eta_0^{\frac{4}{n-2}} \left(1+R^2\right)^2 +o(1)
\end{equation}
for all $1\le j\le N$. These two equations permit to prove thanks to (ii) of Proposition \ref{prop:claimB} (which holds by assumption) that $\frac{\left\vert \xaj-\ya\right\vert}{\mai}\geq \eta_0^{\frac{2}{n-2}} \left(1+R^2\right) +o(1)$ for all $i<j\le N$. Thus
$$\lim_{\epstozero} \frac{\ya-\xai}{\mai}\not\in {\mathcal S}_i$$
and we use (iii) of Proposition \ref{prop:claimB} to get that
$$\mai^{\frac{n-2}{2}} \left\vert \ua\left(\ya\right)- \Uie\left(\ya\right)\right\vert \to 0$$
as $\epstozero$. Since $\raa\left(\ya\right) = O\left(\mai\right)$, we thus get that
$$\raa\left(\ya\right)^{\frac{n-2}{2}}\left\vert \ua\left(\ya\right)- \Uie\left(\ya\right)\right\vert \to 0$$
as $\epstozero$. Let $1\le j\le N$, $j\neq i$. We write now that
$$\raa\left(\ya\right) \Uje\left(\ya\right)^{\frac{2}{n-2}} = O\left(\frac{\mai}{\maj} \left(1+\frac{\left\vert \ya-\xaj\right\vert^2}{\maj^2}\right)^{-1}\right)=o(1)$$
thanks to (\ref{eqB.3.6bis}), (\ref{eqB.3.6ter}) and assertion (ii) of Proposition \ref{prop:claimB}. Thus we arrive to
$$\raa\left(\ya\right)^{\frac{n-2}{2}} \left\vert \ua\left(\ya\right) - \sum_{i=1}^N \Uie\left(\ya\right)\right\vert\to 0$$
as $\epstozero$ which contradicts (\ref{eqB.3.5}) and thus proves (\ref{eqB.3.4}). This ends Step \ref{prop:claimB}.3.1.

\medskip\noindent Note that, coming back to (\ref{eqB.3.1}), (\ref{eqB.3.2}), (\ref{eqB.3.3}) with (\ref{eqB.3.4}), we get that
\begin{equation}\label{eqB.3.7}
\frac{\raa\left(\ya\right)}{\nue} \geq \eps_0^{\frac{2}{n-2}}+o(1)\hskip.1cm.
\end{equation}

\medskip\noindent{\bf Step \ref{prop:claimB}.3.2:} We claim that
\begin{equation}\label{eqB.3.8}
\nue\to 0\hbox{ as }\epstozero\hskip.1cm.
\end{equation}

\smallskip\noindent We prove the claim. If $\raa\left(\ya\right)\to 0$ as $\epstozero$, then \eqref{eqB.3.8} follows from (\ref{eqB.3.7}). Assume now that $\raa\left(\ya\right)\geq 2\delta_0$ for some $\delta_0>0$. Using \eqref{eqB.3.2} and (\ref{eqB.3.4}), we get that
$$\ua\le 2^{\frac{n}{2}}\ua\left(\ya\right)+o(1)$$
in $B_{\delta_0}(\ya)\cap \bar{\Om}$ for $\alpha$ large enough. If $\ua(\ya)\to +\infty$ when $\alpha\to +\infty$, then \eqref{eqB.3.8} holds. If $\ua\left(\ya\right) =O(1)$, we then get by standard elliptic theory (see \cite{gt} and Lemma \ref{lem:ext}) and thanks to (\ref{eq4}) that $\ua\left(\ya\right)\to 0$ as $\epstozero$, which contradicts (\ref{eqB.3.7}) since $\Om$ is a bounded domain. This proves (\ref{eqB.3.8}) and ends Step \ref{prop:claimB}.3.2.

\medskip\noindent Note also that (\ref{eqB.3.4}) directly implies that
\begin{equation}\label{eqB.3.8bis}
\frac{\left\vert \xai-\ya\right\vert^2}{\mai\nue}+\frac{\mai}{\nue}\to +\infty\hbox{ as }\epstozero
\end{equation}
for all $1\le i\le N$. We set now
\begin{equation}\label{eqB.3.9}
\wa(x): = \nue^{\frac{n-2}{2}}\ua\left(\ya +\nue x\right)
\end{equation}
in $\omegaeps: = \left\{x\in \rn\hbox{ s.t. }\ya+\nue x\in \Om\right\}$. We then have that
\begin{equation}\label{eq:we:1}
\Delta \wa+ \ea\nue^2\wa = n(n-2)\wa^{2^\star-1}
\end{equation}
in $\omegaeps$ and $\partial_\nu \wa = 0$ on $\partial\omegaeps$. We define
$${\mathcal S}:=\left\{\lim_{\epstozero} \frac{\xai-\ya}{\nue}\, , \, 1\le i\le N\hbox{ s.t. }\left\vert \xai-\ya\right\vert=O\left(\nue\right)\hbox{ and }\mai=o\left(\nue\right)\right\}\hskip.1cm.$$
Let us fix $K\subset\subset \rn\setminus{\mathcal S}$ a compact set. We note that, thanks to (\ref{eqB.3.4}) and (\ref{eqB.3.2}),
\begin{equation}\label{eqB.3.10}
\left(\frac{\raa\left(\ya+\nue x\right)}{\raa\left(\ya\right)}\right)^{\frac{n-2}{2}}\wa\left(x\right)\le 1+o(1)
\end{equation}
for all $x\in K\cap \omegaeps$, where $\lim_{\epstozero}\sup_{K\cap \omegaeps}o(1)=0$. Let $\za\in B_R(0)\cap \overline{\Omega}_\alpha \setminus \bigcup_{x\in {\mathcal S}}B_{R^{-1}}(x)$ for some $R>0$ fixed.

\medskip\noindent{\bf Step \ref{prop:claimB}.3.3:} We claim that
\begin{equation}\label{eqB.3.11}
\wa\left(\za\right)=O\left(1\right)\hskip.1cm.
\end{equation}

\smallskip\noindent We prove the claim. It is clear from (\ref{eqB.3.10}) if $\frac{\raa\left(\ya+\nue \za\right)}{\raa\left(\ya\right)}\not\to 0$ as $\epstozero$. Assume now that
\begin{equation}\label{eq:3.115}
\frac{\raa\left(\ya+\nue \za\right)}{\raa\left(\ya\right)} \to 0\hbox{ as }\epstozero\hskip.1cm.
\end{equation}
Up to a subsequence, we let $1\le i\le N$ be such that
$$\raa\left(\ya+\nue \za\right)^2 = \left\vert \xai-\ya-\nue \za\right\vert^2+\mai^2\hskip.1cm.$$
We then write thanks to \eqref{eq:3.115} that
$$ \left\vert \xai-\ya-\nue \za\right\vert^2+\mai^2 = o\left(\left\vert \xai-\ya\right\vert^2\right) + o\left(\mai^2\right)$$
which implies that $\left\vert \xai-\ya\right\vert = O\left(\nue\right)$ and that $\mai=o\left(\nue\right)$. This leads to
$$\left\vert \frac{\xai-\ya}{\nue}- \za\right\vert^2 \to 0\hbox{ as }\epstozero\hskip.1cm,$$
which is absurd since, thanks to the definition of ${\mathcal S}$ and to the fact that $d\left(\za,{\mathcal S}\right)\geq \frac{1}{R}$,
$$\left\vert \frac{\xai-\ya}{\nue}-\za\right\vert \geq \frac{1}{2R}$$
for $\alpha$ large. Thus (\ref{eqB.3.11}) is proved. This ends Step \ref{prop:claimB}.3.3.

\medskip\noindent Thanks to (\ref{eqB.3.7}), we easily get that $0\in \omegabar_\alpha\setminus {\mathcal S}$.

\medskip\noindent{\bf Step \ref{prop:claimB}.3.4:} Assume first that
$$\lim_{\epstozero}\frac{d\left(\ya,\partial\Om\right)}{\nue}=+\infty.$$
It follows from Step \ref{prop:claimB}.3.3 that $(\wa)_\alpha$ is bounded in $L^\infty$ on all compact subsets of $\rn\setminus{\mathcal S}$. Then, by standard elliptic theory (see \cite{gt}), it follows from \eqref{eq:we:1} that, after passing to a subsequence,
$$\wa\to w_0 \hbox{ in }C^1_{loc}\left(\rn\setminus {S}\right)$$
where $w_0$ satisfies
$$\Delta_\xi w_0 = n(n-2)w_0^{2^\star-1}$$
in $\rn\setminus  {S}$ and $w_0(0)=1$. Noting that, since $\left(\ua\right)$ is uniformly bounded in $H_1^2\left(\Omega\right)$, we have that $w_0\in H_{1,loc}^2\left(\rn\right)$, we easily get that $w_0$ is in fact a smooth solution of the above equation and that, by Caffarelli-Gidas-Spruck \cite{cgs},
$w_0 (x)= \lambda^{\frac{n-2}{2}}U_0\left(\lambda x + x_0\right)$ for some $\lambda>0$ and some $x_0\in \rn$. If we set
$$x_{N+1,\alpha} := \ya -\frac{\nue}{\lambda}x_0$$
and
$$\mu_{N+1,\alpha} = \lambda^{-1} \nue\hskip.1cm,$$
it is easily checked that, up to reorder the concentration points such that the sequence of weights is non-increasing, assertions (i)-(ii)-(iii) of Proposition \ref{prop:claimB} hold for the $N+1$ sequences $\left(\xai,\mai\right)_{i=1,\dots,N+1}$. Here one must use in particular (\ref{eqB.3.8bis}) to get (ii). This ends Step \ref{prop:claimB}.3.4.

\medskip\noindent{\bf Step \ref{prop:claimB}.3.5:} Assume now that
$$\lim_{\epstozero}\frac{d\left(\ya,\partial\Om\right)}{\nue}=\rho\geq 0.$$
One proceeds similarly, using the extension $\tue$ of $\ua$ as in Lemma \ref{lem:ext} as was done for Step \ref{prop:claimB}.2.2. More precisely, let $y_0:=\lim_{\epstozero}\ya\in \partial\Omega$. We choose $\varphi$ and $\delta_{y_0}>0$, $U_{y_0}$ as in Lemma \ref{lem:ext}. Let $\delta\in (0,\delta_{y_0})$. Denoting by $\tue\in C^{2}(U_{y_0})$ the local extension of $\ua$ on $U_{y_0}$, we then have that
\begin{equation}\label{eq:tua:bis}
\Delta_{\tg} \tue+\ea\tue=\tue^{\crit-1}\hbox{ in }U_{y_0}.
\end{equation}
As in Step \ref{prop:claimB}.2.2, we let $\ya:=\varphi(y_{\alpha,1},\ya')$ and we have
$$\lim_{\epstozero}\frac{y_{\alpha,1}}{\nue}=-\rho\leq 0.$$
We define
\begin{equation}\nonumber
\tve(x):=\nue^{\frac{n-2}{2}}\tue(\varphi((0,\ya')+\nue x))\hbox{ for all }x\in B_{\delta/\nue}(0).
\end{equation}
It follows from \eqref{eq:tua:bis} that
\begin{equation}\label{eq:tva:bis}
\Delta_{\tga} \twe+\ea\nue^2\twe=n(n-2)\twe^{\crit-1}\hbox{ in }B_{\delta/\nue}(0),
\end{equation}
where $\tga(x)=(\varphi^\star \tg)((0,\ya')+\nue x)=((\varphi^{-1}\circ \tilde{\pi})^\star\xi)((0,\ya')+\nue x)$. We define
$$\tilde{I}:=\left\{i\in\{1,...,N\}\hbox{ s.t. }\left\vert \xai-\ya\right\vert=O\left(\nue\right)\hbox{ and }\mai=o\left(\nue\right)\right\}$$
and
$$\tilde{\mathcal S}=\left\{\lim_{\epstozero} \frac{\varphi^{-1}(\xai)-(0,\ya')}{\nue}/\, i\in\tilde{I}\right\}\hskip.1cm.$$
We let $K\subset \rn\setminus \left(\tilde{\mathcal S}\cup \pi^{-1}(\tilde{\mathcal S})\right)$ a compact set. Here, \eqref{eqB.3.11} rewrites $0<\twe(x)\leq C(K)$ for all $x\in K\cap\rnm$. The symmetry of $\twe$ yields
$$0<\twe(x)\leq C(K)\hbox{ for all }x\in K\hbox{ and all }\epspositif.$$
We are then in position to use elliptic theory to get the convergence of $\twe$ in $C^1_{loc}(\rn\setminus \left(\tilde{\mathcal S}\cup \pi^{-1}(\tilde{\mathcal S})\right))$, and the proof goes as in Step \ref{prop:claimB}.3.4. This ends Step \ref{prop:claimB}.3.5.

\medskip Proposition \ref{prop:claimB} follows from Step \ref{prop:claimB}.1 to Step \ref{prop:claimB}.3. Indeed, Step \ref{prop:claimB}.2 tells us that ${\mathcal P}_1$ holds. Then we construct our sequences of points and weights thanks to Step \ref{prop:claimB}.3. Thanks to Step \ref{prop:claimB}.1, we know that the process has to stop. When it stops, (i)-(iv) of the claim holds for these points and weights. This proves Proposition \ref{prop:claimB}.\hfill$\Box$

\section{A first upper-estimate}\label{sec:first:est}

We consider in the following the concentration points $\left(\xai,\mai\right)_{1\le i\le N}$ given by Proposition \ref{prop:claimB}. We recall that they are ordered in such a way that
$$\mu_{1,\alpha}\geq \dots\geq \mu_{N,\alpha}$$
and we shall denote in the following $\ma=\mu_{1,\alpha}$. Let us fix some notations and make some remarks before going on. We let
\begin{equation}\label{eq2.1}
{\mathcal S} := \left\{\lim_{\epstozero}\xai\, ,\,\, 1\le i\le N\right\}
\end{equation}
where the limits do exist, up to a subsequence. For $\delta>0$ small enough, we let
\begin{equation}\label{eq2.2}
\etae\left(\delta\right) = \sup_{\bar{\Om}\cap \left\{d\left(x,{\mathcal S}\right)\geq 2\delta\right\}} \ua\hskip.1cm.
\end{equation}
Thanks to Proposition \ref{prop:claimB} (iv) and to standard elliptic theory (see Theorem 9.11 of \cite{gt}), we get that
\begin{equation}\label{eq2.3}
\etae\left(\delta\right)\to 0\hbox{ as }\epstozero\hbox{ for all }\delta>0\hskip.1cm.
\end{equation}
Note that, as a consequence of (iii) of Proposition \ref{prop:claimB}, there exists $C>0$ such that
$$C\ma^{\frac{n-2}{2}} \le \int_{\Om} \ua^{2^\star-1}\, dx$$
while
$$\int_{\Om} \ua^{2^\star-1}\, dx = \ea \left\vert \Om \right\vert \bue$$
thanks to equation (\ref{eq1}). This proves that
\begin{equation}\label{eq2.4bis}
\ma^{\frac{n-2}{2}}=O\left(\ea \bue\right)=o\left(\bue\right)
\end{equation}
when $\alpha\to +\infty$. At last, we fix $R_0>0$ such that
\begin{equation}\label{eq2.5}
\hbox{for any }1\le i\le N\, , \,\, \vert x\vert \le \frac{R_0}{2}\hbox{ for all }x\in {\mathcal S}_i
\end{equation}
where ${\mathcal S}_i$ is as in Proposition \ref{prop:claimB}, (iii). And we let
\begin{equation}\label{eq2.6}
\ra\left(x\right):=\min_{i=1,\dots, N} \left\vert \xai-x\right\vert\hskip.1cm.
\end{equation}

\medskip\noindent  We prove in this section the following~:

\begin{prop}\label{prop:claimC} There exists $C_1>0$ and some sequence $\betae \to 0$ as $\epstozero$ such that
\begin{equation}\label{est:claimC}
\left\vert \ua(x)-\bue\right\vert \le C_1\ma^{\frac{n-2}{2}} \raa\left(x\right)^{2-n} +\betae \bue
\end{equation}
for all $x\in \bar{\Om}$ and all $\epspositif$.
\end{prop}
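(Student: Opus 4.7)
The plan is to use the Neumann Green's function representation combined with the bubble asymptotics of Proposition \ref{prop:claimB}(iv). Let $G_N$ denote the Green's function for $\Delta$ with Neumann boundary data on $\Om$, normalised so that $\int_\Om G_N(x,y)\,dy=0$; then
$$\ua(x) - \bue = n(n-2)\int_\Om G_N(x,y)\,\ua(y)^{\crit-1}\,dy - \ea\int_\Om G_N(x,y)\,\ua(y)\,dy,$$
and $|G_N(x,y)| \leq C|x-y|^{2-n}$. The bound \eqref{est:claimC} will be obtained by estimating each of these two integrals separately.

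For the nonlinear term, Proposition \ref{prop:claimB}(iv) gives $\ua \leq \sum_{i=1}^N \Uai + \sigma_\alpha\,\raa^{-(n-2)/2}$ with $\sigma_\alpha \to 0$, so $\ua^{\crit-1} \leq C\sum_i \Uai^{\crit-1} + o(1)\,\raa^{-(n+2)/2}$. For each $i$, the identity $\Delta \Uai = n(n-2)\Uai^{\crit-1}$ on $\rn$ together with the Euclidean Green's representation yields
$$\int_\Om |x-y|^{2-n}\Uai(y)^{\crit-1}\,dy \;\leq\; C\,\Uai(x) \;\leq\; C\,\ma^{\frac{n-2}{2}}\,\raa(x)^{2-n},$$
the last inequality using $\mai \leq \ma$ and $(|x-\xai|^2+\mai^2)^{1/2}\geq \raa(x)$. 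The error contribution $o(1)\int |x-y|^{2-n}\raa(y)^{-(n+2)/2}\,dy$ is handled via a dyadic annulus decomposition around each $\xai$, using Proposition \ref{prop:claimB}(ii)--(iii) to separate bubble scales; this produces a further $o(1)\,\ma^{\frac{n-2}{2}}\raa(x)^{2-n}$. In total the nonlinear integral is bounded by $C_1\,\ma^{\frac{n-2}{2}}\raa(x)^{2-n}$ for a universal $C_1$.

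For the linear term I would use $\int_\Om G_N(x,y)\,dy=0$ to rewrite it as $\ea\int_\Om G_N(x,y)(\ua(y)-\bue)\,dy$, then bootstrap: inserting the bound on $|\ua-\bue|$ already obtained and invoking \eqref{eq2.4bis} (which gives $\ma^{\frac{n-2}{2}} = O(\ea\bue) = o(\bue)$), this term is $o(\bue)$, i.e.\ of the form $\betae\bue$ with $\betae \to 0$. When $\xai \in \partial\Om$, the boundary case is reduced to the interior one by working with the extension provided by Lemma \ref{lem:ext}.

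The main difficulty is the error term $\sigma_\alpha\,\raa^{-(n-2)/2}$ coming from Proposition \ref{prop:claimB}(iv): raised to the power $\crit-1$ and convolved with $|x-y|^{2-n}$ the resulting integral is only marginally convergent near each $\xai$, so one must carefully exploit the bubble scale separation of Proposition \ref{prop:claimB}(ii)--(iii) and the fact that $\sigma_\alpha \to 0$ in order to absorb the error into the envelope $C_1\,\ma^{\frac{n-2}{2}}\raa^{2-n} + \betae\,\bue$.
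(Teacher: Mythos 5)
Your outline reproduces the paper's second stage (Green's representation plus Giraud-type convolution estimates, with \eqref{eq2.4bis} to absorb the linear term), but it has a genuine gap exactly where you flag the ``main difficulty'': the remainder coming from Proposition \ref{prop:claimB}~(iv) cannot be absorbed the way you claim, and no dyadic decomposition around the $\xai$'s will fix it, because the problematic contribution is not concentrated at the bubbles. Away from the blow-up set, (iv) only gives $\ua=o(1)$ with no rate, so the error in $\ua^{\crit-1}$ is $\sigma_\alpha^{\crit-1}\raa(y)^{-\frac{n+2}{2}}$ with $\raa(y)\asymp 1$ on a region of full measure; its convolution with $|x-y|^{2-n}$ is therefore $O(1)\cdot o(1)$, i.e.\ a sequence of constants tending to $0$. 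But the estimate \eqref{est:claimC} demands that at points with $\raa(x)\geq\delta$ the error be $O(\ma^{\frac{n-2}{2}})+\betae\bue=o(\bue)$, and since $\bue\to 0$ a generic $o(1)$ is not $o(\bue)$. In other words, \eqref{est:claimC} asserts that away from the concentration points $\ua=\bue\bigl(1+o(1)\bigr)$ up to the bubble tails, and the Struwe-type remainder in (iv) is far too weak to see this; Proposition \ref{prop:claimB}~(ii)--(iii) (scale separation between bubbles) is irrelevant to this far-field region.

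What the paper does to bridge this is an extra first stage that your proposal is missing: Step \ref{prop:claimC}.1 establishes, by a comparison-function/maximum-principle argument with the weights $\Phi_\gamma(\xai,\cdot)=|\cdot-\xai|^{(2-n)(1-\gamma)}$ (and the reflected points $\txie$ near $\partial\Omega$ via Lemma \ref{lem:ext}), the interpolated bound
$\ua\le C_\gamma\bigl(\ma^{\frac{n-2}{2}(1-2\gamma)}\ra^{(2-n)(1-\gamma)}+\etae(\delta_\gamma)\ra^{(2-n)\gamma}\bigr)$
outside the balls $B_{R_\gamma\mai}(\xai)$, where $\etae(\delta)=\sup_{\{d(\cdot,{\mathcal S})\geq 2\delta\}}\ua\to 0$. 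The crucial gain is that in the Green's representation the far-field error then enters as $O\bigl(\etae(\delta_\gamma)^{\crit-1}\bigr)=o\bigl(\etae(\delta_\gamma)\bigr)$, which is superlinearly small in the quantity $\etae$ itself; evaluating the resulting inequality at points realizing $\etae(\delta_\gamma)$ and using $\ma^{\frac{n-2}{2}}=o(\bue)$ from \eqref{eq2.4bis} self-improves it to $\etae(\delta_\gamma)=O(\bue)$, after which the error is legitimately of the form $\betae\bue$. Without this intermediate estimate (or some substitute providing both spatial decay away from the bubbles and smallness measured against $\ua$ itself rather than in absolute terms), your convolution argument stalls at an $o(1)$ error and cannot reach \eqref{est:claimC}.
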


\medskip\noindent{\it Proof of Proposition \ref{prop:claimC}:} We divide the proof in two main steps. We start by proving the following~:

\medskip\noindent{\bf Step \ref{prop:claimC}.1:} We claim that for any $0<\gamma<\frac{1}{2}$, there exists $R_\gamma>0$, $\delta_\gamma>0$ and $C_\gamma>0$ such that
$$\ua(x)\le C_\gamma \left(\ma^{\frac{n-2}{2}\left(1-2\gamma\right)} \ra(x)^{\left(2-n\right)\left(1-\gamma\right)}+\etae\left(\delta_\gamma\right) \ra(x)^{\left(2-n\right)\gamma}\right)$$
for all $\epspositif$ and all $x\in \bar{\Om}\setminus \bigcup_{i=1}^N B_{R_\gamma\mai}(\xai)$.

\medskip\noindent {\it Proof of Step \ref{prop:claimC}.1.} We divide the proof in two parts, depending whether we work in the interior of $\Omega$ or near its boundary. Let $0<\gamma<\frac{1}{2}$. We define
$$\Phi_{\gamma}(x,y):=|x-y|^{(2-n)(1-\gamma)}\hbox{ for all }x,y\in\rnm,\; x\neq y.$$

\medskip\noindent{\bf Step \ref{prop:claimC}.1.1:} We fix $x_0\in \Omega$ and we let $\delta_0>0$ such that $B_{\delta_0}(x_0)\subset\subset \Omega$. We claim that there exists $R_\gamma>0$ such that
\begin{equation}\label{ineq:C1:1}
\ua(x)\le C_\gamma \left(\ma^{\frac{n-2}{2}\left(1-2\gamma\right)} \ra(x)^{\left(2-n\right)\left(1-\gamma\right)}+\etae\left(\delta_\gamma\right) \ra(x)^{\left(2-n\right)\gamma}\right)
\end{equation}
for all $\epspositif$ and all $x\in \overline{B_{\delta_0}(x_0)}\setminus \bigcup_{i=1}^N B_{R_\gamma\mai}(\xai)$.

\medskip\noindent We prove the claim. We let
\begin{equation}\label{eqC.1.1}
\vphiepsgamma(x)= \ma^{\frac{n-2}{2}\left(1-2\gamma\right)}\sum_{i=1}^N \Phi_\gamma\left(\xai,x\right) + \etae\left(\delta\right) \sum_{i=1}^N \Phi_{1-\gamma}\left(\xai,x\right)
\end{equation}
where $\Phi_\gamma$ and $\Phi_{1-\gamma}$ are as above and $\delta>0$ will be chosen later on. We let $\xa\in \overline{B_{\delta_0}(x_0)}\setminus \bigcup_{i=1}^N B_{R\mai}(\xai)$ be such that
\begin{equation}\label{def:sup}
\sup_{\overline{B_{\delta_0}(x_0)}\setminus \bigcup_{i=1}^N B_{R\mai}(\xai)} \frac{\ua}{\vphiepsgamma} = \frac{\ua\left(\xa\right)}{\vphiepsgamma\left(\xa\right)}\hskip.1cm.
\end{equation}
In particular, $\xa\in \Omega$.

\medskip\noindent We claim that, up to choose $\delta>0$ small enough and $R>0$ large enough, we have that
\begin{equation}
\xa\in \partial \left(\bigcup_{i=1}^N B_{R\mai}(\xai)\right)\hbox{ or }\ra\left(\xa\right)\geq \delta \label{eq:sup}
\end{equation}
for $\epspositif$ small. We prove \eqref{eq:sup} by contradiction. We assume on the contrary that
\begin{equation}\label{eqC.1.3}
\xa\not\in  \partial \left(\bigcup_{i=1}^NB_{R\mai}(\xai)\right)\hbox{ and }\ra\left(\xa\right)< \delta
\end{equation}
for all $\epspositif$. Since $\xa\in\Omega$, we write then thanks to (\ref{eqC.1.3}) and the second order characterization of the supremum \eqref{def:sup} that
$$\frac{\Delta \ua\left(\xa\right)}{\ua\left(\xa\right)} \geq \frac{\Delta \vphiepsgamma\left(\xa\right)}{\vphiepsgamma\left(\xa\right)}\hskip.1cm.$$
Thanks to (\ref{eq1}), we have that
$$\frac{\Delta \ua\left(\xa\right)}{\ua\left(\xa\right)} \le n(n-2)\ua\left(\xa\right)^{2^\star-2}$$
which leads to
\begin{equation}\label{eqC.1.4}
\frac{\Delta \vphiepsgamma\left(\xa\right)}{\vphiepsgamma\left(\xa\right)}\le n(n-2)\ua\left(\xa\right)^{2^\star-2}\hskip.1cm.
\end{equation}
Direct computations yield the existence of $D_\gamma>0$ such that
\begin{equation}\label{estAB}
\begin{array}{l}
\hbox{ (A) }D_\gamma^{-1}\le \left\vert x-y\right\vert^{\left(n-2\right)\left(1-\gamma\right)}\Phi_\gamma\left(x,y\right) \le D_\gamma \hbox{ for all }x,\,y\in \rn,\; x\neq y.\\
\\
\hbox{ (B) }\frac{\Delta_y \Phi_\gamma\left(x,y\right)}{\Phi_\gamma\left(x,y\right)}\geq \frac{1}{D_\gamma} \left\vert x-y\right\vert^{-2} - D_\gamma\hbox{ for all }x,\, y\in \Om,\; x\neq y.
\end{array}\end{equation}
Let us write now thanks \eqref{estAB} that
\begincal
\Delta\vphiepsgamma\left(\xa\right) &\geq& D_\gamma^{-1}\ma^{\frac{n-2}{2}\left(1-2\gamma\right)} \sum_{i=1}^N \left\vert \xai-\xa\right\vert^{-2} \Phi_\gamma\left(\xai,\xa\right)\\
&&+ D_{1-\gamma}^{-1}\etae\left(\delta\right) \sum_{i=1}^N \left\vert \xai-\xa\right\vert^{-2} \Phi_{1-\gamma}\left(\xai,\xa\right)\\
&&- D_\gamma \ma^{\frac{n-2}{2}\left(1-2\gamma\right)} \sum_{i=1}^N \Phi_\gamma\left(\xai,\xa\right) \\
&&- D_{1-\gamma} \etae\left(\delta\right)\sum_{i=1}^N \Phi_{1-\gamma}\left(\xai,\xa\right)\\
&&\geq \left(D_\gamma^{-2}\ra\left(\xa\right)^{-2}-ND_\gamma^2\right) \ma^{\frac{n-2}{2}\left(1-2\gamma\right)} \ra\left(\xa\right)^{-\left(n-2\right)\left(1-\gamma\right)}\\
&& + \left(D_{1-\gamma}^{-2}\ra\left(\xa\right)^{-2} -ND_{1-\gamma}^2\right) \etae\left(\delta\right) \ra\left(\xa\right)^{-\left(n-2\right)\gamma}\\
\fincal
We choose $\delta>0$ such that
$$D_\gamma^{-2}\delta^{-2} \geq 2ND_\gamma^2\hbox{ and }D_{1-\gamma}^{-2}\delta^{-2} \geq 2ND_{1-\gamma}^2$$
so that, using once again \eqref{estAB}, the above becomes
\begincal
\Delta\vphiepsgamma\left(\xa\right) &\geq& \frac{1}{2}D_\gamma^{-2}\ra\left(\xa\right)^{-2} \ma^{\frac{n-2}{2}\left(1-2\gamma\right)} \ra\left(\xa\right)^{-\left(n-2\right)\left(1-\gamma\right)}\\
&&+\frac{1}{2}D_{1-\gamma}^{-2}\ra\left(\xa\right)^{-2}\etae\left(\delta\right) \ra\left(\xa\right)^{-\left(n-2\right)\gamma}\\
&\geq & \frac{1}{2N}D_\gamma^{-3}\ra\left(\xa\right)^{-2} \ma^{\frac{n-2}{2}\left(1-2\gamma\right)} \sum_{i=1}^N \Phi_\gamma\left(\xai,\xa\right)\\
&&+\frac{1}{2N}D_{1-\gamma}^{-3}\ra\left(\xa\right)^{-2}\etae\left(\delta\right) \sum_{i=1}^N \Phi_{1-\gamma}\left(\xai,\xa\right)\\
&\geq & \frac{1}{2N}\left( \max\left\{D_\gamma,D_{1-\gamma}\right\}\right)^{-3}\ra\left(\xa\right)^{-2} \vphiepsgamma\left(\xa\right)\hskip.1cm.
\fincal
Coming back to (\ref{eqC.1.4}), we thus get that
$$\ra\left(\xa\right)^2 \ua\left(\xa\right)^{2^\star-2} \geq  \frac{1}{2n(n-2)N}\left( \max\left\{D_\gamma,D_{1-\gamma}\right\}\right)^{-3}\hskip.1cm.$$
Using point (iv) of Proposition \ref{prop:claimB}, it is easily check that one can choose $R>0$ large enough such that this is absurd. And with these choices of $\delta$ and $R$, \eqref{eq:sup} is proved.

\smallskip\noindent Assume that $\ra\left(\xa\right)\geq \delta$. Then we have that $\ua\left(\xa\right)\le \etae\left(\delta\right)$ so that, thanks to \eqref{estAB}, we get in this case that
$\ua\left(\xa\right) = O\left(\vphiepsgamma\left(\xa\right)\right)$.

\smallskip\noindent Assume that $\xa\in \partial B_{R\mai}(\xai)$ for some $1\le i\le N$. Then, up to increase a little bit $R$ so that $R\geq 4R_0$, $R_0$ as in (\ref{eq2.5}), we get thanks to (iii) of Proposition \ref{prop:claimB} that
$$\ua\left(\xa\right)=O\left(\mai^{1-\frac{n}{2}}\right)$$
while, using \eqref{estAB},
$$\vphiepsgamma \left(\xa\right) \geq D_\gamma^{-1} \ma^{\frac{n-2}{2}\left(1-2\gamma\right)} \left(R\mai\right)^{\left(2-n\right)\left(1-\gamma\right)}$$
so that, once again,
$$\ua\left(\xa\right) = O\left(\vphiepsgamma\left(\xa\right)\right)$$
since $\mai\le \ma$.

Thus we have proved so far that there exists $C>0$ such that
$$\ua\left(x\right)\le C \vphiepsgamma(x)\hbox{ in }\overline{B_{\delta_0}(x_0)}\setminus \bigcup_{i=1}^N B_{R\mai}(\xai)\hskip.1cm.$$
It remains to use point (A) of \eqref{estAB} above to prove \eqref{ineq:C1:1} and therefore Step \ref{prop:claimC}.1.1.

\medskip\noindent{\bf Step \ref{prop:claimC}.1.2:} We fix $x_0\in \partial\Omega$. Then there exists $\delta_0>0$ such that
\begin{equation}\label{ineq:C1:2}
\ua(x)\le C_\gamma \left(\ma^{\frac{n-2}{2}\left(1-2\gamma\right)} \ra(x)^{\left(2-n\right)\left(1-\gamma\right)}+\etae\left(\delta_\gamma\right) \ra(x)^{\left(2-n\right)\gamma}\right)
\end{equation}
for all $\epspositif$ and all $x\in (B_{\delta_0}(x_0)\cap\Omega)\setminus \bigcup_{i=1}^N B_{R_\gamma\mai}(\xai)$.

\medskip\noindent We prove the claim. Indeed, via the extension of Lemma \ref{lem:ext}, the proof goes roughly as in Step \ref{prop:claimC}.1. We only enlight here the main differences. As usual, since $x_0\in\partial\Omega$, we consider $\delta_{x_0}$, $U_{x_0}$ and a chart $\varphi$ as in Lemma \ref{lem:ext}. We let $\tue$ be the $C^2-$extension of $\ua$ on $U_{x_0}$: it satisfies that
\begin{equation}\label{eq:tue:C1}
\Delta_{\tg}\tue+\ea\tue=n(n-2)\tue^{\crit-1}\hbox{ in }U_{x_0}.
\end{equation}
We let $J:=\{i\in \{1,...,N\}/\, \lim_{\epstozero}\xai=x_0\}$ and we let $\delta_0>0$ such that
$$B_{\delta_0}(x_0)\subset U_{x_0}\hbox{ and }|\xai-x_0|\geq 2\delta_0\hbox{ for all }i\in \{1,...,N\}\setminus J.$$
For all $i\in J$, we define
$$\txie:=\pip^{-1}(\xai)=\varphi\circ\pi^{-1}\circ\varphi^{-1}(\xai),$$
where $\pi(x_1,x')=(-x_1,x')$ is the usual symmetry. We define
\begin{eqnarray}\label{eqC.1.2}
\vphiepsgamma(x)&:=& \ma^{\frac{n-2}{2}\left(1-2\gamma\right)}\sum_{i\in J} (\Phi_\gamma\left(\xai,x\right)+\Phi_\gamma\left(\txie,x\right)) \nonumber\\
&&+ \etae\left(\delta\right) \sum_{i\in J} (\Phi_{1-\gamma}\left(\xai,x\right)+\Phi_{1-\gamma}\left(\txie,x\right)) \nonumber\\
&&+\ma^{\frac{n-2}{2}\left(1-2\gamma\right)}\sum_{i\in J^c} \Phi_\gamma\left(\xai,x\right) + \etae\left(\delta\right) \sum_{i\in J^c} \Phi_{1-\gamma}\left(\xai,x\right) \nonumber
\end{eqnarray}
where $\Phi_\gamma$ and $\Phi_{1-\gamma}$ are as above and $\delta\in (0,\delta_0)$ will be chosen later on. For the sake of clearness, we define
$$\waR:=\overline{B_{\delta_0}(x_0)\cap\Omega}\setminus \left(\bigcup_{i=1}^N B_{R\mai}(\xai)\cup \bigcup_{i\in J}B_{R\mai}(\txie)\right).$$
We let $\xa\in \waR$ be such that
$$\sup_{x\in \waR} \frac{\tue}{\vphiepsgamma} = \frac{\tue\left(\xa\right)}{\vphiepsgamma\left(\xa\right)}\hskip.1cm.$$
We claim that, up to choose $\delta>0$ small enough and $R>0$ large enough,
\begin{equation}\label{eqC.1.2:bis}
\xa\in \partial \left(\bigcup_{i=1}^N B_{R\mai}(\xai)\cup \bigcup_{i\in J}B_{R\mai}(\txie)\right)\hbox{ or }\ra\left(\xa\right)\geq \delta
\end{equation}
for $\epspositif$ small. We prove it by contradiction. We assume on the contrary that
\begin{equation}\label{eqC.1.3:bis}
\xa\not\in  \partial \left(\bigcup_{i=1}^NB_{R\mai}(\xai)\cup \bigcup_{i\in J}B_{R\mai}(\txie)\right)\hbox{ and }\ra\left(\xa\right)< \delta
\end{equation}
for all $\epspositif$. First, it follows from the choice of $\delta_0$ and of $\etae(\delta)$ that $\xa\in B_{\delta_0}(\xa)$. Therefore, if $\xa\not\in\partial\Omega$, we write then thanks to (\ref{eqC.1.3:bis}) that
$$\frac{\Delta_g \tue\left(\xa\right)}{\tue\left(\xa\right)} \geq \frac{\Delta_g \vphiepsgamma\left(\xa\right)}{\vphiepsgamma\left(\xa\right)}\hskip.1cm.$$
Thanks to (\ref{eq:tue:C1}), we have that
$$\frac{\Delta_g \tue\left(\xa\right)}{\tue\left(\xa\right)} \le \tue\left(\xa\right)^{2^\star-2}$$
which leads to
\begin{equation*}
\frac{\Delta_g \vphiepsgamma\left(\xa\right)}{\vphiepsgamma\left(\xa\right)}\le \tue\left(\xa\right)^{2^\star-2}\hskip.1cm.
\end{equation*}
Since the coefficients of $\Delta_g$ are in $L^\infty$ with a continuous principal part (the metric $g$ is Lipschitz continuous), direct computations yield the existence of $D_\gamma>0$ such that

\smallskip (A') $D_\gamma^{-1}\le \left\vert x-y\right\vert^{\left(n-2\right)\left(1-\gamma\right)}\Phi_\gamma\left(x,y\right) \le D_\gamma $ for all $x,\,y\in \rn$, $x\neq y$.

\smallskip (B') $\frac{(\Delta_g)_y \Phi_\gamma\left(x,y\right)}{\Phi_\gamma\left(x,y\right)}\geq \frac{1}{D_\gamma} \left\vert x-y\right\vert^{-2} - D_\gamma$ for all $x,\, y\in \Om$, $x\neq y$.

\medskip\noindent And then the proof goes exactly as in Step \ref{prop:claimC}.1.1, using the convergence of the rescalings of $\tue$ proved in Proposition \ref{prop:claimB}. In case $\xa\in\partial\Omega$, we approximate it by a sequence of points in $\Omega$ and also conclude. This proves that there exists $C>0$ such that
\begin{eqnarray*}
\tue(x)&\leq &C \ma^{\frac{n-2}{2}\left(1-2\gamma\right)}\sum_{i\in J} (|x-\xai|^{(2-n)(1-\gamma)}+|x-\txie|^{(2-n)(1-\gamma)})\\
&& + \etae\left(\delta\right) \sum_{i\in J} (|x-\xai|^{(2-n)\gamma}+|x-\txie|^{(2-n)\gamma}) \\
&&+\ma^{\frac{n-2}{2}\left(1-2\gamma\right)}\sum_{i\in J^c} |x-\xai|^{(2-n)(1-\gamma)} + \etae\left(\delta\right) \sum_{i\in J^c} |x-\xai|^{(2-n)\gamma}
\end{eqnarray*}
for all $x\in \waR$. As easily checked, there exists $C>0$ such that $|x-\txie|\geq C|x-\xai|$ for all $x\in B_{\delta_0}(x_0)\cap\Omega$. Therefore, we get that there exists $C>0$ such that \eqref{ineq:C1:2} holds. This ends the proof of Step \ref{prop:claimC}.1.2.

\medskip\noindent Since $\overline{\Omega}$ is compact, Step \ref{prop:claimC}.1 is a consequence of Steps \ref{prop:claimC}.1.1 and \ref{prop:claimC}.1.2. \hfill $\Box$

\medskip\noindent{\bf Step \ref{prop:claimC}.2:} We claim that  there exists $C>0$ such that
$$\ua(x)\le C\left(\ma^{\frac{n-2}{2}} \raa(x)^{2-n}+\bue\right)$$
for all $x\in \bar{\Om}$ and all $\epspositif$.

\medskip\noindent{\it Proof of Step \ref{prop:claimC}.2:} We fix $0<\gamma<\frac{1}{n+2}$ in the following. We let $\left(\xa\right)$ be a sequence of points in $\bar{\Om}$ and we claim that
\begin{equation}\label{eqC.2.1}
\ua\left(\xa\right)\le \bue + O\left( \ma^{\frac{n-2}{2}} \raa\left(x\right)^{2-n} \right)+ o\bigl(\etae\left(\delta_\gamma\right)\bigr)\hskip.1cm.
\end{equation}
Note that this clearly implies the estimate of Step \ref{prop:claimC}.2 if we are then able to prove that $\etae\left(\delta_\gamma\right)=O\left(\bue\right)$. Let us prove this last fact before proving (\ref{eqC.2.1}). A direct consequence of (\ref{eqC.2.1}) and (\ref{eq2.4bis}) is that
$$\etae\left(\delta_\gamma\right) = O\left(\ma^{\frac{n-2}{2}}\right) +O\left(\bue\right)=O\left(\bue\right)\hskip.1cm,$$
thus proving the above assertion. We are left with the proof of (\ref{eqC.2.1}).

\smallskip\noindent{\bf Step \ref{prop:claimC}.2.1:} Assume first that $\raa\left(\xa\right) = O\left(\ma\right)$. We use then (iv) of Proposition \ref{prop:claimB} to write that
$$\raa\left(\xa\right)^{\frac{n-2}{2}} \ua\left(\xa\right) = \sum_{i=1}^N \raa\left(\xa\right)^{\frac{n-2}{2}}\Uie\left(\xa\right) + o(1)\hskip.1cm.$$
We can thus write that
\begincal
\ma^{1-\frac{n}{2}} \raa\left(\xa\right)^{n-2} \ua\left(\xa\right)
&= &O\left(\sum_{i=1}^N \ma^{1-\frac{n}{2}} \mai^{\frac{n}{2}-1} \raa\left(\xa\right)^{n-2} \left(\mai^2 + \left\vert \xai - \xa\right\vert^2\right)^{1-\frac{n}{2}}\right)\\
&&+o\left(\left(\frac{\raa\left(\xa\right)}{\ma}\right)^{\frac{n-2}{2}}\right)\\
&=&O\left(\sum_{i=1}^N \ma^{1-\frac{n}{2}} \mai^{\frac{n}{2}-1} \right)+ o(1)= O(1)
\fincal
since $\mai^2 + \left\vert \xai-\xa\right\vert^2 \le \raa\left(\xa\right)^2$ and $\mai\le \ma$ for all $1\le i\le N$. Thus the estimate (\ref{eqC.2.1}) clearly holds in this situation. This ends Step \ref{prop:claimC}.2.1.

\smallskip\noindent{\bf Step \ref{prop:claimC}.2.2:} Assume now that
\begin{equation}\label{eqC.2.2}
\frac{\raa\left(\xa\right)}{\ma}\to +\infty \hbox{ as }\epstozero\hskip.1cm.
\end{equation}
We use the Green representation formula, see Appendix A, and equation (\ref{eq1}) to write that
$$\ua\left(\xa\right)-\bue = \int_{\Om} G\left(\xa,x\right) \left(n(n-2)\ua(x)^{2^\star-1}-\ea \ua(x)\right)\, dx$$
where $G$ is the Green's function for the Neumann problem. Since adding a constant to $G$ does not change the representation above and using the pointwise estimates of Proposition \ref{app:prop:upp:bnd}, we get that
\begin{eqnarray*}
\ua\left(\xa\right)&\le &\bue + \int_{\Om} (G\left(\xa,x\right)+m(\Omega)) \left(n(n-2)\ua(x)^{2^\star-1}-\ea \ua(x)\right)\, dx\\
&\leq& \bue + n(n-2)\int_{\Om} (G\left(\xa,x\right)+m(\Omega))  \ua(x)^{2^\star-1}\, dx\\
&\leq &\bue + C\int_{\Om} \left\vert \xa-x\right\vert^{2-n} \ua(x)^{2^\star-1}\, dx\hskip.1cm.
\end{eqnarray*}
Using now Step \ref{prop:claimC}.1, this leads to
\begincal
\ua\left(\xa\right)&\le& \bue + O\left(\etae\left(\delta_\gamma\right)^{2^\star-1}\int_{\Om} \left\vert \xa-x\right\vert^{2-n} \ra\left(x\right)^{-(n+2)\gamma}\, dx\right)\\
&& + O\left(\ma^{\frac{n+2}{2}\left(1-2\gamma\right)} \int_{\left\{\ra(x)\geq R\ma\right\}} \left\vert \xa-x\right\vert^{2-n} \ra\left(x\right)^{-(n+2)\left(1-\gamma\right)}\, dx\right)\\
&& + O\left(\int_{\left\{\ra(x)\le R\ma\right\}} \left\vert \xa-x\right\vert^{2-n} \ua(x)^{2^\star-1}\, dx\right)\hskip.1cm,
\fincal
for some $R>>1$. The first term can easily be estimated since $2-(n+2)\gamma>0$. We get that
$$\etae\left(\delta_\gamma\right)^{2^\star-1}\int_{\Om} \left\vert \xa-x\right\vert^{2-n} \ra\left(x\right)^{-(n+2)\gamma}\, dx = O\left(\etae\left(\delta_\gamma\right)^{2^\star-1}\right)=o\bigl(\etae\left(\delta_\gamma\right)\bigr)$$
thanks to (\ref{eq2.3}). We estimate the second term:
\begincal
&&\int_{\left\{\ra(x)\geq R\ma\right\}} \left\vert \xa-x\right\vert^{2-n} \ra\left(x\right)^{-(n+2)\left(1-\gamma\right)}\, dx \\
&&\quad \le
\sum_{i=1}^N \int_{\left\{\left\vert \xai-x\right\vert \geq R\ma\right\}} \left\vert \xa-x\right\vert^{2-n} \left\vert x-\xai\right\vert^{-(n+2)\left(1-\gamma\right)}\, dx\\
&&\quad = O\left( \sum_{i=1}^N \ma^{n-(n+2)\left(1-\gamma\right)} \left\vert \xa-\xai\right\vert^{2-n}\right)\\
&&\quad = O\left(\ma^{n-(n+2)\left(1-\gamma\right)}  \raa\left(\xa\right)^{2-n}\right)
\fincal
since $n-(n+2)\left(1-\gamma\right)<0$ and $\left\vert \xai-\xa\right\vert \geq \frac{1}{2}\raa\left(\xa\right)$ for $\alpha$ large for all $1\le i\le N$ thanks to (\ref{eqC.2.2}). The last term is estimated thanks to (\ref{eq2}), to (\ref{eqC.2.2}) and to H\"older's inequalities by
\begincal
&&\int_{\left\{\ra(x)\le \ma\right\}} \left\vert \xa-x\right\vert^{2-n} \ua(x)^{2^\star-1}\, dx \\
&&\quad = O\left(\ra\left(\xa\right)^{2-n} \int_{\left\{\ra(x)\le \ma\right\}} \ua(x)^{2^\star-1}\, dx\right)\\
&&\quad = O\left(\ra\left(\xa\right)^{2-n}\ma^{\frac{n-2}{2}} \left(\int_{\left\{\ra(x)\le \ma\right\}} \ua(x)^{2^\star}\, dx\right)^{\frac{2^\star-1}{2^\star}}\right)\\
&&\quad =  O\left(\ma^{\frac{n-2}{2}} \ra\left(\xa\right)^{2-n}\right)\hskip.1cm.
\fincal
Combining all these estimates gives (\ref{eqC.2.1}) in this second case. This ends Step \ref{prop:claimC}.2.2. As already said, this ends the proof of Step \ref{prop:claimC}.2. \hfill $\Box$

\medskip The proof of Proposition \ref{prop:claimC} is now straightforward, using once again the Green representation formula. We write that, for any sequence $\left(\xa\right)$ of points in $\bar{\Om}$,
$$\ua\left(\xa\right)-\bue = n(n-2)\int_{\Om} G\left(\xa,x\right)\ua\left(x\right)^{2^\star-1}\, dx - \ea\int_\Om G\left(\xa,x\right)\ua\left(x\right)\, dx\hskip.1cm.$$
Let us write thanks to Appendix A, Step \ref{prop:claimC}.2 and Giraud's lemma that
\begincal
\int_\Om G\left(\xa,x\right)\ua\left(x\right)\, dx &=&O \left( \ma^{\frac{n-2}{2}} \int_{\Om} \left\vert \xa-x\right\vert^{2-n} \raa(x)^{2-n}\, dx \right)\\
&&+ O\left( \bue \int_{\Om} \left\vert \xa-x\right\vert^{2-n} \, dx\right)\\
&=&O\left(\ma^{\frac{n-2}{2}}\raa\left(\xa\right)^{2-n}\right) + O\left(\bue\right)
\fincal
(here one needs to spearate the case $n<4$, $n=4$ and $n>4$) and that
$$\int_{\Om} G\left(\xa,x\right)\ua\left(x\right)^{2^\star-1}\, dx = o\left(\bue\right) + O\left(\ma^{\frac{n-2}{2}}\raa\left(\xa\right)^{2-n}\right)\hskip.1cm.$$
Note that this last estimate has been proved in Step \ref{prop:claimC}.2.2. Combining these equations, we get the existence of some $C_1>0$ and some sequence $\betae$ as $\epstozero$ such that \eqref{est:claimC} holds. This proves Proposition \ref{prop:claimC}.\hfill$\Box$

\section{A sharp upper-estimate}\label{sec:sharp:upper}

Let us fix some notations. We let in the following
\begin{equation}\label{eqD.1}
\rie(x):= \min_{i\le j\le N} \left\vert \xai-x\right\vert\hbox{ and }\Rie(x)^2:=\min_{i\le j\le N} \left(\left\vert \xai-x\right\vert^2+\mai^2\right)\hskip.1cm.
\end{equation}
Note that $R_{1,\alpha}(x)=R_\alpha(x)$ and $r_{1,\alpha}(x)=r_\alpha(x)$.

\begin{defi} For $1\le i\le N$, we say that $\left({\mathcal I}_i\right)$ holds if there exists $C_i>0$ and a sequence $\betae$ as $\epstozero$ such that
\begin{equation}\label{eqD.2}
\left\vert \ua(x)-\bue -\sum_{j=1}^{i-1} \Vaj(x) \right\vert \le \betae\left(\bue +\sum_{j=1}^{i-1} \Uje(x)\right) + C_i\mai^{\frac{n-2}{2}} \Rie\left(x\right)^{2-n}
\end{equation}
for all $x\in \bar{\Om}$ and all $\epspositif$. Here, $\Vaj$ is as in Appendix B.
\end{defi}

\medskip This section is devoted to the proof of the following~:

\begin{prop}\label{prop:claimD} $\left({\mathcal I}_N\right)$ holds.
\end{prop}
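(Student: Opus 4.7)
The plan is to prove Proposition \ref{prop:claimD} by induction on $i\in\{1,\dots,N\}$. The base case $({\mathcal I}_1)$ coincides exactly with Proposition \ref{prop:claimC}: the empty sum $\sum_{j=1}^{0}$ disappears, and by definition $R_{1,\alpha}=R_\alpha$, $\mu_{1,\alpha}=\mu_\alpha$, so \eqref{est:claimC} is precisely \eqref{eqD.2} for $i=1$.

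For the induction step, assume $({\mathcal I}_i)$ and let us establish $({\mathcal I}_{i+1})$. The strategy is to apply the Green's representation for the Neumann problem (Appendix A) to both $u_\alpha$ and to each $V_{j,\alpha}$. Subtracting, one obtains
$$u_\alpha(x)-\bar{u}_\alpha-\sum_{j=1}^{i}V_{j,\alpha}(x)=\int_\Omega G(x,y)\Bigl(n(n-2)u_\alpha^{2^\star-1}(y)-\epsilon_\alpha u_\alpha(y)-\sum_{j=1}^{i}F_{j,\alpha}(y)\Bigr)\,dy,$$
where $F_{j,\alpha}=\Delta V_{j,\alpha}$ (plus possible boundary contributions encoded in the construction of $V_{j,\alpha}$ in Appendix~B). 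Since $V_{j,\alpha}$ is built to absorb the principal singularity of $u_\alpha$ at $x_{j,\alpha}$, one expects $F_{j,\alpha}$ to match $n(n-2)U_{j,\alpha}^{2^\star-1}$ to leading order. Inserting the pointwise bound from $({\mathcal I}_i)$ into $u_\alpha^{2^\star-1}$ via the decomposition $u_\alpha=\bar{u}_\alpha+\sum_{j<i}V_{j,\alpha}+h_{i,\alpha}$ and expanding via the mean-value theorem yields a sum of integrals which one then estimates on a partition of $\Omega$ adapted to the scales $\mu_{j,\alpha}$.

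The computations are of Giraud-type: one repeatedly evaluates integrals of the form
$$\int_\Omega|x-y|^{2-n}\,\mu_{j,\alpha}^{(n+2)/2}\bigl(\mu_{j,\alpha}^2+|y-x_{j,\alpha}|^2\bigr)^{-(n+2)/2}\,dy$$
and cross-terms arising from $\bar{u}_\alpha$ multiplied by the bubbles. The contributions from indices $j<i$ cancel with the corresponding $V_{j,\alpha}$'s by construction; the $j=i$ contribution, once $V_{i,\alpha}$ is subtracted, produces at most $\beta_\alpha U_{i,\alpha}(x)$; and outside $B_\delta(x_{i,\alpha})$, the residual reorganizes as $\mu_{i+1,\alpha}^{(n-2)/2}R_{i+1,\alpha}(x)^{2-n}$ using the ordering $\mu_{i+1,\alpha}\le\mu_{i,\alpha}$ and the bubble-separation condition of Proposition \ref{prop:claimB}(ii).

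The main obstacle will be the sharp control near $x_{i,\alpha}$ at the intermediate scale. In this region $u_\alpha$ is of the order $\mu_{i,\alpha}^{1-n/2}$ and the Green kernel produces apparently dangerous singular factors; one must use the explicit construction of $V_{i,\alpha}$ in Appendix~B to verify that the first-order cancellation really works, so that the remaining error is exactly at the scale of the next bubble $\mu_{i+1,\alpha}$. A careful comparison of $R_{i,\alpha}$ with $R_{i+1,\alpha}$, based on the divergence $|x_{i,\alpha}-x_{j,\alpha}|^2/(\mu_{i,\alpha}\mu_{j,\alpha})\to+\infty$ from Proposition \ref{prop:claimB}(ii), guarantees that the bubble at $x_{i,\alpha}$ does not spill into the neighborhood of any subsequent concentration point. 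Iterating the induction $N-1$ times (the process terminating since there are only $N$ concentration points) produces $({\mathcal I}_N)$ and completes the proof.
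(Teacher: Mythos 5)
Your induction skeleton and the use of the Green representation do match part of the paper's argument (this is how Step \ref{prop:claimD}.4 there is organized), but the proposal misses the mechanism that actually produces the gain in the induction step, and the step where you claim it is exactly where the argument breaks. The inductive hypothesis $({\mathcal I}_\kappa)$ only controls the error at the scale $\mu_{\kappa,\alpha}^{\frac{n-2}{2}}R_{\kappa,\alpha}^{2-n}$; if you insert this bound into $u_\alpha^{2^\star-1}$ and run Giraud-type estimates against the Green kernel, the region away from the first $\kappa$ bubbles (the necks and the neighborhoods of the later, smaller bubbles) returns an error of the same order $\mu_{\kappa,\alpha}^{\frac{n-2}{2}}R_{\kappa,\alpha}^{2-n}$, not $\mu_{\kappa+1,\alpha}^{\frac{n-2}{2}}R_{\kappa+1,\alpha}^{2-n}$. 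Your justification that the residual ``reorganizes'' at the scale $\mu_{\kappa+1,\alpha}$ ``using the ordering $\mu_{\kappa+1,\alpha}\le\mu_{\kappa,\alpha}$ and the separation condition'' goes the wrong way: the ordering makes the target error \emph{smaller} than what the inductive bound feeds back, so it cannot be obtained from the ordering; it has to be proved, and the nonlinearity does not do it for free through one pass of the representation formula.

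What the paper adds, and what your plan lacks, is a separate a priori pointwise bound at an intermediate, a priori unknown scale $\nu_{\kappa,\alpha}$, obtained not from the Green function but from a comparison/maximum-principle argument with the auxiliary barriers $\Psi_{i,\alpha}$, $\Theta_\alpha$, $\psi_\alpha$ built from the kernels $\Phi_\gamma$, $\Phi_{1-\gamma}$ (Step \ref{prop:claimD}.2, evaluating $\Delta u_\alpha/u_\alpha\ge\Delta\varphi_\alpha/\varphi_\alpha$ at a maximum point of the ratio), where $\nu_{\kappa,\alpha}$ is defined through a supremum over the sets $\tilde\Omega_{i,\alpha}$ in \eqref{eqD.10}--\eqref{eqD.11}. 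Only with this bound can the Green representation be exploited (Steps \ref{prop:claimD}.3--\ref{prop:claimD}.4) to yield the error $\nu_{\kappa,\alpha}^{\frac{n-2}{2}}R_{\kappa+1,\alpha}^{2-n}$, and a final, independent contradiction argument (Step \ref{prop:claimD}.5) shows $\nu_{\kappa,\alpha}=O(\mu_{\kappa+1,\alpha})$, which is precisely the scale improvement your draft asserts without proof. Your identification of ``the main obstacle'' as a first-order cancellation of $V_{i,\alpha}$ near $x_{i,\alpha}$ also points at the wrong place: that local cancellation is handled by the $C^0$-convergence of Proposition \ref{prop:claimB} (iii) on the annular regions $\Omega_{i,\alpha}$; the genuine difficulty is the control in the complement, which requires $\nu_{\kappa,\alpha}$ and its comparison with $\mu_{\kappa+1,\alpha}$. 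As written, the proposal therefore has a real gap in the induction step.
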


\medskip\noindent{\it Proof of Proposition \ref{prop:claimD}:} Thanks to Proposition \ref{prop:claimC}, we know that $\left({\mathcal I}_1\right)$ holds. The aim of the rest of this section is to prove by induction on $\kappa$ that $\left({\mathcal I}_{\kappa}\right)$ holds for all $1\le \kappa\le N$. In the following, we fix $1\le \kappa\le N-1$ and we assume that $\left({\mathcal I}_{\kappa}\right)$ holds. The aim is to prove that $\left({\mathcal I}_{\kappa+1}\right)$ holds.
We proceed in several steps. Let us first set up some notations. In the following, we fix
\begin{equation}\label{eqD.3}
0<\gamma<\frac{1}{n+2}\hskip.1cm.
\end{equation}
We let, for any $1\le i\le \kappa$,
\begin{equation}\label{eqD.4}
\Psiie(x) := \min\left\{\mai^{\frac{n-2}{2}\left(1-2\gamma\right)}\Phi_\gamma\left(\xai,x\right)\, ;\, A_0 \mai^{-\frac{n-2}{2}\left(1-2\gamma\right)}\Phi_{1-\gamma}\left(\xai,x\right)\right\}
\end{equation}
for $x\in \Om\setminus \left\{\xai\right\}$ where
\begin{equation}\label{eqD.5}
A_0 := \frac{1}{D_\gamma D_{1-\gamma}} \left(4R_0\right)^{-\left(n-2\right)\left(1-2\gamma\right)}\hskip.1cm.
\end{equation}
Here $\Phi_\gamma$, $\Phi_{1-\gamma}$, $D_\gamma$ and $D_{1-\gamma}$ are given by \eqref{estAB} and $R_0$ is as in (\ref{eq2.5}). With this choice of $A_0$, we have with \eqref{estAB} that
$$\Psiie(x) = A_0 \mai^{-\frac{n-2}{2}\left(1-2\gamma\right)}\Phi_{1-\gamma}\left(\xai,x\right)$$
if $\left\vert \xai-x\right\vert \le 2R_0\mai$. Similarly, $\Psiie(x) =\mai^{\frac{n-2}{2}\left(1-2\gamma\right)}\Phi_\gamma\left(\xai,x\right)$ if $x$ is far enough from $\xai$. Note also that we have that
\begin{equation}\label{eqD.5bis}
A_1^{-1}  \le \frac{\Psiie(x)}{\Uie\left(x\right) \left(\frac{\left\vert \xai-x\right\vert}{\mai}+\frac{\mai}{\left\vert \xai-x\right\vert}\right)^{\left(n-2\right)\gamma}} \le A_1
\end{equation}
for all $x\in \bar{\Om}\setminus \left\{\xai\right\}$ and all $\epspositif$ for some $A_1>0$ independent of $\alpha$. We also define
\begin{equation}\label{eqD.6}
\psie(x) := \max\left\{\bue\, ;\, \ma^{\frac{n-2}{2}\left(1-2\gamma\right)}\right\} \sum_{i=1}^N \Phi_{1-\gamma}\left(\xai,x\right)
\end{equation}
and
\begin{equation}\label{eqD.7}
\Thetaeps\left(x\right):= \sum_{i=\kappa+1}^N \Phi_\gamma\left(\xai,x\right)\hskip.1cm.
\end{equation}
We set, for $1\le i\le \kappa$,
\begin{equation}\label{eqD.8}
\Om_{i,\alpha} := \left\{x\in \bar{\Om} \hbox{ s.t. } \Psiie(x)\geq  \Psije(x)\hbox{ for all }1\le j\le \kappa\right\}\hskip.1cm.
\end{equation}
We also fix $A_2>0$ that will be chosen later and we define $\nukappa$ by
\begin{equation}\label{eqD.10}
\nukappa^{\frac{n-2}{2}\left(1-2\gamma\right)} := \max\left\{ \mukappaun^{\frac{n-2}{2}\left(1-2\gamma\right)} \, ;\, \max_{1\le i\le \kappa} \sup_{\tomegaie} \frac{\Psiie}{\Thetaeps}\right\}
\end{equation}
where
\begin{equation}\label{eqD.11}
\tomegaie:= \left\{x\in \omegaie \hbox{ s.t. } \left\vert \xai-x\right\vert^2 \left\vert \ua(x) - \bue - \sum_{j=1}^\kappa \Vaj(x)\right\vert^{2^\star-2}\geq A_2\right\}\hskip.1cm.
\end{equation}
In the above definition, the suprema are by definition $-\infty$ if the set is empty. Remark that, in all these notations, we did not show the dependence in $\gamma$ of the various objects since $\gamma$ is fixed for all this section.

\medskip\noindent{\bf Step \ref{prop:claimD}.1:} We claim that $\nukappa=O\left(\mukappa\right)$ when $\alpha\to +\infty.$

\medskip\noindent{\it Proof of Step \ref{prop:claimD}.1:} This is clearly true if $\nukappa=\mukappaun$ since $\mukappaun\le \mukappa$.

\smallskip\noindent{\bf Step \ref{prop:claimD}.1.1:} Let us assume that there exists $\xa\in \tomegaie$ for some $1\le i\le \kappa$ such that
$$\Psiie\left(\xa\right) =\nukappa^{\frac{n-2}{2}\left(1-2\gamma\right)}  \Thetaeps\left(\xa\right)$$
which implies thanks to \eqref{estAB} that
\begin{equation}\label{eqD.1.1}
\nukappa^{1-2\gamma} = O\left(\Rkappauneps\left(\xa\right)^{2\left(1-\gamma\right)} \Psiie\left(\xa\right)^{\frac{2}{n-2}}\right)\hskip.1cm.
\end{equation}
Since $\left({\mathcal I}_\kappa\right)$ holds and $\xa\in \tomegaie$, we also have that
$$A_2 \le o(1) + o\left(\left\vert \xai-\xa\right\vert^2 \sum_{j=1}^{\kappa-1} \Uje\left(\xa\right)^{2^\star-2}\right) + O\left(\frac{\mukappa^2\left\vert \xai-\xa\right\vert^2}{\Rkappaeps\left(\xa\right)^4}\right)\hskip.1cm.$$
Noting that
$$\left\vert \xai-\xa\right\vert^{2}\Uie\left(\xa\right)^{2^\star-2} = \left(\frac{\left\vert \xai-\xa\right\vert}{\mai}+\frac{\mai}{\left\vert \xai-\xa\right\vert}\right)^{-2}$$
and using (\ref{eqD.5bis}), we get since $\xa\in \Om_{i,\alpha}$ that
\begincal
&&\left\vert \xai-\xa\right\vert^2 \sum_{j=1}^{\kappa-1} \Uje\left(\xa\right)^{2^\star-2}\\
&&\quad = O\left(\sum_{j=1}^{\kappa-1}\left(\frac{\left\vert \xai-\xa\right\vert}{\mai}+\frac{\mai}{\left\vert \xai-\xa\right\vert}\right)^{4\gamma-2}\left(\frac{\left\vert \xaj-\xa\right\vert}{\maj}+\frac{\maj}{\left\vert \xaj-\xa\right\vert}\right)^{-4\gamma}\right)\\\
&&\quad=O(1)
\fincal
since $\gamma<\frac{1}{2}$. Thus the previous equation leads to
\begin{equation}\label{eqD.1.2}
\Rkappaeps\left(\xa\right)^2 = O\bigl(\mukappa\left\vert \xai-\xa\right\vert\bigr)\hskip.1cm.
\end{equation}
If $\Rkappaeps\left(\xa\right)=\Rkappauneps\left(\xa\right)$, then (\ref{eqD.1.1}) and (\ref{eqD.1.2}) together with (\ref{eqD.5bis}) lead to
\begincal
\nukappa^{1-2\gamma}&=&O\left(\mukappa^{1-\gamma}\left\vert \xai-\xa\right\vert^{1-\gamma} \Psiie\left(\xa\right)^{\frac{2}{n-2}}\right)\\
&=&O\left(\mukappa^{1-\gamma}\left\vert \xai-\xa\right\vert^{1-\gamma}
\left(\frac{\left\vert \xai-\xa\right\vert}{\mai}+\frac{\mai}{\left\vert \xai-\xa\right\vert}\right)^{2\gamma}
\mai \left(\mai^2+\left\vert \xai-\xa\right\vert^2\right)^{-1}\right)\\
&=&O\left(\mukappa^{1-\gamma}\mai^{-\gamma} \left(\frac{\left\vert \xai-\xa\right\vert}{\mai}+\frac{\mai}{\left\vert \xai-\xa\right\vert}\right)^{3\gamma-1}\right)\\
&=& O\left(\mukappa^{1-\gamma}\mai^{-\gamma}\right) = O\left(\mukappa^{1-2\gamma}\right)
\fincal
since $\gamma<\frac{1}{3}$ and $i\le \kappa$ so that $\mai\geq \mukappa$. The estimate of Step \ref{prop:claimD}.1 is thus proved in this case. This ends Step \ref{prop:claimD}.1.1.

\medskip\noindent{\bf Step \ref{prop:claimD}.1.2:} Assume now that $\Rkappaeps\left(\xa\right)<\Rkappauneps\left(\xa\right)$ so that
$\Rkappaeps\left(\xa\right)^2 = \left\vert \xkappae-\xa\right\vert^2 +\mukappa^2$. Then (\ref{eqD.1.2}) becomes
\begin{equation}\label{eqD.1.3}
\left\vert \xkappae-\xa\right\vert^2 +\mukappa^2=O\bigl(\mukappa\left\vert \xai-\xa\right\vert\bigr)\hskip.1cm.
\end{equation}
If $i=\kappa$, we then get that $\left\vert \xai-\xa\right\vert = O\left(\mai\right)$. Since $\Rkappauneps\left(\xa\right) \geq \Rkappaeps (\xa)\geq  \mai$ in this case, we can deduce from Proposition \ref{prop:claimB}, (iii), that
$$ \left\vert \xai-\xa\right\vert^2 \left\vert \ua(\xa) - \bue - \sum_{j=1}^\kappa \Vaj(\xa)\right\vert^{2^\star-2}\to 0$$
as $\epstozero$, thus contradicting the fact that $\xa\in \tomegaie$. If $i<\kappa$, we write thanks to (\ref{eqD.5bis}) and to the fact that $\Psiie\left(\xa\right)\geq \psikappae\left(\xa\right)$ (since $\xa\in \Om_{i,\alpha}$) that
\begincal
&&\Uke^{\frac{2}{n-2}}\left(\xa\right) \left(\frac{\left\vert \xkappae-\xa\right\vert}{\mukappa}+\frac{\mukappa}{\left\vert \xkappae-\xa\right\vert}\right)^{2\gamma}\\
&&\quad =O\left( \Uie\left(\xa\right)^{\frac{2}{n-2}} \left(\frac{\left\vert \xai-\xa\right\vert}{\mai}+\frac{\mai}{\left\vert \xai-\xa\right\vert}\right)^{2\gamma}\right)
\fincal
which gives thanks to (\ref{eqD.1.3}) that
\begincal
&&\left(\frac{\left\vert \xkappae-\xa\right\vert}{\mukappa}+\frac{\mukappa}{\left\vert \xkappae-\xa\right\vert}\right)^{2\gamma}\\
&&\quad = O\left( \left\vert \xai-\xa\right\vert \Uie\left(\xa\right)^{\frac{2}{n-2}} \left(\frac{\left\vert \xai-\xa\right\vert}{\mai}+\frac{\mai}{\left\vert \xai-\xa\right\vert}\right)^{2\gamma}\right)\\
&&\quad = O\left(\left(\frac{\left\vert \xai-\xa\right\vert}{\mai}+\frac{\mai}{\left\vert \xai-\xa\right\vert}\right)^{2\gamma-1}\right)\hskip.1cm.
\fincal
Since $\gamma<\frac{1}{2}$, this leads clearly to
$$C^{-1} \le \frac{\left\vert \xai-\xa\right\vert}{\mai}\le C \hbox{ and } C^{-1}\le \frac{\left\vert \xkappae-\xa\right\vert}{\mukappa}\le C$$
for some $C>0$ independent of $\alpha$. This implies that $\mukappa=o\left(\mai\right)$ thanks to Proposition \ref{prop:claimB}, (ii). One then easily deduces from (\ref{eqD.5bis}) that
$$\frac{\Psiie\left(\xa\right)}{\psikappae\left(\xa\right)}= O\left(\left(\frac{\mukappa}{\mai}\right)^{\frac{n-2}{2}}\right)=o(1)\hskip.1cm,$$
which contradicts the fact that $\xa\in \Om_{i,\alpha}$. This ends Step \ref{prop:claimD}.1.2, and therefore this proves Step \ref{prop:claimD}.1.\hfill $\Box$

\medskip\noindent{\bf Step \ref{prop:claimD}.2:} We claim that there exists $A_3>0$ such that
\begin{eqnarray}\label{est:claimD2}
\ua(x)&\le & A_3\left(\sum_{i=1}^\kappa \Psiie(x) + \nukappa^{\frac{n-2}{2}\left(1-2\gamma\right)} \petitrkappauneps(x)^{\left(2-n\right)\left(1-\gamma\right)}\right.\nonumber\\
&&\qquad \left. + \max\left\{\bue\, ;\, \ma^{\frac{n-2}{2}\left(1-2\gamma\right)}\right\}\ra(x)^{\left(2-n\right)\gamma}\right)
\end{eqnarray}
for all $x\in \Omega\setminus \bigcup_{i=\kappa+1}^N B_{R_0\mai}(\xai)$.

\medskip\noindent{\it Proof of Step \ref{prop:claimD}.2:} As in the proof of Step  \ref{prop:claimC}.1, the proof of Step  \ref{prop:claimD}.2 requires to distinguish whether we consider points in the interior or on the boundary of $\Omega$. We only prove the estimate for interior points and we refer to Step  \ref{prop:claimC}.1.2 for the extension of the proof to the boundary. We fix $x_0\in\Omega$ and $\delta_0>0$ such that $B_{\delta_0}(x_0)\subset\subset\Omega$. We let $\xa\in \overline{B_{\delta_0}(x_0)}\setminus \bigcup_{i=\kappa+1}^N B_{R_0\mai}(\xai)$ be such that
\begin{equation}\label{eqD.2.1}
\begin{split}
&\frac{\ua\left(\xa\right)}{\sum_{i=1}^\kappa \Psiie\left(\xa\right) + \nukappa^{\frac{n-2}{2}\left(1-2\gamma\right)} \Thetaeps\left(\xa\right)+ \psie\left(\xa\right)} \\
&\quad
=\sup_{ \overline{B_{\delta_0}(x_0)}\setminus \bigcup_{i=\kappa+1}^N B_{R_0\mai}(\xai)} \frac{\ua}{\sum_{i=1}^\kappa \Psiie + \nukappa^{\frac{n-2}{2}\left(1-2\gamma\right)} \Thetaeps+ \psie}\hskip.1cm.
\end{split}
\end{equation}
and we assume by contradiction that
\begin{equation}\label{eqD.2.2}
\frac{\ua\left(\xa\right)}{\sum_{i=1}^\kappa \Psiie\left(\xa\right) + \nukappa^{\frac{n-2}{2}\left(1-2\gamma\right)} \Thetaeps\left(\xa\right)+ \psie\left(\xa\right)}\to +\infty \hbox{ as }\epstozero\hskip.1cm.
\end{equation}
Thanks to the definition (\ref{eqD.6}) of $\psie$ and to the fact that $\left({\mathcal I}_\kappa\right)$ holds, it is clear that
\begin{equation}\label{eqD.2.3}
\ra\left(\xa\right)\to 0\hbox{ as }\epstozero\hskip.1cm.
\end{equation}
We claim that
\begin{equation}\label{eqD.2.4}
\frac{\left\vert \xai-\xa\right\vert}{\mai}\to +\infty \hbox{ as }\epstozero\hbox{ for all } \kappa+1\le i \le N\hskip.1cm.
\end{equation}
Assume on the contrary that there exists $\kappa+1\le i\le N$ such that $\left\vert \xai-\xa\right\vert=O\left(\mai\right)$. Since $\left\vert \xai-\xa\right\vert\geq R_0\mai$ and by the definition (\ref{eq2.5}) of $R_0$, we then get thanks to Proposition \ref{prop:claimB}, (iii), that
$\ua\left(\xa\right)=O\left( \mai^{1-\frac{n}{2}}\right)$. But, thanks to (\ref{eqD.10}) and to \eqref{estAB}, we also have that
\begincal
\nukappa^{\frac{n-2}{2}\left(1-2\gamma\right)} \Thetaeps\left(\xa\right)&\geq &D_\gamma^{-1}
\mukappaun^{\frac{n-2}{2}\left(1-2\gamma\right)} \left\vert \xai-x\right\vert^{\left(2-n\right)\left(1-\gamma\right)} \\
&\geq & D_\gamma^{-1} \mai^{\frac{n-2}{2}\left(1-2\gamma\right)}  \left\vert \xai-x\right\vert^{\left(2-n\right)\left(1-\gamma\right)}
\fincal
so that
$$\frac{\ua\left(\xa\right)}{\nukappa^{\frac{n-2}{2}\left(1-2\gamma\right)} \Thetaeps\left(\xa\right)} = O\left(\left(\frac{\left\vert \xai-x\right\vert}{\mai}\right)^{\left(n-2\right)\left(1-\gamma\right)}\right)=O(1)\hskip.1cm,$$
thus contradicting (\ref{eqD.2.2}). So we have proved that (\ref{eqD.2.4}) holds. With the same argument performed with $\Psiie$, we also know that, for any $1\le i\le \kappa$,
\begin{equation}\label{eqD.2.5}
\hbox{ either }\left\vert \xai-\xa\right\vert\le R_0\mai \hbox{ or }\frac{\left\vert \xai-\xa\right\vert}{\mai}\to +\infty\hbox{ as }\epstozero\hskip.1cm.
\end{equation}
In particular, we can write thanks to (\ref{eqD.2.1}), to (\ref{eqD.2.4}) and to (\ref{eqD.2.5}) (which ensures that the $\Psiie$'s are smooth in a small neighbourhood of $\xa$, see the remark following (\ref{eqD.5})) that$$\frac{\Delta \ua\left(\xa\right)}{\ua\left(\xa\right)} \geq \frac{\Delta \left(\sum_{i=1}^\kappa \Psiie + \nukappa^{\frac{n-2}{2}\left(1-2\gamma\right)} \Thetaeps+ \psie\right)}{\sum_{i=1}^\kappa \Psiie + \nukappa^{\frac{n-2}{2}\left(1-2\gamma\right)} \Thetaeps+ \psie}\left(\xa\right)$$
for $\alpha$ large. We write thanks to (\ref{eq1}) that
$$\frac{\Delta \ua\left(\xa\right)}{\ua(\xa)} \le n(n-2) \ua\left(\xa\right)^{2^\star-2}$$
so that the above becomes
\begincal
&&\Delta \left(\sum_{i=1}^\kappa \Psiie + \nukappa^{\frac{n-2}{2}\left(1-2\gamma\right)} \Thetaeps+ \psie\right)\left(\xa\right)\\
&&\quad  \le n(n-2) \ua\left(\xa\right)^{2^\star-2}\left(\sum_{i=1}^\kappa \Psiie + \nukappa^{\frac{n-2}{2}\left(1-2\gamma\right)} \Thetaeps+ \psie\right)\left(\xa\right)\hskip.1cm.
\fincal
Writing thanks to (A), (B) that
$$\Delta \Psiie\left(\xa\right) \geq \left(\frac{1}{D_\gamma+D_{1-\gamma}} \left\vert \xai-\xa\right\vert^{-2}-D_\gamma - D_{1-\gamma}\right) \Psiie\left(\xa\right)$$
for all $1\le i\le \kappa$, that
$$\Delta \Thetaeps\left(\xa\right) \geq \left(\frac{1}{N}D_\gamma^{-3} \petitrkappauneps\left(\xa\right)^{-2} -N D_\gamma\right)\Thetaeps\left(\xa\right)$$
and that
$$\Delta \psie\left(\xa\right) \geq \left(\frac{1}{N}D_{1-\gamma}^{-3} \ra\left(\xa\right)^{-2} -N D_{1-\gamma}\right)\psie\left(\xa\right) \hskip.1cm,$$
we get that
\begin{equation}\label{eqD.2.6}
\begin{split}
0\geq & \sum_{i=1}^\kappa \left(\left\vert \xai-\xa\right\vert^{-2}-C_\gamma-C_\gamma \ua\left(\xa\right)^{2^\star-2}\right) \Psiie\left(\xa\right)\\
& +  \left( \petitrkappauneps\left(\xa\right)^{-2} -C_\gamma-C_\gamma \ua\left(\xa\right)^{2^\star-2}\right)\nukappa^{\frac{n-2}{2}\left(1-2\gamma\right)}\Thetaeps\left(\xa\right)\\
&+  \left( \ra\left(\xa\right)^{-2} -C_\gamma - C_\gamma \ua\left(\xa\right)^{2^\star-2}\right)\psie\left(\xa\right)
\end{split}
\end{equation}
where $C_\gamma>0$ is large enough and independent of $\alpha$ and $\delta$. We let in the following $1\le i\le \kappa$ be such that $\xa\in \Om_{i,\alpha}$. We can then deduce from (\ref{eqD.2.6}) that
\begin{equation}\label{eqD.2.7}
\begin{split}
0\geq & \left(\left\vert \xai-\xa\right\vert^{-2}-\kappa C_\gamma- \kappa C_\gamma \ua\left(\xa\right)^{2^\star-2}\right) \Psiie\left(\xa\right)\\
& +  \left( \petitrkappauneps\left(\xa\right)^{-2} -C_\gamma-C_\gamma \ua\left(\xa\right)^{2^\star-2}\right)\nukappa^{\frac{n-2}{2}\left(1-2\gamma\right)}\Thetaeps\left(\xa\right)\\
&+  \left( \ra\left(\xa\right)^{-2} -C_\gamma - C_\gamma \ua\left(\xa\right)^{2^\star-2}\right)\psie\left(\xa\right)
\end{split}
\end{equation}
Thanks to (\ref{eqD.6}) and to (\ref{eqD.2.2}), we know that
\begin{equation}\label{eqD.2.8}
\bue=o\bigl(\ua\left(\xa\right)\bigr)\hskip.1cm.
\end{equation}
We also know thanks to (\ref{eqD.2.2}) that
\begin{equation}\label{eqD.2.9}
\Uje\left(\xa\right) = o\bigl(\ua\left(\xa\right)\bigr)
\end{equation}
for all $1\le j\le \kappa$ since
$$\Uje\left(\xa\right) = O\bigl(\Psiie\left(\xa\right)\bigr)$$
for all $1\le j\le \kappa$. Note also that, thanks to (\ref{eqD.2.4}), we have that
\begin{equation}\label{eqD.2.10}
\Rkappauneps\left(\xa\right)^2 \Uje\left(\xa\right)^{2^\star-2}\to 0\hbox{ as }\epstozero\hbox{ for all }\kappa+1\le j\le N\hskip.1cm.
\end{equation}
Thus we can deduce from Proposition \ref{prop:claimB}, (iv), together with (\ref{eqD.2.9}) and (\ref{eqD.2.10}) that
\begin{equation}\label{eqD.2.11}
\raa\left(\xa\right)^2 \ua\left(\xa\right)^{2^\star-2} \to 0\hbox{ as }\epstozero\hskip.1cm.
\end{equation}
Thanks to (\ref{eqD.2.3}) and to this last equation, we can transform (\ref{eqD.2.7}) into
\begin{equation}\label{eqD.2.12}
\begin{split}
0\geq & \left(\left\vert \xai-\xa\right\vert^{-2}-\kappa C_\gamma- \kappa C_\gamma \ua\left(\xa\right)^{2^\star-2}\right) \Psiie\left(\xa\right)\\
& +  \left( \petitrkappauneps\left(\xa\right)^{-2} -C_\gamma-C_\gamma \ua\left(\xa\right)^{2^\star-2}\right)\nukappa^{\frac{n-2}{2}\left(1-2\gamma\right)}\Thetaeps\left(\xa\right)\\
&+  \bigl(1+o(1)\bigr)\ra\left(\xa\right)^{-2}\psie\left(\xa\right)
\end{split}
\end{equation}
Since $\left({\mathcal I}_\kappa\right)$ holds, we get thanks to (\ref{eqD.2.8}) and (\ref{eqD.2.9}) that
\begin{equation}\label{bnd:ue:mk:rk}
\ua\left(\xa\right)^{2^\star-2} = O\left(\mukappa^2 \Rkappaeps\left(\xa\right)^{-4}\right)\hskip.1cm.
\end{equation}
We claim that we then have that
\begin{equation}\label{eqD.2.13}
\ua\left(\xa\right)^{2^\star-2} = O\left(\mukappa^2 \Rkappauneps\left(\xa\right)^{-4}\right)\hskip.1cm.
\end{equation}
Indeed, if \eqref{eqD.2.13} does not hold, then $\Rkappauneps\left(\xa\right)=o(\Rkappaeps\left(\xa\right))$ when $\epstozero$ and then $\Rkappaeps\left(\xa\right)=\sqrt{\mukappa^2+|\xa-\xkappae|^2}$, which contradicts \eqref{eqD.2.9} and \eqref{bnd:ue:mk:rk}. This proves \eqref{eqD.2.13}.

\smallskip\noindent We claim that this implies that
\begin{equation}\label{eqD.2.14}
\Rkappauneps\left(\xa\right)^2 \ua\left(\xa\right)^{2^\star-2}\to 0\hbox{ as }\epstozero\hskip.1cm.
\end{equation}
Indeed, if not, (\ref{eqD.2.13}) would imply that
$$\Rkappauneps\left(\xa\right) = O\left(\mukappa\right)$$
while (\ref{eqD.2.11}) would imply that $\raa\left(\xa\right) = o\left(\Rkappauneps\left(\xa\right)\right)$, which would in turn imply that there exists $1\le j\le \kappa$ such that
$$\left\vert \xaj-\xai\right\vert^2 +\maj^2 = \raa\left(\xa\right)^2 = o\left(\Rkappauneps\left(\xa\right)^2\right)=o\left(\mukappa^2\right)$$
which turns out to be absurd since $\maj\geq \mukappa$. Thus (\ref{eqD.2.14}) holds. Note that (\ref{eqD.2.13}) together with (\ref{eqD.2.8}) also implies that
\begin{equation}\label{eqD.2.15}
\Rkappauneps\left(\xa\right)\to 0\hbox{ as }\epstozero
\end{equation}
thanks to (\ref{eq2.4bis}). Thanks to (\ref{eqD.2.14}) and (\ref{eqD.2.15}), we can transform (\ref{eqD.2.12}) into
\begin{equation}\label{eqD.2.16}
\begin{split}
0\geq & \left(\left\vert \xai-\xa\right\vert^{-2}-\kappa C_\gamma- \kappa C_\gamma \ua\left(\xa\right)^{2^\star-2}\right) \Psiie\left(\xa\right)\\
& +  \bigl(1+o(1)\bigr) \petitrkappauneps\left(\xa\right)^{-2} \nukappa^{\frac{n-2}{2}\left(1-2\gamma\right)}\Thetaeps\left(\xa\right)
+  \bigl(1+o(1)\bigr)\ra\left(\xa\right)^{-2}\psie\left(\xa\right) \hskip.1cm.
\end{split}
\end{equation}
If $\xa\not\in \tomegaie$, we can transform this into
\begincal
0&\geq & \left(1+o(1) -  \kappa C_\gamma A_2 -\kappa C_\gamma \left\vert \xai-\xa\right\vert^{2}\right) \left\vert \xai-\xa\right\vert^{-2}\Psiie\left(\xa\right)\\
&& +  \bigl(1+o(1)\bigr) \petitrkappauneps\left(\xa\right)^{-2} \nukappa^{\frac{n-2}{2}\left(1-2\gamma\right)}\Thetaeps\left(\xa\right)
+  \bigl(1+o(1)\bigr)\ra\left(\xa\right)^{-2}\psie\left(\xa\right)
\fincal
thanks to (\ref{eqD.2.8}) and (\ref{eqD.2.9}). Up to taking $A_2>0$ small enough, this leads to
$$\ra\left(\xa\right)^{-2}\psie\left(\xa\right) = O\left(\mai^{\frac{n-2}{2}\left(1-2\gamma\right)}\right)\hskip.1cm.$$
Thanks to (\ref{eqD.2.3}), (\ref{eqD.6}) and \eqref{estAB}, this is clearly absurd. Thus we have that $\xa\in \tomegaie$. Coming back to (\ref{eqD.2.16}), we have that
$$\nukappa^{\frac{n-2}{2}\left(1-2\gamma\right)}\Thetaeps\left(\xa\right) = O\left( \left(\ua\left(\xa\right)^{2^\star-2}+1\right) \petitrkappauneps\left(\xa\right)^2 \Psiie\left(\xa\right)\right)\hskip.1cm.$$
Using (\ref{eqD.2.14}) and (\ref{eqD.2.15}), this leads to
$$\nukappa^{\frac{n-2}{2}\left(1-2\gamma\right)}\Thetaeps\left(\xa\right) = o\bigl(\Psiie\left(\xa\right)\bigr)\hskip.1cm,$$
which clearly contradicts the definition (\ref{eqD.10}) of $\nukappa$ since $\xa\in \tomegaie$. We have thus proved that (\ref{eqD.2.2}) leads to a contradiction. Using \eqref{estAB}, this proves \eqref{est:claimD2} and permits to end the proof of Step \ref{prop:claimD}.2. \hfill $\Box$

\medskip\noindent{\bf Step \ref{prop:claimD}.3:} We claim that  there exists $A_4>0$ such that
\begin{equation}\label{est:claimD3}
\ua(x) \le A_4\left(\sum_{i=1}^\kappa \Uie(x) + \bue +\nukappa^{\frac{n-2}{2}} \Rkappauneps(x)^{2-n}\right)
\end{equation}
for all $x\in \bar{\Om}$ and all $\epspositif$.

\medskip\noindent{\it Proof of Step \ref{prop:claimD}.3:} We let $\left(\xa\right)$ be a sequence of points in $\bar{\Om}$. We aim at proving that
\begin{equation}\label{eqD.3.1}
\ua\left(\xa\right) = O\left(\sum_{i=1}^\kappa \Uie(\xa) + \bue +\nukappa^{\frac{n-2}{2}} \Rkappauneps(\xa)^{2-n}\right)\hskip.1cm.
\end{equation}
Since $\left({\mathcal I}_\kappa\right)$ holds, and distinguishing whether $\Rkappaeps(\xa)=o(\Rkappauneps(\xa))$ or not, we already know that (\ref{eqD.3.1}) holds if
$$\mukappa^{\frac{n-2}{2}}\Rkappauneps\left(\xa\right)^{2-n} = O\bigl(\Uie\left(\xa\right)\bigr)$$
for some $1\le i\le \kappa$. Thus we can assume from now on that
\begin{equation}\label{eqD.3.2}
\Rkappauneps\left(\xa\right)^2 = o\left(\mai\mukappa\right) + o\left(\frac{\mukappa}{\mai} \left\vert \xai-\xa\right\vert^2\right)
\end{equation}
for all $1\le i\le \kappa$. This implies in particular that, for $\alpha$ large,
\begin{equation}\label{eqD.3.3}
\raa\left(\xa\right) = \Rkappauneps\left(\xa\right)=o(1)\hskip.1cm.
\end{equation}
Using Step \ref{prop:claimD}.1 and (iv) of Proposition \ref{prop:claimB}, we also get that (\ref{eqD.3.1}) holds as soon as $\Rkappauneps\left(\xa\right) = O\left(\nukappa\right)$. Thus we can assume from now on that
\begin{equation}\label{eqD.3.4}
\frac{\Rkappauneps\left(\xa\right)}{\nukappa}\to +\infty\hbox{ as }\epstozero\hskip.1cm.
\end{equation}
We now use the Green representation formula to estimate $\ua\left(\xa\right)$. As in Step \ref{prop:claimC}.2.2, we write that
$$\ua\left(\xa\right) \le \bue +n(n-2)\int_{\Om} (G\left(\xa,x\right)+m(\Omega))\ua\left(x\right)^{2^\star-1}\, dx $$
since $\ua$ satisfies equation (\ref{eq1}). This leads to
\begin{equation}\label{eqD.3.5}
\ua\left(\xa\right)-\bue \le  C_0 n(n-2)\int_{\Om} \left\vert \xa-x\right\vert^{2-n} \ua\left(x\right)^{2^\star-1}\, dx\hskip.1cm.
\end{equation}
Noting that $\petitrkappauneps(\xa)\asymp\Rkappauneps\left(\xa\right)$, we write thanks to (\ref{eqD.3.4}) and to Step \ref{prop:claimD}.1 that
\begin{equation}\label{eq:added}
\int_{\left\{x\in \Om,\, \petitrkappauneps\left(x\right)\le R_0\nukappa\right\}} \left\vert \xa-x\right\vert^{2-n} \ua\left(x\right)^{2^\star-1}\, dx =O\left( \nukappa^{\frac{n-2}{2}}\Rkappauneps\left(\xa\right)^{2-n} \right) \end{equation}
using H\"older's inequalities and (\ref{eq2}), where $R_0$ is as in Step \ref{prop:claimD}.2. Noting that
$$\bigcup_{i=\kappa+1}^N B_{R_0\mai}(\xai)\subset \left\{x\in \Om,\, \petitrkappauneps\left(x\right)\le R_0\nukappa\right\},$$
we write now thanks to this last inclusion, to \eqref{eq:added} and to \eqref{est:claimD2} that
\begincal
&&\int_{\Om} \left\vert \xa-x\right\vert^{2-n} \ua\left(x\right)^{2^\star-1}\, dx \\
&&\quad = O\left(\sum_{i=1}^\kappa \int_{\Om} \left\vert \xa-x\right\vert^{2-n} \Psiie\left(x\right)^{2^\star-1}\, dx\right)\\
&&\qquad + O\left(\nukappa^{\frac{n+2}{2}\left(1-2\gamma\right)}\int_{\left\{x\in \Om,\, \petitrkappauneps\left(x\right)\geq R_0\nukappa\right\}}
\left\vert \xa-x\right\vert^{2-n} \petitrkappauneps\left(x\right)^{-(n+2)(1-\gamma)}\, dx \right)\\
&&\qquad + O\left(\max\left\{\bue^{2^\star-1}\,;\, \ma^{\frac{n+2}{2}\left(1-2\gamma\right)}\right\}\int_{\Om} \left\vert \xa-x\right\vert^{2-n} \ra(x)^{-\left(n+2\right)\gamma}\, dx\right)\\
&&\qquad + O\left( \nukappa^{\frac{n-2}{2}}\Rkappauneps\left(\xa\right)^{2-n} \right)\hskip.1cm.
\fincal
Here all the integrals have a meaning since $\gamma<\frac{2}{n+2}$. We write that
\begincal
&&\int_{\left\{x\in \Om,\, \petitrkappauneps\left(x\right)\geq \nukappa\right\}}
\left\vert \xa-x\right\vert^{2-n} \petitrkappauneps\left(x\right)^{-(n+2)(1-\gamma)}\, dx\\
&& \quad \le \sum_{i=\kappa+1}^N \int_{\left\{x\in \Om,\, \left\vert \xai-x\right\vert \geq \nukappa\right\}} \left\vert \xa-x\right\vert^{2-n} \left\vert \xai-x\right\vert^{-(n+2)(1-\gamma)}\, dx\\
&&\quad = O\left(\sum_{i=\kappa+1}^N \nukappa^{n-\left(n+2\right)\left(1-\gamma\right)}\left\vert \xa-\xai\right\vert^{2-n}\right)\\
&&\quad = O\left(\nukappa^{n-\left(n+2\right)\left(1-\gamma\right)} \Rkappauneps\left(\xa\right)^{2-n}\right)
\fincal
since $\gamma<\frac{2}{n+2}$ and thanks to (\ref{eqD.3.4}). We can also write since $\gamma<\frac{2}{n+2}$ that
$$\int_{\Om} \left\vert \xa-x\right\vert^{2-n} \ra(x)^{-\left(n+2\right)\gamma}\, dx = O(1)$$
and that
$$\bue^{2^\star-1}+\ma^{\frac{n+2}{2}\left(1-2\gamma\right)} =o\left(\bue\right)+ o\left(\ma^{\frac{n-2}{2}}\right) = o\left(\bue\right)$$
thanks to (\ref{eq4}) and (\ref{eq2.4bis}). Collecting these estimates, we arrive to
\begincal
&&\int_{\Om} \left\vert \xa-x\right\vert^{2-n} \ua\left(x\right)^{2^\star-1}\, dx \\
&&\quad = O\left(\sum_{i=1}^\kappa \int_{\Om} \left\vert \xa-x\right\vert^{2-n} \Psiie\left(x\right)^{2^\star-1}\, dx\right)\\
&&\qquad + o\left(\bue\right) +O\left( \nukappa^{\frac{n-2}{2}}\Rkappauneps\left(\xa\right)^{2-n} \right)\hskip.1cm.
\fincal
Since $\gamma<\frac{2}{n+2}$, we get that (see Step \ref{lemmaprojection}.2 in the proof of Proposition \ref{lemmaprojection} in Appendix B for the details)
$$\int_{\Om} \left\vert \xa-x\right\vert^{2-n} \Psiie\left(x\right)^{2^\star-1}\, dx = O\bigl(\Uie\left(\xa\right)\bigr)$$
for all $1\le i\le \kappa$. Thus we have obtained that
\begincal
&&\int_{\Om} \left\vert \xa-x\right\vert^{2-n} \ua\left(x\right)^{2^\star-1}\, dx \\
&&\quad = O\left(\sum_{i=1}^k\Uie\left(\xa\right)\right)+ o\left(\bue\right) +O\left( \nukappa^{\frac{n-2}{2}}\Rkappauneps\left(\xa\right)^{2-n} \right)\hskip.1cm.
\fincal
Coming back to (\ref{eqD.3.5}) with this last estimate, we obtain that (\ref{eqD.3.1}) holds. This ends the proof of Step \ref{prop:claimD}.3. \hfill $\Box$

\medskip\noindent{\bf Step \ref{prop:claimD}.4:} We claim that there exists $A_5>0$ such that for any sequence $\left(\xa\right)$ of points in $\bar{\Om}$, we have that
\begin{eqnarray}\label{est:claimD4}
&&\left\vert \ua\left(\xa\right)-\bue - \sum_{i=1}^\kappa \Vai\left(\xa\right)\right\vert \\
&&\quad \le A_5 \nukappa^{\frac{n-2}{2}}\Rkappauneps\left(\xa\right)^{2-n} +o\left(\bue\right) + o\left(\sum_{i=1}^\kappa \Uie\left(\xa\right)\right)\hskip.1cm.\nonumber
\end{eqnarray}

\medskip\noindent {\it Proof of Step \ref{prop:claimD}.4:} Let $\left(\xa\right)$ be a sequence of points in $\bar{\Om}$.

\smallskip\noindent{\bf Step \ref{prop:claimD}.4.1:} Assume first that
\begin{equation}\label{eqD.4.1}
\Rkappauneps\left(\xa\right)=O\left(\nukappa\right)\hbox{ when }\alpha\to +\infty\hbox{ and }\Rkappauneps\left(\xa\right)=\raa\left(\xa\right)\hskip.1cm.
\end{equation}
We can apply Proposition \ref{prop:claimB}, (iv), to write that
$$\Rkappauneps\left(\xa\right)^{\frac{n}{2}-1} \left\vert \ua\left(\xa\right)-\bue-\sum_{i=1}^N \Uie\left(\xa\right)\right\vert = o(1)\hskip.1cm.$$
This leads to
$$\left\vert \ua\left(\xa\right)-\bue - \sum_{i=1}^\kappa \Uie\left(\xa\right)\right\vert \le
\sum_{i=\kappa+1}^N \Uie\left(\xa\right) + o\left(\Rkappauneps\left(\xa\right)^{1-\frac{n}{2}}\right)\hskip.1cm.$$
Noting that, for any $\kappa+1\le i\le N$,
\begincal
\Uie\left(\xa\right) &=& \mai^{\frac{n-2}{2}} \left(\left\vert \xai-\xa\right\vert^2 +\mai^2\right)^{1-\frac{n}{2}}\\
&\le &  \mai^{\frac{n-2}{2}}\Rkappauneps\left(\xa\right)^{2-n} \\
&\le & \mukappaun^{\frac{n-2}{2}}\Rkappauneps\left(\xa\right)^{2-n} \\
&\le & \nukappa^{\frac{n-2}{2}}\Rkappauneps\left(\xa\right)^{2-n}
\fincal
thanks to (\ref{eqD.10}), we then get that
$$\left\vert \ua\left(\xa\right)-\bue - \sum_{i=1}^\kappa \Uie\left(\xa\right)\right\vert \le
N \nukappa^{\frac{n-2}{2}}\Rkappauneps\left(\xa\right)^{2-n}+ o\left(\Rkappauneps\left(\xa\right)^{1-\frac{n}{2}}\right)\hskip.1cm.$$
Thanks to (\ref{eqD.4.1}), we also know that
$$\Rkappauneps\left(\xa\right)^{1-\frac{n}{2}}=O\left(\nukappa^{\frac{n-2}{2}}\Rkappauneps\left(\xa\right)^{2-n}\right)\hskip.1cm.$$
We then get that
\begin{equation}\label{ineq:diff:1}
\left\vert \ua\left(\xa\right)-\bue - \sum_{i=1}^\kappa \Vai\left(\xa\right)\right\vert \le
C \nukappa^{\frac{n-2}{2}}\Rkappauneps\left(\xa\right)^{2-n}+\sum_{i=1}^\kappa |\Uie\left(\xa\right)-\Vai(\xa)|\hskip.1cm.
\end{equation}
We are left with estimating $|\Uie\left(\xa\right)-\Vai(\xa)|$ when $\epstozero$ for all $i\in \{1,...,\kappa\}$. We use the estimates of Proposition \ref{lemmaprojection} and we let $i\in\{1,...,\kappa\}$. We have that
\begin{eqnarray*}
\Uie\left(\xa\right)-\Vai(\xa)&=&O(\Uie(\xa))=O\left(\left(\frac{\mai}{\mai^2+|\xa-\xai|^2}\right)^{\frac{n-2}{2}}\right)\\
&=& O\left(\min \left\{\frac{\mai^{\frac{n-2}{2}}}{\raa(\xa)^{n-2}},\mai^{-\frac{n-2}{2}}\right\}\right)\\
&=& O\left(\min \left\{\frac{\mai^{\frac{n-2}{2}}}{\nukappa^{\frac{n-2}{2}}},\frac{\raa(\xa)^{n-2}}{\nukappa^{\frac{n-2}{2}}\mai^{\frac{n-2}{2}}}\right\}\frac{\nukappa^{\frac{n-2}{2}}}{\raa(\xa)^{n-2}}\right)\\
\end{eqnarray*}
Using \eqref{eqD.4.1} and $\raa(\xa)=\Rkappauneps(\xa)$, we get that
\begin{eqnarray}
\Uie\left(\xa\right)-\Vai(\xa)&=&O\left(\min \left\{\left(\frac{\mai}{\nukappa}\right)^{\frac{n-2}{2}},\left(\frac{\nukappa}{\mai}\right)^{\frac{n-2}{2}}\right\} \frac{\nukappa^{\frac{n-2}{2}}}{\Rkappauneps(\xa)^{n-2}}\right)\nonumber\\
\Uie\left(\xa\right)-\Vai(\xa)&=&O\left(\frac{\nukappa^{\frac{n-2}{2}}}{\Rkappauneps(\xa)^{n-2}}\right)\label{ineq:diff:2}
\end{eqnarray}
Plugging \eqref{ineq:diff:2} into \eqref{ineq:diff:1} yields \eqref{est:claimD4} up to take $A_5$ large enough if \eqref{eqD.4.1} holds. This ends Step \ref{prop:claimD}.4.1.

\medskip\noindent{\bf Step \ref{prop:claimD}.4.2:} Assume now that
\begin{equation}\label{eqD.4.1:bis}
\Rkappauneps\left(\xa\right)=O\left(\nukappa\right)\hbox{ when }\alpha\to +\infty\hbox{ and }\raa\left(\xa\right)<\Rkappauneps\left(\xa\right)\hskip.1cm.
\end{equation}
Then there exists $1\le i\le \kappa$ such that
$$\left\vert \xai-\xa\right\vert^2 + \mai^2 \le \Rkappauneps\left(\xa\right)^2 = O\left(\mukappa^2\right)$$
thanks to Step \ref{prop:claimD}.1. This implies that $\mai= O\left(\mukappa\right)$ and that $\left\vert \xai-\xa\right\vert=O\left(\mukappa\right)$ when $\alpha\to +\infty$. This also implies that $\Rkappauneps\left(\xa\right)\geq \mai$. Since we have that $\mukappa\leq \mai$, using Proposition \ref{prop:claimB}, (ii) and (iii), we then obtain that
$$\left\vert \ua\left(\xa\right) - \Uie\left(\xa\right)\right\vert = o\bigl(\Uie\left(\xa\right)\bigr)$$
and that
\begin{equation}\label{ineq:Ui}
\mai^{1-\frac{n}{2}} = O\bigl(\Uie\left(\xa\right)\bigr)\hskip.1cm.
\end{equation}
This leads in particular to
$$\left\vert \ua\left(\xa\right)-\bue - \sum_{j=1}^\kappa \Uje\left(\xa\right)\right\vert
= o\bigl(\Uie\left(\xa\right)\bigr) + O\left(\sum_{1\le j\le \kappa,\, j\neq i}\Uje\left(\xa\right)\right)\hskip.1cm.$$
Now, for any $1\le j\le \kappa$, $j\neq i$, we have that
\begin{eqnarray}
\mai^{\frac{n-2}{2}}\Uje\left(\xa\right) &=&\left(\mai\maj\right)^{\frac{n-2}{2}} \left(\left\vert \xaj-\xa\right\vert^2 +\maj^2\right)^{1-\frac{n}{2}}\nonumber\\
&\le & \left(\frac{\left\vert \xaj-\xa\right\vert^2}{\mai\maj} +\frac{\maj}{\mai}\right)^{1-\frac{n}{2}}=o(1)\label{ineq:added:2}
\end{eqnarray}
thanks to Proposition \ref{prop:claimB}, (ii), since $\mai = O\left(\mukappa\right)$ and $\mukappa\le \maj$.  In particular, \eqref{ineq:Ui} and \eqref{ineq:added:2} yield
\begin{equation}\label{ineq:Uj:Ui}
\Uje(\xa)=o(\Uie(\xa))
\end{equation}
when $\epstozero$ for all $1\le j\le \kappa$, $j\neq i$. Thus we arrive in this case to
\begin{equation}\label{est:diff:1}
\left\vert \ua\left(\xa\right)-\bue - \sum_{j=1}^\kappa \Uje\left(\xa\right)\right\vert
= o\bigl(\Uie\left(\xa\right)\bigr)\hskip.1cm.
\end{equation}
To obtain \eqref{est:claimD4}, we need to remark that, thanks to Proposition \ref{lemmaprojection} and \eqref{ineq:Uj:Ui}, we have that
\begin{equation}\label{eq:diff}
\Uje(\xa)-\Vaj(\xa)=O(\Uje(\xa))=o(\Uie(\xa))
\end{equation}
when $\epstozero$ for all $1\le j\le \kappa$, $j\neq i$. Concerning $\Uie(\xa)$, we refer again to Proposition \ref{lemmaprojection}: if $\xai$ is such that case (i) or (ii) holds, then $\Uie(\xa)-\Vai(\xa)=o(\Uie(\xa))$ when $\epstozero$. In case (iii) of Proposition \ref{lemmaprojection}, we get with \eqref{ineq:Ui} that
\begin{eqnarray*}
\Uie(\xa)-\Vai(\xa)&=&O\left(\left(\frac{\mai}{\mai^2+d(\xai,\partial\Omega)^2}\right)^{\frac{n-2}{2}}\right)+o(\Uie(\xa))+O(\mai^{\frac{n-2}{2}})\\
&=&o(\mai^{-\frac{n-2}{2}})+o(\Uie(\xa))=o(\Uie(\xa))
\end{eqnarray*}
when $\epstozero$. Therefore \eqref{eq:diff} holds for all $j\in\{1,...,\kappa\}$: associating this equation with \eqref{est:diff:1} yields \eqref{est:claimD4} for any choice of $A_5>0$ if \eqref{eqD.4.1:bis} holds. This ends Step \ref{prop:claimD}.4.2.

\medskip\noindent{\bf Step \ref{prop:claimD}.4.3:} From now on, we assume that
\begin{equation}\label{eqD.4.3}
\frac{\Rkappauneps\left(\xa\right)}{\nukappa}\to +\infty \hbox{ as }\epstozero\hskip.1cm.
\end{equation}
As a preliminary remark, let us note that
\begin{equation}\label{eq:r:R}
r_{\kappa+1,\alpha}(\alpha)\asymp R_{\kappa+1,\alpha}(\xa)
\end{equation}
for large $\alpha$'s (the argument goes by contradiction). We use Green's representation formula and (\ref{eq2.4bis}) to write that
\begin{equation}\label{eqD.4.4}
\begin{split}
&\left\vert \ua\left(\xa\right) - \bue -\sum_{i=1}^\kappa \Vai\left(\xa\right) \right\vert \\
&\quad \le n(n-2)C_0 \int_{\Om} \left\vert \xa-x\right\vert^{2-n} \left\vert \ua\left(x\right)^{2^\star-1}-\sum_{i=1}^\kappa \Uie\left(x\right)^{2^\star-1}\right\vert\, dx \\
&\qquad + C_0\ea \int_{\Om} \left\vert \xa-x\right\vert^{2-n} \ua\left(x\right)\, dx + o\left(\bue\right)\hskip.1cm.
\end{split}
\end{equation}
Let us write thanks to \eqref{est:claimD3} that
\begincal
&&\int_{\Om} \left\vert \xa-x\right\vert^{2-n} \ua\left(x\right)\, dx \leq A_4\sum_{i=1}^\kappa \mai^{\frac{n-2}{2}}\int_{\Om} \left\vert \xa-x\right\vert^{2-n} \left(\left\vert \xai-x\right\vert^2+\mai^2\right)^{1-\frac{n}{2}}\, dx\\
&&+ A_4 \bue \int_{\Om}\left\vert \xa-x\right\vert^{2-n}\, dx+ A_4  \nukappa^{\frac{n-2}{2}} \sum_{i=\kappa+1}^N\int_{\Om} \left\vert \xa-x\right\vert^{2-n}\left(\left\vert \xai-x\right\vert^2 +\mai^2\right)^{1-\frac{n}{2}} \, dx\\
&&= O\left(\sum_{i=1}^\kappa \mai^{\frac{n-2}{2}}\left(\left\vert\xai-\xa\right\vert^2+\mai^2\right)^{1-\frac{n}{2}} \right) + O\left(\bue\right)\\
&&+ O\left(\nukappa^{\frac{n-2}{2}} \sum_{i=\kappa+1}^N \left(\left\vert\xai-\xa\right\vert^2+\mai^2\right)^{1-\frac{n}{2}} \right)\\
&&= O\left(\sum_{i=1}^\kappa \Uie\left(\xa\right)\right) + O\left(\bue\right) + O\left(\nukappa^{\frac{n-2}{2}}\Rkappauneps\left(\xa\right)^{2-n}\right)\hskip.1cm.
\fincal
Thus (\ref{eqD.4.4}) becomes
\begin{equation}\label{eqD.4.5}
\begin{split}
&\left\vert \ua\left(\xa\right) - \bue -\sum_{i=1}^\kappa \Vai\left(\xa\right) \right\vert \\
&\quad \le n(n-2)C_0 \int_{\Om} \left\vert \xa-x\right\vert^{2-n} \left\vert \ua\left(x\right)^{2^\star-1}-\sum_{i=1}^\kappa \Uie\left(x\right)^{2^\star-1}\right\vert\, dx \\
&\qquad + o\left(\bue\right) + o\left(\sum_{i=1}^\kappa \Uie\left(\xa\right)\right)+o\left(\nukappa^{\frac{n-2}{2}}\Rkappauneps\left(\xa\right)^{2-n}\right)\hskip.1cm.
\end{split}
\end{equation}
Thanks to Proposition \ref{prop:claimB}, (ii) and (iii), there exists a sequence $\Leps\to +\infty$ as $\epstozero$ such that, for any $1\le i\le \kappa$,
$$\left\Vert \frac{\ua-\Uie}{\Uie}\right\Vert_{L^\infty\left(\Omega_{i,\alpha}\cap \Om\right)} \to 0\hbox{ as }\epstozero$$
and
$$\left\Vert \frac{\sum_{1\le j\le \kappa,\, j\neq i} \Uje}{\Uie}\right\Vert_{L^\infty\left(\Omega_{i,\alpha}\cap \Om\right)} \to 0\hbox{ as }\epstozero$$
where
$$\Omega_{i,\alpha} := B_{\Leps\mai}(\xai) \setminus \bigcup_{i+1\le j \le N} B_{\frac{1}{\Leps}\mai}(\xaj)$$
and such that these sets are disjoint for $\alpha$ large enough. Then we can write that
\begincal
&&\int_{\Om \cap \Omega_{i,\alpha}}  \left\vert \xa-x\right\vert^{2-n} \left\vert \ua\left(x\right)^{2^\star-1}-\sum_{j=1}^\kappa \Uje\left(x\right)^{2^\star-1}\right\vert\, dx \\
&&\quad = o\left(\int_{\Om \cap \Omai}  \left\vert \xa-x\right\vert^{2-n} \Uie(x)^{2^\star-1}\, dx\right)\\
&&\quad = o\bigl(\Uie\left(\xa\right)\bigr)
\fincal
for all $1\le i\le \kappa$. We also remark that
$$\int_{\Om\setminus \Omega_{i,\alpha}} \left\vert \xa-x\right\vert^{2-n}\Uie(x)^{2^\star-1}\, dx =  o\bigl(\Uie\left(\xa\right)\bigr)$$
for all $1\le i\le \kappa$. Thus, using \eqref{est:claimD3}, we transform (\ref{eqD.4.5}) into
\begin{equation}\label{eqD.4.6}
\begin{split}
&\left\vert \ua\left(\xa\right) - \bue -\sum_{i=1}^\kappa \Vai\left(\xa\right) \right\vert \\
&\quad \le C\nukappa^{\frac{n+2}{2}} \int_{\Om\cap\{\petitrkappauneps(x)\geq \nukappa\}} \left\vert \xa-x\right\vert^{2-n} \Rkappauneps\left(x\right)^{-(n+2)}\, dx \\
&\qquad + O\left(\int_{\Om\cap\{\petitrkappauneps(x)< \nukappa\}} \left\vert \xa-x\right\vert^{2-n}\ua(x)^{2^\star-1}\, dx\right)\\
&\qquad+ o\left(\bue\right) + o\left(\sum_{i=1}^\kappa \Uie\left(\xa\right)\right)+o\left(\nukappa^{\frac{n-2}{2}}\Rkappauneps\left(\xa\right)^{2-n}\right)\hskip.1cm.
\end{split}
\end{equation}
Following the proof of Step \ref{prop:claimD}.3, it remains to notice that
\begincal
&&\int_{\Om\cap\{\petitrkappauneps(x)\geq \nukappa\}} \left\vert \xa-x\right\vert^{2-n} \Rkappauneps\left(x\right)^{-(n+2)}\, dx \\
&&\quad \le \sum_{i=\kappa+1}^N \int_{B_R(\xai)\cap\{|x-\xai|\geq \nukappa\}} \left\vert \xa-x\right\vert^{2-n}\left(\left\vert \xai-x\right\vert^2+\mai^2\right)^{-1-\frac{n}{2}}\, dx
\fincal
Fix $i\geq \kappa+1$. Assume first that
$$\lim_{\epstozero}\frac{|\xai-\xa|}{\mai}=+\infty.$$
Then, we get with changes of variables that
\begin{eqnarray*}
&&\int_{B_R(\xai)\cap\{|x-\xai|\geq \nukappa\}} \left\vert \xa-x\right\vert^{2-n}\left(\left\vert \xai-x\right\vert^2+\mai^2\right)^{-1-\frac{n}{2}}\, dx\\
&& \leq  \int_{B_R(\xai)\cap\{|x-\xai|\geq \nukappa\}} \left\vert \xa-x\right\vert^{2-n}\left\vert \xai-x\right\vert^{-(n+2)}\, dx\\
&&= |\xai-\xa|^{2-n}\nukappa^{-2}\int_{1< |z|<\frac{R}{\nukappa}}|z|^{-2-n}\left|\frac{\xai-\xa}{|\xai-\xa|}+\frac{\nukappa}{|\xai-\xa|}z\right|^{2-n}\, dz\\
&&=O(|\xai-\xa|^{2-n}\nukappa^{-2})=O\left(\nukappa^{-2}\left(|\xai-\xa|^2+\mai^2\right)^{1-\frac{n}{2}}\right)
\end{eqnarray*}
when $\epstozero$. Assume now that
\begin{equation}\label{cas:2}
|\xai-\xa|=O(\mai)
\end{equation}
when $\epstozero$. With the change of variables $x:=\xai+\mai z$, we get that
$$\int_{B_R(\xai)\cap\{|x-\xai|\geq \nukappa\}} \left\vert \xa-x\right\vert^{2-n}\left(\left\vert \xai-x\right\vert^2+\mai^2\right)^{-1-\frac{n}{2}}\, dx=O(\mai^{-n})$$
when $\epstozero$. It follows from \eqref{cas:2} that $R_{\kappa+1,\alpha}(\xa)=O(\mai)$, and then, with \eqref{eqD.4.3}, we get that $\nukappa=o(\mai)$ and then
\begin{eqnarray*}
&&\int_{B_R(\xai)\cap\{|x-\xai|\geq \nukappa\}} \left\vert \xa-x\right\vert^{2-n}\left(\left\vert \xai-x\right\vert^2+\mai^2\right)^{-1-\frac{n}{2}}\, dx\\
&&=O(\mai^{2-n}\nukappa^{-2})=O\left(\nukappa^{-2}\left(|\xai-\xa|^2+\mai^2\right)^{1-\frac{n}{2}}\right).
\end{eqnarray*}
In all the cases, we have then proved that
\begin{eqnarray}
&&\int_{B_R(\xai)\cap\{|x-\xai|\geq \nukappa\}} \left\vert \xa-x\right\vert^{2-n}\left(\left\vert \xai-x\right\vert^2+\mai^2\right)^{-1-\frac{n}{2}}\, dx\label{eq:add:1}\\
&&=O\left(\nukappa^{-2}\left(|\xai-\xa|^2+\mai^2\right)^{1-\frac{n}{2}}\right)\nonumber
\end{eqnarray}
when $\epstozero$ for all $i\geq \kappa+1$.

\smallskip\noindent independently, using H\"older's inequality and \eqref{eq:r:R}, we have that
\begin{equation}\label{eq:add:2}
\int_{\Om\cap\{\petitrkappauneps(x)<\nukappa\}} \left\vert \xa-x\right\vert^{2-n}\ua(x)^{2^\star-1}\, dx=O(\nukappa^{\frac{n-2}{2}}\petitrkappauneps(\xa)^{2-n})
\end{equation}
when $\epstozero$.

\smallskip\noindent Plugging \eqref{eq:add:1} and \eqref{eq:add:2} into (\ref{eqD.4.6}), we get that \eqref{est:claimD4} holds up to take $A_5$ large enough if \eqref{eqD.4.3} holds. This ends Step \ref{prop:claimD}.4.3.

\medskip\noindent Plugging together Steps  \ref{prop:claimD}.4.1 to  \ref{prop:claimD}.4.3, we get that \eqref{est:claimD4} holds up to taking $A_5$ large enough. This ends the proof of Step \ref{prop:claimD}.4.\hfill$\Box$

\medskip\noindent{\bf Step \ref{prop:claimD}.5:} We claim that $\nukappa=O\left(\mukappaun\right)$ when $\alpha\to +\infty$.

\medskip\noindent{\it Proof of Step \ref{prop:claimD}.5:} We proceed by contradiction and thus assume that, up to a subsequence, there exists $1\le i\le \kappa$ and $\xa\in \tomegaie$ such that (see the definition \eqref{eqD.10})
\begin{equation}\label{eqD.5.1}
\nukappa^{\frac{n-2}{2}\left(1-2\gamma\right)}\Thetaeps\left(\xa\right) = \Psiie\left(\xa\right)\hskip.1cm.
\end{equation}
Since $\xa\in \tomegaie$, we also have that
\begin{equation}\label{eqD.5.2}
\left\vert \xai-\xa\right\vert^2 \left\vert \ua\left(\xa\right)-\bue -\sum_{j=1}^\kappa \Vaj\left(\xa\right)\right\vert^{2^\star-2} \geq A_2\hskip.1cm.
\end{equation}
At last, since $\xa\in \Om_{i,\alpha}$, we have that
\begin{equation}\label{eqD.5.3}
\Psije\left(\xa\right)\le \Psiie\left(\xa\right)
\end{equation}
for all $1\le j\le \kappa$. In particular, we can write thanks to (\ref{eqD.5bis}) that
\begincal
&&\left\vert \xai-\xa\right\vert^2 \Uje\left(\xa\right)^{2^\star-2}\le C A_1^{2^\star-2} \left\vert \xai-\xa\right\vert^2 \Psiie\left(\xa\right)^{2^\star-2}\\
&&\leq C A_1^{2\left(2^\star-2\right)} \left\vert \xai-\xa\right\vert^2 \Uie\left(\xa\right)^{2^\star-2}
\left(\frac{\left\vert \xai-\xa\right\vert}{\mai}+\frac{\mai}{\left\vert \xai-\xa\right\vert}\right)^{4\gamma}\\
&&\leq C A_1^{2\left(2^\star-2\right)}\left(\frac{\left\vert \xai-\xa\right\vert}{\mai}+\frac{\mai}{\left\vert \xai-\xa\right\vert}\right)^{4\gamma-2}\leq C A_1^{2\left(2^\star-2\right)}
\fincal
for all $1\le j\le \kappa$ since $\gamma<\frac{1}{2}$. Applying \eqref{est:claimD4} to the sequence $\left(\xa\right)$ and coming back to (\ref{eqD.5.2}), we thus obtain that
$$A_2 \le A_5^{2^\star-2} \left\vert \xai-\xa\right\vert^2 \nukappa^2 \Rkappauneps\left(\xa\right)^{-4} + o(1)\hskip.1cm.$$
This leads to
\begin{equation}\label{eqD.5.4}
\Rkappauneps\left(\xa\right)^2 = O\left(\nukappa\left\vert \xai-\xa\right\vert\right)\hskip.1cm.
\end{equation}
Using \eqref{estAB}, we can write thanks to (\ref{eqD.5.1}) that
\begin{equation}\label{eqD.5.5}
\nukappa^{1-2\gamma} = O\left(\Psiie\left(\xa\right)^{\frac{2}{n-2}} \Rkappauneps\left(\xa\right)^{2(1-\gamma)}\right)
\end{equation}
which leads with (\ref{eqD.5.4}) to
$$\nukappa^{-\gamma} = O\left(\Psiie\left(\xa\right)^{\frac{2}{n-2}}\left\vert \xai-\xa\right\vert^{1-\gamma}\right)\hskip.1cm.$$
It is easily checked thanks to (\ref{eqD.5bis}) that this leads to $\left\vert \xai-\xa\right\vert = O\left(\nukappa\right)$. Since $\nukappa=O\left(\mai\right)$ thanks to Step \ref{prop:claimD}.1 and since $\xai-\xa\neq o(\mai)$ when $\epstozero$, this leads in turn to
$$\mai= O\left(\left\vert \xai-\xa\right\vert^{1-\gamma}\nukappa^\gamma\right)=O\left(\nukappa\right)=O\left(\mukappa\right)\hskip.1cm.$$
Thanks to (\ref{eqD.5.5}), we have obtained so far that $\left\vert \xai-\xa\right\vert = O\left(\mai\right)$, that $\mai=O\left(\mukappa\right)$ and at last that
$\mai= O\left(\Rkappauneps\left(\xa\right)\right)$ using again \eqref{eqD.5.5}. Note that since $\mai\asymp\mukappa$, we have that $\mai=O(\maj)$ for $j\leq\kappa$ when $\alpha\to +\infty$. Using Proposition \ref{prop:claimB}, (ii) and (iii), we then get that
$$\left\vert \xai-\xa\right\vert^2 \left\vert \ua\left(\xa\right) - \bue-\sum_{j=1}^\kappa \Vaj\left(\xa\right)\right\vert^{2^\star-2}\to 0\hbox{ as }\epstozero\hskip.1cm,$$
thus contradicting (\ref{eqD.5.2})  This ends the proof of Step \ref{prop:claimD}.5. \hfill $\Box$

\medskip\noindent Steps \ref{prop:claimD}.4 and \ref{prop:claimD}.5 give that, if $\left({\mathcal I}_\kappa\right)$ holds for some $1\le \kappa\le N-1$, then $\left({\mathcal I}_{\kappa+1}\right)$ holds. Since we know that $\left({\mathcal I}_1\right)$ holds thanks to Proposition \ref{prop:claimC}, we have proved that $\left({\mathcal I}_N\right)$ holds and thus we have proved Proposition \ref{prop:claimD}.\hfill$\Box$

\section{Asymptotic estimates in $C^1\left(\Omega\right)$}\label{sec:asymp:c1}

In this section, we prove the following:

\begin{prop}\label{prop:claimE} There exists a sequence $\betae$ as $\epstozero$ such that
\begin{equation}\label{est:pointwise:V}
\left\vert \ua-\bue - \sum_{i=1}^N \Vai\right\vert \le \betae \left(\bue + \sum_{i=1}^N \Uie\right)
\end{equation}
for all $x\in \bar{\Om}$ and all $\epspositif$. In addition, there exists $A_6>0$ such that
\begin{equation}\label{est:grad}
\left\vert \nabla \ua(x)\right\vert \le o(\bua)+A_6 \sum_{i=1}^N \mai^{\frac{n-2}{2}} \left(\mai^2 +\left\vert \xai-x\right\vert^2\right)^{-\frac{n-1}{2}}
\end{equation}
for all $x\in \bar{\Om}$ and all $\epspositif$.
\end{prop}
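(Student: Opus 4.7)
The starting point is the inductive estimate $(\mathcal I_N)$ established in Proposition \ref{prop:claimD}. Observing that $R_{N,\alpha}(x)^2=\mu_{N,\alpha}^2+|x-x_{N,\alpha}|^2$ since no index exceeds $N$, the quantity $\mu_{N,\alpha}^{(n-2)/2}R_{N,\alpha}(x)^{2-n}$ coincides exactly with $U_{N,\alpha}(x)$. Thus $(\mathcal I_N)$ already reads
$$\left|\ua-\bua-\sum_{j=1}^{N-1}\Vaj\right|\le \betae\Bigl(\bua+\sum_{j=1}^{N-1}\Uaj\Bigr)+C_N\, U_{N,\alpha}\quad\hbox{on }\bar\Omega,$$
so the only missing ingredient for \eqref{est:pointwise:V} is to replace the possibly large constant $C_N$ in front of $U_{N,\alpha}$ by a vanishing sequence while simultaneously promoting $U_{N,\alpha}$ into $V_{N,\alpha}$ on the left-hand side. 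The plan is to obtain this improvement by one further application of the Green representation formula, now fed with the full estimate $(\mathcal I_N)$.

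Set $w_\alpha:=\ua-\bua-\sum_{i=1}^N \Vai$. The defining properties of the Neumann projections $\Vai$ recalled in Appendix B give, up to harmless lower-order corrections, $\Delta \Vai = n(n-2)\Uai^{\crit-1}$ with $\partial_\nu\Vai=0$ and zero mean. Consequently $w_\alpha$ has zero mean, satisfies the homogeneous Neumann condition, and
$$\Delta w_\alpha = n(n-2)\Bigl(\ua^{\crit-1}-\sum_{i=1}^N \Uai^{\crit-1}\Bigr) - \ea\, \ua.$$
Green's representation combined with the kernel estimate $|G(x,y)+m(\Omega)|\le C|x-y|^{2-n}$ from Appendix A reduces the matter to controlling
$$\int_\Omega|x-y|^{2-n}\Bigl|\ua^{\crit-1}(y)-\sum_{i=1}^N \Uai^{\crit-1}(y)\Bigr|\,dy + \ea\int_\Omega|x-y|^{2-n}\ua(y)\,dy,$$
whose second term is $o(\bua)+o(\sum_i \Uai)$ thanks to \eqref{eq2.4bis} and Giraud-type estimates identical to those at the end of Step \ref{prop:claimD}.3.

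The heart of the argument is the nonlinear integral. I would split $\Omega$ into bubble zones $\Omai:=B_{\Leps\mai}(\xai)\setminus\bigcup_{j>i}B_{\Leps^{-1}\maj}(\xaj)$, for some $\Leps\to+\infty$ slowly enough to be compatible with Proposition \ref{prop:claimB}(ii), and their exterior. On each $\Omai$, point (iii) of Proposition \ref{prop:claimB} yields $\ua=(1+o(1))\Uai$ and $\Uaj=o(\Uai)$ for $j\neq i$, hence $|\ua^{\crit-1}-\sum_j\Uaj^{\crit-1}|=o(\Uai^{\crit-1})$; convolving against $|x-y|^{2-n}$ produces an $o(\Uai(x))$ contribution via the standard identity $\int_\Omega|x-y|^{2-n}\Uai^{\crit-1}(y)\,dy=O(\Uai(x))$. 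On the exterior region, the pointwise bound $(\mathcal I_N)$ together with the identity $U_{N,\alpha}=\mu_{N,\alpha}^{(n-2)/2}R_{N,\alpha}^{2-n}$ shows that $\ua^{\crit-1}$ is dominated by a small factor times $\bua^{\crit-1}+\sum_i\Uai^{\crit-1}$, giving in turn an $o(\bua+\sum_i\Uai)$ contribution. Summing yields \eqref{est:pointwise:V}.

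For the gradient estimate \eqref{est:grad}, I would differentiate the Green representation $\ua(x)=\bua+\int_\Omega G(x,y)\Delta\ua(y)\,dy$ in $x$; the kernel bound $|\nabla_x G(x,y)|\le C|x-y|^{1-n}$ from Appendix A together with the refined pointwise bound \eqref{est:pointwise:V} just proved reduces the problem to convolutions of $|x-y|^{1-n}$ with $\Uai^{\crit-1}$ and with $\bua$. A direct computation (Giraud once more) produces the announced terms $\mai^{(n-2)/2}(\mai^2+|\xai-x|^2)^{-(n-1)/2}\asymp|\nabla\Uai(x)|$ for each bubble, together with an $o(\bua)$ remainder absorbing both the $\ea\ua$ and the constant contributions. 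The main technical obstacle throughout is handling the boundary: near $\partial\Omega$ the Euclidean convolution estimates are not directly available, so the reflection procedure of Lemma \ref{lem:ext} must be invoked locally in order to straighten $\partial\Omega$ and to benefit from the symmetry of $\tpip$, in combination with the fact that the Neumann projections $\Vai$ are themselves adapted to this reflection. A secondary difficulty is to keep the error term uniform in $\alpha$ across very disparate peak scales $\mu_{1,\alpha}\ge\dots\ge\mu_{N,\alpha}$, which requires a careful tuning of $\Leps\to+\infty$ consistent with Proposition \ref{prop:claimB}(ii).
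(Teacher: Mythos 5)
Your proposal is correct and follows essentially the same route as the paper: the pointwise bound \eqref{est:pointwise:V} is obtained by rerunning the Green-representation argument of Step \ref{prop:claimD}.4 (bubble regions $\Omega_{i,\alpha}$ with $L_\alpha\to+\infty$, Proposition \ref{prop:claimB} (ii)--(iii), and Giraud-type convolution estimates), which is precisely what the paper does when it says the proof "goes as in Step \ref{prop:claimD}.4". Your gradient argument — differentiating the Green representation and using $|\nabla_x G(x,y)|\le C|x-y|^{1-n}$ together with the pointwise bound just proved — is also the paper's Step \ref{prop:claimE}.2.
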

\medskip\noindent{\it Proof of Proposition \ref{prop:claimE}:} We first prove the pointwise estimate on $\ua$. Then we will prove the pointwise estimate in $C^1\left(\bar{\Om}\right)$.

\medskip\noindent{\bf Step \ref{prop:claimE}.1:} We claim that there exists a sequence $\betae\to 0$ as $\epstozero$ such that \eqref{est:pointwise:V} holds. In particular, there exists $C>0$ such that
\begin{equation}\label{est:pointwise}
\ua(x)\leq C\left(\bua+\sum_{i=1}^N\left(\frac{\mai^2}{\mai^2+|x-\xai|^2}\right)^{\frac{n-2}{2}}\right)\end{equation}
for all $x\in\Omega$ and for all $\alpha\in\nn$.

\smallskip\noindent{\it Proof of Step \ref{prop:claimE}.1:} The proof of \eqref{est:pointwise:V} goes as in Step \ref{prop:claimD}.4. We omit the details. The estimate \eqref{est:pointwise} is a consequence of \eqref{est:pointwise:V} and the inequality \eqref{ineq:Ua:Va} of Proposition \ref{lemmaprojection}.

\medskip\noindent{\bf Step \ref{prop:claimE}.2:} We claim that \eqref{est:grad} holds.

\smallskip\noindent{\it Proof of Step \ref{prop:claimE}.2:} Green's representation formula yields
$$\ua(x)-\bua=\int_\Omega G(x,\cdot)\left(\ua^{\crit-1}-\ea\ua\right)\, dy$$
for all $x\in\Omega$ and all $\alpha\in\nn$. Differentiation with respect to $x$ yields
\begin{eqnarray*}
|\nabla\ua(x)|&\leq &\left|\int_\Omega \nabla_xG(x,\cdot)\left(\ua^{\crit-1}-\ea\ua\right)\, dy\right|\\
&\leq &\int_\Omega |\nabla_xG(x,\cdot)|\left(\ua^{\crit-1}+\ea\ua\right)\, dy
\end{eqnarray*}
Plugging \eqref{est:pointwise} and the estimate of $\nabla_x G$ of in \eqref{app:est:der:green} of Proposition \ref{app:prop:upp:bnd} yield \eqref{est:grad}: we omit the details.

\smallskip\noindent These two steps prove Proposition \ref{prop:claimE}.\hfill$\Box$

\section{Convergence to singular harmonic functions}\label{sec:cv:sing}
\subsection{Convergence at general scale}
We prove the following general convergence result
\begin{prop}\label{th:cv:ra} Let $(\ua)_{\alpha\in\nn}\in C^2(\omegabar)$ such that \eqref{eq:ua} and \eqref{bnd:nrj} hold. Let $(x_{1, \alpha}),...,(x_{N, \alpha})\in\omegabar$ and  $(\mu_{1, \alpha}),...,(\mu_{N, \alpha})\in (0,+\infty)$ such that \eqref{est:pointwise:V} holds. Let $(\xa)_{\alpha\in\nn}\in\omegabar$ and $(\ma)_{\alpha\in\nn}, (\ra)_{\alpha\in\nn}\in (0,+\infty)$ be sequences such that

\smallskip(i) $\lim_{\alpha\to +\infty}\ra=0$ and $\ma=o(\ra)$ when $\alpha\to +\infty$,

\smallskip(ii) $\ra\not\asymp\mai$ when $\alpha\to +\infty$ for all $i\in\{1,...,N\}$ such that $\xai-\xa=O(\ra)$ when $\alpha\to +\infty$,

\smallskip(iii) $\ra^{n-2}\bua=O(\ma^{\frac{n-2}{2}})$ when $\alpha\to +\infty$,

\smallskip(iv) $\mai=O(\ma)$ when $\alpha\to +\infty$ for all $i\in I$ where
$$I:=\{i\in \{1,...,N\}/ \xai-\xa=O(\ra)\hbox{ and }\mai=o(\ra)\hbox{ when }\alpha\to +\infty\},$$

\smallskip(v) $\ra^2\mai=O(\ma (\mai^2+|\xa-\xai|^2))$ when $\alpha\to +\infty$ for all $i\in I^c$.

\medskip\noindent Then we distinguish two cases:

\smallskip\noindent$\bullet$ {\bf Case \ref{th:cv:ra}.1:} Assume that
$$\lim_{\alpha\to +\infty}\frac{d(\xa,\partial\Omega)}{\ra}=+\infty.$$
We define
\begin{equation}\label{def:va}
\va(x):=\frac{\ra^{n-2}}{\ma^{\frac{n-2}{2}}}\ua(\xa+\ra x)\hbox{ for all }x\in \frac{\Omega-\xa}{\ra}.
\end{equation}
Then,
\begin{equation}\label{lim:va}
\lim_{\alpha\to +\infty}\va(x)=K+\sum_{i\in I'}\lambda_i|x-\theta_i|^{2-n}\hbox{ in }C^2_{loc}(\rn\setminus\{\theta_i/\, i\in I\})
\end{equation}
where
\begin{equation}\label{def:Iprime}
I':=\{i\in I/\, \mai\asymp\ma\}
\end{equation}
and
\begin{equation}\label{def:thetai:th}
\theta_i:=\lim_{\alpha\to +\infty}\frac{\xai-\xa}{\ra}\hbox{ for all }i\in I\hbox{ and }\lambda_i>0\hbox{ for all }i\in I'.
\end{equation}
And
\begin{equation}\label{lower:K}
K= \lim_{\alpha\to +\infty}\frac{\ra^{n-2}\bua}{\ma^{\frac{n-2}{2}}}+\sum_{i\in I^c}\tau_i\lim_{\alpha\to +\infty}\left(\frac{\ra^2\mai}{\ma(\mai^2+|\xa-\xai|^2)}\right)^{\frac{n-2}{2}}
\end{equation}
with $\tau_i\geq 1$ for all $i\in I^c$

\medskip\noindent$\bullet$ {\bf Case \ref{th:cv:ra}.2:} Assume that
$$\lim_{\alpha\to +\infty}\frac{d(\xa,\partial\Omega)}{\ra}=\rho\in [0,+\infty).$$Then there exists $x_0\in\partial\Omega$ such that $\lim_{\alpha\to +\infty}\xa=x_0$. We take $\varphi$, $U_{x_0}$ and the extension $\tua$ as in Lemma \ref{lem:ext}. We define
\begin{equation}\label{def:tva}
\tva(x):=\frac{\ra^{n-2}}{\ma^{\frac{n-2}{2}}}\tua\circ\varphi(\varphi^{-1}(\xa)+\ra x)\hbox{ for all }x\in\frac{\varphi^{-1}(U_{x_0})-\varphi^{-1}(\xa)}{\ra}.
\end{equation}
Then,
\begin{equation}\label{val:lim:tva}
\lim_{\alpha\to +\infty}\tva(x)=K+\sum_{i\in I'}\lambda_i(|x-\tilde{\theta}_i|^{2-n}+|x-\sigma(\tilde{\theta}_i)|^{2-n})\hbox{ in }C^1_{loc}(\rn\setminus\{\tilde{\theta}_i,\, \sigma(\tilde{\theta}_i)/\, i\in I\})
\end{equation}
where $K\geq 0$, $I'$, $\lambda_i$ are as above,
$$\tilde{\theta}_i:=\lim_{\alpha\to +\infty}\frac{\varphi^{-1}(\xai)-\varphi^{-1}(\xa)}{\ra}\hbox{ for all }i\in I$$
and $\sigma:\rn\to\rn$ is the orthogonal symmetry with respect to the hyperplane $\{x_1=\rho\}$, that is
\begin{equation}\label{def:sigma}
\sigma(x_1,x')=(2\rho-x_1, x')\hbox{ for all }(x_1,x')\in\rn.
\end{equation}
\end{prop}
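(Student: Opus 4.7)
The plan is to rescale $\ua$ at scale $\ra$ about $\xa$, observe that the equation is subcritical at this scale since $\ma=o(\ra)$, use the pointwise bound of Proposition \ref{prop:claimE} to obtain locally uniform bounds away from the rescaled peak positions, and then identify the harmonic limit by passing to the limit term by term in the refined decomposition \eqref{est:pointwise:V}. The boundary case will be handled by first reflecting via Lemma \ref{lem:ext}.

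\textbf{Rescaled equation and uniform bounds (Case \ref{th:cv:ra}.1).} A direct computation from \eqref{eq:ua} shows that on $(\Omega-\xa)/\ra$,
$$\Delta\va+\ea\ra^2\va=n(n-2)\frac{\ma^2}{\ra^2}\va^{\crit-1},$$
a domain that exhausts $\rn$ since $d(\xa,\partial\Omega)/\ra\to+\infty$. To bound $\va$ on a compact $K\subset\subset\rn\setminus\{\theta_i/\,i\in I\}$ one substitutes $y=\xa+\ra x$ into \eqref{est:pointwise}: the $\bua$ contribution is handled by hypothesis (iii), peaks with $i\in I$ by (iv) (together with the fact that the limit positions $\theta_i$ avoid $K$), and peaks with $i\in I^c$ by (v). Because $\ma/\ra\to 0$ and $\ea\ra^2\to 0$, the right-hand side above tends to $0$ locally uniformly off the $\theta_i$'s; standard elliptic theory (Theorem 9.11 of \cite{gt}) thus provides, up to a subsequence, convergence in $C^2_{loc}(\rn\setminus\{\theta_i/\,i\in I\})$ to a function $v$ harmonic on that set.

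\textbf{Identification of $v$.} Evaluating \eqref{est:pointwise:V} at $y=\xa+\ra x$ and multiplying by $\ra^{n-2}/\ma^{(n-2)/2}$ yields, locally uniformly off the $\theta_i$'s,
$$\va(x)=\frac{\ra^{n-2}\bua}{\ma^{(n-2)/2}}+\sum_{i=1}^{N}\frac{\ra^{n-2}}{\ma^{(n-2)/2}}\Vai(\xa+\ra x)+o(1).$$
For $i\in I'$, the conditions $\mai\asymp\ma$, $(\xai-\xa)/\ra\to\theta_i$ and $\mai=o(\ra)$, combined with the far-field asymptotics of the Neumann projection $\Vai$ from Proposition \ref{lemmaprojection}, make the $i$-th term converge to $\lambda_i|x-\theta_i|^{2-n}$ with $\lambda_i>0$. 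For $i\in I\setminus I'$ one has $\mai=o(\ma)$, so the contribution is $o(1)$ uniformly off $\theta_i$. For $i\in I^c$, the running point $\xa+\ra x$ stays far from $\xai$ on the scale $\sqrt{\mai^2+|\xa-\xai|^2}$, so $\Vai(\xa+\ra x)$ is, to leading order, $\tau_i\mai^{(n-2)/2}(\mai^2+|\xa-\xai|^2)^{-(n-2)/2}$ (Proposition \ref{lemmaprojection}); rescaled, this produces the $i$-th summand of $K$ in \eqref{lower:K}, with $\tau_i\ge 1$ ($\tau_i=1$ for interior peaks, $\tau_i=2$ for boundary peaks). Comparing with the harmonic $v$ of the preceding step yields \eqref{lim:va}.

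\textbf{Boundary case and main obstacle.} In Case \ref{th:cv:ra}.2, Lemma \ref{lem:ext} first produces the Lipschitz extension $\tua$ satisfying $\Delta_{\tg}\tua+\ea\tua=n(n-2)\tua^{\crit-1}$ weakly, with $\tua=\ua\circ\tpip$. Rescaling in the chart $\varphi$, the domain of $\tva$ eventually exhausts $\rn$ and $\tva$ solves $\Delta_{\tga}\tva+\ea\ra^2\tva=n(n-2)(\ma/\ra)^2\tva^{\crit-1}$ with $\tga$ Lipschitz; since $d\varphi_0$ is orthogonal, $\tga$ converges locally uniformly to the Euclidean metric $\xi$, and the preceding analysis adapts (with $C^1_{loc}$ convergence, in accordance with the Lipschitz regularity of $\tga$) to yield a harmonic limit on $\rn$ minus the rescaled singular set. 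The symmetry $\tua=\ua\circ\tpip$ descends to invariance of $\tva$ under the rescaled reflection, which converges to $\sigma$ of \eqref{def:sigma} as $d(\xa,\partial\Omega)/\ra\to\rho$, pairing each $\tilde\theta_i$ ($i\in I'$) with $\sigma(\tilde\theta_i)$; this delivers \eqref{val:lim:tva}. The main delicate point throughout is that one must use the refined bound \eqref{est:pointwise:V} with the projections $\Vai$ (not the cruder bubbles $\Uai$) and their precise far-field behaviour from Proposition \ref{lemmaprojection}: only then do the exact coefficients $\lambda_i$, the constants $\tau_i\ge 1$, and the correct value of $K$ fall out of the rescaling, rather than just the location of the singular set.
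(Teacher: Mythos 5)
Your proposal is correct and follows essentially the same route as the paper: rescale at scale $\ra$, control and identify the limit term by term via the refined bound \eqref{est:pointwise:V} together with the asymptotics of the projections $\Vai$ from Proposition \ref{lemmaprojection} (hypotheses (iii)--(v) handling exactly the average, the $I$-peaks and the $I^c$-peaks), upgrade by elliptic theory using $\ma=o(\ra)$, and treat the boundary case through the reflection of Lemma \ref{lem:ext} with only $C^1_{loc}$ convergence because the extended metric is Lipschitz. The only cosmetic differences are that you extract the harmonic limit by compactness before identifying it (the paper identifies the pointwise limit first and then bootstraps) and that you obtain the $\sigma(\tilde\theta_i)$ pairing from the reflection invariance of $\tva$ rather than from the explicit reflected-bubble term in $\Vai$; both variants are sound.
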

\medskip\noindent{\it Proof of Proposition \ref{th:cv:ra}:} As in the statement of the proposition, we distinguish two cases.

\medskip\noindent{\bf Case \ref{th:cv:ra}.1:} we assume that
\begin{equation}\label{lim:da:ma:1}
\lim_{\alpha\to +\infty}\frac{d(\xa,\partial\Omega)}{\ra}=+\infty.
\end{equation}
We let $R>0$ so that, for $\alpha\in\nn$ large enough, it follows from \eqref{def:va} that $\va(x)$ makes sense for all $x\in B_R(0)$. We fix $x\in B_R(0)$. It follows from \eqref{est:pointwise:V} that
\begin{equation}\label{est:va:1}
\va(x)=(1+o(1))\left(\frac{\ra^{n-2}\bua}{\ma^{\frac{n-2}{2}}}+\sum_{i=1}^N\frac{\ra^{n-2}\Vai(\xa+\ra x)}{\ma^{\frac{n-2}{2}}}\right)
\end{equation}
when $\alpha\to +\infty$. We estimate the right-hand-side with Proposition \ref{lemmaprojection}. We have to distinguish whether $i\in I$ or not

\medskip\noindent{\bf Step \ref{th:cv:ra}.1.1:} Let $i\in I$. We define
$$\theta_{i,\alpha}:=\frac{\xai-\xa}{\ra}$$
for all $\alpha\in\nn$. In particular, $\lim_{\alpha\to +\infty}\theta_{i, \alpha}=\theta_i$ where $\theta_i$ is defined    in \eqref{def:thetai:th}. Therefore
\begin{eqnarray}
\frac{\ra^{n-2}\Uai(\xa+\ra x)}{\ma^{\frac{n-2}{2}}}&=&\left(\frac{\mai\ra^2}{\ma(\mai^2+|\xa-\xai+\ra x|^2)}\right)^{\frac{n-2}{2}}\nonumber\\
&= & \left(\frac{\mai}{\ma(\left(\frac{\mai}{\ra}\right)^2+|x-\theta_{i, \alpha}|^2)}\right)^{\frac{n-2}{2}}\nonumber\\
&= & \left(\lim_{\alpha\to +\infty}\left(\frac{\mai}{\ma}\right)^{\frac{n-2}{2}} \right)|x-\theta_{i}|^{2-n}+o(1)\label{bnd:va:1}
\end{eqnarray}
for all $x\in B_R(0)\setminus \{\theta_{ i}\}$ when $\alpha\to\ +\infty$. Note that these quantities are well-defined due to point (iv) of the hypothesis of Proposition \ref{th:cv:ra}.

\medskip\noindent{\bf Step \ref{th:cv:ra}.1.2:} Let $i\in I^c$ such that
$$\lim_{\alpha\to +\infty}\frac{|\xai-\xa|}{\ra}=+\infty.$$
Let $\alpha_0\in\nn$ be large enough such that $|\xai-\xa|\geq 2R \ra$ for all $\alpha\geq \alpha_0$. Then
$$\left||\xa-\xai+\ra x|-|\xai-\xa|\right|\leq \ra|x|=O(\ra)=o(|\xa-\xai|)$$
when $\alpha\to +\infty$ and uniformly for all $x\in B_R(0)$. Therefore, we have that
\begin{eqnarray}
\frac{\ra^{n-2}\Uai(\xa+\ra x)}{\ma^{\frac{n-2}{2}}}&=&\left(\frac{\mai\ra^2}{\ma(\mai^2+|\xa-\xai+\ra x|^2)}\right)^{\frac{n-2}{2}}\nonumber\\
&= & (1+o(1))\left(\frac{\mai\ra^2}{\ma(\mai^2+|\xa-\xai|^2)}\right)^{\frac{n-2}{2}}\label{bnd:va:2}
\end{eqnarray}
for all $x\in B_R(0)$ and all $\alpha\in\nn$.

\medskip\noindent{\bf Step \ref{th:cv:ra}.1.3:} Let $i\in I^c$ such that
$$|\xai-\xa|=O(\ra)\hbox{ when }\alpha\to +\infty.$$
Since $i\not\in I$ and points (ii) and (iv) of the hypothesis of Proposition \ref{th:cv:ra} hold, we then have that $\ra=o(\mai)$ when $\alpha\to +\infty$: in particular, $|\xa-\xai|=o(\mai)$ when $\alpha\to +\infty$. We then get that
\begin{eqnarray}
\frac{\ra^{n-2}\Uai(\xa+\ra x)}{\ma^{\frac{n-2}{2}}} &=& \left(\frac{\mai\ra^2}{\ma(\mai^2+o(\mai^2))}\right)^{\frac{n-2}{2}}\nonumber\\
&=&(1+o(1))\left(\frac{\mai\ra^2}{\ma(\mai^2+|\xai-\xa|^2)}\right)^{\frac{n-2}{2}}\label{bnd:va:3}
\end{eqnarray}
for all $x\in B_R(0)$ and all $\alpha\in\nn$.

\medskip\noindent{\bf Step \ref{th:cv:ra}.1.4:} Let $i\in\{1,..., N\}$. We claim that
\begin{equation}\label{bnd:va:4}
\frac{\ra^{n-2}\mai^{\frac{n-2}{2}}}{\ma^{\frac{n-2}{2}}}=o(1)
\end{equation}
when $\alpha\to +\infty$. Indeed, it follows from Point (iii) of Proposition \ref{prop:claimB} that
\begin{equation}\label{comp:ea:ma}
\mai^{\frac{n-2}{2}}\leq C\int_{\Omega\cap B_{\mai}(\xai)}\ua^{\crit-1}\, dx\leq C\int_{\Omega}\ua^{\crit-1}\, dx=C\ea\int_\Omega\ua\, dx=o(\bua)
\end{equation}
when $\alpha\to +\infty$. Therefore, \eqref{bnd:va:4} follows from point (iii) of the hypothesis of Proposition \ref{th:cv:ra}.

\medskip\noindent{\bf Step \ref{th:cv:ra}.1.5:} We let $i\in\{1,..., N\}$ such that the hypothesis of point (iii) of Proposition \ref{lemmaprojection} hold. Since $\pip^{-1}(\xai)\not\in \Omega$, we have that $|\xa-\pip^{-1}(\xai)|\geq d(\xa,\partial\Omega)$. Moreover, since \eqref{lim:da:ma:1} holds, we have that
\begin{eqnarray*}
\frac{\ra^{n-2}\tUai(\xa+\ra x)}{\ma^{\frac{n-2}{2}}} &=& \left(\frac{\mai\ra^2}{\ma(\mai^2+|\xa+\ra x-\pip^{-1}(\xai)|^2)}\right)^{\frac{n-2}{2}}\\
&=&(1+o(1))\left(\frac{\mai\ra^2}{\ma(\mai^2+|\xa-\pip^{-1}(\xai)|^2)}\right)^{\frac{n-2}{2}}
\end{eqnarray*}
Assume that $i\in I$: in this case, we have that $\mai=O(\ma)$ when $\alpha\to +\infty$. Since in addition $|\xa-\pip^{-1}(\xai)|\geq d(\xa,\partial\Omega)$ and \eqref{lim:da:ma:1} holds, we have that
\begin{equation}\label{bnd:va:5}
\lim_{\alpha\to +\infty}\frac{\ra^{n-2}\tUai(\xa+\ra x)}{\ma^{\frac{n-2}{2}}}=0\hbox{ if }i\in I.
\end{equation}
Assume that $i\not\in I$. Since $|\xa-\xai|=O(|\xa-\pip^{-1}(\xai)|)$ when $\alpha\to +\infty$, we have that
\begin{equation}\label{bnd:va:6}
\frac{\ra^{n-2}\tUai(\xa+\ra x)}{\ma^{\frac{n-2}{2}}}=O\left(\left(\frac{\mai\ra^2}{\ma(\mai^2+|\xa-\xai|^2)}\right)^{\frac{n-2}{2}}\right)
\end{equation}
when $\alpha\to +\infty$.

\medskip\noindent Plugging \eqref{bnd:va:1}-\eqref{bnd:va:6} into  \eqref{est:va:1} and using Proposition \ref{lemmaprojection}, we get that
\begin{equation}\label{lim:va:bis}
\lim_{\alpha\to +\infty}\va(x)=K+\sum_{i\in I'}\lambda_i|x-\theta_i|^{2-n}
\end{equation}
for all $x\in\rn\setminus\{\theta_i/\, i\in I\}$, where $K$, $I'$, $\theta_i$ and $\lambda_i$ are as in \eqref{def:Iprime}, \eqref{def:thetai:th} and \eqref{lower:K}. Moreover, as easily checked, this convergence is uniform on every compact subset of $\rn\setminus\{\theta_i/\, i\in I\}$.

\medskip\noindent{\bf Step \ref{th:cv:ra}.1.6:} We claim that \eqref{lim:va} holds. We prove the claim. It follows from equation \eqref{eq:ua} that
\begin{equation}\label{eq:va}
\Delta\va+\ra^2\ea\va= n(n-2)\left(\frac{\ma}{\ra}\right)^2\va^{\crit-1}\hbox{ in }B_R(0)
\end{equation}
for all $\alpha\in\nn$. Since $\ma=o(\ra)$ when $\alpha\to +\infty$, it follows from \eqref{lim:va:bis} and standard elliptic theory that \eqref{lim:va} holds. This proves the claim.

\medskip\noindent This ends the proof of Proposition \ref{th:cv:ra} in Case \ref{th:cv:ra}.2.

 \medskip\noindent{\bf Case \ref{th:cv:ra}.2:} We assume that
\begin{equation*}
\lim_{\alpha\to +\infty}\frac{d(\xa,\partial\Omega)}{\ra}=\rho
\end{equation*}
with $\rho\in [0,+\infty)$. In particular, $\lim_{\alpha\to +\infty}\xa=x_0\in\partial\Omega$. We consider the domain $U_{x_0}$, the extension $\tg$ of the Euclidean metric $\xi$, the chart $\varphi$ and the extension $\tua$ defined in Lemma \ref{lem:ext}. Let $R>0$ and let $\alpha>0$ large enough such that
$$B_R(0)\subset \ra^{-1}(\varphi^{-1}(U_{x_0})-\varphi^{-1}(\xa)).$$
Let us define $(x_{1, \alpha},\xa'):=\varphi^{-1}(\xa)$ with $x_{1, \alpha}\leq 0$ and $\xa'\in \rr^{n-1}$. Therefore, as is easily checked, we have that for any $x\in B_R(0)$,
$$\varphi(\varphi^{-1}(\xa)+\ra x)\in \omegabar\; \Leftrightarrow \;x_1\leq \frac{|x_{1, \alpha}|}{\ra}.$$
We consider the extension $\tua$ of $\ua$ defined as in Lemma \ref{lem:ext}. In particular, the maps $\varphi$, $\pi$, $\tilde{\pi}$, $\pip$, $\tpip$ refer to the point $x_0$. Given $i\in \{1,...,N\}$, it follows from the properties of the $\Vai$'s (see Proposition \ref{lemmaprojection}) that
\begin{equation}\label{diff:sym:vai}
\Vai(\pip^{-1}(x))=(1+o(1))\Vai(x)+O(\mai^{\frac{n-2}{2}})
\end{equation}
when $\alpha\to +\infty$ uniformly for $x\in U_{x_0}$ (up to taking $U_{x_0}$ a sufficiently small neighborhood of $x_0$ in $\rn$). Therefore, it follows from \eqref{est:pointwise:V} that
$$\tua(x)=(1+o(1))\left(\bua+\sum_{i=1}^N\Vai(x)\right)$$
when $\alpha\to +\infty$ uniformly for $x\in U_{x_0}\cap\Omega$. Consequently, using \eqref{diff:sym:vai} and \eqref{bnd:va:4}, for $x\in B_R(0)$, we have that
\begin{equation}\label{eq:tva:1}
\tva(x)=(1+o(1))\left(\frac{\ra^{n-2}\bua}{\ma^{\frac{n-2}{2}}}+\sum_{i=1}^N\frac{\ra^{n-2}\Vai(\varphi((\xaun, \xa')+\ra x))}{\ma^{\frac{n-2}{2}}}\right)
\end{equation}
when $\alpha\to +\infty$ uniformly for $x\in B_R(0)$. Here again, we distinguish whether $i\in I$ or not.

\medskip\noindent{\bf Step \ref{th:cv:ra}.2.1:} we fix $i\in I^c$. Then there exists $\tau_i\geq 1$ such that
\begin{equation}
\frac{\ra^{n-2}\Vai(\varphi((\xaun, \xa')+\ra x))}{\ma^{\frac{n-2}{2}}}=(1+o(1))\tau_i\left(\frac{\mai\ra^2}{\ma(\mai^2+|\xai-\xa|^2)}\right)^{\frac{n-2}{2}}\label{lim:uai:0}
\end{equation}
for all $x\in B_R(0)$ and all $\alpha\in\nn$. The proof goes as in Case \ref{th:cv:ra}.1 above and we omit it.

\medskip\noindent{\bf Step \ref{th:cv:ra}.2.2:} We fix $i\in I$. Mimicking what was done in Case \ref{th:cv:ra}.1.1, we define
$$\tilde{\theta}_{i,\alpha}:=\frac{\varphi^{-1}(\xai)-\varphi^{-1}(\xa)}{\ra}\hbox{ and }\tilde{\theta}_i:=\lim_{\alpha\to +\infty}\tilde{\theta}_{i,\alpha}$$
for all $i\in I$. Using that $d\varphi_0$ is an orthogonal transformation and proceeding as in Step \ref{th:cv:ra}.1.1, we get that
\begin{equation}
\frac{\ra^{n-2}\Uai(\varphi((\xaun, \xa')+\ra x))}{\ma^{\frac{n-2}{2}}}=  \left(\lim_{\alpha\to +\infty}\left(\frac{\mai}{\ma}\right)^{\frac{n-2}{2}} \right)|x-\tilde{\theta}_{i}|^{2-n}+o(1)\label{lim:uai:00}
\end{equation}
for all $x\in B_R(0)\setminus \{\tilde{\theta}_{ i}\}$ when $\alpha\to\ +\infty$. Here again we omit the proof and we refer to Step \ref{th:cv:ra}.1.1.

\medskip\noindent{\bf Step \ref{th:cv:ra}.2.3:} We fix $i\in I$. In particular, $\lim_{\alpha\to +\infty}\xai=x_0$. We assume that $\xai\not\in\partial\Omega$ for all $\alpha\in\nn$. We then have that
\begin{eqnarray}
\frac{\ra^{n-2}\tUai(\varphi((\xaun, \xa')+\ra x))}{\ma^{\frac{n-2}{2}}} &= &\left(\frac{\mai\ra^2}{\ma(\mai^2+|\varphi((\xaun, \xa')+\ra x)-\pip^{-1}(\xai)|^2)}\right)^{\frac{n-2}{2}}\label{lim:tuai:1}
\end{eqnarray}
for all $\alpha\in\nn$ and all $x\in B_R(0)$. Here, note that since we work in a neighborhood of $x_0$, we use the maps $\varphi$, $\pi$ defined above. We define $((\xai)_1,\xai'):=\varphi^{-1}(\xai)$ for all $\alpha\in\nn$. We have that
\begin{eqnarray}
&&|\varphi((\xaun, \xa')+\ra x)-\pip^{-1}(\xai)|= |\varphi((\xaun, \xa')+\ra x)-\varphi\circ\pi^{-1}((\xaun,\xa')+\ra\tilde{\theta}_{i,\alpha})|\nonumber\\
&&= (1+o(1))|(\xaun, \xa')+\ra x-\pi^{-1}(\xaun,\xa')-\ra\pi^{-1}(\tilde{\theta}_{i,\alpha})|\nonumber\\
&&=  (1+o(1))\ra \left|\left(2\frac{\xaun}{\ra}, 0\right)+ x-\pi^{-1}(\tilde{\theta}_{i,\alpha})\right|\label{lim:tuai:2}
\end{eqnarray}
independently, since $d\varphi_0$ is an orthogonal transformation (this is due to the choice made in Lemma \ref{lem:ext}), we have that
\begin{equation}\label{lim:tuai:3}
d(\xa, \partial\Omega)=(1+o(1))|x_{1, \alpha}|
\end{equation}
when $\alpha\to +\infty$. In particular,
$$\lim_{\alpha\to +\infty}\frac{|x_{1, \alpha}|}{\ra}=\rho.$$
Since $\xaun<0$, plugging together \eqref{lim:tuai:1}, \eqref{lim:tuai:2} and \eqref{lim:tuai:3}, we have that
\begin{equation}\label{lim:tuai:4}
\frac{\ra^{n-2}\tUai(\varphi((\xaun, \xa')+\ra x))}{\ma^{\frac{n-2}{2}}} =\left(\lim_{\alpha\to +\infty}\left(\frac{\mai}{\ma}\right)^{\frac{n-2}{2}} \right)|x-\sigma(\tilde{\theta}_{i})|^{2-n}+o(1)
\end{equation}
when $\alpha\to +\infty$ uniformly on compact subsets of $\rn\setminus\{\sigma(\tilde{\theta}_{i})\}$.

\medskip\noindent{\bf Step \ref{th:cv:ra}.2.4:} Given $i\in I'$, we define
$$\lambda_i:=\lim_{\alpha\to +\infty}\left(\frac{\mai}{\ma}\right)^{\frac{n-2}{2}}\cdot \left\{\begin{array}{ll}
1 & \hbox{ if }\xai\not\in\partial\Omega\hbox{ for all }\alpha\in\nn\\
\frac{1}{2} & \hbox{ if }\xai\in\partial\Omega\hbox{ for all }\alpha\in\nn
\end{array}\right.$$
It then follows from \eqref{eq:tva:1}, \eqref{lim:uai:0}, \eqref{lim:uai:00}, \eqref{lim:tuai:4}, Step \ref{th:cv:ra}.1.4 and Proposition \ref{lemmaprojection} that
\begin{equation}\label{lim:tva:1}
\lim_{\alpha\to +\infty}\tva(x)=K+\sum_{i\in I'}\lambda_i(|x-\tilde{\theta}_i|^{2-n}+|x-\sigma(\tilde{\theta}_i)|^{2-n})
\end{equation}
uniformly for all $x$ in compact subsets of $\rn\setminus\{\tilde{\theta}_i,\sigma(\tilde{\theta}_i)/\, i\in I\}$, where $K$ is defined in \eqref{lower:K}. We define the metric $g_\alpha:=(\varphi^\star \tg)(\varphi^{-1}(\xa)+\ra x)$ for $x\in \ra^{-1}(\varphi^{-1}(U_{x_0})-\varphi^{-1}(\xa))$. With a change of variables, equation \eqref{eq:ua} rewrites
\begin{equation*}
\Delta_{\tilde{g}_\alpha}\tva+\ea\ra^2\tva=\left(\frac{\ma}{\ra}\right)^2\tva^{\crit-1}
\end{equation*}
weakly in $B_R(0)$. It then follows from standard elliptic theory that \eqref{lim:tva:1} holds in $C^1_{loc}$. This proves \eqref{val:lim:tva}, and this concludes the proof of Proposition \ref{th:cv:ra} in Case \ref{th:cv:ra}.2.

\medskip\noindent Proposition \ref{th:cv:ra} is a direct consequence of Cases \ref{th:cv:ra}.1 and \ref{th:cv:ra}.2.\hfill$\Box$

\subsection{Convergence at appropriate scale}
We fix $i\in\{1,...,N\}$. We define
$$J_i:=\{j\neq i/\, \mai=O(\maj)\hbox{ when }\alpha\to +\infty\}.$$
We define also

\begin{equation}\label{def:sai}
\sai:=\left\{\begin{array}{ll}
\min\left\{\frac{\mai^{\frac{1}{2}}}{\bua^{\frac{1}{n-2}}}, \min_{j\in J_i}\left(\frac{\mai}{\maj}(\maj^2+|\xaj-\xai|^2)\right)^{\frac{1}{2}}\right\} & \hbox{ if }\xai\in\partial\Omega\\
\min\left\{\frac{\mai^{\frac{1}{2}}}{\bua^{\frac{1}{n-2}}}, \min_{j\in J_i}\left(\frac{\mai}{\maj}(\maj^2+|\xaj-\xai|^2)\right)^{\frac{1}{2}}, d(\xai,\partial\Omega)\right\} & \hbox{ if }\xai\not\in\partial\Omega
\end{array}\right.
\end{equation}
Applying Proposition \ref{th:cv:ra}, we get the two following propositions:

\begin{prop}\label{prop:cv:sai:1} Let $i\in\{1,...,N\}$ and assume that
$$\lim_{\alpha\to +\infty}\frac{d(\xai,\partial\Omega)}{\sai}=+\infty.$$
For $x\in \sai^{-1}(\Omega-\xai)$, we define
$$\vai(x):=\frac{\sai^{n-2}}{\mai^{\frac{n-2}{2}}}\ua(\xai+\sai x).$$
We define
$$I_i:=\{j\in\{1,...,N\}/\, \xaj-\xai=O(\sai)\hbox{ and }\maj=o(\sai)\hbox{ when }\alpha\to +\infty\}$$
and
$$\theta_j:=\lim_{\alpha\to +\infty}\frac{\xaj-\xai}{\sai}\hbox{ for all }j\in I_i.$$
Then there exists $v_i\in C^2(\rn\setminus\{\theta_j/\, j\in I_i\})$ such that
\begin{equation}\label{cv:vai:prop}
\lim_{\alpha\to +\infty}\vai=v_i\hbox{ in }C^2_{loc}(\rn\setminus\{\theta_j/\, j\in I_i\}).
\end{equation}
In addition, there exists $K\geq 0$ and $\lambda_j>0$ for all $j\in I'_i:=\{j\in I_i/\, \mai \asymp \maj\}$ such that
\begin{equation}\label{explicit:vai:prop}
v_i(x)=K+\sum_{j\in I_i'}\lambda_j|x-\theta_j|^{2-n}\hbox{ for all }x\in \rn\setminus\{\theta_j/\, j\in I_i\}.
\end{equation}
Moreover, there exists $\delta>0$, there exists $\lambda_i'>0$ and $\psi_i\in C^2(B_{2\delta}(0))$ harmonic such that
\begin{equation}\label{cons:prop:1}
v_i(x):=\frac{\lambda_i'}{|x|^{n-2}}+\psi_i(x)\hbox{ for all }x\in B_{2\delta}(0)\setminus\{0\}\hbox{ with }\psi_i(0)>0.
\end{equation}
\end{prop}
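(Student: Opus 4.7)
The strategy is to apply Proposition \ref{th:cv:ra} in Case \ref{th:cv:ra}.1 with the substitution $\xa := \xai$, $\ma := \mai$, $\ra := \sai$; the hypothesis $d(\xai,\partial\Omega)/\sai\to+\infty$ places us precisely in that case, and the set $I$ from Proposition \ref{th:cv:ra} then agrees with $I_i$ defined here.

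The core of the argument is the verification of the five hypotheses of Proposition \ref{th:cv:ra}. Hypothesis (iii) is immediate from the entry $\mai^{1/2}/\bua^{1/(n-2)}$ in the minimum \eqref{def:sai}. For $\sai\to 0$ and $\mai=o(\sai)$, each entry of that minimum contributes: $\bua\to 0$ (recall \eqref{upp:bua}) handles the first, Proposition \ref{prop:claimB} (ii) applied to $j\in J_i$ yields $\mai\maj=o(\maj^2+|\xaj-\xai|^2)$ and handles the second, and Proposition \ref{prop:claimB} (i) combined with $d(\xai,\partial\Omega)>0$ handles the third. Hypotheses (ii), (iv) and (v) reduce to short case distinctions depending on whether $j\in J_i$ (where the inequality defining $\sai$ is used directly) or $j\in J_i^c$ (where $\maj=o(\mai)$ makes everything easy); the only subtlety is in (ii), where assuming $\maj\asymp\sai$ together with $\xaj-\xai=O(\sai)$ and the defining bound $\sai^2\maj\le\mai(\maj^2+|\xaj-\xai|^2)$ would yield $\sai=O(\mai)$, contradicting $\mai=o(\sai)$.

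Once the hypotheses are verified, the limit $v_i$ has the form \eqref{explicit:vai:prop}. Since $i\in I_i'$ with $\theta_i=0$, it contributes the term $\lambda_i|x|^{2-n}$, and we set $\lambda_i':=\lambda_i>0$ and
$$\psi_i(x):=K+\sum_{j\in I_i'\setminus\{i\}}\lambda_j|x-\theta_j|^{2-n}.$$
For $j\in I_i'\setminus\{i\}$ one has $\mai\asymp\maj$, so Proposition \ref{prop:claimB} (ii) forces $|\xaj-\xai|^2/(\mai\maj)\to+\infty$; combined with the second entry of the minimum in \eqref{def:sai} this yields $\sai=O(|\xaj-\xai|)$, whence $\theta_j\ne 0$. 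Hence for $\delta>0$ sufficiently small, $B_{2\delta}(0)$ avoids every $\theta_j$ with $j\ne i$, and $\psi_i$ is harmonic there.

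The main obstacle is proving $\psi_i(0)>0$. We argue by case analysis on which entry realizes the minimum in \eqref{def:sai} along a subsequence. If the realized entry is $\mai^{1/2}/\bua^{1/(n-2)}$, then $\sai^{n-2}\bua/\mai^{(n-2)/2}\to 1$, so the first term in \eqref{lower:K} gives $K\ge 1$ and hence $\psi_i(0)\ge K>0$. Otherwise the entry is $(\mai/\maj(\maj^2+|\xaj-\xai|^2))^{1/2}$ for some $j\in J_i$, in which case the ratio $\sai^2\maj/(\mai(\maj^2+|\xaj-\xai|^2))$ equals $1$; using that $j\in J_i\cap I_i$ forces $\mai\asymp\maj$ (via hypothesis (iv) and the definition of $J_i$), the index $j$ lies either in $I_i'$ — in which case $\theta_j\ne 0$ contributes $\lambda_j|\theta_j|^{2-n}>0$ to $\psi_i(0)$ — or in $I_i^c$, in which case the corresponding coefficient $\tau_j\ge 1$ appears in the second sum of \eqref{lower:K}, forcing $K>0$. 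In all cases $\psi_i(0)>0$, completing the proof.
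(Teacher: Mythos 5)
Your proposal follows the paper's own route: verify hypotheses (i)--(v) of Proposition \ref{th:cv:ra} with $\ra:=\sai$, $\ma:=\mai$, apply Case \ref{th:cv:ra}.1, and then deduce $\psi_i(0)>0$ from the formula \eqref{lower:K} for $K$ together with a case analysis on which entry of the minimum \eqref{def:sai} is attained (the paper phrases the same dichotomy as ``$K>0$ or some $\theta_j\neq 0$'' in Step \ref{prop:cv:sai:1}.2.1, and your sharper remark that $\theta_j\neq 0$ for every $j\in I_i'\setminus\{i\}$ is correct). The only quibbles are cosmetic: $\sai\to 0$ requires \eqref{comp:ea:ma} (i.e.\ $\mai^{\frac{n-2}{2}}=O(\ea\bua)$) rather than merely $\bua\to 0$, and the ball $B_{2\delta}(0)$ need only avoid the nonzero $\theta_j$'s, exactly as in the paper's Steps \ref{prop:cv:sai:1}.1.1 and \ref{prop:cv:sai:1}.2.
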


\begin{prop}\label{prop:cv:sai:2} Let $i\in\{1,...,N\}$ and assume that
$$\lim_{\alpha\to +\infty}\frac{d(\xai,\partial\Omega)}{\sai}=\rho\in [0,+\infty).$$
In particular $\lim_{\alpha\to +\infty}\xai=x_0\in \partial\Omega$. We let $\varphi$ be a chart around $x_0$ as in Lemma \ref{lem:ext}. For $x\in \sai^{-1}(\Omega-\xai)$, we define
$$\tvai(x):=\frac{\sai^{n-2}}{\mai^{\frac{n-2}{2}}}\tua\circ\varphi(\varphi^{-1}(\xai)+\sai x).$$
We define
$$I_i:=\{j\in\{1,...,N\}/\, \xaj-\xai=O(\sai)\hbox{ and }\maj=o(\sai)\hbox{ when }\alpha\to +\infty\}$$
and
\begin{equation}\label{def:tthetaj}
\tilde{\theta}_j:=\lim_{\alpha\to +\infty}\frac{\varphi^{-1}(\xaj)-\varphi^{-1}(\xai)}{\sai}\hbox{ for all }j\in I_i.
\end{equation}
We define $\sigma(x_1,x'):=(2\rho-x_1,x')$ for all $(x_1,x')\in\rn$. Then there exists $\tv_i\in C^2(\rn\setminus\{\tilde{\theta}_j, \sigma(\tilde{\theta}_j)/\, j\in I_i\})$ such that
\begin{equation}\label{cv:tvai:prop}
\lim_{\alpha\to +\infty}\tvai=\tv_i\hbox{ in }C^1_{loc}(\rn\setminus\{\tilde{\theta}_j, \sigma(\tilde{\theta}_j)/\, j\in I_i\}).
\end{equation}
In addition, there exists $K\geq 0$ and $\lambda_j>0$ for all $j\in I'_i:=\{j\in I_i/\, \mai \asymp \maj\}$ such that
\begin{equation}\label{explicit:tvai:prop}
\tv_i(x)=K+\sum_{j\in I_i'}\lambda_j\left(|x-\tilde{\theta}_j|^{2-n}+|x-\sigma(\tilde{\theta}_j)|^{2-n}\right)\hbox{ for all }x\in \rn\setminus\{\theta_j, \sigma(\tilde{\theta}_j)/\, j\in I_i\}.
\end{equation}
Moreover, there exists $\delta>0$, there exists $\lambda_i'>0$ and $\tilde{\psi}_i\in C^2(B_{2\delta}(0))$ harmonic such that
\begin{equation}\label{cons:prop:2}
\tv_i(x):=\frac{\lambda_i'}{|x|^{n-2}}+\tilde{\psi}_i(x)\hbox{ for all }x\in B_{2\delta}(0)\setminus\{0\}\hbox{ with }\tilde{\psi}_i(0)>0.
\end{equation}
\end{prop}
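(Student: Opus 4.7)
The plan is to apply Proposition \ref{th:cv:ra} (Case \ref{th:cv:ra}.2) with the choices $\xa := \xai$, $\ma := \mai$, and $\ra := \sai$. With these choices, the rescaled function $\tva$ of \eqref{def:tva} coincides with $\tvai$, and the hypothesis $d(\xa,\partial\Omega)/\ra \to \rho \in [0,+\infty)$ is exactly our standing assumption. The conclusions \eqref{cv:tvai:prop}--\eqref{explicit:tvai:prop} will then follow directly from \eqref{val:lim:tva}, with $I_i$ and $I_i'$ playing the role of $I$ and $I'$ there and with the $K$ and the $\lambda_j$'s given by \eqref{lower:K}.

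First I would verify hypotheses (i)--(v) of Proposition \ref{th:cv:ra}. Hypothesis (iii) is immediate from $\sai \le \mai^{1/2}\bua^{-1/(n-2)}$, and (i) follows by combining this bound with the estimate $\mai^{(n-2)/2} = o(\bua)$ of \eqref{comp:ea:ma}, which yields $\mai/\sai = o(1)$. For (iv), if some $j \in I_i$ satisfied $\maj/\mai \to \infty$ (forcing $j \in J_i$), the bound $\sai^2 \le (\mai/\maj)(\maj^2 + |\xaj - \xai|^2)$ combined with $|\xaj - \xai| = O(\sai)$ would force $\maj \gtrsim \sai$, contradicting $\maj = o(\sai)$. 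Hypotheses (ii) and (v) are verified by a short case analysis depending on whether $j \in J_i$ or $j \notin J_i$, using Proposition \ref{prop:claimB}(ii) together with the lower bounds for $\sai$ encoded in \eqref{def:sai}.

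Next I would extract the local expansion \eqref{cons:prop:2} from \eqref{explicit:tvai:prop}. Since $i \in I_i'$ trivially and $\tilde{\theta}_i = 0$, the only singularity of $\tv_i$ at the origin comes from the term $\lambda_i(|x|^{2-n} + |x - \sigma(0)|^{2-n})$. Setting $\lambda_i' := \lambda_i$ when $\rho > 0$ (where $\sigma(0) = (2\rho,0) \ne 0$) and $\lambda_i' := 2\lambda_i$ when $\rho = 0$, the function $\tilde{\psi}_i(x) := \tv_i(x) - \lambda_i' |x|^{2-n}$ is then harmonic on a small ball $B_{2\delta}(0)$, with $\lambda_i' > 0$ in both cases, and
$$
\tilde{\psi}_i(0) = K + \sum_{j \in I_i'\setminus\{i\}} \lambda_j\bigl(|\tilde{\theta}_j|^{2-n} + |\sigma(\tilde{\theta}_j)|^{2-n}\bigr) + \mathbf{1}_{\{\rho>0\}}\lambda_i(2\rho)^{2-n},
$$
a sum of non-negative terms.

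The main obstacle is the strict positivity $\tilde{\psi}_i(0) > 0$. This is where the precise definition \eqref{def:sai} of $\sai$ enters, and I would argue by cases on which term in \eqref{def:sai} attains the minimum. (a) If $\sai \asymp \mai^{1/2}\bua^{-1/(n-2)}$, then the first term in \eqref{lower:K} is bounded below, hence $K > 0$. (b) If $\sai \asymp (\mai/\maj(\maj^2 + |\xaj - \xai|^2))^{1/2}$ for some $j \in J_i$, then combining this equivalence with Proposition \ref{prop:claimB}(ii) forces $\mai \asymp \maj$ and $|\xaj - \xai| \asymp \sai$, placing $j$ in $I_i'\setminus\{i\}$ with $\tilde{\theta}_j \ne 0$, hence $\lambda_j|\tilde{\theta}_j|^{2-n} > 0$. (c) If $\xai \notin \partial\Omega$ and $\sai = d(\xai,\partial\Omega)$, then $\rho \ge 1$ and the reflected contribution $\lambda_i(2\rho)^{2-n}$ is strictly positive. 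In each case $\tilde{\psi}_i(0) > 0$, completing the proof.
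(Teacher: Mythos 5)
Your overall strategy is exactly the paper's: apply Proposition \ref{th:cv:ra} with $\xa:=\xai$, $\ma:=\mai$, $\ra:=\sai$ (this is Step \ref{prop:cv:sai:1}.1 of the paper, where (i)--(v) are checked just as you sketch, using \eqref{def:sai}, \eqref{comp:ea:ma} and Proposition \ref{prop:claimB}(ii)), and then extract \eqref{cons:prop:2} and the positivity of $\tilde{\psi}_i(0)$ by examining which term of \eqref{def:sai} realizes the minimum (Steps \ref{prop:cv:sai:1}.2.1--\ref{prop:cv:sai:1}.3 of the paper). Two small remarks on the expansion itself: your formula for $\tilde\psi_i(0)$ tacitly assumes that no $j\in I_i'\setminus\{i\}$ has $\tilde\theta_j=0$; the paper does not assume this and instead sets $\lambda_i':=\sum_{j\in I_i',\,\tilde\theta_j=0}\lambda_j$, which is the harmless fix. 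Also, your one-line justification of hypothesis (i) only gives $\sai\to 0$; the statement $\mai=o(\sai)$ needs the short contradiction argument with Proposition \ref{prop:claimB}(ii) (and (i)) that you invoke for (ii) and (v), as in the paper's Step \ref{prop:cv:sai:1}.1.1.

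There is, however, a genuine flaw in your case (b) of the positivity argument. You claim that $\sai^2\asymp\frac{\mai}{\maj}(\maj^2+|\xai-\xaj|^2)$ for some $j\in J_i$, together with Proposition \ref{prop:claimB}(ii), forces $\mai\asymp\maj$ and $|\xai-\xaj|\asymp\sai$. This implication is false as stated: take for instance $\mai=o(\maj)$ and $|\xai-\xaj|\asymp 1$, so that $\sai^2\asymp\mai/\maj$; then (ii) of Proposition \ref{prop:claimB} holds, the minimum in \eqref{def:sai} can well be attained at this $j$, and yet $\mai\not\asymp\maj$ and $j\notin I_i$. The deduction you want is only valid once you know in addition that $j\in I_i$ (i.e. $\maj=o(\sai)$ and $\xaj-\xai=O(\sai)$): then $\mai\maj=o(\sai^2)$ forces $\frac{\mai}{\maj}|\xai-\xaj|^2\asymp\sai^2$, hence $\mai\asymp\maj$ and $|\xai-\xaj|\asymp\sai$. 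The repair is exactly the paper's ordering of the argument: first assume $K=0$ (otherwise there is nothing to prove); the formula \eqref{lower:K} for $K$ then gives $\sai=o\bigl(\mai^{1/2}\bua^{-1/(n-2)}\bigr)$ and $\sai^2=o\bigl(\frac{\mai}{\maj}(\maj^2+|\xai-\xaj|^2)\bigr)$ for every $j\notin I_i$, so the minimizing term is either $d(\xai,\partial\Omega)$ (your case (c)) or corresponds to some $j\in I_i\cap J_i$, for which your computation is correct. Equivalently, you may keep your case split but add the observation that if the minimizing $j$ lies outside $I_i$, then the corresponding term $\tau_j\bigl(\frac{\sai^2\maj}{\mai(\maj^2+|\xai-\xaj|^2)}\bigr)^{\frac{n-2}{2}}$ in \eqref{lower:K} equals $\tau_j>0$, so $K>0$ and positivity holds anyway. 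With this correction the proof is complete and coincides with the paper's.
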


\medskip\noindent{\it Proof of Propositions \ref{prop:cv:sai:1} and \ref{prop:cv:sai:2}:} We apply Proposition \ref{th:cv:ra}.

\medskip\noindent{\bf Step \ref{prop:cv:sai:1}.1:} we claim that points (i) to (v) of Proposition \ref{th:cv:ra} hold with
$$\ma:=\mai\hbox{ and }\ra:=\sai\hbox{ for all }\alpha\in\nn.$$
We prove the claim.

\medskip\noindent{\bf Step \ref{prop:cv:sai:1}.1.1} We claim that (i) holds.

\smallskip\noindent We prove this claim via two claims. We first claim that
\begin{equation}\label{lim:sai}
\lim_{\alpha\to +\infty}\sai=0.
\end{equation}
We prove the claim. Indeed, it follows from the estimate \eqref{comp:ea:ma} and the definition \eqref{def:sai} of $\sai$ that
$$\sai^{n-2}\leq \frac{\mai^{\frac{n-2}{2}}}{\bua}\leq C\ea=o(1)\hbox{ when }\alpha\to +\infty.$$
This proves \eqref{lim:sai}. This proves the claim.

\medskip\noindent We claim that
\begin{equation}\label{lim:sai:mai}
\mai=o(\sai)\hbox{ when }\alpha\to +\infty.
\end{equation}
We prove the claim by contradiction. Assume that $\sai=O(\mai)$ when $\alpha\to +\infty$. Since $\lim_{\alpha\to +\infty}\mai^{-1}d(\xai,\partial\Omega)=+\infty$ if $\xai\not\in\partial\Omega$ (see Proposition \ref{prop:claimB}), it then follows from the definition of $\sai$ that there exists $j\in J_i$ such that
\begin{equation}\label{ineq:proof:sai:mai}
\maj^2+|\xaj-\xai|^2=O(\mai\maj)\hbox{ when }\alpha\to +\infty.
\end{equation}
In particular, $\maj=O(\mai)$ when $\alpha\to +\infty$. Since $j\in J_i$, we then get that $\maj\asymp\mai$ when $\alpha\to +\infty$. It then follows from \eqref{ineq:proof:sai:mai} that $\xaj-\xai=O(\mai)$ when $\alpha\to +\infty$. A contradiction with point (ii) of Proposition \ref{prop:claimB}.  This proves that \eqref{lim:sai:mai} holds. This proves the claim.

\smallskip\noindent These two claims prove that (i) holds. This ends Step \ref{prop:cv:sai:1}.1.1.

\medskip\noindent{\bf Step \ref{prop:cv:sai:1}.1.2:} Let $k\in\{1,...,N\}$. We assume that $\xak-\xai=O(\sai)$ when $\alpha\to +\infty$. We claim that
\begin{equation}\label{claim:iii}
\sai\not\asymp \mak\hbox{ when }\alpha\to +\infty.
\end{equation}
We prove the claim by contradiction and we assume that
\begin{equation}\label{claim:iii:contrap}
\sai\asymp \mak\hbox{ when }\alpha\to +\infty.
\end{equation}
Since $\mai=o(\sai)$ when $\alpha\to +\infty$, we then get that $\mai=o(\mak)$ when $\alpha\to +\infty$, and therefore $k\in J_i$. It then follows from the definiiton of $\sai$ that
$$\sai^2\leq \frac{\mai}{\mak}(\mak^2+|\xai-\xak|^2)=o(\mak^2)+o(\sai^2)$$
when $\alpha\to +\infty$, and then $\sai=o(\mak)$ when $\alpha\to +\infty$: a contradiction with \eqref{claim:iii:contrap}. Then \eqref{claim:iii} holds and the claim is proved. This ends Step \ref{prop:cv:sai:1}.1.2.

\medskip\noindent{\bf Step \ref{prop:cv:sai:1}.1.3:} Point (iii) is a straightforward consequence of the definition \eqref{def:sai} of $\sai$.

\medskip\noindent{\bf Step \ref{prop:cv:sai:1}.1.4:} We let $j\in \{1,...,N\}$ be such that $\xaj-\xai=O(\sai)$ and $\maj=o(\sai)$ when $\alpha\to +\infty$. We claim that
\begin{equation}\label{claim:iv}
\maj=O(\mai)\hbox{ when }\alpha\to +\infty.
\end{equation}
We prove the claim by contradiction and we assume that
\begin{equation}\label{claim:iv:contrap}
\mai=o(\maj)\hbox{ when }\alpha\to +\infty.
\end{equation}
Therefore, $j\in J_i$ and we have with \eqref{lim:sai:mai} that
$$\sai^2\leq\frac{\mai}{\maj}(\maj^2+|\xai-\xaj|^2)=o(\maj^2)+o(\sai^2)=o(\sai^2)$$
when $\alpha\to +\infty$. A contradiction. Then \eqref{claim:iv:contrap} does not hold and \eqref{claim:iv} holds. This proves the claim and ends Step \ref{prop:cv:sai:1}.1.4.

\medskip\noindent{\bf Step \ref{prop:cv:sai:1}.1.5:} Let $j\in \{1,...,N\}$ be such that $\lim_{\alpha\to +\infty}\frac{|\xai-\xaj|}{\sai}=+\infty$. We claim that
\begin{equation}\label{claim:v:1}
\frac{\sai^2\maj}{\mai(\maj^2+|\xai-\xaj|^2)}=O(1)\hbox{ when }\alpha\to +\infty.
\end{equation}
We prove the claim. Assume first that $\maj=o(\mai)$ when $\alpha\to +\infty$: we then get that
$$\frac{\sai^2\maj}{\mai(\maj^2+|\xai-\xaj|^2)}=O\left(\frac{\maj}{\mai}\cdot\frac{\sai^2}{|\xai-\xaj|)}\right)=o(1)$$
when $\alpha\to +\infty$. This proves \eqref{claim:v:1}, and the claim is proved in this case.\par
\smallskip\noindent Assume that $\mai=O(\maj)$ when $\alpha\to +\infty$. Then $j\in J_i$ and \eqref{claim:v:1} follows from the definition of $\sai$.\par
\smallskip\noindent In the two cases, we have proved \eqref{claim:v:1}. This proves the claim and ends Step \ref{prop:cv:sai:1}.1.5.

\medskip\noindent{\bf Step \ref{prop:cv:sai:1}.1.6:} Let $j\in \{1,...,N\}$ be such that $\xai-\xaj=O(\sai)$ and $\sai=o(\maj)$ when $\alpha\to +\infty$. We claim that
\begin{equation}\label{claim:v:2}
\frac{\sai^2\maj}{\mai(\maj^2+|\xai-\xaj|^2)}=O(1)\hbox{ when }\alpha\to +\infty.
\end{equation}
 We prove the claim. We first assume that $\maj=o(\mai)$ when $\alpha\to +\infty$. We then get that
$$\frac{\sai^2\maj}{\mai(\maj^2+|\xai-\xaj|^2)}=O\left(\frac{\maj}{\mai}\cdot\frac{\sai^2}{\maj^2}\right)=o(1)$$
when $\alpha\to +\infty$. Then \eqref{claim:v:2} holds in this case. The case $\mai=O(\maj)$ when $\alpha\to +\infty$ is dealt as in Step \ref{prop:cv:sai:1}.1.5. This proves \eqref{claim:v:2} and then the claim. This ends Step \ref{prop:cv:sai:1}.1.6.

\medskip\noindent{\bf Step \ref{prop:cv:sai:1}.1.7:} point (v) is a consequence of Steps \ref{prop:cv:sai:1}.1.5 and \ref{prop:cv:sai:1}.1.6.

\medskip\noindent Therefore, points (i) to (v) of the hypothesis of Proposition \ref{th:cv:ra} are satisfied with $\ma:=\mai$ and $\ra:=\sai$. This ends Step 1.\hfill$\Box$

\medskip\noindent Then we can apply Proposition \ref{th:cv:ra} with $\ra:=\sai$ and $\ma:=\mai$.

\medskip\noindent{\bf Step \ref{prop:cv:sai:1}.2:} we assume that
\begin{equation}\label{lim:d:sai:infty}
\lim_{\alpha\to +\infty}\frac{d(\xai,\partial\Omega)}{\sai}=+\infty.
\end{equation}
It then follows from Proposition \ref{th:cv:ra} that there exists $v_i$ as in Proposition \ref{prop:cv:sai:1} such that \eqref{cv:vai:prop} and \eqref{explicit:vai:prop} hold. Moreover, there exists $(\tau_j)_{j}$ such that

\begin{equation}\label{inf:K}
K=\lim_{\alpha\to +\infty}\frac{\sai^{n-2}\bua}{\mai^{\frac{n-2}{2}}}+\sum_{j\not\in I_i}\tau_j\lim_{\alpha\to +\infty}\left(\frac{\sai^2\maj}{\mai(\maj^2+|\xai-\xaj|^2)}\right)^{\frac{n-2}{2}}.
\end{equation}

\medskip\noindent{\bf Step \ref{prop:cv:sai:1}.2.1:} We claim that
\begin{equation}\label{claim:K:theta}
K>0\hbox{ or }\exists j\in I_i'\hbox{ such that }\theta_j\neq 0.
\end{equation}
We prove the claim. If $K>0$, then \eqref{claim:K:theta} holds. We assume that $K=0$. It then follows from \eqref{inf:K} that
\begin{equation}\label{cons:K:0}
\sai=o\left(\frac{\mai^{\frac{1}{2}}}{\bua^{\frac{1}{n-2}}}\right)\hbox{ and }\sai^2=o\left(\frac{\mai}{\maj}(\maj^2+|\xai-\xaj|^2)\right)\hbox{ for all }j\not\in I_i
\end{equation}
when $\alpha\to +\infty$. The definition \eqref{def:sai} of $\sai$, \eqref{lim:d:sai:infty} and \eqref{cons:K:0} yield the existence of $j\in I_i\cap J_i$ such that
\begin{equation}\label{sai:contra}
\sai^2=\frac{\mai}{\maj}(\maj^2+|\xaj-\xai|^2)
\end{equation}
for all $\alpha\in\nn$. Since $j\in J_i$, we have that
\begin{equation}\label{comp:mai:maj}
\mai=O(\maj)\hbox{ when }\alpha\to +\infty\hbox{ and }j\neq i.
\end{equation}
Moreover, since $j\in I_i$, we have that
\begin{equation}\label{app:Ii}
\xaj-\xai=O(\sai)\hbox{ and }\maj=o(\sai)
\end{equation}
When $\alpha\to +\infty$. It then follows from \eqref{sai:contra}, \eqref{comp:mai:maj} and \eqref{app:Ii} that
\begin{equation}\label{eq:mai:maj}
\mai\asymp\maj\hbox{ and }|\xai-\xaj|\asymp\sai \hbox{ when }\alpha\to +\infty.
\end{equation}
In particular, $j\in I_i'$ and $\theta_j\neq 0$. This proves \eqref{claim:K:theta} when $K=0$. This proves the claim and ends Step \ref{prop:cv:sai:1}.2.1.\hfill$\Box$

\medskip\noindent We set
\begin{equation*}
\delta:=\frac{1}{2}\min\{|\theta_j|/\, j\in I_i\hbox{ and }\theta_j\neq 0\}.
\end{equation*}
We define
$$\psi_i(x):=K+\sum_{j\in I_i''}\lambda_j|x-\theta_j|^{2-n}$$
for all $x\in B_{2\delta}(0)$ where $I_i":=\{j\in I_i/\, \theta_j\neq 0\}$. Clearly $\psi_i$ is smooth and harmonic on $B_\delta(0)$. We define $\lambda_i'=\sum_{j\in I_i'\setminus I_i"}\lambda_j$, so that one has that
$$v_i(x)=\frac{\lambda_i'}{|x|^{n-2}}+\psi_i(x)\hbox{ for all }x\in B_{2\delta}(0)\setminus\{0\}.$$
Note that $\lambda'_i\geq\lambda_i>0$.

\medskip\noindent{\bf Step \ref{prop:cv:sai:1}.2.2:}  We claim that
$$\psi_i(0)>0.$$
We prove the claim. Indeed, if $K>0$, the claim is clear. If $K=0$, it follows from \eqref{claim:K:theta} that there exists $j\in I_i"$, and then $\psi_i(0)\geq \lambda_j|\theta_j|^{2-n}>0$. This proves the claim.

\medskip\noindent Proposition \ref{prop:cv:sai:1} is a consequence of Steps \ref{prop:cv:sai:1}.1 and \ref{prop:cv:sai:1}.2.\hfill$\Box$

\medskip\noindent{\bf Step \ref{prop:cv:sai:1}.3:} we assume that
\begin{equation}\label{lim:d:sai:finite}
\lim_{\alpha\to +\infty}\frac{d(\xai,\partial\Omega)}{\sai}=\rho\geq 0.
\end{equation}
In this case, the proof of Proposition \ref{prop:cv:sai:2} goes basically as the proof of Proposition \ref{prop:cv:sai:1}. We stress here on the differences.

\smallskip\noindent It follows from Proposition \ref{th:cv:ra} that there exists $\tv_i$ as in Proposition \ref{prop:cv:sai:2} such that \eqref{cv:tvai:prop} and \eqref{explicit:tvai:prop} holds. We define
$$\delta:=\frac{1}{2}\min\{|\tilde{\theta}_j|/\, j\in I_i\hbox{ and }\tilde{\theta}_j\neq 0\}.$$
 We define
$$\tilde{\psi}_i(x):=K+\sum_{j\in I_i''}\lambda_j(|x-\tilde{\theta}_j|^{2-n}+|x-\sigma(\tilde{\theta}_j)|^{2-n})+\left\{\begin{array}{ll}
\lambda'_i|x-\sigma(\tilde{\theta}_i)|^{2-n}&\hbox{ if }\sigma(\tilde{\theta}_i)\neq 0\\
0&\hbox{ if }\sigma(\tilde{\theta}_i)=0
\end{array}\right.$$
for all $x\in B_{2\delta}(0)$ where $I_i":=\{j\in I_i"/\, \theta_j\neq 0\}$ and $\lambda_i'>0$ is as in Step \ref{prop:cv:sai:1}.2.1. In particular, as in Step \ref{prop:cv:sai:1}.2, we have that
$$\tv_i(x)=\frac{\lambda_i'}{|x|^{n-2}}+\tilde{\psi}_i(x)$$
for all $x\in B_{2\delta}(0)$.

\medskip\noindent We claim that
\begin{equation}\label{sgn:psi}
\tilde{\psi}_i(0)>0.
\end{equation}
We prove the claim. As in Step \ref{prop:cv:sai:1}.2.2, \eqref{sgn:psi} holds if $K>0$. Assume that  $K=0$. Arguing as in Step \ref{prop:cv:sai:1}.2.1, we get that
$$\left\{\begin{array}{l}
\hbox{either }\sai= d(\xai,\partial\Omega)\hbox{ and }\xai\not\in\partial\Omega\\
\hbox{or there exists }j\in I_i\cap J_i\hbox{ such that }\sai^2=\frac{\mai}{\maj}(\maj^2+|\xaj-\xai|^2)\end{array}\right.$$

\medskip\noindent{\bf Step \ref{prop:cv:sai:1}.3.1:} we assume that
$$\sai:=d(\xai,\partial\Omega)$$
for all $\alpha\in\nn$. In particular, it follows from \eqref{lim:d:sai:finite} that that $\rho=1>0$ and then $\sigma(\tilde{\theta}_i)=\sigma(0)=(2\rho, 0)\neq 0$ and then $\tilde{\psi}_i(0)\geq \lambda'_i|\sigma(\tilde{\theta}_i)|^{2-n}=\lambda'_i (2\rho)^{2-n}>0$.

\medskip\noindent{\bf Step \ref{prop:cv:sai:1}.3.2:} we assume that there exists $j\in I_i\cap J_i$ such that
$$\sai^2=\frac{\mai}{\maj}(\maj^2+|\xaj-\xai|^2)$$
for all $\alpha\in\nn$. Mimicking what was done in Step \ref{prop:cv:sai:1}.2.2, we get again that $\tilde{\psi}_i(0)>0$.

\medskip\noindent In all the cases, we have proved that $\tilde{\psi}_i(0)>0$. This proves \eqref{sgn:psi}, and then ends Step \ref{prop:cv:sai:1}.3.\hfill$\Box$

\medskip\noindent Proposition \ref{prop:cv:sai:2} is a consequence of Steps \ref{prop:cv:sai:1}.1 and \ref{prop:cv:sai:1}.3.\hfill$\Box$

\section{Estimates of the interior blow-up rates}
This section is devoted to the analysis of the concentration at the points $\xai$ away from the boundary.
\begin{thm}\label{th:asymp:int} Let $i\in\{1,...,N\}$. We assume that
\begin{equation}\label{lim:d:mai:infty}
\lim_{\alpha\to +\infty}\frac{d(\xai,\partial\Omega)}{\mai}=+\infty.
\end{equation}
Then $n\geq 4$ (equation \eqref{lim:d:mai:infty} does not hold in dimension $n=3$). Concerning the blow-up rate, there exists $c_i>0$ such that
\begin{equation}\label{rate:sai:int}
\lim_{\alpha\to +\infty}\frac{\ea\sai^{n-2}}{\mai^{n-4}}=c_i\hbox{ if }n\geq 5,
\end{equation}
\begin{equation}\label{rate:sai:int:4}
\lim_{\alpha\to +\infty}\ea\sai^2\ln\frac{1}{\mai}=c_i\hbox{ if }n=4.
\end{equation}
and
\begin{equation}\label{d:s:bord}
\sai=o(d(\xai,\partial\Omega))
\end{equation}
when $\alpha\to +\infty$. Moreover, when $n\geq 7$, we have the following additional information:
\begin{equation}
\sai=o\left(\frac{\mai^{\frac{1}{2}}}{\bua^{\frac{1}{n-2}}}\right)\hbox{ when }\alpha\to +\infty,
\end{equation}
and there exists $j\in\{1,...,N\}$ such that $\mai=o(\maj)$ when $\alpha\to +\infty$ and
$$\sai=\left(\frac{\mai}{\maj}(\maj^2+|\xai-\xaj|^2)\right)^{\frac{1}{2}}$$
for all $\alpha\in\nn$.
\end{thm}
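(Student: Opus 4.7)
The strategy is the Pohozaev identity applied on a ball $B_{\delta s_{i,\alpha}}(\xai)$ at the interaction scale $s_{i,\alpha}$, for fixed small $\delta>0$, combined with the $C^1$ convergence and the singular-harmonic representation of the limit given by Proposition \ref{prop:cv:sai:1}. Multiplying \eqref{eq:ua} by the multiplier $(x-\xai)\cdot\nabla \ua+\tfrac{n-2}{2}\ua$ and integrating by parts on $B_{\delta s_{i,\alpha}}(\xai)\subset\Omega$ (which holds for large $\alpha$ by \eqref{lim:d:sai:infty}), the critical exponent bulk term vanishes and one obtains the Pohozaev identity
\begin{equation*}
\ea\int_{B_{\delta s_{i,\alpha}}(\xai)} \ua^2\,dx=P_\alpha(\delta)+\frac{\ea\delta s_{i,\alpha}}{2}\int_{\partial B_{\delta s_{i,\alpha}}}\ua^2\,d\sigma-\frac{n(n-2)\delta s_{i,\alpha}}{\crit}\int_{\partial B_{\delta s_{i,\alpha}}}\ua^{\crit}\,d\sigma,
\end{equation*}
with main boundary functional $P_\alpha(\delta):=\int_{\partial B_{\delta s_{i,\alpha}}}\bigl[\tfrac{r}{2}|\nabla\ua|^2-r(\partial_\nu\ua)^2-\tfrac{n-2}{2}\ua\partial_\nu\ua\bigr]d\sigma$, where $r=|x-\xai|$.

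Rescaling $x=\xai+s_{i,\alpha} y$, the $C^1$ convergence $\vai\to v_i$ away from the singularities yields $P_\alpha(\delta)=(\mu_{i,\alpha}/s_{i,\alpha})^{n-2}P(v_{i,\alpha},B_\delta)$ with $P(v_{i,\alpha},B_\delta)\to P(v_i,B_\delta)$ as $\alpha\to+\infty$. Using the representation $v_i(x)=\lambda_i'|x|^{2-n}+\psi_i(x)$ with $\psi_i$ harmonic on $B_{2\delta}$ and $\psi_i(0)>0$ from \eqref{cons:prop:1}, a direct calculation splits $P(v_i,B_\delta)$ into three pieces: the pure $\lambda_i'|x|^{2-n}$ contribution vanishes by scale invariance; the cross term, after using $\int_{\partial B_\delta}\partial_\nu\psi_i\,d\sigma=0$ (harmonicity, in the convention $\Delta=-\mathrm{div}\nabla$) and the mean-value identity $\int_{\partial B_\delta}\psi_i\,d\sigma=\omega_{n-1}\delta^{n-1}\psi_i(0)$, reduces to $\tfrac{(n-2)^2\omega_{n-1}}{2}\lambda_i'\psi_i(0)$; the pure $\psi_i$ part is $O(\delta^{n-1})$. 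Hence $\lim_{\delta\to 0}\lim_{\alpha\to+\infty}(s_{i,\alpha}/\mu_{i,\alpha})^{n-2}P_\alpha(\delta)=A_i:=\tfrac{(n-2)^2\omega_{n-1}}{2}\lambda_i'\psi_i(0)>0$. For the left-hand side, Proposition \ref{prop:claimE} and the change of variable $x=\xai+\mu_{i,\alpha} z$ yield $\int_{B_{\delta s_{i,\alpha}}(\xai)}\ua^2\,dx=\mu_{i,\alpha}^2\int_{B_{\delta s_{i,\alpha}/\mu_{i,\alpha}}(0)}(1+|z|^2)^{2-n}dz+o(\cdot)$, which behaves as $C_n\mu_{i,\alpha}^2$ for $n\ge 5$ (integrable at infinity), as $C_4\mu_{i,\alpha}^2\log(s_{i,\alpha}/\mu_{i,\alpha})$ for $n=4$, and as $C_3\mu_{i,\alpha} s_{i,\alpha}$ for $n=3$. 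The two surface remainder terms are easily checked to be negligible compared to $P_\alpha(\delta)$ when $\ea s_{i,\alpha}^2\to 0$.

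Matching the two sides of the Pohozaev identity and letting $\alpha\to+\infty$ at fixed $\delta$ (then $\delta\to 0$) gives the three rates. For $n=3$, the relation forces $\ea s_{i,\alpha}^2\to A_i/(C_3\delta)$; its right-hand side diverges as $\delta\to 0$, while $\ea\to 0$ and $s_{i,\alpha}\to 0$ force $\ea s_{i,\alpha}^2\to 0$; contradiction, so $n\geq 4$. For $n\ge 5$, we directly read \eqref{rate:sai:int} with $c_i=A_i/C_n>0$. For $n=4$, we obtain $\ea s_{i,\alpha}^2\log(s_{i,\alpha}/\mu_{i,\alpha})\to A_i/C_4$; bootstrapping with this bound, $\log(1/s_{i,\alpha})=o(\log(1/\mu_{i,\alpha}))$, so $\log(s_{i,\alpha}/\mu_{i,\alpha})=\log(1/\mu_{i,\alpha})(1+o(1))$ and \eqref{rate:sai:int:4} follows.

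The refined claims are obtained by rerunning the argument in each alternative case for $s_{i,\alpha}$. If $s_{i,\alpha}\asymp d(\xai,\partial\Omega)$, we use Proposition \ref{prop:cv:sai:2} (boundary case with $\rho>0$) instead of Proposition \ref{prop:cv:sai:1}; the analog of $A_i$ is only enlarged by the positive reflection contribution at $\sigma(0)$, and the rate formula keeps the same form. Inserting the constraint $d(\xai,\partial\Omega)^{n-2}\bua\leq \mu_{i,\alpha}^{(n-2)/2}$ forced by \eqref{def:sai} into this rate, combining with \eqref{comp:ea:ma} (which gives $\bua\geq c\mu_{i,\alpha}^{(n-2)/2}/\ea$) and \eqref{upp:bua} (which gives $\bua\leq C\ea^{(n-2)/4}$), one extracts a lower bound on $\ea\mu_{i,\alpha}^{-2}$ incompatible with $\ea\to 0$, for $n\geq 4$. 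The analogous scheme for $n\geq 7$ starts from the alternative $s_{i,\alpha}\asymp \mu_{i,\alpha}^{1/2}/\bua^{1/(n-2)}$: \eqref{rate:sai:int} then reads $\bua\asymp \ea\mu_{i,\alpha}^{-(n-6)/2}$, and contrasting with \eqref{upp:bua} and \eqref{comp:ea:ma} forces $\ea\geq c\mu_{i,\alpha}^{-2}\to+\infty$, a contradiction precisely when $n\geq 7$. By elimination, \eqref{def:sai} yields the final claim. The main obstacle is this case analysis: the Pohozaev computation is routine once \eqref{cons:prop:1} is in hand, but identifying the dimension thresholds $n=4$ and $n=7$ demands careful bookkeeping of the interplay between the rate formula and the upper/lower bounds on $\bua$.
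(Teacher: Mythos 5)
Your core computation reproduces the paper's route: the Pohozaev identity on $B_{\delta\sai}(\xai)$, the rescaling to $v_{i,\alpha}\to v_i$, the evaluation of the boundary functional as $\bigl(\tfrac{(n-2)^2\omega_{n-1}}{2}\lambda_i'\psi_i(0)+o(1)\bigr)(\mai/\sai)^{n-2}$ using harmonicity and the mean value property of $\psi_i$, the $L^2$ asymptotics of the left-hand side, and the $n\ge 7$ elimination of the alternative $\sai\asymp\mai^{1/2}\bua^{-1/(n-2)}$ are Steps 1--6 of the paper's proof. Two caveats there are repairable but real: the claim that the bubble $U_{i,\alpha}$ dominates $\int_{B_{\delta\sai}}\ua^2$ (and, in dimension $3$, the matching upper bound $O(\mai\sai)$ on which your contradiction rests) is not automatic and requires the case-by-case comparison of the other bubbles and of the $\bua$-term against the definition \eqref{def:sai} of $\sai$ (the paper's Steps 3.1--3.4 and 10); and your final inequality ``$\ea\ge c\mai^{-2}$'' is not what the arithmetic gives — the correct conclusion is $1=O\bigl(\mai^{\frac{n-6}{2}}\ea^{\frac{n-6}{4}}\bigr)$, absurd precisely for $n\ge 7$.

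The genuine gap is your treatment of \eqref{d:s:bord} and, downstream, of the final structural claim. Ruling out $\sai\asymp d(\xai,\partial\Omega)$ by rate bookkeeping fails: from $\sai\le\mai^{1/2}\bua^{-1/(n-2)}$, \eqref{rate:sai:int}, \eqref{comp:ea:ma} and \eqref{upp:bua} one only obtains $\bua=O\bigl(\ea\mai^{\frac{6-n}{2}}\bigr)$ and hence $\mai^{n-4}=O(\ea^2)$, which is perfectly consistent with $\ea\to 0$ and $\mai\to 0$ (and for $n=4$ the logarithmic rate only yields $1=O\bigl(\ea^2\ln(\sai/\mai)\bigr)$, again no contradiction); no lower bound on $\ea\mai^{-2}$ follows. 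As you yourself observe, the reflected singularity only changes the constant in the rate identity, so the radial Pohozaev identity cannot distinguish this case. The paper excludes it, for every $n\ge 3$, by a different mechanism (its Step 7): the translated/differentiated Pohozaev identity \eqref{id:poho:der} applied on a small ball centered at the auxiliary point $x_{j_0,\alpha}$ whose rescaled limit $\tilde{\theta}_{j_0}$ has minimal first coordinate (shown to stay at distance $\ge\epsilon_0\sai$ from $\partial\Omega$), which forces $\partial_1\psi_{i,j_0}(\tilde{\theta}_{j_0})=0$ for the regular part of $\tv_i$; the explicit form of $\tv_i$ with the reflected poles $\sigma(\tilde{\theta}_j)$ makes every term of this derivative nonpositive and the self-reflection term strictly negative, a contradiction. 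The same force-balance argument (the paper's Step 8) proves $\xaj-\xai=o(\sai)$ for all $j$ with $\maj\asymp\mai$, and this is exactly what your ``by elimination'' conclusion for $n\ge 7$ is missing: after discarding the $\bua$-term and the distance term, the minimizing index $j\in J_i$ could a priori satisfy $\maj\asymp\mai$ with $|\xai-\xaj|\asymp\sai$, and nothing in your argument forbids it, so you cannot conclude $\mai=o(\maj)$. Without these two balancing steps, \eqref{d:s:bord} and the last assertion of the theorem remain unproved.
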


\medskip\noindent{\it Proof of Theorem \ref{th:asymp:int}:}

\medskip\noindent For $x\in\sai^{-1}(\Omega-\xai)$, we define
$$\vai(x):=\frac{\sai^{n-2}}{\mai^{\frac{n-2}{2}}}\ua(\xai+\sai x).$$

\medskip\noindent {\bf Step \ref{th:asymp:int}.1:} We claim that there exists $\delta>0$ such that $\vai$ is well defined on $B_\delta(0)$ and such that there exists $v_i\in C^2(B_\delta(0)\setminus \{0\})$ such that
\begin{equation}\label{lim:vai:vi}
\lim_{\alpha\to +\infty}\vai=v_i\hbox{ in }C^2_{loc}(B_{2\delta}(0)\setminus \{0\})
\end{equation}
where there exists $\lambda_i'>0$ and $\psi_i\in C^2(B_{2\delta}(0))$ such that $\Delta\psi_i=0$ and
\begin{equation}\label{vi:explicit}
v_i(x)=\frac{\lambda_i'}{|x|^{n-2}}+\psi_i(x)\hbox{ for all }x\in B_{2\delta}(0)\setminus \{0\}\hbox{ with }\psi_i(0)>0.
\end{equation}
We prove the claim. Indeed, since $\xai\not\in\partial\Omega$, it follows from the definition of $\sai$ that
\begin{equation}\label{min:dist:sai}
\frac{d(\xai,\partial\Omega)}{\sai}\geq 1
\end{equation}
for all $\alpha\in\nn$. In particular, $\vai$ is well defined on $B_{1/2}(0)$.\par
\smallskip\noindent Assume that $\lim_{\alpha\to +\infty}\frac{d(\xai,\partial\Omega)}{\sai}=+\infty$: then \eqref{lim:vai:vi} and \eqref{vi:explicit} are direct consequences of Proposition \ref{prop:cv:sai:1}.\par
\smallskip\noindent Assume that $\lim_{\alpha\to +\infty}\frac{d(\xai,\partial\Omega)}{\sai}=\rho\geq 0$:  it follows from \eqref{min:dist:sai} that $\rho\geq 1$ and that $\lim_{\alpha\to +\infty}\xai=x_0\in\partial\Omega$. Using that the chart $\varphi$ around $x_0$ is such that $d\varphi_0$ is an orthogonal transformation and that $\tua$ coincides with $\ua$ on $\Omega$, we get  \eqref{lim:vai:vi} and \eqref{vi:explicit} thanks to Proposition \ref{prop:cv:sai:2}.\par
\smallskip\noindent This proves the claim and therefore ends Step \ref{th:asymp:int}.1.

\medskip\noindent Taking $\delta>0$ smaller if needed, for any $j\in \{1,...,N\}$, we have that
\begin{equation}\label{ppty:delta}
\xaj-\xai\neq o(\sai)\hbox{ when }\alpha\to +\infty\,\Rightarrow \, |\xaj-\xai|\geq 2\delta\sai\hbox{ for all }\alpha\in\nn.
\end{equation}

\medskip\noindent{\bf Step \ref{th:asymp:int}.2:} Let $U$ be a  smooth bounded domain of $\rn$, let $x_0\in\rn$ be a point and let $u\in C^2(\overline{U})$. We claim that
\begin{eqnarray}\label{id:poho}
&&\int_U(x-x_0)^k\partial_k u \Delta u\, dx+\frac{n-2}{2}\int_U u\Delta u\, dx\\
&&=\int_{\partial U}\left((x-x_0,\nu)\frac{|\nabla u|^2}{2}-\partial_\nu u\left((x-x_0)^k\partial_k u+\frac{n-2}{2}u\right)\right)\, d\sigma\nonumber
\end{eqnarray}
We prove the claim. Indeed, this is the celebrated Pohozaev identity \cite{pohozaev}. We sketch a proof here for convenience for the reader. We have that
\begin{eqnarray*}
&&\int_U(x-x_0)^k\partial_k u \Delta u\, dx+\frac{n-2}{2}\int_U u\Delta u\, dx\\
&&=\int_U -\partial_j\partial_j u \left((x-x_0)^k\partial_k u+\frac{n-2}{2}u\right)\, dx\\
&&= \int_U \partial_j u\partial_j \left((x-x_0)^k\partial_k u+\frac{n-2}{2}u\right)\, dx -\int_{\partial U}\partial_\nu u\left((x-x_0)^k\partial_k u+\frac{n-2}{2}u\right)\, d\sigma\\
&&= \frac{n}{2}\int_U |\nabla u|^2\, dx+\frac{1}{2}\int_U(x-x_0)^k\partial_k|\nabla u|^2\, dx -\int_{\partial U}\partial_\nu u\left((x-x_0)^k\partial_k u+\frac{n-2}{2}u\right)\, d\sigma\\
&&=\int_U \partial_k \left((x-x_0)^k\frac{|\nabla u|^2}{2}\right)\, dx-\int_{\partial U}\partial_\nu u\left((x-x_0)^k\partial_k u+\frac{n-2}{2}u\right)\, d\sigma\\
&&=\int_{\partial U}\left((x-x_0,\nu)\frac{|\nabla u|^2}{2}-\partial_\nu u\left((x-x_0)^k\partial_k u+\frac{n-2}{2}u\right)\right)\, d\sigma.
\end{eqnarray*}
This proves \eqref{id:poho}, and therefore the claim. This ends Step \ref{th:asymp:int}.2.

\medskip\noindent As a consequence, differentiating \eqref{id:poho} with respect to $x_0$, we get that
\begin{equation}\label{id:poho:der}
\int_U\partial_k u \Delta u\, dx=\int_{\partial U}\left(\nu_k\frac{|\nabla u|^2}{2}-\partial_\nu u\partial_k u\right)\, d\sigma
\end{equation}

\medskip\noindent Taking $u:=\ua$, using equation \eqref{eq:ua} and integrating by parts, we get that
\begin{eqnarray}\label{id:poho:ua}
\ea\int_U\ua^2\, dx&=&\int_{\partial U}\left((x-x_0,\nu)\left(\frac{|\nabla \ua|^2}{2}-c_n\frac{\ua^{\crit}}{\crit}+\frac{\ea\ua^2}{2}\right)\right.\\
&&\left.-\partial_\nu \ua\left((x-x_0)^k\partial_k \ua+\frac{n-2}{2}\ua\right)\right)\, d\sigma\nonumber
\end{eqnarray}
where here and in the sequel, we define $c_n:=n(n-2)$. Taking $i\in\{1,...,N\}$ such that \eqref{lim:d:mai:infty} holds, and $\delta>0$ as in Step  \ref{th:asymp:int}.1, we let $U:=B_{\delta\sai}(\xai)\subset\subset \Omega$ and $x_0:=\xai$ in \eqref{id:poho:ua}. This yields
\begin{eqnarray}\label{id:poho:ua:int}
\ea\int_{B_{\delta\sai}(\xai)}\ua^2\, dx&=&\int_{\partial B_{\delta\sai}(\xai)}\left((x-\xai,\nu)\left(\frac{|\nabla \ua|^2}{2}-c_n\frac{\ua^{\crit}}{\crit}+\frac{\ea\ua^2}{2}\right)\right.\nonumber\\
&&\left.-\partial_\nu \ua\left((x-\xai)^k\partial_k \ua+\frac{n-2}{2}\ua\right)\right)\, d\sigma.
\end{eqnarray}

\medskip\noindent We now estimate the LHS and the RHS separately.

\medskip\noindent{\bf Step \ref{th:asymp:int}.3:} We claim that there exists $c>0$ such that
\begin{equation}\label{lhs:L2}
\int_{B_{\delta\sai}(\xai)}\ua^2\, dx=(c+o(1))\mai^2\cdot\left\{\begin{array}{ll}
1&\hbox{ if }n\geq 5\\
\ln\frac{\sai}{\mai}&\hbox{ if }n=4
\end{array}\right.
\end{equation}
when $\alpha\to +\infty$.\par
\medskip\noindent We prove the claim. We assume here that $n\geq 4$. It follows from \eqref{est:pointwise:V} and the estimate \eqref{ineq:Ua:Va} that
\begin{eqnarray}
\int_{B_{\delta\sai}(\xai)}\ua^2\, dx&\geq &C\int_{B_{\delta\sai}(\xai)}U_{i,\alpha}^2\, dx=C\mai^2\int_{B_{\delta\sai/\mai}(0)}\frac{1}{(1+|z|^2)^{n-2}}\, dz\nonumber\\
&\geq &C\mai^2\cdot\left\{\begin{array}{ll}
1&\hbox{ if }n\geq 5\\
\ln\frac{\sai}{\mai}&\hbox{ if }n=4
\end{array}\right.\label{lower:L2:ua}
\end{eqnarray}
for all $\alpha\in\nn$.\par

\smallskip\noindent We now deal with the upper estimate. With the upper bound \eqref{est:pointwise}, we get that
\begin{eqnarray}\label{upper:L2:ua}
&&\int_{B_{\delta\sai}(\xai)}\ua^2\, dx\\
&&\leq  C\int_{B_{\delta\sai}(\xai)}\bua^2\, dx+C\sum_{j=1}^N\int_{B_{\delta\sai}(\xai)}\left(\frac{\maj}{\maj^2+|x-\xaj|^2}\right)^{n-2}\, dx\nonumber
\end{eqnarray}
We deal with the different terms separately.

\medskip\noindent{\bf Step \ref{th:asymp:int}.3.1:} We claim that
\begin{equation}\label{est:L2:1}
\int_{B_{\delta\sai}(\xai)}\bua^2\, dx=O(\mai^2)\hbox{ when }n\geq 4
\end{equation}
when $\alpha\to +\infty$. We prove the claim. Indeed, with the definition \eqref{def:sai} of $\sai$, we have that
$$\int_{B_{\delta\sai}(\xai)}\bua^2\, dx=O(\sai^n\bua^2)=O(\mai^{\frac{n}{2}}\bua^{\frac{n-4}{n-2}})=o(\mai^2)$$
when $\alpha\to +\infty$ since $n\geq 4$. This proves \eqref{est:L2:1} and ends Step \ref{th:asymp:int}.3.1.

\medskip\noindent{\bf Step \ref{th:asymp:int}.3.2:} We let $j\in\{1,...,N\}$ such that
\begin{equation}\label{hyp:mai:maj:1}
\maj=O(\mai)
\end{equation}
when $\alpha\to +\infty$. We claim that
\begin{equation}\label{est:L2:2}
\int_{B_{\delta\sai}(\xai)}\left(\frac{\maj}{\maj^2+|x-\xaj|^2}\right)^{n-2}\, dx\leq C\mai^2\cdot\left\{\begin{array}{ll}
1&\hbox{ if }n\geq 5\\
\ln\frac{\sai}{\mai}&\hbox{ if }n=4
\end{array}\right.
\end{equation}
when $\alpha\to +\infty$. We first assume that $n\geq 5$. Estimating roughly the integral, we get with the change of variable $x=\xaj+\maj z$ and with \eqref{hyp:mai:maj:1} that
\begin{eqnarray*}
&&\int_{B_{\delta\sai}(\xai)}\left(\frac{\maj}{\maj^2+|x-\xaj|^2}\right)^{n-2}\, dx\leq \int_{\rn}\left(\frac{\maj}{\maj^2+|x-\xaj|^2}\right)^{n-2}\, dx\\
&&=\maj^2\int_{\rn}\frac{dz}{(1+|z|^2)^{n-2}}=O(\maj^2)=O(\mai^2)
\end{eqnarray*}
when $\alpha\to +\infty$ since $n\geq 5$. This proves \eqref{est:L2:2} when $n\geq 5$. When $n=4$, we must be a little more precise. Assume first that $\xai-\xaj=O(\sai)$ when $\alpha\to +\infty$. Then we have that
\begin{eqnarray*}
&&\int_{B_{\delta\sai}(\xai)}\left(\frac{\maj}{\maj^2+|x-\xaj|^2}\right)^{2}\, dx\leq \int_{B_{R\sai}(\xaj)}\left(\frac{\maj}{\maj^2+|x-\xaj|^2}\right)^{n-2}\, dx\\
&&=\maj^2\int_{B_{\delta\sai\mai^{-1}}(0)}\frac{dz}{(1+|z|^2)^{2}}=O\left(\maj^2\ln\frac{\sai}{\maj}\right)=O\left(\mai^2\ln\frac{\sai}{\mai}\right)
\end{eqnarray*}
when $\alpha\to +\infty$. Assume now that $\sai^{-1}|\xai-\xaj|\to +\infty$ when $\alpha\to +\infty$. Then for any $x\in B_{\delta\sai}(\xai)$, we have that $|x-\xaj|\geq \sai$ and then
\begin{eqnarray*}
&&\int_{B_{\delta\sai}(\xai)}\left(\frac{\maj}{\maj^2+|x-\xaj|^2}\right)^{2}\, dx\leq C\frac{\sai^4\maj^2}{\sai^4}=O(\maj^2)=O\left(\mai^2\ln\frac{\sai}{\mai}\right)
\end{eqnarray*}
when $\alpha\to +\infty$. These estimates prove \eqref{est:L2:2} in case $n=4$. This ends Step \ref{th:asymp:int}.3.2.

\medskip\noindent{\bf Step \ref{th:asymp:int}.3.3:} We let $j\in\{1,...,N\}$ such that
\begin{equation}\label{hyp:mai:maj:2}
\mai=o(\maj)\hbox{ and }\xai-\xaj\neq o(\sai)
\end{equation}
when $\alpha\to +\infty$. We claim that when $n\geq 4$, we have that
\begin{equation}\label{est:L2:3}
\int_{B_{\delta\sai}(\xai)}\left(\frac{\maj}{\maj^2+|x-\xaj|^2}\right)^{n-2}\, dx=O(\mai^2)
\end{equation}
when $\alpha\to +\infty$. We prove the claim. It follows from \eqref{hyp:mai:maj:2} and the definition \eqref{ppty:delta} of $\delta$ that $|\xai-\xaj|\geq 2\delta\sai$ for all $\alpha\in\nn$. In particular,
$$x\in B_{\delta\sai}(\xai)\,\Rightarrow\, |x-\xaj|\geq \frac{|\xai-\xaj|}{2}$$
and therefore
\begin{equation}\label{eq:step3.3:1}
\int_{B_{\delta\sai}(\xai)}\left(\frac{\maj}{\maj^2+|x-\xaj|^2}\right)^{n-2}\, dx=O\left(\sai^n\left(\frac{\maj}{\maj^2+|\xai-\xaj|^2}\right)^{n-2}\right)
\end{equation}
when $\alpha\to +\infty$. Moreover, it follows from \eqref{hyp:mai:maj:2} that $j\in J_i$, and then
\begin{equation}\label{eq:step3.3:2}
\sai^2\leq\frac{\mai}{\maj}(\maj^2+|\xai-\xaj|^2)
\end{equation}
for all $\alpha\in\nn$. It then follows from \eqref{eq:step3.3:1}, \eqref{eq:step3.3:2} and \eqref{lim:sai:mai} that
\begin{eqnarray*}
&&\int_{B_{\delta\sai}(\xai)}\left(\frac{\maj}{\maj^2+|x-\xaj|^2}\right)^{n-2}\, dx=O\left(\sai^n\left(\frac{\mai}{\sai^2}\right)^{n-2}\right)\\
&&=O\left(\mai^2\left(\frac{\mai}{\sai}\right)^{n-4}\right)=O(\mai^2)
\end{eqnarray*}
when $\alpha\to +\infty$ since $n\geq 4$. This proves \eqref{est:L2:3} and ends Step \ref{th:asymp:int}.3.3.

\medskip\noindent{\bf Step \ref{th:asymp:int}.3.4:} We let $j\in\{1,...,N\}$ such that
\begin{equation}\label{hyp:mai:maj:3}
\mai=o(\maj)\hbox{ and }\xai-\xaj= o(\sai)
\end{equation}
when $\alpha\to +\infty$. We claim that
\begin{equation}\label{est:L2:4}
\int_{B_{\delta\sai}(\xai)}\left(\frac{\maj}{\maj^2+|x-\xaj|^2}\right)^{n-2}\, dx=O(\mai^2)\hbox{ when }n\geq 4
\end{equation}
when $\alpha\to +\infty$. We prove the claim. As in Step 3.3, it follows from \eqref{hyp:mai:maj:3} that $j\in J_i$. In particular, using the definition \eqref{def:sai} of $\sai$ and the second assertion of \eqref{hyp:mai:maj:3}, we get that
\begin{equation*}
\sai^2\leq\frac{\mai}{\maj}(\maj^2+|\xai-\xaj|^2)\leq \mai\maj+o(\sai^2)
\end{equation*}
when $\alpha\to +\infty$, and then $\sai^2=O(\mai\maj)$ when $\alpha\to +\infty$. Consequently, we get that
\begin{eqnarray*}
&&\int_{B_{\delta\sai}(\xai)}\left(\frac{\maj}{\maj^2+|x-\xaj|^2}\right)^{n-2}\, dx=O\left(\frac{\sai^n}{\maj^{n-2}}\right)=O\left(\frac{\mai^{\frac{n}{2}}}{\maj^{\frac{n}{2}-2}}\right)\\
&&=O\left(\left(\frac{\mai}{\maj}\right)^{\frac{n}{2}-2}\mai^2\right)=O(\mai^2)
\end{eqnarray*}
when $\alpha\to +\infty$ since $n\geq 4$. This proves \eqref{est:L2:4} and ends Step \ref{th:asymp:int}.3.4.

\medskip\noindent Plugging together \eqref{est:L2:1}, \eqref{est:L2:2}, \eqref{est:L2:3} and \eqref{est:L2:4} into \eqref{upper:L2:ua} and combining this with \eqref{lower:L2:ua}, we get \eqref{lhs:L2}. This proves the claim and ends Step \ref{th:asymp:int}.3.

\medskip\noindent We define
\begin{eqnarray}
A_{i, \alpha}:&=&\int_{\partial B_{\delta\sai}(\xai)}\left((x-\xai,\nu)\left(\frac{|\nabla \ua|^2}{2}-c_n\frac{\ua^{\crit}}{\crit}+\frac{\ea\ua^2}{2}\right)\right.\nonumber\\
&&\left.-\partial_\nu \ua\left((x-\xai)^k\partial_k \ua+\frac{n-2}{2}\ua\right)\right)\, d\sigma.\label{def:Aai}
\end{eqnarray}
for all $\alpha\in\nn$.

\medskip\noindent{\bf Step \ref{th:asymp:int}.4:} Assume that $n\geq 3$. We claim  that
\begin{equation}\label{est:A}
A_{i, \alpha}=\left(\frac{(n-2)^2\omega_{n-1}\lambda_i'\psi_i(0)}{2}+o(1)\right)\cdot\left(\frac{\mai}{\sai}\right)^{n-2}
\end{equation}
when $\alpha\to +\infty$. Here, $\omega_{n-1}$ denotes the volume of the unit $(n-1)-$sphere of $\rn$.\par
\medskip\noindent We prove the claim. With the change of variable $x=\xai+\sai z$ and using the definition of $\vai$, we get that
\begin{eqnarray*}
A_{i, \alpha}&=&\left(\frac{\mai}{\sai}\right)^{n-2}\int_{\partial B_{\delta}(0)}\left((z,\nu)\left(\frac{|\nabla \vai|^2}{2}-c_n\left(\frac{\mai}{\sai}\right)^2\frac{\vai^{\crit}}{\crit}+\frac{\ea\sai^2\ua^2}{2}\right)\right.\\
&&\left.-\partial_\nu \vai\left( x^k\partial_k \vai+\frac{n-2}{2}\vai\right)\right)\, d\sigma
\end{eqnarray*}
for all $\alpha\in\nn$. Since $\vai\to v_i$ in $C^2_{loc}(B_{2\delta}(0)\setminus \{0\})$ when $\alpha\to +\infty$, passing to the limit, we get that
\begin{eqnarray}\label{est:A:1}
A_{i, \alpha}&=&\left(\frac{\mai}{\sai}\right)^{n-2}\left(\int_{\partial B_{\delta}(0)}\left((z,\nu)\left(\frac{|\nabla v_i|^2}{2}\right)\right.\right.\\
&&\left.\left.-\partial_\nu v_i\left( x^k\partial_k v_i+\frac{n-2}{2}v_i\right)\right)\, d\sigma +o(1)\right)\nonumber
\end{eqnarray}
when $\alpha\to +\infty$. We let $\epsilon\in (0,\delta)$ and we apply the Pohozaev identity \eqref{id:poho} to $v_i$ on $B_\delta(0)\setminus \overline{B}_\epsilon(0)$ with $x_0=0$. Since $\Delta v_i=0$, we get that the map
$$\epsilon\mapsto \int_{\partial B_{\epsilon}(0)}\left((z,\nu)\left(\frac{|\nabla v_i|^2}{2}\right)-\partial_\nu v_i\left( x^k\partial_k v_i+\frac{n-2}{2}v_i\right)\right)\, d\sigma$$
is constant on $(0,\delta]$. With the explicit expression \eqref{vi:explicit} of $v_i$, we have the asymptotic expansion
$$(z,\nu)\left(\frac{|\nabla v_i|^2}{2}\right)-\partial_\nu v_i\left( x^k\partial_k v_i+\frac{n-2}{2}v_i\right)=\frac{(n-2)^2 \lambda_i'\psi_i(0)}{2}|x|^{1-n}+O(|x|^{2-n})$$
when $|x|\to 0$. Consequently, we get that
$$\lim_{\epsilon\to 0}\int_{\partial B_{\epsilon}(0)}\left((z,\nu)\left(\frac{|\nabla v_i|^2}{2}\right)-\partial_\nu v_i\left( x^k\partial_k v_i+\frac{n-2}{2}v_i\right)\right)\, d\sigma=\frac{(n-2)^2 \lambda_i'\psi_i(0)\omega_{n-1}}{2},$$
and then
$$\int_{\partial B_{\delta}(0)}\left((z,\nu)\left(\frac{|\nabla v_i|^2}{2}\right)-\partial_\nu v_i\left( x^k\partial_k v_i+\frac{n-2}{2}v_i\right)\right)\, d\sigma=\frac{(n-2)^2 \lambda_i'\psi_i(0)\omega_{n-1}}{2}.$$
Plugging this equality in \eqref{est:A:1} yields \eqref{est:A}. This ends Step \ref{th:asymp:int}.4.

\medskip\noindent{\bf Step \ref{th:asymp:int}.5:} We claim that there exists $c_i>0$ such that
\begin{equation}\label{asymp:sai}
\lim_{\alpha\to +\infty}\frac{\ea\sai^{n-2}}{\mai^{n-4}}=c_i\hbox{ if }n\geq 5\hbox{ and }\lim_{\alpha\to +\infty}\ea\sai^{2}\ln\frac{1}{\mai}=c_i\hbox{ if }n=4.
\end{equation}
Indeed, plugging \eqref{lhs:L2} and \eqref{est:A} into \eqref{id:poho:ua:int} yields
$$(c+o(1))\ea\mai^2=\left(\frac{(n-2)^2\omega_{n-1}\lambda_i'\psi_i(0)}{2}+o(1)\right)\cdot\left(\frac{\mai}{\sai}\right)^{n-2}$$
when $\alpha\to +\infty$ when $n\geq 5$. Since $c,\lambda_i',\psi_i(0)>0$, we get that
$$\lim_{\alpha\to +\infty}\frac{\ea\sai^{n-2}}{\mai^{n-4}}=\frac{(n-2)^2\omega_{n-1}\lambda_i'\psi_i(0)}{2 c}>0.$$
This proves the claim when $n\geq 5$. The proof is similar when $n=4$.

\medskip\noindent{\bf Step \ref{th:asymp:int}.6:} we claim that
\begin{equation}\label{comp:sai:bua}
\sai=o\left(\frac{\mai^{\frac{1}{2}}}{\bua^{\frac{1}{n-2}}}\right)\hbox{ when }n\geq 7.
\end{equation}
when $\alpha\to +\infty$. We prove the claim by contradiction. Indeed, if \eqref{comp:sai:bua} does not hold, it follows from the definition \eqref{def:sai} of $\sai$ that
$$\sai\asymp \frac{\mai^{\frac{1}{2}}}{\bua^{\frac{1}{n-2}}}$$
when $\alpha\to +\infty$. Plugging this identity into \eqref{asymp:sai} yields
$$\ea\asymp \mai^{\frac{n-6}{2}}\bua$$
when $\alpha\to +\infty$. With \eqref{upp:bua}, we then get that
$$1=O\left(\mai^{\frac{n-6}{2}}\ea^{\frac{n-6}{4}}\right),$$
a contradiction since $n\geq 7$. Then \eqref{comp:sai:bua} holds and the claim is proved. This ends Step \ref{th:asymp:int}.6.

\medskip\noindent{\bf Step \ref{th:asymp:int}.7:} Assume that $n\geq 3$. We claim that
\begin{equation}\label{lim:d:sai}
\lim_{\alpha\to +\infty}\frac{d(\xai,\partial\Omega)}{\sai}=+\infty.
\end{equation}
We prove the claim. We argue by contradiction and we assume that
\begin{equation*}
\lim_{\alpha\to +\infty}\frac{d(\xai,\partial\Omega)}{\sai}=\rho\geq 0.
\end{equation*}
It follows from the definition \eqref{def:sai} of $\sai$ that $\rho\geq 1>0$. We adopt the notations of Proposition \ref{prop:cv:sai:2}. We let $j_0\in I_i'$ such that
\begin{equation}\label{def:j0}
\tilde{\theta}_{j_0,1}=\min_{j\in I_i'}\{\tilde{\theta}_{j,1}\}.
\end{equation}
Here, $\tilde{\theta}_{j,1}$ denotes the first coordinate of $\tilde{\theta}_j$.

\medskip\noindent{\bf Step \ref{th:asymp:int}.7.1:} We claim that there exists $\epsilon_0>0$ such that
\begin{equation}\label{lower:j0}
d(x_{j_0, \alpha},\partial\Omega)\geq \epsilon_0\sai
\end{equation}
for all $\alpha\in\nn$. We prove the claim by contradiction and we assume that $d(x_{j_0, \alpha},\partial\Omega)=o(\sai)$ when $\alpha\to +\infty$. In particular,  via the chart $\varphi$, we get that
$$\lim_{\alpha\to +\infty}\frac{(\varphi^{-1}(x_{j_0, \alpha}))_1}{\sai}=0\hbox{ and }\lim_{\alpha\to +\infty}\frac{(\varphi^{-1}(\xai))_1}{\sai}=-\rho<0.$$
Coming back to the definition \eqref{def:tthetaj} of $\tilde{\theta}_{j_0}$, we get that
$$\tilde{\theta}_{j_0,1}=\lim_{\alpha\to +\infty}\frac{(\varphi^{-1}(x_{j_0, \alpha})-\varphi^{-1}(\xai))_1}{\sai}=\rho>0.$$
A contradiction since $\tilde{\theta}_{j_0, 1}\leq \tilde{\theta}_{i,1}=0$. This proves \eqref{lower:j0} and ends Step \ref{th:asymp:int}.7.1.

\medskip\noindent{\bf Step \ref{th:asymp:int}.7.2:} We let $\delta_0>0$ such that
$$\delta_0<\frac{\epsilon_0}{2}\hbox{ and }\tilde{\theta}_j\neq \tilde{\theta}_{j_0}\,\Rightarrow \, |\tilde{\theta}_{j}-\tilde{\theta}_{j_0}|\geq 2\delta_0.$$
Taking the Pohozaev identity \eqref{id:poho:ua} with $U:=B_{\delta_0\sai}(x_{j_0, \alpha})\subset\subset\Omega$ and differentiating with respect to $x_0$, we get that
\begin{equation}\label{id:der:ua}
\int_{\partial B_{\delta_0\sai}(x_{j_0, \alpha})}\left(\nu_k\left(\frac{|\nabla \ua|^2}{2}-c_n\frac{\ua^{\crit}}{\crit}+\frac{\ea\ua^2}{2}\right)-\partial_\nu \ua\partial_k \ua)\right)\, d\sigma=0
\end{equation}
for all $\alpha\in\nn$ and all $k\in \{1,...,n\}$.  With the change of variable $x=\xai+\sai z$ and using the function $\vai$, we get that
\begin{equation}\label{id:der:vai}
\int_{\partial B_{\delta_0}(\tilde{\theta}_{j_0, \alpha})}\left(\nu_k\left(\frac{|\nabla \vai|^2}{2}-c_n\left(\frac{\mai}{\sai}\right)^2\frac{\vai^{\crit}}{\crit}+\frac{\ea\sai^2\vai^2}{2}\right)-\partial_\nu \vai\partial_k \vai)\right)\, d\sigma=0
\end{equation}
for all $\alpha\to 0$. Letting $\alpha\to 0$, we get with \eqref{lim:vai:vi} that
\begin{equation}\label{id:der:vi}
\int_{\partial B_{\delta_0}(\tilde{\theta}_{j_0})}\left(\nu_k\frac{|\nabla v_i|^2}{2}-\partial_\nu v_i\partial_k  v_i)\right)\, d\sigma=0
\end{equation}
for all $k\in\nn$. It follows from \eqref{explicit:tvai:prop} that
\begin{eqnarray*}
v_i(x)&=&K+\sum_{j\in I_i'}\lambda_j(|x-\theta_j|^{2-n}+|x-\sigma(\theta_j)|^{2-n}\\
&=&\frac{\lambda'_{i, j_0}}{|x-\theta_{j_0}|^{n-2}}+\psi_{i, j_0}(x)
\end{eqnarray*}
where $\lambda'_{i, j_0}>0$ and
$$\psi_{i, j_0}(x):=K+\lambda_{j_0}|x-\sigma(\theta_{j_0})|^{2-n}+\sum_{j\in I_i" }\lambda_j(|x-\theta_j|^{2-n}+|x-\sigma(\theta_j)|^{2-n}$$
where $I_i":=\{j\in I_i'/\, \theta_j\neq\theta_{j_0}\}$
Arguing as in Step \ref{th:asymp:int}.4, we get that \eqref{id:der:vi} holds on balls with arbitrary small positive radius and then we get that
$$\partial_k\psi_{i, j_0}(\theta_{j_0})=0.$$
Taking $k=1$, we get that
\begin{equation}\label{id:poho:der:0}
\lambda_{j_0}\frac{(\theta_{j_0}-\sigma(\theta_{j_0}))_1}{|\theta_{j_0}-\sigma(\theta_{j_0})|^n}+\sum_{j\in I_i"}\lambda_j\left(\frac{(\theta_{j_0}-\theta_{j})_1}{|\theta_{j_0}-\theta_{j}|^n}+\frac{(\theta_{j_0}-\sigma(\theta_{j}))_1}{|\theta_{j_0}-\sigma(\theta_{j})|^n}\right)=0.
\end{equation}
Recall that if $\theta_j=(\theta_{j, 1},\theta_j')$, then $\sigma(\theta_j)=(2\rho-\theta_{j, 1},\theta_j')$. In particular, since $x_{j,\alpha}\in\omegabar$, we have that $\theta_{j}\in\{x_1\leq \rho\}$ and then for all $j\in I_i"$, we have that
\begin{equation}\label{ineq:poho:1}
\theta_{j_0,1}\leq \theta_{j,1}\leq (\sigma(\theta_j))_1.
\end{equation}
In addition, we have that
\begin{equation}\label{ineq:poho:2}
(\theta_{j_0}-\sigma(\theta_{j_0}))_1=2(\theta_{j_0, 1}-\rho)=-2(|\theta_{j_0,1}|+\rho)<0.
\end{equation}
Plugging \eqref{ineq:poho:1} and \eqref{ineq:poho:2} into \eqref{id:poho:der:0} yields a contradiction. This proves that \eqref{lim:d:sai} holds. This ends Step \ref{th:asymp:int}.7.

\medskip\noindent{\bf Step \ref{th:asymp:int}.8:} We assume that $n\geq 3$. We claim that
\begin{equation}\label{d:xai:xaj:sai}
\xaj-\xai=o(\sai)\hbox{ when }\alpha\to +\infty\hbox{ for all }j\in I_i'.
\end{equation}
We prove the claim. Since \eqref{lim:d:sai} holds, we define $\vai$ and $v_i$ as in Proposition \ref{prop:cv:sai:1}. In particular, we have that
$$v_i(x)=K+\sum_{j\in I_i'}\lambda_j|x-\theta_j|^{2-n}$$
for all $x\in\rn\setminus \{\theta_j/\, j\in I_i\}$. We fix $k\in\{1,...,n\}$ and we let $j_0\in I_i'$ such that
$$\theta_{j_0,k}=\min \{\theta_{j, k}/\, j\in I_i'\}.$$
We let $I_i":=\{j\in I_i'/\, \theta_j\neq \theta_{j_0}\}$. Therefore, there exists $\lambda_{i, j_0}'>0$ such that
$$v_i(x)=\frac{\lambda_{i, j_0}'}{|x-\theta_{j_0}|^{n-2}}+\psi_{i, j_0}(x)$$
where
$$\psi_{i, j_0}(x):=K+\sum_{j\in I_i"}\lambda_j|x-\theta_j|^{2-n}.$$
Taking $\delta<\min\{|\theta_j|/\, \theta_j\neq \theta_{j_0}\}$, we use the identity \eqref{id:der:ua} as in Step \ref{th:asymp:int}.7. Performing the change of variable $x=\xai+\sai z$, we get again that
$$\partial_k \psi_{i, j_0}(\theta_{j_0})=0.$$
With the explicit expression of $\psi_{i, j_0}$, this yields
$$\sum_{j\in I_i"}\lambda_j\frac{(\theta_j-\theta_{j_0})_k}{|\theta_j-\theta_{j_0}|^n}=0.$$
Since $(\theta_j-\theta_{j_0})_k\geq 0$ for all $j\in I_i"$ by definition, we get that $\theta_{j,k}=\theta_{j_0,k}$ for all $j\in I_i"$, and therefore for all $j\in I_i'$. In particular, $\theta_{j, k}=\theta_{i, k}$ for all $k\in\nn$, and therefore $\theta_j=\theta_i=0$ for all $j\in I_i'$. Coming back to the definition \eqref{def:tthetaj} of $\theta_j$, we get that \eqref{d:xai:xaj:sai} holds. This ends the proof of the claim and of Step \ref{th:asymp:int}.8.

\medskip\noindent{\bf Step \ref{th:asymp:int}.9:} Assume that $n\geq 7$. We claim that there exists $j_0\in J_i$ such that
\begin{equation}\label{ppty:sai:j}
\sai=\left(\frac{\mai}{\maj}(\maj^2+|\xai-\xaj|^2)\right)^{\frac{1}{2}}\hbox{ and }\mai=o(\maj)
\end{equation}
when $\alpha\to +\infty$. We prove the claim. Indeed, it follows from the definition \eqref{def:sai} of $\sai$ and \eqref{comp:sai:bua} of Step \ref{th:asymp:int}.6 and \eqref{lim:d:sai} of Step \ref{th:asymp:int}.7 that there exists $j\in J_i$ such that
\begin{equation}\label{id:sai}
\sai=\left(\frac{\mai}{\maj}(\maj^2+|\xai-\xaj|^2)\right)^{\frac{1}{2}}
\end{equation}
for all $\alpha\in\nn$ (up to a subsequence, of course). Since $j\in J_i$, we have that $\mai=O(\maj)$ when $\alpha\to +\infty$. Assume that $\mai\asymp\maj$ when $\alpha\to +\infty$: then it follows from \eqref{id:sai} that $\xaj-\xai=O(\sai)$ when $\alpha\to +\infty$, and then $j\in I_i'$. It then follows from \eqref{d:xai:xaj:sai} of Step \ref{th:asymp:int}.8 that we have that $\xai-\xaj=o(\sai)$. Coming back to \eqref{id:sai}, we get that $\sai\asymp\mai$ when $\alpha\to +\infty$: a contradiction with \eqref{lim:sai:mai}. Therefore \eqref{ppty:sai:j} holds, and the claim is proved. This ends Step \ref{th:asymp:int}.9.

\medskip\noindent{\bf Step \ref{th:asymp:int}.10:} We assume that $n=3$. It follows from \eqref{est:A} and \eqref{id:poho:ua:int} that
\begin{equation}\label{asymp:3}
\int_{B_{\delta\sai}(\xai)}\ua^2\, dx\asymp\frac{\mai}{\sai}
\end{equation}
when $\alpha\to +\infty$. It follows from \eqref{est:pointwise:V} that
\begin{eqnarray*}
\int_{B_{\delta\sai}(\xai)}\ua^2\, dx&=&(1+o(1))\int_{B_{\delta\sai}(\xai)}\left(\bua+\sum_{j=1}^NV_{j,\alpha}(x)\right)^2\, dx\\
&\asymp& \sai^3\bua^2+\sum_{j=1}^N\maj\int_{B_{\delta\sai}(\xai)}(\maj^2+|x-\xaj|^2)^{-1}\, dx
\end{eqnarray*}
when $\alpha\to +\infty$. We distinguish three cases to get a contradiction.

\smallskip\noindent{\bf Step \ref{th:asymp:int}.10.1:} we assume that
\begin{equation}\label{3d:1}
\int_{B_{\delta\sai}(\xai)}\ua^2\, dx\asymp \sai^3\bua^2
\end{equation}
when $\alpha\to +\infty$. It then follows from \eqref{asymp:3} that $\ea\sai^4\bua^2\asymp\mai$ when $\alpha\to +\infty$. Moreover, since $\sai\leq \mai^{1/2}\bua^{-1}$ by the definition \eqref{def:sai}, we get that $\bua^2=o(\mai)$ when $\alpha\to +\infty$. This is a contradiction with \eqref{comp:ea:ma}. Then \eqref{3d:1} does not hold.

\smallskip\noindent{\bf Step \ref{th:asymp:int}.10.2:} we assume that there exists $j\in\{1,...,N\}$ such that $\sai=O(|\xai-\xaj|)$ and
\begin{equation}\label{3d:2}
\int_{B_{\delta\sai}(\xai)}\ua^2\, dx\asymp \maj\int_{B_{\delta\sai}(\xai)}(\maj^2+|x-\xaj|^2)^{-1}\, dx
\end{equation}
when $\alpha\to +\infty$. Here again, since $|x-\xaj|\asymp |\xai-\xaj|$ for all $x\in B_{\delta\sai}(\xai)$, it follows from \eqref{3d:2} and \eqref{asymp:3} that
\begin{equation}\label{3d:2:bis}
\frac{\ea\maj\sai^3}{\maj^2+|\xai-\xaj|^2}\asymp\frac{\mai}{\sai}
\end{equation}
when $\alpha\to +\infty$. In particular, since $\sai=O(|\xai-\xaj|)$, we get that $\mai=o(\maj)$ when $\alpha\to +\infty$, and then $j\in J_i$. Therefore, we have that
$$\sai^2\leq\frac{\mai}{\maj}(\maj^2+|\xai-\xaj|^2)$$
for all $\alpha\in\nn$, and it then follows from \eqref{3d:2:bis} that $1=O(\ea\sai^2)=o(1)$. A contradiction. Therefore, \eqref{3d:2} does not hold.

\smallskip\noindent{\bf Step \ref{th:asymp:int}.10.3:} we assume that there exists $j\in\{1,...,N\}$ such that $|\xai-\xaj|=o(\sai)$ and
\begin{equation}\label{3d:3}
\int_{B_{\delta\sai}(\xai)}\ua^2\, dx\asymp \maj\int_{B_{\delta\sai}(\xai)}(\maj^2+|x-\xaj|^2)^{-1}\, dx
\end{equation}
when $\alpha\to +\infty$. A change of variable then yields
$$\int_{B_{\delta\sai}(\xai)}\ua^2\, dx\asymp \maj\sai^3\int_{B_{\delta}\left(\frac{\xai-\xaj}{\sai}\right)}(\maj^2+\sai^2|z|^2)^{-1}$$
when $\alpha\to +\infty$. Therefore,
$$\int_{B_{\delta\sai}(\xai)}\ua^2\, dx\asymp \maj\sai^3 \max\{\maj,\sai\}^{-2}$$
when $\alpha\to +\infty$. It then follows from \eqref{asymp:3} that
\begin{equation}\label{eq:dim3}
\ea\maj\sai^4\asymp \mai\max\{\maj,\sai\}^2
\end{equation}
when $\alpha\to +\infty$. In particular, we have that $\mai=o(\maj)$, and then $j\in J_i$. Therefore, we have that
$$\sai^2\leq \frac{\mai}{\maj}(\maj^2+|\xai-\xaj|^2)\leq \mai\maj+o(\sai^2)$$
and then $\sai=O(\sqrt{\mai\maj})=o(\maj)$ when $\alpha\to +\infty$. Then \eqref{eq:dim3} becomes $\ea\sai^4\asymp\mai\maj$ when $\alpha\to +\infty$, a contradiction since $\sai^2=O(\mai\maj)$ when $\alpha\to +\infty$. Therefore, \eqref{3d:3} does not hold.

\medskip\noindent In all the situations, we have proved a contradiction. Therefore the hypothesis \eqref{lim:d:mai:infty} of Theorem \ref{th:asymp:int} does not hold in dimension $n=3$. This ends Step \ref{th:asymp:int}.10.

\medskip\noindent{\bf Step \ref{th:asymp:int}.10:} Theorem \ref{th:asymp:int} is a direct consequence of Steps \ref{th:asymp:int}.5, \ref{th:asymp:int}.6, \ref{th:asymp:int}.7, \ref{th:asymp:int}.8 and \ref{th:asymp:int}.10. This ends the proof of Theorem \ref{th:asymp:int}.\hfill$\Box$

\bigskip\noindent In the sequel, we need to translate slightly the boundary concentration points: we fix $\theta\in\rr^{n-1}$ and for all $i\in\{1,...,N\}$ such that $\xai\in\partial\Omega$, we define $\tilde{x}_{i,\alpha}:=\varphi(\varphi^{-1}(\xai)+\mai\theta)\in\partial\Omega$ for all $\alpha\in\nn$. The parameter $\theta$ is chosen such that there exists $\epsilon_0>0$ such that
\begin{equation}\label{zero:S}
|\tilde{x}_{i,\alpha}-\tilde{x}_{j,\alpha}|\geq \epsilon_0\mai
\end{equation}
for all $i,j\in\{1,...,N\}$ distincts such that $\tilde{x}_{i,\alpha},\tilde{x}_{j,\alpha}\in\partial\Omega$ and all $\alpha\in\nn$. We define $\tilde{s}_{i,\alpha}$ as $\sai$ with replacing $\xai$ by $\tilde{x}_{i,\alpha}$: as easily checked, for any $i\in\{1,...,N\}$ such that $\xai\in\partial\Omega$, we have that $\tilde{s}_{i,\alpha}\asymp\sai$ when $\alpha\to +\infty$. From now on, we replace $\xai$ by $\tilde{x}_{i,\alpha}$. As easily checked, the convergence Propositions \ref{prop:cv:sai:1} and \ref{prop:cv:sai:2} and the estimates \eqref{est:grad} and \eqref{est:pointwise} continue to hold with this new choice of points (with $\tau_i>0$ only in the propositions). Note that
the convergence \eqref{cv:ua:ma:2} of the $\tuai$ in Proposition \ref{prop:claimB} is changed as follows:
\begin{equation}\label{cv:ua:ma:2:bis}
\lim_{\alpha\to +\infty}\Vert\tuai-U_0(\cdot+\theta)\Vert_{C^1\left(K\cap \bar{\Om}_{i,\alpha}\right)}=0.
\end{equation}

\section{Estimates of the boundary blow-up rates}\label{sec:bnd}
In this section, we deal with the case when the concentration point is on the boundary.
\begin{thm}\label{th:asymp:bndy} Assume that $n\geq 3$. Let $i\in\{1,...,N\}$. We assume that
\begin{equation}\label{lim:d:mai:0}
\xai\in\partial\Omega
\end{equation}
for all $\alpha\in\nn$. We assume that for all $j\in\{1,...,N\}\setminus\{i\}$, we have that
\begin{equation}\label{ppty:cv:bord}
\xaj\in\partial\Omega\,\Rightarrow\, \xaj-\xai \neq o(\sai)\hbox{ when }\alpha\to +\infty
\end{equation}
when $\alpha\to +\infty$. Then there exists $c'_i>0$ such that
\begin{equation}
\begin{array}{ll}\label{rate:sai:bnd}
\lim_{\alpha\to +\infty}\frac{\mai^{n-3}}{\sai^{n-2}}=-c_i' H(x_0)&\hbox{ if }n\geq 4,\\
\lim_{\alpha\to +\infty}\frac{1}{\sai\ln\frac{1}{\mai}}=-c_i' H(x_0)&\hbox{ if }n=3,
\end{array}
\end{equation}
Where $x_0:=\lim_{\alpha\to +\infty}\xai$ and $H(x_0)$ denotes the mean curvature of $\partial\Omega$ at $x_0$. In particular, $H(x_0)\leq 0$.
\end{thm}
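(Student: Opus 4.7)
The proof follows the blueprint of Theorem \ref{th:asymp:int}. We apply the Pohozaev identity \eqref{id:poho:ua} to $\ua$ on $U:=\Omega\cap B_{\delta\sai}(\xai)$ with $x_0=\xai\in\partial\Omega$, where $\delta>0$ is small enough that (by \eqref{ppty:cv:bord} and Step \ref{th:asymp:int}.8 applied at neighboring interior concentrations) no other blow-up profile disturbs $B_{2\delta\sai}(\xai)$. The boundary of $U$ splits as $\partial U=\Gamma_1\cup\Gamma_2$ with $\Gamma_1:=\Omega\cap\partial B_{\delta\sai}(\xai)$ and $\Gamma_2:=\partial\Omega\cap B_{\delta\sai}(\xai)$; the Neumann condition $\partial_\nu\ua=0$ on $\Gamma_2$ kills every $\partial_\nu\ua$-term there, so the identity reads
\[
\ea\int_U\ua^2\,dx=A_\alpha+B_\alpha, \qquad B_\alpha:=\int_{\Gamma_2}(x-\xai,\nu)\Bigl(\tfrac{|\nabla\ua|^2}{2}-c_n\tfrac{\ua^{\crit}}{\crit}+\tfrac{\ea\ua^2}{2}\Bigr)\,d\sigma,
\]
with $A_\alpha$ the spherical Pohozaev integral on $\Gamma_1$.

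The left-hand side is estimated as in Step \ref{th:asymp:int}.3 using the pointwise bound \eqref{est:pointwise}: it is of order $\ea\mai\sai$ for $n=3$, $\ea\mai^2\ln(\sai/\mai)$ for $n=4$ and $\ea\mai^2$ for $n\ge5$. The spherical term $A_\alpha$ is computed as in Step \ref{th:asymp:int}.4 via $x=\xai+\sai z$ and the $C^1_{\mathrm{loc}}$-convergence $\tva\to\tv_i$ of Proposition \ref{prop:cv:sai:2}; the local structure $\tv_i(z)=\lambda_i'|z|^{2-n}+\tilde\psi_i(z)$ with $\tilde\psi_i$ harmonic and $\tilde\psi_i(0)>0$ yields
\[
A_\alpha=\bigl(c_1\,\lambda_i'\,\tilde\psi_i(0)+o(1)\bigr)\Bigl(\frac{\mai}{\sai}\Bigr)^{n-2}
\]
for an explicit $c_1>0$ (the hemispherical analog of the constant $(n-2)^2\omega_{n-1}/2$ appearing in \eqref{est:A}).

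The heart of the argument is $B_\alpha$. Via the chart $\varphi$ of Lemma \ref{lem:ext}, we parametrize $\partial\Omega$ near $\xai$ as a graph $x_1=\phi(x')$ with $\phi(0)=0$, $\nabla\phi(0)=0$ and $\mathrm{tr}\,D^2\phi(0)=-(n-1)H(x_0)$; the relevant expansion is $(x-\xai,\nu(x))=-\tfrac{1}{2}\,y'^{\,T}D^2\phi(0)\,y'+O(|y'|^3)$ on $\Gamma_2$, with $y'=(x-\xai)'$. Plugging \eqref{est:pointwise:V} and \eqref{est:grad} into $B_\alpha$ shows that the $|\nabla\ua|^2/2$-term dominates the other two; splitting $\Gamma_2$ into $B_{R\mai}(\xai)\cap\Gamma_2$ (where \eqref{cv:ua:ma:2:bis} gives the profile $\ua\sim\mai^{(2-n)/2}U_0((\cdot-\xai)/\mai+\theta)$) and its complement (where the scale-$\sai$ convergence $\tva\to\tv_i$ applies), then rescaling $x-\xai=\mai s$ and averaging angularly via $\mathrm{tr}\,D^2\phi(0)=-(n-1)H(x_0)$ yields
\[
B_\alpha=H(x_0)\,\mai\,\bigl(c_2+o(1)\bigr)\ (n\ge4),\qquad B_\alpha=H(x_0)\,\mai\,\ln\tfrac{1}{\mai}\,\bigl(c_2+o(1)\bigr)\ (n=3),
\]
for an explicit $c_2>0$ (the relevant integral converges at infinity for $n\ge 4$ since $|\nabla U_0|^2\sim|s|^{-2(n-1)}$, and diverges logarithmically for $n=3$, being cut off at $\delta\sai/\mai$).

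Inserting the three estimates into $\ea\int_U\ua^2=A_\alpha+B_\alpha$, dividing by $\mai$ and using $\ea\mai\to0$, the identity reduces to
\[
c_1\lambda_i'\tilde\psi_i(0)\,\frac{\mai^{n-3}}{\sai^{n-2}}+c_2\,H(x_0)=o(1)\qquad(n\ge4),
\]
and the analog $c_1\lambda_i'\tilde\psi_i(0)/\sai+c_2\,H(x_0)\ln(1/\mai)=o(1)$ in dimension three, producing \eqref{rate:sai:bnd} with $c_i'=c_2/(c_1\lambda_i'\tilde\psi_i(0))>0$; the sign constraint $H(x_0)\le0$ then follows from positivity of the left-hand side of \eqref{rate:sai:bnd}. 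The main obstacle is the sharp evaluation of $B_\alpha$: one must show that the $O(|y'|^3)$ remainder in the expansion of $(x-\xai,\nu)$, the interaction between the $|x-\xai|\lesssim\mai$ and $\mai\ll|x-\xai|\lesssim\sai$ regimes, and the contributions of the subdominant $c_n\ua^{\crit}/\crit$ and $\ea\ua^2/2$ terms to $B_\alpha$ are all of lower order than $\mai$ (respectively $\mai\ln(1/\mai)$ in dimension three), using in a precise manner the pointwise bounds \eqref{est:pointwise:V}, \eqref{est:grad} and the profile description \eqref{cv:ua:ma:2:bis}.
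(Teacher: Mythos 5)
Your overall strategy is the paper's: a Pohozaev identity on the half-ball of radius $\delta\sai$ centered at $\xai$, with the Neumann condition killing the $\partial_\nu\ua$ terms on the flat part, the spherical part evaluated through Proposition \ref{prop:cv:sai:2} giving $(c_i+o(1))(\mai/\sai)^{n-2}$, and the boundary part producing the mean curvature. However there are two genuine gaps in your treatment of the boundary term $B_\alpha$. First, for $n\geq 4$ your claim that the $|\nabla\ua|^2/2$ term \emph{dominates} the other two is false: after rescaling at scale $\mai$, the term $c_n\ua^{\crit}/\crit$ contributes at exactly the same order $\mai$ as the gradient term (on $\partial\rnm$ one has $|x|^2U_0^{\crit}\sim|x|^{2-2n}$, which is integrable), so the leading constant is governed by the \emph{combination} $\int_{\partial\rnm}|x|^2\bigl(\tfrac{|\nabla U_0|^2}{2}-c_n\tfrac{U_0^{\crit}}{\crit}\bigr)\,d\xi$. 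This integrand is not pointwise signed, and its positivity is a nontrivial computation (Step \ref{th:asymp:bndy}.6, reducing to $\int_0^1(1-r^2)(r^{n-4}-r^n)(1+r^2)^{-n}dr>0$); dropping the $\ua^{\crit}$ term gives the wrong constant and, more seriously, an unjustified sign. Moreover, because the points have been shifted by $\theta$ (profile $U_0(\cdot+\theta)$ in \eqref{cv:ua:ma:2:bis}), your angular averaging via $\mathrm{tr}\,D^2\phi(0)$ leaves a cross term proportional to $(\partial_{kl}\varphi(0),\nu(x_0))\theta^k\theta^l\int_{\partial\rnm}\bigl(\tfrac{|\nabla U_0|^2}{2}-c_n\tfrac{U_0^{\crit}}{\crit}\bigr)d\xi$, which is not a multiple of $H(x_0)$; it disappears only because that second integral vanishes, the other half of \eqref{integ:positive}, which you never address.

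Second, in dimension $3$ the logarithmic mass of $B_\alpha$ lives in the intermediate annulus $\mai\ll|x-\xai|\ll\sai$, where neither the $\mai$-scale convergence \eqref{cv:ua:ma:2:bis} nor the $\sai$-scale convergence of Proposition \ref{prop:cv:sai:2} (which holds away from the origin) gives the pointwise behavior of $\nabla\ua$; your "rescaling $x-\xai=\mai s$ and cutting off at $\delta\sai/\mai$" implicitly assumes that $|\nabla\ua|$ follows the profile $|\nabla U_0|$ throughout this range. The gradient bound \eqref{est:grad} only yields an upper bound of the right order, which is not enough: to extract the precise constant (and hence the sign of $H(x_0)$) one needs the two-sided intermediate asymptotics $|z|^{n-2}\tuai(z)\to1$ and $|z|^{n-1}|\nabla\tuai(z)|_{g_\alpha}\to n-2$ for $1\ll|z|=o(\sai/\mai)$, which the paper establishes by a separate rescaling and Liouville-type argument (Steps \ref{th:asymp:bndy}.8--\ref{th:asymp:bndy}.9, then \ref{th:asymp:bndy}.10). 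You flag the sharp evaluation of $B_\alpha$ as the "main obstacle," but this intermediate-scale gradient asymptotic is precisely the missing ingredient, not a routine application of the stated estimates; also note the natural output is $\mai\ln(\sai/\mai)$ rather than $\mai\ln(1/\mai)$, and identifying the two requires the (a posteriori) information $\ln(1/\sai)=o(\ln(1/\mai))$.
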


\medskip\noindent{\it Proof of Theorem \ref{th:asymp:bndy}:} As for Theorem \ref{th:asymp:int}, the proof relies on a Pohozaev identity. Here, we have to consider the boundary of $\Omega$. For any $\alpha\in\nn$, we define
\begin{equation}\label{def:Ualpha}
U_\alpha:=B_{\delta\sai}(\varphi^{-1}(\xai))
\end{equation}

\medskip\noindent{\bf Step \ref{th:asymp:bndy}.1:} we apply the Pohozaev identity \eqref{id:poho:ua} on $\varphi(U_\alpha)\cap\Omega=\varphi(U_\alpha\cap\rnm)$ with $x_0=\xai$. This yields
\begin{eqnarray}
&&\ea\int_{\varphi(U_\alpha\cap\rnm)}\ua^2\, dx=\int_{\partial \varphi(U_\alpha\cap\rnm)}F_\alpha\, d\sigma\label{id:poho:bord}\\
&&=\int_{\varphi((\partial U_\alpha)\cap\rnm)}F_\alpha\, d\sigma+\int_{\varphi(U_\alpha\cap\partial\rnm)}F_\alpha\, d\sigma\nonumber
\end{eqnarray}
where for convenience, we have defined
$$F_\alpha:=(\cdot-\xai,\nu)\left(\frac{|\nabla \ua|^2}{2}-c_n\frac{\ua^{\crit}}{\crit}+\frac{\ea\ua^2}{2}\right)-\partial_\nu \ua\left((\cdot-\xai)^k\partial_k \ua+\frac{n-2}{2}\ua\right)$$
for all $\alpha\in\nn$.

\medskip\noindent{\bf Step \ref{th:asymp:bndy}.2:} We deal with the LHS of \eqref{id:poho:bord}. We claim that
\begin{equation}\label{poho:bord:1}
\int_{\varphi(U_\alpha\cap\rnm)}\ua^2\, dx=\left\{\begin{array}{ll}
o(\mai)&\hbox{ if }n\geq 4\\
O(\mai)&\hbox{ if }n=3\end{array}\right.
\end{equation}
when $\alpha\to +\infty$. Indeed, the proof goes exactly as in the proof of \eqref{lhs:L2} of Step \ref{th:asymp:int}.3 of the proof of Theorem \ref{th:asymp:int}.

\medskip\noindent{\bf Step \ref{th:asymp:bndy}.3:} We deal with the first term of the RHS of \eqref{id:poho:bord}. When $n\geq 3$, we claim that there exists $c_i>0$ such that
\begin{equation}\label{poho:bord:2}
\int_{\varphi((\partial U_\alpha)\cap\rnm)}F_\alpha\, dx=\left(\frac{\mai}{\sai}\right)^{n-2}(c_i+o(1))
\end{equation}
when $\alpha\to +\infty$. \par
\smallskip\noindent We prove the claim.  The proof proceeds basically as in the proof of \eqref{est:A} of Step \ref{th:asymp:int}.4 of the proof of Theorem \ref{th:asymp:int}. Since $\xai\in\partial\Omega$, we have that $\lim_{\alpha\to +\infty}\xai=x_0\in\partial\Omega$. We take a domain $U_{x_0}$, a chart $\varphi$ and the extension $\tg$ of the metric and $\tua$ of $\ua$ as in Lemma \ref{lem:ext}. Therefore, there exists $\xa'\in\rr^{n-1}$ such that $\xai=\varphi(0,\xai')$ for all $\alpha\in\nn$ with $\lim_{\alpha\to +\infty}\xa'=0$. We define $\tvai$ as in Proposition \ref{prop:cv:sai:2}, that is
\begin{equation}\label{def:tvai:bord}
\tvai(x):=\frac{\sai^{n-2}}{\mai^{\frac{n-2}{2}}}\tua((0,\xai')+\sai x)
\end{equation}
for all $\alpha\in\nn$ and for all $x\in \sai^{-1}(\varphi^{-1}(U_{x_0})-(0, \xai'))$. Recall that it follows from Proposition \ref{prop:cv:sai:2} that there exists  $\tv_i\in C^2(B_{2\delta}(0)\setminus\{0\})$ such that
\begin{equation}\label{cv:tvai:tvi:bord}
\lim_{\alpha\to +\infty}\tvai=\tv_i\hbox{ in }C^1_{loc}(\rn\setminus\{0\}
\end{equation}
In addition, there exists $\tilde{\psi}_i\in C^2(B_\delta(0))$ harmonic such that
\begin{equation}\label{val:tvi}
\tv_i(x):=\frac{\lambda_i'}{|x|^{n-2}}+\tilde{\psi}_i(x)\hbox{ for all }x\in B_\delta(0)\setminus\{0\}\hbox{ with }\tilde{\psi}_i(0)>0.
\end{equation}
We define the metric $\tga(x):=(\varphi^\star \tg)((0,\xai')+\sai x)$ for all $x$. With the change of variable $x=\varphi((0,\xai')+\sai z)$, we get that
\begin{eqnarray*}
&&\int_{\varphi((\partial B_{\delta\sai}(\varphi^{-1}(\xai)))\cap\rnm)}F_\alpha\, dx \\
&&=\left(\frac{\mai}{\sai}\right)^{n-2}\int_{\partial B_{\delta}(0)\cap\rnm}\left((z,\nu)_{g_\alpha}\left(\frac{|\nabla \tvai|_{g_\alpha}^2}{2}-c_n\left(\frac{\mai}{\sai}\right)^2\frac{\tvai^{\crit}}{\crit}+\frac{\ea\sai^2\tvai^2}{2}\right)\right.\\
&&\left.-\partial_\nu \tvai\left( x^k\partial_k \tvai+\frac{n-2}{2}\tvai\right)\right)\, d\sigma_{\alpha}
\end{eqnarray*}
Passing to the limit $\alpha\to +\infty$ and using \eqref{cv:tvai:tvi:bord}, we get that
\begin{eqnarray*}
&&\int_{\varphi((\partial B_{\delta\sai}(\varphi^{-1}(\xai)))\cap\rnm)}F_\alpha\, dx\\
&&=\left(\frac{\mai}{\sai}\right)^{n-2}\left(\int_{\partial B_{\delta}(0)\cap\rnm}\left((z,\nu)\left(\frac{|\nabla v_i|^2}{2}\right)-\partial_\nu v_i\left( x^k\partial_k v_i+\frac{n-2}{2}v_i\right)\right)\, d\sigma +o(1)\right)\\
&&=\left(\frac{\mai}{\sai}\right)^{n-2}\left(\frac{1}{2}\int_{\partial B_{\delta}(0)}\left((z,\nu)\left(\frac{|\nabla v_i|^2}{2}\right)-\partial_\nu v_i\left( x^k\partial_k v_i+\frac{n-2}{2}v_i\right)\right)\, d\sigma +o(1)\right)
\end{eqnarray*}
when $\alpha\to +\infty$. Similarly to what was done in the proof of \eqref{est:A} of Step \ref{th:asymp:int}.4 in the proof of Theorem \ref{th:asymp:int}, and using \eqref{val:tvi}, we get that
\begin{equation*}
\int_{\varphi((\partial B_{\delta\sai}(\varphi^{-1}(\xai)))\cap\rnm)}F_\alpha\, dx=\left(\frac{\mai}{\sai}\right)^{n-2}\left(\frac{(n-2)^2 c_i\psi_i(0)\omega_{n-1}}{4}+o(1)\right)
\end{equation*}
%This proves \eqref{rhs:bord:2} and therefore ends Step \ref{th:asymp:bndy}.4.2.

%\medskip\noindent The identity \eqref{poho:bord:2} follows from \eqref{rhs:bord:1} and \eqref{rhs:bord:2}. This ends Step \ref{th:asymp:bndy}.4.
\medskip\noindent This proves  \eqref{poho:bord:2} and ends Step \ref{th:asymp:bndy}.3.

\medskip\noindent We define
\begin{equation*}
L:=\{j\in \{1,...,N\}/\, \xaj-\xai=O(\mai)\hbox{ when }\alpha\to +\infty\}.
\end{equation*}
Given $R>0$ and $\alpha\in\nn$, we define
\begin{equation}\label{def:DRa}
{\mathcal D}_{R, \alpha}:=\varphi\left(B_{R\mai}(\varphi^{-1}(\xai))\setminus \bigcup_{k\in L}B_{R^{-1}\mai}(\varphi^{-1}(\xak))\cap\rnm\right).
\end{equation}

\medskip\noindent{\bf Step \ref{th:asymp:bndy}.4:} Assume that $n\geq 4$. We claim that
\begin{equation}\label{lim:bnd:Rc}
\lim_{R\to +\infty}\lim_{\alpha\to +\infty}\mai^{-1}\int_{\varphi(U_\alpha\cap\partial\rnm)\setminus {\mathcal D}_{R, \alpha}}(x-\xai,\nu)\left(\frac{|\nabla \ua|^2}{2}-c_n\frac{\ua^{\crit}}{\crit}+\frac{\ea\ua^2}{2}\right)\, d\sigma=0
\end{equation}
\smallskip\noindent We prove the claim. Indeed, it follows from \eqref{est:grad}  that
\begin{equation}\label{dom:integrand}
\left|\frac{|\nabla \ua|^2}{2}-c_n\frac{\ua^{\crit}}{\crit}+\frac{\ea\ua^2}{2}\right|\leq C\bua^2+ C\sum_{j=1}^N\frac{\maj^{n-2}}{\left(\maj^2+|x-\xaj|^2\right)^{n-1}}
\end{equation}
for all $x\in\Omega$ and all $\alpha\in\nn$.

\medskip\noindent{\bf Step \ref{th:asymp:bndy}.4.1:} We claim that
\begin{equation}\label{dom:ps:quadra}
|(x-\xai,\nu(x))|\leq C|x-\xai|^2
\end{equation}
for all $\alpha\in\nn$ and all $x\in\partial\Omega\cap\partial U_{x_0}$. We prove the claim. Indeed, for $x\in\rnm$ small enough, we get via the chart $\varphi$ that
\begin{eqnarray}
&&(\cdot-\xai,\nu)\circ\varphi((0,\xai')+x)\label{dl:ps}\\
&&= \left(\varphi((0, \xai')+x)-\varphi(0,\xai'), \nu\circ\varphi((0,\xai')+x)\right)\nonumber\\
&&= \left(d\varphi_{(0, \xai')}(x)+\frac{1}{2}d^2\varphi_{(0,\xai')}(x,x)+O(|x|^3), \nu\circ\varphi((0,\xai')+x)\right)\nonumber\\
&&=-\frac{1}{2}\left(d^2\varphi_{(0,\xai')}(x,x), \nu\circ\varphi(0,\xai')\right)+O(|x|^3).\nonumber
\end{eqnarray}
Inequality \eqref{dom:ps:quadra} is a straightforward consequence of \eqref{dl:ps}. This proves \eqref{dom:ps:quadra} and ends Step \ref{th:asymp:bndy}.4.1.

\medskip\noindent As a consequence of \eqref{dom:integrand} and \eqref{dom:ps:quadra}, we have that
\begin{eqnarray}\label{inter:Rc}
&&\left|\int_{\varphi(U_\alpha\cap\partial\rnm\setminus {\mathcal D}_{R, \alpha})}(x-\xai,\nu)\left(\frac{|\nabla \ua|^2}{2}-\frac{\ua^{\crit}}{\crit}+\frac{\ea\ua^2}{2}\right)\, d\sigma\right|\\
&&\leq C\int_{\varphi(U_\alpha\cap\partial\rnm\setminus {\mathcal D}_{R, \alpha})}|x-\xai|^2\bua^2\, d\sigma \nonumber\\
&&+C\sum_{j=1}^N \int_{\varphi(U_\alpha\cap\partial\rnm\setminus {\mathcal D}_{R, \alpha})}\frac{|x-\xai|^2\maj^{n-2}}{\left(\maj^2+|x-\xaj|^2\right)^{n-1}}\, d\sigma\nonumber
\end{eqnarray}
for all $\alpha\in\nn$ and all $R>0$. We are going to estimate these terms separately.

\medskip\noindent{\bf Step \ref{th:asymp:bndy}.4.2:} We claim that
\begin{equation}\label{est:bnd:Rc:1}
\int_{\varphi(U_\alpha\cap\partial\rnm\setminus {\mathcal D}_{R, \alpha})}|x-\xai|^2\bua^2\, d\sigma=o(\mai)\hbox{ when }\alpha\to +\infty
\end{equation}
We prove the claim. Indeed, using the definition \eqref{def:sai} of $\sai$, we get that
$$\int_{\varphi(U_\alpha\cap\partial\rnm\setminus {\mathcal D}_{R, \alpha})}|x-\xai|^2\bua^2\, d\sigma\leq C\sai^{n+1}\bua^2=O(\mai^{\frac{n+1}{2}}\bua^{\frac{n-5}{n-2}})$$
when $\alpha\to +\infty$. Moreover, since $\mai^{\frac{n-2}{2}}=o(\bua)$, we get that the above expression is $o(\mai\bua^{\frac{n-3}{n-2}})=o(\mai)$ when $\alpha\to +\infty$ since $n\geq 3$. This proves \eqref{est:bnd:Rc:1} and ends Step \ref{th:asymp:bndy}.4.2.

\medskip\noindent{\bf Step \ref{th:asymp:bndy}.4.3:} We claim that
\begin{equation}\label{est:bnd:Rc:2}
\lim_{R\to +\infty}\lim_{\alpha\to +\infty}\mai^{-1}\int_{\varphi(U_\alpha\cap\partial\rnm\setminus {\mathcal D}_{R, \alpha})}\frac{|x-\xai|^2\mai^{n-2}}{\left(\mai^2+|x-\xai|^2\right)^{n-1}}\, d\sigma=0\hbox{ if }n\geq 4.
\end{equation}
We prove the claim. Recall that for convenience, we let $\rr^{n-1}:=\partial\rnm$. Noting that
$$\varphi(U_\alpha\cap\partial\rnm\setminus {\mathcal D}_{R, \alpha})\subset \rr^{n-1}\setminus \left(B_{R\mai}(\xai)\setminus \cup_{k\in L}B_{R^{-1}\mai}(\xak)\right),$$
we get with the change of variables $x=\xai+\mai z$ that
\begin{eqnarray*}
&&\int_{\varphi(U_\alpha\cap\partial\rnm)\setminus {\mathcal D}_{R, \alpha}}\frac{|x-\xai|^2\mai^{n-2}}{\left(\mai^2+|x-\xai|^2\right)^{n-1}}\, d\sigma\\
&&\leq C\mai\int_{\rr^{n-1}\setminus \left(B_{R}(0)\setminus \cup_{k\in L}B_{R^{-1}}(\theta_{k,\alpha})\right)}\frac{|z|^2\, dz}{(1+|z|^2)^{n-1}}
\end{eqnarray*}
where $\theta_{k, \alpha}:=\mai^{-1}(\xak-\xai)$ for all $\alpha\in\nn$ and all $k\in L$. Letting $\theta_k:=\lim_{\alpha\to +\infty}\theta_{k, \alpha}$, we get that
\begin{eqnarray*}
&&\limsup_{\alpha\to +\infty}\mai^{-1}\int_{\varphi(U_\alpha\cap\partial\rnm\setminus {\mathcal D}_{R, \alpha})}\frac{|x-\xai|^2\mai^{n-2}}{\left(\mai^2+|x-\xai|^2\right)^{n-1}}\, d\sigma\\
&&\leq C\int_{\rr^{n-1}\setminus \left(B_{R}(0)\setminus \cup_{k\in L}B_{R^{-1}}(\theta_{k})\right)}\frac{|z|^2\, dz}{(1+|z|^2)^{n-1}}
\end{eqnarray*}
for all $R>0$. Then, letting $R\to +\infty$ and using that $n\geq 4$, we get \eqref{est:bnd:Rc:2}. This ends Step \ref{th:asymp:bndy}.4.3.

\medskip\noindent{\bf Step \ref{th:asymp:bndy}.4.4:} Let $j\in\{1,...,N\}$ such that
\begin{equation*}
\xaj-\xai\neq o(\sai)\hbox{ when }\alpha\to +\infty.
\end{equation*}
Then
\begin{equation}\label{est:bnd:Rc:3}
\int_{\varphi(U_\alpha\cap\partial\rnm)\setminus {\mathcal D}_{R, \alpha}}\frac{|x-\xai|^2\maj^{n-2}}{\left(\maj^2+|x-\xaj|^2\right)^{n-1}}\, d\sigma=\left\{\begin{array}{ll}
o(\mai)&\hbox{ if }n\geq 4\\
O(\mai)&\hbox{ if }n=3
\end{array}\right.
\end{equation}
when $\alpha\to +\infty$. We prove the claim. Taking $\delta>0$ smaller if necessary,  we have that
\begin{equation}\label{inf:xai:xaj}
|\xaj-\xai|\geq 2\delta\sai
\end{equation}
for all $\alpha\in\nn$. In particular, for all $x\in {\mathcal D}_{R, \alpha}\subset\varphi(U_\alpha\cap\rnm)$, we have that
$$|x-\xaj|\geq\delta\sai.$$
Therefore, we have that
\begin{equation}\label{bnd:j:1}
\int_{\varphi(U_\alpha\cap\partial\rnm\setminus {\mathcal D}_{R, \alpha})}\frac{|x-\xai|^2\maj^{n-2}}{\left(\maj^2+|x-\xaj|^2\right)^{n-1}}\, d\sigma=O\left(\frac{\sai^{n+1}\maj^{n-2}}{\left(\maj^2+|\xaj-\xai|^2\right)^{n-1}}\right)
\end{equation}
for all $\alpha\in\nn$. We distinguish two cases:

\medskip\noindent{\bf Case \ref{th:asymp:bndy}.4.4.1:} assume that $\maj=o(\mai)$ when $\alpha\to +\infty$. Then it follows from \eqref{inf:xai:xaj} and \eqref{bnd:j:1} that
\begin{eqnarray*}
&&\int_{\varphi(U_\alpha\cap\partial\rnm\setminus {\mathcal D}_{R, \alpha})}\frac{|x-\xai|^2\maj^{n-2}}{\left(\maj^2+|x-\xaj|^2\right)^{n-1}}\, d\sigma\\
&&=O\left(\frac{\sai^{n+1}\maj^{n-2}}{\sai^{2n-2}}\right)=o\left(\left(\frac{\mai}{\sai}\right)^{n-3}\mai\right)=o(\mai)
\end{eqnarray*}
when $\alpha\to +\infty$. This proves \eqref{est:bnd:Rc:3} in Case  \ref{th:asymp:bndy}.4.4.1.

\medskip\noindent{\bf Case \ref{th:asymp:bndy}.4.4.2}: assume that $\mai=O(\maj)$ when $\alpha\to +\infty$. Then, we have that $j\in J_i$ and it follows from the definition \eqref{def:sai} of $\sai$ that
$$\sai^2\leq\frac{\mai}{\maj}(\maj^2+|\xai-\xaj|^2)$$
for all $\alpha\in\nn$. Plugging this inequality in \eqref{bnd:j:1}, we get that\begin{eqnarray}
&&\int_{\varphi(U_\alpha\cap\partial\rnm\setminus {\mathcal D}_{R, \alpha})}\frac{|x-\xai|^2\maj^{n-2}}{\left(\maj^2+|x-\xaj|^2\right)^{n-1}}\, d\sigma=O\left(\frac{\mai^{n-1}}{\sai^{n-3}\maj}\right)\\
&&=O\left(\frac{\mai}{\maj}\cdot\left(\frac{\mai}{\sai}\right)^{n-3}\mai\right)=\left\{\begin{array}{ll}
o(\mai)&\hbox{ if }n\geq 4\\
O(\mai)&\hbox{ if }n=3
\end{array}\right.
\end{eqnarray}
when $\alpha\to +\infty$. This proves \eqref{est:bnd:Rc:3} in Case \ref{th:asymp:bndy}.4.4.2.

\smallskip\noindent We have proved \eqref{est:bnd:Rc:3} in all cases. This ends Step \ref{th:asymp:bndy}.4.4.

\medskip\noindent{\bf Step \ref{th:asymp:bndy}.4.5:} Let $j\in\{1,...,N\}$ such that
\begin{equation}\label{hyp:5.5}
\xaj\in\Omega \hbox{ and }\xaj-\xai=o(\sai)\hbox{ when }\alpha\to +\infty.
\end{equation}
Then we claim that
$$\maj=o(\mai)\hbox{ when }\alpha\to +\infty.$$
We prove the claim by contradiction and we assume that $\mai=O(\maj)$ when $\alpha\to +\infty$. Then $j\in J_i$ and it follows from the definition \eqref{def:sai} of $\sai$ that
$$\sai^2\leq\frac{\mai}{\maj}(\maj^2+|\xai-\xaj|^2)$$
for all $\alpha\to +\infty$. It then follows from \eqref{hyp:5.5} that $\sai^2=O(\mai\maj)=O(\maj^2)$ when $\alpha\to +\infty$. It then follows from \eqref{hyp:5.5} that $\xai-\xaj=o(\maj)$ when $\alpha\to +\infty$. Since $\xai\in\partial\Omega$, we then get that $d(\xaj,\partial\Omega)=o(\maj)$ when $\alpha\to +\infty$, and then $\xaj\in\partial\Omega$ (see (i) of Proposition \ref{prop:claimB}): a contradiction with our assumption \eqref{hyp:5.5}. This proves the claim.

\medskip\noindent{\bf Step \ref{th:asymp:bndy}.4.6:} Let $j\in\{1,...,N\}$ such that
\begin{equation}\label{hyp:5.6}
\xaj\not\in\partial\Omega\hbox{ and }\xaj-\xai=o(\sai)\hbox{ when }\alpha\to +\infty.
\end{equation}
We claim that
\begin{equation}\label{est:bnd:Rc:4}
\lim_{\alpha\to +\infty}\mai^{-1}\int_{\varphi(U_\alpha\cap\partial\rnm)\setminus {\mathcal D}_{R, \alpha}}\frac{|x-\xai|^2\maj^{n-2}}{\left(\maj^2+|x-\xaj|^2\right)^{n-1}}\, d\sigma=0\hbox{ when }n\geq 3.
\end{equation}
We prove the claim. Since $\lim_{\alpha\to +\infty}\xaj=x_0$, we write $\xaj=\varphi(x_{j, \alpha,1},\xaj')$ for all $\alpha\in\nn$. Here again, since $d\varphi_0$ is an orthogonal transformation (see Lemma \ref{lem:ext}, we get that
$$d(\xaj,\partial\Omega)=(1+o(1))|x_{j, \alpha,1}|$$
when $\alpha\to +\infty$. For simplicity, we let $d_{j, \alpha}:=d(\xaj,\partial\Omega)$ for all $\alpha\in\nn$. With the change of variables $x=\varphi(z)$, we get that
\begin{eqnarray*}
&&\int_{\varphi(U_\alpha\cap\partial\rnm\setminus {\mathcal D}_{R, \alpha})}\frac{|x-\xai|^2\maj^{n-2}}{\left(\maj^2+|x-\xaj|^2\right)^{n-1}}\, d\sigma\\
&&\leq C\int_{\partial\rnm}\frac{\maj^{n-2}}{\left(\maj^2+|(0,z)-(x_{j, \alpha,1},\xaj')|^2\right)^{n-1}}\, dz\\
&&\leq C\int_{\partial\rnm}\frac{\maj^{n-2}}{\left(d_{j, \alpha}^2+|z|^2\right)^{n-1}}\, dz\leq C\frac{\maj^{n-2}}{d_{j, \alpha}^{n-1}}
\end{eqnarray*}
for all $\alpha\in\nn$. Since $\xaj\not\in\partial\Omega$, we apply \eqref{rate:sai:int} and \eqref{d:s:bord} and we get that
\begin{eqnarray*}
&&\int_{\varphi(U_\alpha\cap\partial\rnm\setminus {\mathcal D}_{R, \alpha})}\frac{|x-\xai|^2\maj^{n-2}}{\left(\maj^2+|x-\xaj|^2\right)^{n-1}}\, d\sigma=O\left(\frac{\maj^{n-2}}{d_{j, \alpha}^{n-1}}\right)=o\left(\frac{\maj^{n-2}}{\saj^{n-1}}\right)\\
&&=o\left(\frac{\maj^{n-2}}{\maj^{\frac{(n-1)(n-4)}{n-2}}}\right)=o(\maj^{\frac{n}{n-2}})=o(\maj)=o(\mai)
\end{eqnarray*}
when $\alpha\to +\infty$, where we have used Step \ref{th:asymp:bndy}.4.5. This proves \eqref{est:bnd:Rc:1} and ends Step \ref{th:asymp:bndy}.4.6.

\medskip\noindent{\bf Step \ref{th:asymp:bndy}.4.7:} Plugging  \eqref{est:bnd:Rc:1}, \eqref{est:bnd:Rc:2}, \eqref{est:bnd:Rc:3} and \eqref{est:bnd:Rc:4} into \eqref{inter:Rc}, we get \eqref{lim:bnd:Rc}. This ends Step \ref{th:asymp:bndy}.4.

\medskip\noindent{\bf Step \ref{th:asymp:bndy}.5:} We claim that there exists $c>0$ such that
\begin{eqnarray}\label{lim:bnd:R}
&&\lim_{R\to +\infty}\lim_{\alpha\to +\infty}\mai^{-1}\int_{\varphi(U_\alpha\cap\partial\rnm)\cap{\mathcal D}_{R,\alpha}}(x-\xai,\nu)\left(\frac{|\nabla \ua|^2}{2}-c_n\frac{\ua^{\crit}}{\crit}+\frac{\ea\ua^2}{2}\right)\, d\sigma\nonumber\\
&&=\frac{H(x_0)}{2n}\int_{\partial\rnm}|x|^2\left(\frac{|\nabla U_0|_{\xi}^2}{2}-c_n\frac{U_0^{\crit}}{\crit}\right)\, d\xi\nonumber\\
&&-\frac{(\partial_{ij}\varphi(0),\nu(x_0))}{2}\theta^i\theta^j\int_{\partial\rnm}\left(\frac{|\nabla U_0|_{\xi}^2}{2}-n(n-2)\frac{U_0^{\crit}}{\crit}\right)\, d\xi\hbox{ when }n\geq 4,
\end{eqnarray}
where $\theta$ is as in \eqref{cv:ua:ma:2:bis}. We prove the claim. We assume that $n\geq 4$. As a preliminary remark, using the definition \eqref{def:DRa} of ${\mathcal D}_{R, \alpha}$ and \eqref{d:s:bord}, note that
$$\varphi(U_\alpha\cap\partial\rnm)\cap{\mathcal D}_{R,\alpha}={\mathcal D}_{R,\alpha}\cap\partial\Omega$$
for all $\alpha\in\nn$. We define
$$\tuai(x):=\mai^{\frac{n-2}{2}}\ua\circ\varphi((0,\xai')+\mai x)$$
for all $x\in\mai^{-1}(\varphi^{-1}(U_{x_0})-\varphi^{-1}(\xai))\cap\overline{\rnm}$. It follows from Theorem \ref{cv:ua:ma:2} modified by \eqref{cv:ua:ma:2:bis} that
\begin{equation}\label{cv:tuai:U}
\lim_{\alpha\to +\infty}
\tuai=U_0(\cdot+\theta)\hbox{ in }C^1_{loc}\left(\overline{\rnm}\setminus\{\theta_k/\, k\in L\}\right).
\end{equation}
where $U_0(x):=(1+|x|^{2})^{1-\frac{n}{2}}$ for all $x\in\rn$ and $\theta\in\rr^{n-1}$. With the change of variable $x=\varphi((0,\xai')+\mai z)$, we get that
\begin{eqnarray*}
&&\int_{{\mathcal D}_{R,\alpha}\cap\partial\Omega}(x-\xai,\nu)\left(\frac{|\nabla \ua|^2}{2}-c_n\frac{\ua^{\crit}}{\crit}+\frac{\ea\ua^2}{2}\right)\, d\sigma\\
&&=\mai\int_{B_{R}(0)\cap\partial\rnm\setminus \cup_{k\in L}B_{R^{-1}}(\theta_{k, \alpha}))}G_\alpha(z)\left(\frac{|\nabla \tuai|_{g_\alpha}^2}{2}-c_n\frac{\tuai^{\crit}}{\crit}+\frac{\ea\mai^2\tuai^2}{2}\right)\, d\sigma_\alpha
\end{eqnarray*}
where $g_\alpha(x):=(\varphi^\star\xi)((0,\xai')+\mai x)$ is the pull-back of $\xi$ by the chart $\varphi$ and $\sigma_\alpha$ is the surface area associated to the metric $g_\alpha$ and
$$G_\alpha(z):=\frac{(\varphi((0,\xai')+\mai z)-\varphi(0,\xai'),\nu\circ\varphi((0,\xai')+\mai z))}{\mai^2}.$$
With \eqref{dl:ps}, \eqref{cv:tuai:U}, using that $\varphi^\star\xi(0)=\xi$ and that $n\geq 4$, we get that
\begin{eqnarray}
&&\lim_{R\to +\infty}\lim_{\alpha\to +\infty}\mai^{-1}\int_{{\mathcal D}_{R, \alpha}\cap\Omega}(x-\xai,\nu)\left(\frac{|\nabla \ua|^2}{2}-n(n-2)\frac{\ua^{\crit}}{\crit}+\frac{\ea\ua^2}{2}\right)\, d\sigma\label{lim:bnd:1}\\
&&=\lim_{R\to +\infty}\int_{{\mathcal D}_R}\frac{-(\partial_{kl}\varphi(0), \nu(x_0))}{2}x^kx^l\left(\frac{|\nabla U_0|_{\xi}^2(x+\theta)}{2}-c_n\frac{U_0^{\crit}(x+\theta)}{\crit}\right)\, d\xi\nonumber\\
&&=\int_{\partial\rnm}\frac{-(\partial_{kl}\varphi(0),\nu(x_0))}{2}(x-\theta)^k(x-\theta)^l\left(\frac{|\nabla U_0|_{\xi}^2(x)}{2}-c_n\frac{U_0^{\crit}(x)}{\crit}\right)\, d\xi\nonumber\\
&&=\int_{\partial\rnm}\frac{-(\partial_{kl}\varphi(0),\nu(x_0))}{2}x^kx^l\left(\frac{|\nabla U_0|_{\xi}^2}{2}-c_n\frac{U_0^{\crit}}{\crit}\right)\, d\xi\nonumber\\
&&-\frac{(\partial_{kl}\varphi(0),\nu(x_0))}{2}\theta^k\theta^l\int_{\partial\rnm}\left(\frac{|\nabla U_0|_{\xi}^2}{2}-c_n\frac{U_0^{\crit}}{\crit}\right)\, d\xi
\end{eqnarray}
where in these computations, we have defined
$${\mathcal D}_R:=B_{R}(0)\cap\partial\rnm\setminus \cup_{k\in L}B_{R^{-1}}(\theta_{k})).$$
We let ${\mathcal A}(\varphi,\theta,x_0)$ be the right-hand-side of this expression. Since $U_0$ is radially symmetrical, we get that
\begin{eqnarray*}
&&{\mathcal A}(\varphi,\theta,x_0)=\frac{-\sum_k (\partial_{kk}\varphi(0),\nu(x_0))}{2n}\int_{\partial\rnm}|x|^2\left(\frac{|\nabla U_0|_{\xi}^2}{2}-c_n\frac{U_0^{\crit}}{\crit}\right)\, d\xi\\
&&-\frac{(\partial_{kl}\varphi(0),\nu(x_0))}{2}\theta^k\theta^l\int_{\partial\rnm}\left(\frac{|\nabla U_0|_{\xi}^2}{2}-c_n\frac{U_0^{\crit}}{\crit}\right)\, d\xi
\end{eqnarray*}
Since $d\varphi_0$ is an orthogonal transformation, the first and second fundamental formes of $\partial\Omega$ at $x_0$ in the chart $\varphi$ are respectiveley $I_{kl}=\delta_{kl}$ and $II_{kl}=-(\partial_{kl}\varphi(0),\nu(x_0))$. Therefore the mean curvature of $\partial\Omega$ at $x_0$ is $H(x_0)=\sum_kII_{kk}$ and then
\begin{eqnarray*}\label{id:H}
&&\int_{\partial\rnm}\frac{-\partial_{kl}\varphi(0)}{2}x^kx^l\left(\frac{|\nabla U_0|_{\xi}^2}{2}-c_n\frac{U_0^{\crit}}{\crit}\right)\, d\xi\\
&&=\frac{H(x_0)}{2n}\int_{\partial\rnm}|x|^2\left(\frac{|\nabla U_0|_{\xi}^2}{2}-c_n\frac{U_0^{\crit}}{\crit}\right)\, d\xi.
\end{eqnarray*}
Combining \eqref{lim:bnd:1} and \eqref{id:H} yields \eqref{lim:bnd:R}. This ends Step \ref{th:asymp:bndy}.6.

\medskip\noindent{\bf Step \ref{th:asymp:bndy}.6:} we claim that
\begin{equation}\label{integ:positive}
\left\{\begin{array}{ll}
\int_{\partial\rnm}|x|^2\left(\frac{|\nabla U_0|_{\xi}^2}{2}-c_n\frac{U_0^{\crit}}{\crit}\right)\, d\xi>0\\
\hbox{ and }\int_{\partial\rnm}\left(\frac{|\nabla U_0|_{\xi}^2}{2}-c_n\frac{U_0^{\crit}}{\crit}\right)\, d\xi=0
\end{array}\right\}\hbox{ when }n\geq 4.
\end{equation}
We prove the claim and assume that $n\geq 4$. Using the explicit expression of $U_0$, we get that
\begin{eqnarray*}
\int_{\partial\rnm}|x|^2\left(\frac{|\nabla U_0|_{\xi}^2}{2}-c_n\frac{U_0^{\crit}}{\crit}\right)\, d\xi &=& \frac{\omega_{n-2}(n-2)^2}{2}\int_0^\infty\frac{(r^2-1)r^n}{(1+r^2)^{n}}\, dr \\
&=& \frac{\omega_{n-2}(n-2)^2}{2} \Biggl[ \int_0^1 \frac{(r^2-1)r^n}{(1+r^2)^{n}}\, dr + \int_1^\infty\frac{(r^2-1)r^n}{(1+r^2)^{n}}\, dr \Biggr] \\
&=& \frac{\omega_{n-2}(n-2)^2}{2} \Biggl[ \int_0^1 \frac{(r^2-1)r^n}{(1+r^2)^{n}}\, dr + \int_0^1\frac{(1-r^2)r^{n-4}}{(1+r^2)^{n}}\, dr \Biggr] \\
&=& \frac{\omega_{n-2}(n-2)^2}{2}  \int_0^1 \frac{(1-r^2)(r^{n-4}-r^{n}) }{(1+r^2)^{n}}\, dr >0.
\end{eqnarray*}

%Integrating by parts yields
%$$\int_0^\infty\frac{r^n}{(1+r^2)^n}\, dr=\frac{n-3}{2(n-1)}\int_0^\infty\frac{r^n}{(1+r^2)^{n-1}}\, dr.$$
%Therefore,
%$$\int_{\partial\rnm}|x|^2\left(\frac{|\nabla U_0|_{\xi}^2}{2}-c_n\frac{U_0^{\crit}}{\crit}\right)\, d\xi
%=\frac{2\omega_{n-2}(n-2)^2}{n-3}\int_0^\infty\frac{r^n}{(1+r^2)^{n}}\, dr>0.%$$

Similarly, we prove that the second integral in \eqref{integ:positive} vanishes. The claim is proved. This ends Step \ref{th:asymp:bndy}.6.

\medskip\noindent{\bf Step \ref{th:asymp:bndy}.7:} Assume that $n\geq 4$. Plugging together \eqref{lim:bnd:Rc}, \eqref{lim:bnd:R} and \eqref{integ:positive}, we get that there exists $d_i>0$ such that
\begin{equation}\label{lim:H}
\lim_{\alpha\to +\infty}\mai^{-1}\int_{\varphi(U_\alpha\cap\partial\rnm)}(x-\xai,\nu)\left(\frac{|\nabla \ua|^2}{2}-c_n\frac{\ua^{\crit}}{\crit}+\frac{\ea\ua^2}{2}\right)\, d\sigma=d_iH(x_0)
\end{equation}
Plugging \eqref{poho:bord:1}, \eqref{poho:bord:2} and \eqref{lim:H} into \eqref{id:poho:bord}, we get that
$$\left(\frac{\mai}{\sai}\right)^{n-2}(c_i+o(1)) +(d_iH(x_0)+o(1))\mai=0$$
when $\alpha\to +\infty$, and then there exists $c_i'>0$ such that
\begin{equation}\label{lim:m:s:H}
\lim_{\alpha\to +\infty}\frac{\mai^{n-3}}{\sai^{n-2}}=-c_i'H(x_0).
\end{equation}
In particular, $H(x_0)\leq 0$. This ends the proof of Theorem \ref{th:asymp:bndy} when $n\geq 4$. We are now left with the case $n=3$.

\medskip\noindent{\bf Step \ref{th:asymp:bndy}.8:} We assume that $n=3$. We define $\tuai$ as above. We let $(z_\alpha)_{\alpha}\in \rn$ be such that
$$\lim_{\alpha\to +\infty}|z_\alpha|=+\infty.$$
Then, we have that
\begin{equation}\label{lim:3.1}
|z_\alpha|\leq\delta\frac{\sai}{\mai}\,\Rightarrow\, |z_\alpha|^{n-2}\tuai(z_\alpha)=O(1)\hbox{ when }\alpha\to +\infty
\end{equation}
and
\begin{equation}\label{lim:3.2}
|z_\alpha|=o\left(\frac{\sai}{\mai}\right)\,\Rightarrow\, \lim_{\alpha\to +\infty}|z_\alpha|^{n-2}\tuai(z_\alpha)=1\hbox{ when }\alpha\to +\infty.
\end{equation}
\medskip\noindent We prove the claim. As in Case \ref{th:cv:ra}.2 of the proof of Theorem \ref{th:cv:ra}, we have that
$$\tua(x)=(1+o(1))\left(\bua+\sum_{i=1}^NV_{j,\alpha}(x)\right)$$
for all $x\in B_{\delta_0}(x_0)$ and all $\alpha\in\nn$ for $\delta_0>0$ small enough. Therefore, we have that
\begin{eqnarray*}
|z_\alpha|^{n-2}\tuai(z_\alpha)&=&(1+o(1))\left(|z_\alpha|^{n-2}\mai^{\frac{n-2}{2}}\bua\right.\\
&&\left.+\sum_{i=1}^N|z_\alpha|^{n-2}\mai^{\frac{n-2}{2}}V_{j,\alpha}(\varphi(\varphi^{-1}(\xai)+\mai z_\alpha))\right)
\end{eqnarray*}
for all $\alpha\in\nn$. It follows from Theorem \ref{th:asymp:int} that there is no blowup point in the interior when $n=3$: therefore, \eqref{ppty:cv:bord} rewrites
\begin{equation}\label{mino:xi:xj}
|\xai-\xaj|\geq 2\delta\sai\hbox{ for all }j\neq i\hbox{ and all }\alpha\in\nn.
\end{equation}
We fix $j\neq i$. Similar to what was done in Step \ref{th:cv:ra}.1.2 of the proof of Theorem \ref{th:cv:ra}, we have that
$$\left||z_\alpha|^{n-2}\mai^{\frac{n-2}{2}}V_{j,\alpha}(\varphi(\varphi^{-1}(\xai)+\mai z_\alpha))\right|\leq C\left(\frac{\sai^2\maj}{\mai(\maj^2+|\xai-\xaj|^2)}\right)^{\frac{n-2}{2}}$$
for all $\alpha\in\nn$ (we have used that $|z_\alpha|\leq \delta\mai^{-1}\sai$). Therefore, if $\mai=O(\maj)$ when $\alpha\to +\infty$, it follows from the definition of $\sai$ that the right-hand-side is bounded. If $\maj=O(\mai)$, using \eqref{mino:xi:xj}, we get that the right-hand-side is also bounded.

\smallskip\noindent In the case $j=i$, it follows from Case (i) of Proposition \ref{lemmaprojection} that
\begin{eqnarray*}
|z_\alpha|^{n-2}\mai^{\frac{n-2}{2}}V_{j,\alpha}(\varphi(\varphi^{-1}(\xai)+\mai z_\alpha))&=&(1+o(1))\left(\frac{|z_\alpha|^2\mai}{(\mai^2+|\mai z_\alpha|^2)}\right)^{\frac{n-2}{2}}\\
&=&1+o(1)
\end{eqnarray*}
when $\alpha\to +\infty$ since $\lim_{\alpha\to +\infty}|z_\alpha|=+\infty$.

\smallskip\noindent Finally, noting in addition that $|z_\alpha|^{n-2}\mai^{\frac{n-2}{2}}\bua=O(\sai^{n-2}\mai^{-\frac{n-2}{2}}\bua)=O(1)$ by definition of $\sai$, we get that \eqref{lim:3.1} holds. With a little more careful analysis, we get \eqref{lim:3.2}. This ends Step \ref{th:asymp:bndy}.8.

\medskip\noindent{\bf Step \ref{th:asymp:bndy}.9:} We still assume that $n=3$ and we let $(z_\alpha)_{\alpha}\in \rn$ be such that $\lim_{\alpha\to +\infty}|z_\alpha|=+\infty$. Then, we claim that
\begin{equation}\label{lim:3.3}
|z_\alpha|=o\left(\frac{\sai}{\mai}\right)\,\Rightarrow\, \lim_{\alpha\to +\infty}|z_\alpha|^{n-1}|\nabla\tuai(z_\alpha)|_{g_\alpha}=n-2,
\end{equation}
where $g_\alpha(x):=(\varphi^\star \tg)(\varphi(\varphi^{-1}(\xai)+\mai x))$.

\smallskip\noindent We prove the claim by contradiction and assume that there exists $(z_\alpha)_\alpha$ as above and $\epsilon_0>0$ such that
\begin{equation}\label{lim:abs:3}
||z_\alpha|^{n-1}|\nabla\tuai(z_\alpha)|_{g_\alpha}-(n-2)|\geq \epsilon_0
\end{equation}
for all $\alpha\in\nn$. We define $r_\alpha:=|z_\alpha|$ and $w_\alpha(x):=\ra^{n-2}\tuai(\ra x)$ for $x\neq 0$: this is well defined and it follows from Step \ref{th:asymp:bndy}.8 that $\lim_{\alpha\to +\infty}w_\alpha(x)=|x|^{2-n}$ in $C^0_{loc}(\rn\setminus\{0\})$. Moreover, $\Delta_{g_\alpha'}w_\alpha+\ea (\mai|z_\alpha|)^2 w_\alpha=c_n|z_\alpha|^{-2}w_\alpha^{\crit-1}$ where $g_\alpha'(x)=\ra^2g_\alpha(\ra x)$, and therefore, it follows from standard elliptic theory that $w_\alpha$ converges in $C^1_{loc}(\rn\setminus\{0\})$. Computing $\nabla w_\alpha(r_\alpha^{-1}z_\alpha)$ and passing to the limit when $\alpha\to +\infty$ contradicts \eqref{lim:abs:3}. This ends Step \ref{th:asymp:bndy}.9.

\medskip\noindent The rough estimate \eqref{est:pointwise} and computations similar to the case $n\geq 4$ yield
\begin{equation}\nonumber
\int_{\varphi(B_{\delta\sai}(\varphi^{-1}(\xai))\cap\partial\Omega}\left|(x-\xai,\nu)\left(c_n\frac{\ua^{\crit}}{\crit}-\frac{\ea\ua^2}{2}\right)\right|\, dx=O(\mai)
\end{equation}
when $\alpha\to +\infty$. Similarly, we have that
\begin{equation}\nonumber
\int_{\varphi(B_{\delta\sai}(\varphi^{-1}(\xai))\cap\partial\Omega}|x-\xai|^2|\nabla\ua|^2\, dx=O\left(\mai\ln\frac{\sai}{\mai}\right)
\end{equation}
when $\alpha\to +\infty$. Therefore, we have that
\begin{eqnarray}\label{eq:F:3}
&&\int_{\varphi(B_{\delta\sai}(\varphi^{-1}(\xai))\cap\partial\Omega}(x-\xai,\nu)F_\alpha(x)\, dx\\
&&=-\frac{(\partial_{kl}\varphi(0),\nu(x_0))}{4}\int_{\varphi(B_{\delta\sai}(\varphi^{-1}(\xai))\cap\partial\Omega}x^kx^l|\nabla\ua|^2\, dx+o\left(\mai\ln\frac{\sai}{\mai}\right)\nonumber
\end{eqnarray}
when $\alpha\to +\infty$ and $n=3$.

\medskip\noindent{\bf Step \ref{th:asymp:bndy}.10:} Assume that $n=3$. We claim that
\begin{equation}\label{lim:3.5}
\lim_{\alpha\to +\infty}\frac{\int_{\varphi(B_{\delta\sai}(\varphi^{-1}(\xai))\cap\partial\Omega}x^kx^l|\nabla\ua|^2\, dx}{\mai\ln\frac{\sai}{\mai}}=\frac{2\pi}{3}\delta^{kl}
\end{equation}
We prove the claim. We let $(\delta_\alpha)_\alpha\in (0,+\infty)$ be such that $\lim_{\alpha\to +\infty}\delta_\alpha=0$ and $\lim_{\alpha\to +\infty}\mai^{-1}\delta_\alpha\sai=+\infty$. The sequence $(\delta_\alpha)$ will be chosen later. With a change of variables and noting that the element of volume satisfies $dv_{g_\alpha}=(1+o(1))d\sigma$, we have that
\begin{eqnarray*}
&&\int_{\varphi(B_{\delta_\alpha\sai}(\varphi^{-1}(\xai))\cap\partial\Omega}x^kx^l|\nabla\ua|^2\, dx=\mai\int_{B_{\frac{\delta_\alpha\sai}{\mai}}(0)\cap\partial\rnm}x^kx^l|\nabla\tuai|_{g_\alpha}^2dv_{g_\alpha}\\
&&=\mai\int_{B_{\frac{\delta_\alpha\sai}{\mai}}(0)\cap\partial\rnm\setminus B_R(0)}(1+o(1))x^kx^l|\nabla\tuai|_{g_\alpha}^2d\sigma+O(\mai)\\
&&=\mai\int_{B_{\frac{\delta_\alpha\sai}{\mai}}(0)\cap\partial\rnm\setminus B_R(0)}(1+o(1))\frac{x^kx^l}{|x|^{2(n-1)}}(|x|^{n-1}|\nabla\tuai|_{g_\alpha})^2dv_{g_\alpha}+O(\mai)
\end{eqnarray*}
when $\alpha\to +\infty$ for $R>0$ arbitrary large. With \eqref{lim:3.3}, we then get that there exists $\epsilon_R$ such that $\lim_{R\to +\infty}\epsilon_R=0$ and
\begin{eqnarray}
&&\int_{\varphi(B_{\delta_\alpha\sai}(\varphi^{-1}(\xai))\cap\partial\Omega}x^kx^l|\nabla\ua|^2\, dx\nonumber\\
&&=\mai(n-2)^2\int_{B_{\frac{\delta_\alpha\sai}{\mai}}(0)\cap\partial\rnm\setminus B_R(0)}\frac{x^kx^l}{|x|^{2(n-1)}}dv_\sigma+(o(1)+\epsilon_R)\left(\mai\ln\frac{\delta_\alpha\sai}{\mai}\right)\nonumber\\
&&=\frac{(n-2)^2\omega_{n-2}\delta^{kl}}{n}\mai\ln\frac{\delta_\alpha\sai}{\mai}+o\left(\mai\ln\frac{\delta_\alpha\sai}{\mai}\right)\label{eq:bnv:1}
\end{eqnarray}
We now estimate the complementing term. It follows from \eqref{lim:3.3} and the local convergence of $\tuai$ that there exists $C>0$ such that $|\nabla\tuai|_{g_\alpha}(x)\leq C|x|^{1-n}$ for all $x\in B_{\mai^{-1}\delta\sai}(0)$. Therefore, we have that
\begin{eqnarray}
&&\int_{\varphi(B_{\delta\sai}(\varphi^{-1}(\xai))\cap\partial\Omega\setminus B_{\delta_\alpha\sai}(\varphi^{-1}(\xai))}x^kx^l|\nabla\ua|^2\, dx\nonumber\\
&\leq & C\mai\int_{B_{\mai^{-1}\delta\sai}(0)\setminus B_{\mai^{-1}\delta_\alpha\sai}(0)}|z|^2(1+|z|^2)^{1-n}\, dz\nonumber\\
&\leq &C\mai\ln \frac{\delta}{\delta_\alpha}\label{eq:bnv:2}
\end{eqnarray}
We now choose $(\delta_\alpha)_\alpha$ such that
$$\lim_{\alpha\to +\infty}\delta_\alpha=0\; ,\; \lim_{\alpha\to +\infty}\frac{\delta_\alpha\sai}{\mai}=+\infty\hbox{ and }\ln\frac{1}{\delta_\alpha}=o\left(\ln\frac{\sai}{\mai}\right)$$
when $\alpha\to +\infty$. Clearly, this choice is possible: combining \eqref{eq:bnv:1} and \eqref{eq:bnv:2} yields \eqref{lim:3.5}. This ends Step \ref{th:asymp:bndy}.10.

\medskip\noindent{\bf Step \ref{th:asymp:bndy}.11:} Assume that $n=3$. We claim that there exists $c_i>0$ such that
$$\lim_{\alpha\to +\infty}\frac{1}{\sai\ln\frac{1}{\mai}}=-c_i H(x_0).$$
\smallskip\noindent We prove the claim. Putting \eqref{lim:3.5} into \eqref{eq:F:3} and arguing as in Step \ref{th:asymp:bndy}.5 yields
\begin{eqnarray*}
&&\int_{\varphi(B_{\delta\sai}(\varphi^{-1}(\xai))\cap\partial\Omega}(x-\xai,\nu)F_\alpha(x)\, dx\\
&&=-\frac{\pi\sum_k(\partial_{kk}\varphi(0), \nu(x_0))}{6}\mai\ln\frac{\sai}{\mai}+o\left(\mai\ln\frac{\sai}{\mai}\right)\\
&&=\frac{\pi H(x_0)}{6}\mai\ln\frac{\sai}{\mai}+o\left(\mai\ln\frac{\sai}{\mai}\right)
\end{eqnarray*}
When $\alpha\to +\infty$. Plugging this asymptotic behavior into \eqref{id:poho:bord} and using \eqref{poho:bord:1} and \eqref{poho:bord:2} yields the existence of $c_i>0$ such that
$$c_i(H(x_0)+o(1))\sai\ln\frac{\sai}{\mai}+1=0$$
when $\alpha\to +\infty$. This yields the desired result and this ends Step \ref{th:asymp:bndy}.11. Theorem \ref{th:asymp:bndy} is proved for $n=3$.

\section{Proof of Theorems \ref{th:sing:bndy} and \ref{th:conj}}
\label{sec:proof:th}
Let $(\ua)_{\alpha\in\nn}\in C^2(\omegabar)$ be as in the statement of Theorem \ref{th:sing:bndy}. We let
$$\hat{\mathcal{S}}:=\left\{\lim_{\alpha\to +\infty}\xai/\, i\in\{1,...,N\}\right\}.$$

\medskip\noindent{\bf Step 1:} We claim that
$$\hat{\mathcal{S}}=\mathcal{S}$$
where $\mathcal{S}$ is as in Definition \ref{def:sing}. We prove the claim. Let $x_0\in \hat{\mathcal{S}}$ and let $i\in\{1,..., N\}$ such that $\lim_{\alpha\to +\infty}\xai=x_0$. In particular, we have that $\lim_{\alpha\to +\infty}\ua(\xai)=+\infty$: then $x_0$ is a singular point, and then $x_0\in \mathcal{S}$. This proves that $\hat{\mathcal{S}}\subset \mathcal{S}$.\par

\medskip\noindent Let $x_0\in \hat{\mathcal{S}}^c$: then there exists $\delta>0$ such that $|x_0-\xai|\geq 2\delta$ for all $i\in\{1,...,N\}$. In particular, it follows from \eqref{est:pointwise} that there exists $C>0$ such that $\ua(x)\leq C$ for all $x\in B_\delta(x_0)\cap\Omega$, and then $x_0$ is not a singular point, that is $x\not\in \mathcal{S}$. This proves that $\hat{\mathcal{S}}^c\subset \mathcal{S}^c$.\par
\medskip\noindent These two assertions prove that $\hat{\mathcal{S}}=\mathcal{S}$, and the claim is proved. This ends Step 1.\hfill$\Box$

\medskip\noindent{\bf Step 2:} Let $x_0\in \mathcal{S}$. Assume that $n\geq 7$. We claim that
$$\hbox{there exists }(\xai)_{\alpha\in\nn}\in\partial\Omega\hbox{ such that }\lim_{\alpha\to +\infty}\xai=x_0.$$

\medskip\noindent We prove the claim by contradiction and assume that for all $i\in\{1,...,N\}$ such that $\lim_{\alpha\to +\infty}\xai=x_0$, then $\xai\in\Omega$. We let $i\in\{1,...,N\}$ such that
$$\mu_{i, \alpha}:=\max\{\maj/\, j\in\{1,...,N\} \hbox{ such that }\lim_{\alpha\to +\infty}\xaj=x_0\}.$$
It then follows from Theorem \ref{th:asymp:int} that
\begin{equation}\label{as:sai:ea}
\ea\sai^{n-2}\asymp\mai^{n-4}
\end{equation}
when $\alpha\to +\infty$ and there exists $j\in\{1,...,N\}$ such that $\mai=o(\maj)$ when $\alpha\to +\infty$ and
$$\sai^2=\frac{\mai}{\maj}(\maj^2+|\xai-\xaj|^2)$$
for all $\alpha\in\nn$.\par
\smallskip\noindent Assume that $\lim_{\alpha\to +\infty}\xaj=x_0$. Then it follows from the definition of $\mai$ that $\mai\geq \maj$: a contradiction with $\mai=o(\maj)$ when $\alpha\to +\infty$.\par
\smallskip\noindent Assume that  $\lim_{\alpha\to +\infty}\xaj\neq x_0$. Then $\xai-\xaj\not\to 0$ and we have that
$$\sai^2\asymp\frac{\mai}{\maj}$$
when $\alpha\to +\infty$. Plugging this estimate in \eqref{as:sai:ea}, we get with \eqref{comp:ea:ma} that
$$\ea\asymp \mai^{\frac{n-6}{2}}\maj^{\frac{n-2}{2}}=o(\maj^{\frac{n-2}{2}})=o(\ea)$$
when $\alpha\to +\infty$. A contradiction since $n\geq 7$.\par
\smallskip\noindent This proves the claim, and this ends Step 2.\hfill$\Box$

\medskip\noindent{\bf Step 3:} Let $x_0\in \mathcal{S}$. Assume that $n=3$ or $n\geq 7$. We claim that
$$x_0\in \partial\Omega\hbox{ and that }H(x_0)\leq 0.$$

\smallskip\noindent We prove the claim. We let $i\in\{1,...,N\}$ be such that
$$\sai=\min\{\saj/\, \xaj\in\partial\Omega\hbox{ and }\lim_{\alpha\to +\infty}\xaj=x_0\}.$$
This minimum is well-defined: this follows from Theorem \ref{th:asymp:int} for $n=3$ and from Step 2 when $n\geq 7$. In particular, $\xai\in\partial\Omega$ and $x_0\in\partial\Omega$. We claim that for all $j\in\{1,...,N\}\setminus\{i\}$
\begin{equation}\label{hyp:concl:th}
\xaj\in\partial\Omega\;\Rightarrow\;\xaj-\xai\neq o(\sai)\hbox{ when }\alpha\to +\infty
\end{equation}
We prove the claim by contradiction and we assume that there exists $j\in\{1,...,N\}\setminus\{i\}$ such that $\lim_{\alpha\to +\infty}\xaj=x_0$, $\xai-\xaj=o(\sai)$ and $\xaj\in\partial\Omega$ for all $\alpha\in\nn$.\par
\smallskip\noindent We claim that $\mai=o(\maj)$ when $\alpha\to +\infty$. We argue by contradiction and assume that $\maj=O(\mai)$ when $\alpha\to +\infty$: then $i\in J_j$ and it follows from the definition \eqref{def:sai} of $\saj$ that
$$\saj^2\leq\frac{\maj}{\mai}\left(\mai^2+|\xai-\xaj|^2\right)$$
for all $\alpha\in\nn$. Since $|\xai-\xaj|=o(\sai)$ and $\mai=o(\sai)$ when $\alpha\to +\infty$, we get that $\saj=o(\sai)$ when $\alpha\to +\infty$: a contradiction since $\sai\leq \saj$ for all $\alpha\in\nn$. This proves that $\mai=o(\maj)$ when $\alpha\to +\infty$.\par

\smallskip\noindent In particular, we have that $j\in J_i$, and then
$$\sai^2\leq\frac{\mai}{\maj}(\maj^2+|\xai-\xaj|^2)$$
for all $\alpha\in\nn$. Since $\xai-\xaj=o(\sai)$ and $\mai=o(\maj)$ when $\alpha\to +\infty$, we then get that $\sai=o(\maj)$ and then $\xai-\xaj=o(\maj)$ when $\alpha\to +\infty$. A contradiction with \eqref{zero:S}. This proves that \eqref{hyp:concl:th} holds.

\medskip\noindent Therefore, we can apply Theorem \ref{th:asymp:bndy} to $i$, and we get that $H(x_0)\leq 0$ when $n=3$ or $n\geq 7$. This proves the claim, and therefore this ends Step 3.

\medskip\noindent Theorem \ref{th:sing:bndy} is a consequence of Step 3.

\medskip\noindent Theorem \ref{th:conj} is a consequence of Theorem \ref{th:sing:bndy} and Proposition \ref{prop:claimA}.

\section*{Appendix A: Construction and estimates on the Green's function}\label{sec:Green}
This appendix is devoted to a construction and to pointwise properties of the Green's functions of the Laplacian with Neumann boundary condition on a smooth bounded domain of $\rn$. These proof are essentially self-contained and require only standard elliptic theory.

\medskip\noindent  Let $\Omega$ be a smooth bounded domain of $\rn$ (see Definition \ref{def:smoothdomain} in Section \ref{sec:reflection}). We consider the following problem:
\begin{equation}\label{app:NB}
\left\{\begin{array}{ll}
\Delta u=f &\hbox{ in }\Omega\\
\partial_\nu u=0&\hbox{ in }\partial\Omega
\end{array}\right.
\end{equation}
where $u\in C^2(\omegabar)$ and $f\in C^0(\omegabar)$. Note that the solution $u$ is defined up to the addition of a constant and that it is necessary that $\int_\Omega f\, dx=0$ (this is a simple integration by parts). Our objective here is to study the existence and the properties of the Green kernel associated to \eqref{app:NB}.

\begin{defi} We say that a function $G: \Omega\times \Omega\setminus\{(x,x)/\, x\in\Omega\}\to\rr$ is a Green's function for \eqref{app:NB} if for any $x\in\Omega$, noting $G_x:=G(x,\cdot)$, we have that

\smallskip(i) $G_x\in L^1(\Omega)$,

\smallskip(ii) $\int_\Omega G_x\, dy=0$,

\smallskip(iii) for all $\varphi\in C^2(\omegabar)$ such that $\partial_\nu\varphi=0$ on $\partial\Omega$, we have that
$$\varphi(x)-\bar{\varphi}=\int_{\Omega}G_x\Delta\varphi\, dy.$$
\end{defi}

\smallskip\noindent Condition (ii) here is required for convenience in order to get uniqueness, symmetry and regularity for the Green's function. Note that if $G$ is a Green's function and if $c:\Omega\to\rr$ is any function, the function $(x,y)\mapsto G(x,y)+c(x)$ satisfies (i) and (iii). The first result concerns the existence of the Green's function:

\begin{thm}\label{app:th:green:exist} Let $\Omega$ be a smooth bounded domain of $\rn$. Then there exists a unique Green's function $G$ for \eqref{app:NB}. Moreover, $G$ is symmetric and extends continuously to $\omegabar\times\omegabar\setminus\{(x,x)/\, x\in\omegabar\}$ and for any $x\in\omegabar$, we have that $G_x\in C^{2,\alpha}(\omegabar\setminus\{x\})$ and satisfies
\begin{equation*}
\left\{\begin{array}{ll}
\Delta G_x=-\frac{1}{|\Omega|} &\hbox{ in }\Omega\setminus\{x\}\\
\partial_\nu G_x=0&\hbox{ in }\partial\Omega.
\end{array}\right.
\end{equation*}
In addition, for all $x\in\omegabar$ and for all $\varphi\in C^2(\omegabar)$  we have that
$$\varphi(x)-\bar{\varphi}=\int_{\Omega}G_x\Delta\varphi\, dy+\int_{\partial\Omega}G_x\partial_\nu\varphi\, dy.$$
\end{thm}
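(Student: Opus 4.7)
The plan is to build $G_x$ as the sum of an explicit singular parametrix and a regular correction supplied by a standard Neumann problem with smooth source. For $x\in\Omega$, let $\eta_x\in C^\infty_c(\Omega)$ be a cutoff equal to $1$ on a fixed neighbourhood of $x$ contained in $\Omega$, and let $\Gamma(x,y):=c_n|x-y|^{2-n}$ be the Euclidean fundamental solution (replaced by $c_2\log|x-y|$ if $n=2$). The distribution $\rho_x:=\Delta_y(\eta_x\Gamma(x,\cdot))-\delta_x$ is a smooth function on $\Omega$, and $\int_\Omega\rho_x\,dy=-1$ by the divergence theorem applied to the compactly supported function $\eta_x\Gamma(x,\cdot)$. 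Hence the source $-\rho_x-|\Omega|^{-1}$ has zero mean and one may solve
\begin{equation*}
\Delta v_x=-\rho_x-|\Omega|^{-1}\text{ in }\Omega,\qquad \partial_\nu v_x=0\text{ on }\partial\Omega
\end{equation*}
by Lax-Milgram on $\{u\in H_1^2(\Omega):\int_\Omega u=0\}$, obtaining $v_x\in C^{2,\alpha}(\omegabar)$ by standard elliptic regularity. Setting $G_x:=\eta_x\Gamma(x,\cdot)+v_x$ and subtracting its (finite) average produces a Green's function satisfying (i)--(iii) as well as the stated PDE and Neumann condition.

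Uniqueness follows because any two candidates differ by a function in $L^1(\Omega)$ whose distributional Laplacian is zero on $\Omega$; such a function is harmonic, hence smooth, and vanishing Neumann data together with zero mean force it to be identically $0$. For symmetry, I would apply condition (iii) on $\Omega\setminus(\overline{B_\varepsilon(x)}\cup\overline{B_\varepsilon(y)})$ with test function $G_y$, integrate by parts twice using $\partial_\nu G_\cdot=0$ on $\partial\Omega$, and let $\varepsilon\to 0$: the two sphere integrals produce $G_y(x)-G_x(y)$ in the limit thanks to the explicit singular profile $G_z(w)=c_n|w-z|^{2-n}+O(1)$ built into the construction, while the boundary integral vanishes at every step.

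The main difficulty is that the theorem asserts that $G$ extends continuously to all of $\omegabar\times\omegabar$ off the diagonal, which requires extending the construction uniformly up to $x\in\partial\Omega$ and establishing joint continuity. Near a boundary point $x_0\in\partial\Omega$, I would use the reflection of Lemma \ref{app:lem:ext}: working in the chart $\varphi$ there, replace the parametrix by the Neumann-symmetrised version involving both $\varphi^{-1}(x)$ and its reflection $\tilde{\pi}(\varphi^{-1}(x))$; since $\tg$ is Lipschitz and $d\varphi_0$ is orthogonal, this symmetrisation kills the singular part of the normal derivative on $\partial\Omega$, leaving only a $C^{0,1}$ source that is continuous in $x\in\omegabar$. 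The regular correction $v_x$ then depends continuously on $x\in\omegabar$ in $C^{2,\alpha}_{loc}(\omegabar\setminus\{x\})$ by continuous dependence for the Neumann problem, and combined with the explicit continuity of the parametrix away from the diagonal this yields $G\in C^0((\omegabar\times\omegabar)\setminus\mathrm{diag})$. Finally, the extended representation formula is obtained by redoing the integration by parts in (iii) with a general $\varphi\in C^2(\omegabar)$ and without assuming $\partial_\nu\varphi=0$, the only new contribution being exactly $\int_{\partial\Omega}G_x\partial_\nu\varphi\,d\sigma$.
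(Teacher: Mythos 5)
Your interior construction (a cutoff parametrix plus a mean-zero Neumann correction, then subtraction of the average) is essentially the paper's construction in A.1.1 with the complementary cutoff, and your symmetry argument by Green's identity on $\Omega\setminus(\overline{B_\varepsilon(x)}\cup\overline{B_\varepsilon(y)})$ is a classical alternative to the paper's duality/Fubini proof (Proposition \ref{app:sym}); it does work for interior points because your $G_x$ differs from the Euclidean kernel by a function that is $C^{2,\alpha}$ near $x$ and satisfies $\partial_\nu G_x=0$ on $\partial\Omega$. Two secondary imprecisions: in the uniqueness step, a function that is merely $L^1(\Omega)$ and harmonic in $\Omega$ has no Neumann trace, so ``vanishing Neumann data'' is not available as stated; the correct argument is the duality one (test against solutions of $\Delta u=\psi-\bar{\psi}$ with zero Neumann data for arbitrary $\psi\in C^\infty_c(\Omega)$, as in Lemmas \ref{app:lem:lp} and \ref{app:lem:unic}). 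Likewise, for $x\in\partial\Omega$ the final representation formula cannot be obtained by ``redoing the integration by parts'', since on the boundary $G_x$ is only available through a limiting procedure.

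The genuine gap is the extension to $x\in\partial\Omega$ and the continuity of $G$ on $\omegabar\times\omegabar$ off the diagonal, which is the real content of the theorem beyond the textbook construction. Your plan -- symmetrize the Euclidean kernel across $\partial\Omega$ in the chart of Lemma \ref{app:lem:ext} and claim that what remains is ``a $C^{0,1}$ source continuous in $x\in\omegabar$'', so that the correction $v_x$ follows by continuous dependence for the Neumann problem -- fails at exactly this point. Because $\partial\Omega$ is curved and the reflection $\tpip$ (equivalently the reflected metric $\tg$) is only Lipschitz, the symmetrized kernel does not have vanishing normal derivative on $\partial\Omega$: its Neumann trace is of order $|x-y|^{2-n}$ (it carries the second fundamental form), and the volume error produced by applying the Laplacian (or $\Delta_{\tg}$) to such a parametrix is of order $|x-y|^{1-n}$, exactly as in \eqref{app:bnd:f}; neither leftover is bounded, let alone Lipschitz, and both degenerate as $x$ approaches $\partial\Omega$, so ``continuous dependence'' gives no estimate uniform in $x$. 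In particular neither the uniform bound $|G_x(y)|\le C|x-y|^{2-n}$ nor the very existence and continuity of $G_x$ for $x\in\partial\Omega$ follows from your argument as written. This is precisely why the paper proceeds through a uniform-in-$x$ $L^p$ bound by duality (Lemma \ref{app:lem:lp}), a parametrix for $\Delta_{\tg}$ whose $O(|z-y|^{1-n})$ error is absorbed by the rescaling estimates of Step 4 of A.1.4 (the correction is shown to be $O(|y-z|^{-\alpha})$ for some $\alpha<n-2$), and then an Ascoli-plus-uniqueness limiting argument to define $G_x$ on the boundary, prove continuity in both variables, and pass to the limit in the representation formula; to complete your proof you would have to supply quantitative estimates of this type for your symmetrized parametrix rather than invoke continuous dependence.
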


\medskip\noindent A standard and useful estimate for Green's function is the following uniform pointwise upper bound:
\begin{prop}\label{app:prop:upp:bnd} Let $G$ be the Green's function for \eqref{app:NB}. Then there exist $C(\Omega)>0$ and $m(\Omega)>0$ depending only on $\Omega$ such that
\begin{equation}\label{app:est:green}
\frac{1}{C(\Omega)}|x-y|^{2-n}-m(\Omega)\leq G(x,y)\leq C(\Omega)|x-y|^{2-n}\hbox{ for all }x,y\in\Omega, \, x\neq y.
\end{equation}
Concerning the derivatives, we get that
\begin{equation}\label{app:est:der:green}
|\nabla_y G_x(y)|\leq C|x-y|^{1-n}\hbox{ for all }x,y\in\Omega, \, x\neq y.
\end{equation}
\end{prop}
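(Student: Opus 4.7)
The plan is to construct a parametrix capturing the $|x-y|^{2-n}$ singularity of $G_x$ and to control the regular remainder via standard Neumann elliptic regularity. By Theorem \ref{app:th:green:exist}, $G$ extends continuously to $\omegabar \times \omegabar$ off the diagonal, so it is uniformly bounded on $\{|x-y|\geq \delta_0\}$ for any fixed $\delta_0>0$; hence it suffices to establish both estimates in a small neighborhood of the diagonal.

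For the parametrix I would distinguish base points in the interior from base points on $\partial\Omega$. At $x_0\in\Omega$ with $d(x_0,\partial\Omega)>2\delta_0$ take $P_x(y)=c_n|x-y|^{2-n}$ cut off smoothly outside a small ball, with $c_n=\frac{1}{(n-2)\omega_{n-1}}$. At $x_0\in\partial\Omega$ use Lemma \ref{app:lem:ext} to pick a chart $\varphi$ (with $d\varphi_0$ orthogonal and the associated reflection $\tpip$) and take, locally,
\begin{equation*}
P_x(y)=c_n\bigl(|x-y|^{2-n}+|y-\tpip(x)|^{2-n}\bigr).
\end{equation*}
Writing $x=\varphi(x_1,x')$ with $x_1<0$, $\tpip(x)=\varphi(-x_1,x')$, and $y=\varphi(0,y')$, a second-order Taylor expansion of $\varphi$ together with the orthogonality of $d\varphi_0$ gives $|x-y|=|\tpip(x)-y|+O(|x-y|^2)$ and $(y-x,\nu(y))=-x_1+O(|x-y|^2)$, $(y-\tpip(x),\nu(y))=x_1+O(|x-y|^2)$. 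The leading singular contributions $\pm(n-2)c_n x_1|x-y|^{-n}$ in $\partial_\nu P_x(y)$ therefore cancel, leaving a Neumann defect of order $O(|x-y|^{2-n})$. Patching by a finite partition of unity and subtracting the $y$-mean, one obtains a global parametrix $\Pi(x,y)$ with $\int_\Omega \Pi_x\,dy=0$, $\Delta_y \Pi_x=\delta_x+\rho_x$ with $\rho_x\in L^\infty(\Omega)$ uniformly in $x$, and $\partial_\nu\Pi_x=\sigma_x$ on $\partial\Omega$ satisfying $|\sigma_x(y)|\leq C|x-y|^{2-n}$.

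The difference $E_x:=G_x-\Pi_x$ then solves $\Delta E_x=-|\Omega|^{-1}-\rho_x$ in $\Omega$, $\partial_\nu E_x=-\sigma_x$ on $\partial\Omega$, with $\int_\Omega E_x\,dy=0$ and data bounded uniformly in $x$. Since $\sigma_x\in L^p(\partial\Omega)$ for every $p<\frac{n-1}{n-2}$ uniformly in $x$, Neumann $W^{2,q}$ estimates (Theorem 9.11 of \cite{gt} combined with Theorem \ref{app:th:existence}) combined with Sobolev embedding, applied through a bootstrap, yield $\|E_x\|_{L^\infty(\omegabar)}\leq C$ uniformly in $x$. Writing $G_x=\Pi_x+E_x$ and using the pointwise comparison $c^{-1}|x-y|^{2-n}\leq \Pi_x(y)\leq c|x-y|^{2-n}$ near the diagonal then gives \eqref{app:est:green}, the constant $m(\Omega)$ absorbing $\|E_x\|_\infty$. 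The gradient bound \eqref{app:est:der:green} follows analogously: $|\nabla_y\Pi_x(y)|\leq C|x-y|^{1-n}$ by direct differentiation, and interior/boundary $C^{1,\alpha}$ Schauder estimates applied to $E_x$ at the scale $r=|x-y|/2$ give $|\nabla_y E_x|\leq C$, which is dominated by $|x-y|^{1-n}$ on the bounded domain $\omegabar$.

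The main obstacle is the borderline regularity of the boundary defect $\sigma_x$: because $\sigma_x\notin L^\infty(\partial\Omega)$ uniformly in $x$, one cannot invoke a $W^{2,\infty}$ Neumann estimate directly. I expect to handle this either by iterating the bootstrap in $L^p$ spaces (which is enough because $\sigma_x\in L^p$ for some $p>1$), or more elegantly by adding a further correction term to $\Pi$---an explicit polynomial in the chart coordinates times $|x-y|^{2-n}$---that cancels the remaining leading singularity of $\sigma_x$, leaving a bounded Neumann defect for which the elliptic regularity step is immediate.
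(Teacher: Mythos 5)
Your overall strategy (singular parametrix plus a remainder that is bounded uniformly in $x$) is sound and close in spirit to what the paper does, and your computation that the reflected kernel $c_n\bigl(|x-y|^{2-n}+|y-\tpip(x)|^{2-n}\bigr)$ has Neumann defect only of order $|x-y|^{2-n}$ is correct. The genuine gap is exactly the step you flag but do not close: the uniform bound $\Vert E_x\Vert_{L^\infty}\leq C$. As written, $E_x=G_x-\Pi_x$ solves a Neumann problem with \emph{inhomogeneous} boundary data $\sigma_x$, and $\sigma_x$ is not bounded; it only lies in $L^p(\partial\Omega)$ for $p<\frac{n-1}{n-2}$, i.e.\ barely above $1$. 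Theorem 9.11 of \cite{gt} and Theorem \ref{app:th:existence} concern the homogeneous Neumann condition (or data with enough fractional Sobolev regularity), so the asserted ``$W^{2,q}$ plus Sobolev bootstrap'' does not apply to this boundary-value problem, and with boundary data merely in $L^p$ for such small $p$ no standard estimate gives $L^\infty$ control of $E_x$ directly. Your two proposed remedies are plausible (iterating a layer-potential correction does improve the singularity of the defect by one order at each step, by a Giraud-type lemma on $\partial\Omega$), but neither is carried out, and the ``single explicit polynomial correction'' would require an actual computation involving the second fundamental form; so the central estimate of the proposition is not established. A secondary, fixable inaccuracy: the claim $|\nabla_y E_x|\leq C$ via Schauder at scale $|x-y|/2$ cannot hold uniformly since $\sigma_x$ blows up near $x$; the rescaled estimate only gives $|\nabla_y E_x|\leq C|x-y|^{2-n}$ there, which is still enough for \eqref{app:est:der:green} --- but again only once the $L^\infty$ bound on $E_x$ is in hand.

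For comparison, the paper avoids the inhomogeneous Neumann problem altogether: it reflects $G_x$ itself across $\partial\Omega$ via $\tpip$ (Lemma \ref{app:lem:ext}), so that $\tilde G_x$ solves $\Delta_{\tg}\tilde G_x=\delta_x+\delta_{x^\star}-|\Omega|^{-1}$ as an \emph{interior} equation for the Lipschitz metric $\tg$, with no boundary condition left; it then builds a parametrix $\Gamma_z$ for $\Delta_{\tg}$ (with a scaling argument to control the correction term) and bounds the remainder $\mu_x$ uniformly by Theorem 8.17 of \cite{gt} (De~Giorgi--Nash--Moser), which needs no boundary data regularity. This reflection-before-subtraction is the missing idea that makes the remainder estimate routine. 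The paper then obtains the lower bound in \eqref{app:est:green} and the gradient bound \eqref{app:est:der:green} not from the decomposition but by blow-up/contradiction arguments using the rescaled asymptotics of $G$ (Proposition \ref{app:prop:asymp:green}); your plan of reading both bounds off $G_x=\Pi_x+E_x$ would indeed be a legitimate and somewhat more direct alternative, but only after the uniform remainder bound is actually proved.
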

Estimate \eqref{app:est:green} was proved by Rey-Wei \cite{ReyWei} with a different method. We also refer to Faddeev \cite{f} for very nice estimates in the two-dimensional case.

\medskip\noindent{\bf Notations:} in the sequel, $C(a,b,...)$ denotes a constant that depends only on $\Omega$, $a$, $b$... We will often keep the same notation for different constants in a formula, and even in the same line.

\medskip\noindent We will intensively use the following existence and regularity for solutions to the Neumann problem (this is in Agmon-Douglis-Nirenberg \cite{adn}):

\begin{thm}\label{app:th:existence} Let $\Omega$ be a smooth bounded domain of $\rn$ and let $f\in L^p(\Omega)$, $p>1$ be such that $\int_\Omega f\, dx=0$. Then there exists $u\in H_2^p(\Omega)$ which is a weak solution to
$$\left\{\begin{array}{ll}
\Delta u=f &\hbox{ in }\Omega\\
\partial_\nu u=0&\hbox{ in }\partial\Omega
\end{array}\right.$$
The function $u$ is unique up to the addition of a constant. Moreover, there exists $C(p)>0$ such that
$$\Vert u-\bar{u}\Vert_{H_2^p(\Omega)}\leq C(p)\Vert f\Vert_p.$$
If $f\in C^{0,\alpha}(\omegabar)$, $\alpha\in (0,1)$, then $u\in C^{2,\alpha}(\omegabar)$ is a strong solution and there exists $C(\alpha)>0$ such that
$$\Vert u-\bar{u}\Vert_{C^{2,\alpha}(\Omega)}\leq C(\alpha)\Vert f\Vert_{C^{0,\alpha}(\omegabar)}.$$
\end{thm}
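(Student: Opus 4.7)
\textbf{Proof proposal for Theorem \ref{app:th:existence}.}

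My plan is to split the argument into three stages: (1) produce a weak $H^1$ solution for $L^2$ data via a Hilbert space argument; (2) upgrade the solution to $H_2^p$ or $C^{2,\alpha}$ by localizing and using the reflection apparatus of Lemma \ref{app:lem:ext} to reduce boundary regularity to interior regularity; (3) deduce the global estimates from the local ones together with uniqueness up to constants.

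\textbf{Stage 1: existence in $H^1$ and uniqueness.} I work in the Hilbert space $H:=\{u\in H_1^2(\Omega):\int_\Omega u\,dx=0\}$ equipped with the inner product $(u,v)_H:=\int_\Omega(\nabla u,\nabla v)\,dx$, which is equivalent to the $H_1^2$ norm by the Poincar\'e--Wirtinger inequality. For $f\in L^2(\Omega)$ with $\int_\Omega f\,dx=0$, the map $v\mapsto\int_\Omega fv\,dx$ is a bounded linear functional on $H$, so Riesz representation gives a unique $u\in H$ with $\int_\Omega(\nabla u,\nabla v)\,dx=\int_\Omega fv\,dx$ for every $v\in H$. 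Since both sides are invariant under replacing $v$ by $v-\bar v$ (using $\int f=0$), the identity extends to all $v\in H_1^2(\Omega)$; that is $u$ is a weak Neumann solution. Uniqueness up to a constant is immediate: a weak solution of the homogeneous problem tested against itself yields $\nabla w\equiv 0$.

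\textbf{Stage 2: local regularity.} Interior regularity is classical: on any ball $B\subset\subset\Omega$, the Calder\'on--Zygmund theory (for $L^p$) and the Schauder theory (for $C^{0,\alpha}$) applied to $\Delta u=f$ give $\|u\|_{H_2^p(B')}\lesssim\|f\|_{L^p(B)}+\|u\|_{L^p(B)}$ and the analogous $C^{2,\alpha}$ bound on $B'\subset\subset B$. For boundary regularity at $x_0\in\partial\Omega$, I use exactly the construction of Lemma \ref{app:lem:ext}: choose the chart $\varphi$ and the neighborhood $U_{x_0}$ from that lemma, and form the even extension $\tilde u:=u\circ\tpip$ of $u$ across $\partial\Omega$. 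The lemma yields $\tilde u\in H_1^1(U_{x_0})$ with $\Delta_{\tg}\tilde u=\tilde f$ in the distribution sense, where $\tg$ is Lipschitz continuous in $U_{x_0}$ and the Christoffel symbols belong to $L^\infty$. Thus $\tilde u$ solves a uniformly elliptic equation in divergence form with continuous (in fact Lipschitz) leading coefficients and $L^\infty$ lower-order terms on an open set of $\rn$. The standard interior Calder\'on--Zygmund estimate for operators with continuous principal part (approximating by the constant-coefficient operator and absorbing the error, as in Gilbarg--Trudinger Chapter 9) gives $\tilde u\in H_2^p$ on a smaller neighborhood, and when $\tilde f\in C^{0,\alpha}$ the Schauder estimates apply to $\tilde u$ after transferring via $\varphi$ (note that $\tilde g\in C^{0,1}$ is in particular $C^{0,\alpha}$, which suffices for second-derivative H\"older bounds via the standard freezing-coefficients argument).

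\textbf{Stage 3: globalization and the estimates.} Covering $\partial\Omega$ by finitely many charts of the previous type and combining with interior estimates on a compact set away from the boundary gives a finite cover $\{V_k\}$ of $\omegabar$ on which $\|u-c_k\|_{H_2^p(V_k)}\le C(\|f\|_{L^p(\Omega)}+\|u\|_{L^p(\Omega)})$ for some local constants $c_k$. Since $\omegabar$ is connected and $H_2^p\hookrightarrow L^p$, the local constants can be replaced by $\bar u$ up to a controlled error, giving $\|u-\bar u\|_{H_2^p(\Omega)}\le C(\|f\|_p+\|u-\bar u\|_p)$. The lower-order term $\|u-\bar u\|_p$ is absorbed by a standard Poincar\'e--Wirtinger and compactness-uniqueness argument: if no such absorbed inequality held, a sequence $u_\alpha$ with $\|u_\alpha-\bar u_\alpha\|_p=1$ and $\|f_\alpha\|_p\to 0$ would (by the a priori bound plus compact embedding) converge to a nonzero weak solution of the homogeneous problem with zero mean, contradicting uniqueness. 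This yields the claimed $L^p$ estimate, and the Schauder version follows identically with $H_2^p$ replaced by $C^{2,\alpha}$. For $f\in L^p$ with $p>2$ the existence produced in Stage 1 is already regular by this bootstrapping; for $1<p<2$ one first approximates $f$ by smooth mean-zero data $f_\alpha\to f$ in $L^p$, applies the a priori estimate to the corresponding mean-zero solutions $u_\alpha$, and passes to the limit using the estimate itself to get a Cauchy sequence in $H_2^p$.

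The main obstacle is Stage 2, specifically obtaining $H_2^p$ estimates up to the boundary: the reflected metric $\tg$ is only Lipschitz across $\partial\Omega$, so one cannot invoke the smooth-coefficient Calder\'on--Zygmund theorem directly. This is overcome by the continuity (indeed Lipschitz regularity) of the principal part, which suffices for the Agmon--Douglis--Nirenberg / Gilbarg--Trudinger $L^p$ estimate via the freezing-coefficients method; everything else is routine bookkeeping.
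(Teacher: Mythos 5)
The paper does not actually prove Theorem \ref{app:th:existence}: it is quoted as a known result from Agmon--Douglis--Nirenberg \cite{adn}, so any comparison is between your argument and the classical boundary regularity theory. Your Stage 1 (Riesz representation on the mean-zero subspace) and Stage 3 (patching plus the compactness--uniqueness absorption of the lower-order term) are fine, and for the $H_2^p$ estimate the reflection route is workable: after expanding $\Delta_{\tg}\tilde u=\tilde f$ into non-divergence form, the principal coefficients are Lipschitz (hence continuous) and the first-order coefficients are $L^\infty$, which is exactly the setting of Theorem 9.11 of \cite{gt}; you still owe the reader the difference-quotient/bootstrap step showing $\tilde u\in H_{2,loc}^p$ before that a priori estimate applies, but that is routine.

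The genuine gap is in the $C^{2,\alpha}$ part of Stage 2. After reflection the operator has first-order coefficients built from the first derivatives of $\tg$, and since $\tg$ is only $C^{0,1}$ these coefficients are merely bounded and in general \emph{discontinuous} across $\partial\Omega$ (they jump where $\tilde\pi$ folds). Interior Schauder theory for $a^{ij}\partial_{ij}u+b^i\partial_i u=f$ requires $b^i\in C^{0,\alpha}$ to conclude $u\in C^{2,\alpha}$; with $b^i\in L^\infty$ only, you get at best $W^{2,p}$ for all $p$, hence $C^{1,\beta}$, and no freezing-coefficients argument on the principal part alone can recover the missing H\"older continuity of the second derivatives. (The conclusion that the reflected function is in fact $C^{2,\alpha}$ happens to be true, because the discontinuity of the drift is compatible with the reflection symmetry, but exploiting that compatibility amounts to proving a boundary estimate on the half-ball with the Neumann condition --- which is precisely the content of the ADN/Gilbarg--Trudinger boundary Schauder theory you were trying to avoid.) To close the argument you should either invoke the boundary Schauder estimates for the oblique-derivative problem directly, as the paper implicitly does by citing \cite{adn}, or restrict the reflection trick to the $L^p$ scale and treat the H\"older case by flattening the boundary and perturbing off the constant-coefficient half-space estimate.
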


\medskip\noindent{\bf A.1. Construction of the Green's function and proof of the upper bound.}\par
\noindent This section is devoted to the proof of Theorem \ref{app:th:green:exist}.

\medskip\noindent{\it A.1.1. Construction of $G_x$.}\par
\noindent We define $k_n:=\frac{1}{(n-2)\omega_{n-1}}$. We fix $x\in\Omega$ and we take $u_x\in C^2(\omegabar)$ that will be chosen later, and we define
$$H_x:=k_n |\cdot-x|^{2-n}+u_x.$$
In particular, $H_x\in L^p(\Omega)$ for all $p\in (1,\frac{n}{n-2})$. We let $u\in C^2(\omegabar)$ be a function. Standard computations (see \cite{gt} or \cite{robert:green}) yield
\begin{equation}\label{app:eq:H:1}
\int_\Omega H_x\Delta u\, dy=u(x)+\int_\Omega u\Delta u_x\, dy+\int_{\partial\Omega}(-\partial_\nu uH_x+u\partial_\nu H_x)\, d\sigma.
\end{equation}
We let $\eta\in C^\infty(\rr)$ be such that $\eta(t)=0$ if $t\leq 1/3$ and $\eta(t)=1$ if $t\geq 2/3$. We define
$$v_x(y):=\eta\left(\frac{|x-y|}{d(x,\partial\Omega)}\right)k_n|x-y|^{2-n}$$
for all $y\in \omegabar$. Clearly, $v_x\in C^\infty(\omegabar)$ and $v_x(y)=k_n|x-y|^{2-n}$ for all $y\in\Omega$ close to $\partial\Omega$. It follows from Theorem \ref{app:th:existence} that there exists $u'_x\in C^{2,\alpha}(\omegabar)$ for all $\alpha\in (0,1)$ unique such that
$$\left\{\begin{array}{ll}
\Delta u'_x=\Delta v_x -\overline{\Delta v_x}&\hbox{ in }\Omega\\
\partial_\nu u_x'=0&\hbox{ in }\partial\Omega\\
\overline{u'_x}=0&
\end{array}\right.$$
We now define $u_x:=u'_x-v_x\in C^{2,\alpha}(\omegabar)$ and $c_x:=\overline{\Delta v_x}\in\rr$ so that
$$\left\{\begin{array}{ll}
\Delta u_x= -c_x&\hbox{ in }\Omega\\
\partial_\nu u_x=-\partial_\nu(k_n|\cdot-x|^{2-n})&\hbox{ in }\partial\Omega
\end{array}\right.$$
Therefore, $\partial_\nu H_x=0$ on $\partial\Omega$ and \eqref{app:eq:H:1} rewrites
$$\int_\Omega H_x\Delta u\, dy=u(x)-c_x\int_\Omega u\, dy-\int_{\partial\Omega}\partial_\nu uH_x\, d\sigma$$
for all $u\in C^2(\omegabar)$. Taking $u\equiv 1$ yields $c_x=\frac{1}{|\Omega|}$, and then, we have that
$$\int_\Omega H_x\Delta u\, dy=u(x)-\bar{u}-\int_{\partial\Omega}\partial_\nu uH_x\, d\sigma$$
for all $u\in C^2(\omegabar)$. Finally, we define $G_x:=H_x-\overline{H_x}$ and we have that:
$$\int_\Omega G_x\Delta u\, dy=u(x)-\bar{u}-\int_{\partial\Omega}\partial_\nu uG_x\, d\sigma$$
for all $u\in C^2(\omegabar)$. Therefore $G$ is a Green's function for \eqref{app:NB}. In addition,
$$G_x\in C^{2,\alpha}(\omegabar\setminus\{x\})\cap L^p(\Omega)\hbox{ for all }\alpha\in (0,1)\hbox{ and } p\in \left(1,\frac{n}{n-2}\right).$$
Taking $u\in C^\infty_c(\omegabar\setminus\{x\})$ above, and the definition of $G_x$, we get that
\begin{equation}\label{app:eq:Gx}
\left\{\begin{array}{ll}
\Delta G_x=-\frac{1}{|\Omega|} &\hbox{ in }\Omega\setminus\{x\}\\
\partial_\nu G_x=0&\hbox{ in }\partial\Omega.
\end{array}\right.
\end{equation}
\noindent{\it A.1.2. Uniform $L^p-$bound.}\par
\begin{lemma}\label{app:lem:lp} Fix $x\in\omegabar$ and assume that there exist $H\in L^1(\Omega)$ such that
$$\int_\Omega H\Delta u\, dy=u(x)-\bar{u}$$
for all $u\in C^2(\omegabar)$ such that $\partial_\nu u=0$ on $\partial\Omega$. Then $H\in L^p(\Omega)$ for all $p\in \left(1,\frac{n}{n-2}\right)$ and there exists $C(p)>0$ independent of $x$ such that
\begin{equation}\label{app:lp:bnd}
\Vert H-\bar{H}\Vert_p\leq C(\Omega,p)
\end{equation}
for all $x\in \Omega$.
\end{lemma}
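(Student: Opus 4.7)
The strategy is a duality argument: we test the identity $\int_\Omega H\Delta u\, dy=u(x)-\bar u$ against solutions $u$ of the Neumann problem with prescribed right-hand side, turning the estimate into a bound on the linear functional $g\mapsto \int_\Omega (H-\bar H)g\, dy$ on $L^{p'}(\Omega)$.

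First I would exploit the precise range $p\in(1,n/(n-2))$: its conjugate exponent satisfies $p'>n/2$, so by standard Sobolev embedding on the smooth bounded domain $\Omega$ we have $H_2^{p'}(\Omega)\hookrightarrow C^0(\overline{\Omega})$ continuously, with an embedding constant depending only on $\Omega$ and $p$. Now fix an arbitrary $f\in C^\infty(\overline{\Omega})$ with $\int_\Omega f\, dy=0$. By Theorem~\ref{app:th:existence} there is a (unique up to constant) solution $u\in C^{2,\alpha}(\overline{\Omega})\cap H_2^{p'}(\Omega)$ to
\begin{equation*}
\Delta u=f\text{ in }\Omega,\qquad \partial_\nu u=0\text{ on }\partial\Omega,
\end{equation*}
satisfying $\|u-\bar u\|_{H_2^{p'}(\Omega)}\le C(p')\|f\|_{p'}$, hence $\|u-\bar u\|_\infty\le C(\Omega,p)\|f\|_{p'}$ by the Sobolev embedding above.

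Since $u\in C^2(\overline{\Omega})$ and $\partial_\nu u=0$, the hypothesis of the lemma applies and gives
\begin{equation*}
\int_\Omega Hf\, dy=\int_\Omega H\Delta u\, dy=u(x)-\bar u,
\end{equation*}
so that $\bigl|\int_\Omega Hf\,dy\bigr|\le\|u-\bar u\|_\infty\le C(\Omega,p)\|f\|_{p'}$, with constant independent of $x\in\overline{\Omega}$. Applied to $f=g-\bar g$ for an arbitrary $g\in C^\infty(\overline{\Omega})$, and noting that $\int_\Omega H(g-\bar g)\,dy=\int_\Omega(H-\bar H)g\,dy$, this yields
\begin{equation*}
\Bigl|\int_\Omega(H-\bar H)g\,dy\Bigr|\le 2C(\Omega,p)\|g\|_{p'}\qquad\text{for all }g\in C^\infty(\overline{\Omega}).
\end{equation*}

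Finally, since $C^\infty(\overline{\Omega})$ is dense in $L^{p'}(\Omega)$ and $H-\bar H\in L^1(\Omega)$, the above inequality extends by approximation to all $g\in L^{p'}(\Omega)$. The converse of H\"older's inequality (or Riesz representation on $L^{p'}$) then yields $H-\bar H\in L^p(\Omega)$ with $\|H-\bar H\|_p\le 2C(\Omega,p)$, proving \eqref{app:lp:bnd} with a constant independent of $x$. The only delicate point is checking that $p<n/(n-2)$ is exactly what is needed to place $u-\bar u$ in $L^\infty$ via the $H_2^{p'}\hookrightarrow C^0$ embedding; once this observation is made, the argument is routine duality.
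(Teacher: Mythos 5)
Your proposal is correct and follows essentially the same route as the paper: test the identity against the Neumann solution of $\Delta u=\psi-\bar\psi$, control $\Vert u-\bar u\Vert_\infty$ via the $H_2^{q}$ a priori bound of Theorem \ref{app:th:existence} and the Sobolev embedding $H_2^{q}\hookrightarrow L^\infty$ for $q=p'>n/2$, and conclude by duality. The only cosmetic difference is that the paper normalizes $\bar u=0$ and writes the pairing directly as $\int_\Omega(H-\bar H)\psi\,dy=u(x)$, while you carry the constant along explicitly; the substance is identical.
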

\begin{proof} For $p$ as above, we define $q:=\frac{p}{p-1}>\frac{n}{2}$. We fix $\psi\in C^\infty(\omegabar)$. It follows from Theorem \ref{app:th:existence} that there exists $u\in C^2(\omegabar)$ such that
$$\left\{\begin{array}{ll}
\Delta u=\psi-\bar{\psi}&\hbox{ in }\Omega\\
\partial_\nu u=0&\hbox{ in }\partial\Omega\\
\bar{u}=0&
\end{array}\right.$$
It follows from the properties of $H$ that
$$\int_\Omega (H-\bar{H})\psi\, dy=\int_\Omega H(\psi-\bar{\psi})\, dy=u(x).$$
It follows from Sobolev's embedding that $H_2^q(\Omega)$ is continuously embedded in $L^\infty(\Omega)$: therefore, using the control of the $H_2^q-$norm of Theorem \ref{app:th:existence} yields
$$\left|\int_\Omega (H-\bar{H})\psi\, dy\right|\leq \Vert u\Vert_\infty\leq C(q)\Vert u\Vert_{H_2^q}\leq C'(q)\Vert \psi-\bar{\psi}\Vert_q\leq C''(q)\Vert \psi\Vert_q$$
for all $\psi\in C^\infty_c(\omegabar)$. It then follows from duality that $H-\bar{H}\in L^p(\Omega)$ and that \eqref{app:lp:bnd} holds.\end{proof}

\medskip\noindent{\it A.1.3. Uniqueness.}\par
\noindent We prove the following uniqueness result:
\begin{lemma}\label{app:lem:unic} Fix $x\in\omegabar$ and assume that there exist $G_1,G_2\in L^1(\Omega)$ such that
$$\int_\Omega G_i\Delta u\, dy=u(x)-\bar{u}$$
for all $i\in\{1,2\}$ and for all $u\in C^2(\omegabar)$ such that $\partial_\nu u=0$ on $\partial\Omega$. Then there exists $c\in\rr$ such that $G_1-G_2=c$ a.e on $\Omega$.
\end{lemma}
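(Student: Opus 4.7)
Set $H := G_1 - G_2 \in L^1(\Omega)$. Subtracting the two identities, $H$ satisfies
\[
\int_\Omega H \, \Delta u \, dy = 0
\]
for every $u \in C^2(\omegabar)$ with $\partial_\nu u = 0$ on $\partial\Omega$. The goal is to upgrade this orthogonality to the statement that $H$ equals its average $\bar H$ almost everywhere.

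The first step is to identify the linear span of the admissible test Laplacians $\{\Delta u : u \in C^2(\omegabar),\ \partial_\nu u = 0\}$. I will apply Theorem \ref{app:th:existence}: given any $f \in C^\infty(\omegabar)$ with $\int_\Omega f\, dy = 0$, there exists $u \in C^{2,\alpha}(\omegabar)$ solving $\Delta u = f$ with $\partial_\nu u = 0$ on $\partial\Omega$. Hence $u$ is an admissible test function and we obtain
\[
\int_\Omega H f \, dy = 0 \qquad \text{for every } f \in C^\infty(\omegabar) \text{ with } \int_\Omega f\, dy = 0.
\]

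The second step is to remove the zero-mean constraint on $f$. For an arbitrary $\psi \in C^\infty(\omegabar)$, write $\psi = (\psi - \bar\psi) + \bar\psi$; since $\psi - \bar\psi$ has zero mean, the previous identity yields
\[
\int_\Omega H \psi \, dy = \bar\psi \int_\Omega H \, dy = |\Omega|\, \bar H\, \bar\psi = \int_\Omega \bar H\, \psi \, dy.
\]
Therefore $\int_\Omega (H - \bar H)\psi\, dy = 0$ for every $\psi \in C^\infty(\omegabar)$, and by density of $C^\infty(\omegabar)$ in $L^\infty(\Omega)$ (or simply by the fundamental lemma of the calculus of variations applied to the $L^1$ function $H - \bar H$) we conclude $H = \bar H$ a.e. on $\Omega$. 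Setting $c := \bar H$ finishes the proof. No real obstacle is anticipated: the argument is a routine duality, and the only input is the solvability statement of Theorem \ref{app:th:existence}, which guarantees that the admissible $\Delta u$ exhaust exactly the zero-mean smooth functions.
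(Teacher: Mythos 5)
Your proof is correct and follows essentially the same route as the paper: solve the Neumann problem $\Delta u=\psi-\bar\psi$, $\partial_\nu u=0$ via Theorem \ref{app:th:existence}, test the identity with $u$, and conclude $H=\bar H$ a.e. by the fundamental lemma (the paper additionally cites the $L^p$ bound of Lemma \ref{app:lem:lp}, which is not really needed). Only drop the parenthetical appeal to ``density of $C^\infty(\omegabar)$ in $L^\infty(\Omega)$'' (false in the uniform norm); the du Bois--Reymond argument for the $L^1$ function $H-\bar H$, which you also state, is the correct justification.
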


\begin{proof} We define $g:=G_1-G_2$. We have that
$$\int_\Omega g\Delta u\, dy=0$$
for all $u\in C^2(\omegabar)$ such that $\partial_\nu u=0$ on $\partial\Omega$. We fix $\psi\in C^\infty_c(\Omega)$. It follows from  Theorem \ref{app:th:existence} that there exists $u\in C^2(\omegabar)$ such that $\Delta u=\psi-\bar{\psi}$ in $\Omega$, $\partial_\nu u=0$ on $\partial\Omega$ and $\bar{u}=0$ . Therefore, we get that
$$\int_\Omega (g-\bar{g})\psi\, dy=\int_\Omega g(\psi-\bar{\psi})\, dy=\int_\Omega g\Delta u\, dy=0.$$
for all $\psi\in C^\infty_c(\Omega)$. Moreover, it follows from Lemma \ref{app:lem:lp} that $g\in L^p(\Omega)$ for some $p>1$,  and then we get that $g-\bar{g}=0$ a.e, and then $G_1=G_2+\bar{g}$.\end{proof}

\medskip\noindent As an immediate corollary, we get that the function $G$ constructed above is the unique Green's function for \eqref{app:NB}.

\medskip\noindent{\it A.1.4. Pointwise control.}\par
\noindent We let $G$ be the Green's function for \eqref{app:NB}. The objective here is to prove that there exists $C(\Omega)>0$ such that
\begin{equation}\label{app:est:green:pointwise}
|G_x(y)|\leq C(\Omega)|x-y|^{2-n}
\end{equation}
for all $x,y\in\Omega$, $x\neq y$.
\begin{proof} The proof of \eqref{app:est:green:pointwise} goes through six steps.

\medskip\noindent {\bf Step 1:} We fix $K\subset \Omega$ a compact set. We claim that there exists $C(K)>0$ such that
$$|G_x(y)|\leq C(K)|x-y|^{2-n}$$
for all $x\in K$ and all $y\in\Omega$, $y\neq x$.

\smallskip\noindent We prove the claim. We use the notations $u_x,u'_x, v_x$ above. As easily checked, $v_x\in C^{2}(\omegabar)$ and $\Vert v_x\Vert_{C^2}\leq Cd(x,\partial\Omega)^{-n}\leq C d(K,\partial\Omega)^{-n}\leq C(K)$. Therefore, it follows from Theorem \ref{app:th:existence} that $\Vert u_x'\Vert_{\infty}\leq C(K)$, and then $|H_x(y)|\leq C(K)|x-y|^{2-n}$ for all $y\in\Omega$, $y\neq x$. Since $G_x=H_x-\overline{H_x}$ and \eqref{app:lp:bnd} holds, the claim follows.

\medskip\noindent {\bf Step 2:} We fix $\delta>0$. We claim that there exists $C(\delta)>0$ such that
\begin{equation}\label{app:bnd:c2}
\Vert G_x\Vert_{C^2(\Omega\setminus\bar{B}_x(\delta))}\leq C(\delta)
\end{equation}
for all $x,y\in \Omega$ such that $|x-y|\geq \delta$.

\smallskip\noindent We prove the claim. It follows from \eqref{app:eq:Gx} and standard elliptic theory (see for instance \cite{adn}) that for any $p>1$, there exists $C(\delta,p)>0$ such that $\Vert G_x\Vert_{C^{2}(\Omega\setminus\bar{B}_x(\delta))}\leq C(\delta)+C(\delta)\Vert G_x\Vert_{L^p(\Omega)}$. Step 2 is then a consequence of \eqref{app:lp:bnd}.

\medskip\noindent We are now interested in the neighborhood of $\partial\Omega$. We fix $x_0\in\partial\Omega$ and we choose a chart $\varphi$ as in Lemma \ref{app:lem:ext}. For simplicity, we assume that $\varphi: B_\delta(0)\to\rn$ and that $\varphi(0)=x_0$ and we define $V:=\varphi(B_\delta(0))$. We fix $x\in V\cap\Omega$ and we let $\tilde{G}_x$ be the extension $\tilde{G}_x:=G_x\circ\tpip=G_x\circ \varphi\circ \tilde{\pi}\circ\varphi^{-1}$: we have that
$$\tilde{G}_x: V\setminus\{x,x^\star\}\to\rr\hbox{ with }x^\star:=\pip^{-1}(x)=\varphi\circ\pi^{-1}\circ\varphi^{-1}(x)\in\overline{\Omega}^c.$$
Moreover, since $G_x$ is $C^{2,\alpha}$ outside $x$ and $\tpi$ is Lipschitz continuous, we have that $\tilde{G}_x\in H_{1,loc}^q(V\setminus\{x,x^\star\})$ for all $q>1$; in addition, it follows from \eqref{app:lp:bnd} that $\tilde{G}_x\in L^p(V)$ for all $p\in \left(1,\frac{n}{n-2}\right)$ and that there exists $C(p)>0$ independent of $x$ such that
\begin{equation*}
\Vert \tilde{G}_x\Vert_p\leq C(p).
\end{equation*}

\medskip\noindent{\bf Step 3:} We claim that
\begin{equation}\label{app:eq:tG:dist}
\Delta_{\tg}\tilde{G}_x=\delta_x+\delta_{x^\star}-\frac{1}{|\Omega|}\hbox{ in }{\mathcal D}'(V).
\end{equation}
We prove the claim. We let $\psi\in C^\infty_c(V)$ be a smooth function. Separating $V\cap\Omega$ and $V\cap\Omega^c$ and using a change of variable, we get that
$$\int_V\tilde{G}_x\Delta_{\tg}\psi\, dv_{\tg}=\int_{V\cap \Omega}G_x\Delta\left(\psi+\psi\circ\pip^{-1}\right)\, dy.$$
Noting that $\partial_\nu \left(\psi+\psi\circ\pip^{-1}\right)=0$ on $\partial\Omega$ (we have used that $\nu(\varphi(0,x'))=d\varphi_{(0,x')}(\vec{e}_1)$) and using the definition of the Green's function $G_x$, we get that
\begin{eqnarray*}
\int_V\tilde{G}_x\Delta_{\tg}\psi\, dv_{\tg}&=&\psi(x)+\psi\circ\pip^{-1}(x)-\frac{1}{|\Omega|}\int_{V\cap \Omega}\left(\psi+\psi\circ\pip^{-1}\right)\, dy\\
&=&\psi(x)+\psi(x^\star)-\frac{1}{|\Omega|}\int_{V}\psi\, dv_{\tg}.
\end{eqnarray*}
This proves \eqref{app:eq:tG:dist} and ends the claim.

\medskip\noindent{\bf Step 4:} We fix $z\in V$. We claim that there exists $\Gamma_z: V\setminus\{z\}\to \rr$ such that the following properties hold:
\begin{equation}\label{app:ppty:gamma}
\left\{\begin{array}{ll}
\Delta_{\tg}\Gamma_z=\delta_z &\hbox{ in }{\mathcal D}'(V),\\
&\\
|\Gamma_z(y)|\leq C|z-y|^{2-n} & \hbox{ for all }y\in V\setminus\{z\},\\
&\\
\Gamma_z\in C^{1}(V\setminus\{z\})&
\end{array}\right\}\end{equation}

\medskip\noindent We prove the claim. We define $r(y):=\sqrt{\tg_{ij}(z)(y-z)^i(y-z)^j}$ for all $y\in V$. As easily checked, $r^{2-n}\in C^\infty(V\setminus\{z\})$: we define $f:=\Delta_{\tg}r^{2-n}$ on $V\setminus\{z\}$. It follows from the properties of $\tg$ that $f\in L^\infty_{loc}(V\setminus\{z\})$. Moreover, straightforward computations yield the existence of $C>0$ such that
\begin{equation}\label{app:bnd:f}
|f(y)|\leq C|z-y|^{1-n}\hbox{ for all }y\in V\setminus\{z\}.
\end{equation}
Computing $\Delta_{\tg}r^{2-n}$ in the distribution sense yields
$$\Delta_{\tg}r^{2-n}=f+K_z\delta_z\hbox{ in }{\mathcal D}'(V),$$
where $K_z:=(n-2)\int_{\partial B_{1}(0)}(\nu(y),y)_{\tg(z)}r(y)^{2-n}\, dv_{\tg(z)}>0$. Moreover, $\lim_{z\to x_0}K_z=K_{x_0}>0$.

\medskip\noindent We define $h$ such that
$$\left\{\begin{array}{ll}
\Delta_{\tg}h=f &\hbox{ in }V\\
h=0 & \hbox{ on }\partial V
\end{array}\right\}$$
It follows from \eqref{app:bnd:f} and elliptic theory that $h$ is well defined and that $h\in H_{2,0}^p(V)$ for all $p\in \left(1,\frac{n}{n-1}\right)$ and $h\in C^{1,\theta}_{loc}(V\setminus\{z\})$. Moreover, there exists $C>0$ such that
\begin{equation}\label{app:bnd:h:h2}
\Vert h\Vert_{H_2^p}\leq C(p)\hbox{ for all }p\in \left(1,\frac{n}{n-1}\right).
\end{equation}

\medskip\noindent We claim that for any $\alpha\in (n-3,n-2)$, there exists $C(\alpha)>0$ such that
$$|h(y)|\leq C(\alpha)|y-z|^{-\alpha}$$
for all $y\in V\setminus\{z\}$.

\smallskip\noindent We prove the claim. We let $\epsilon>0$ be a small parameter and we define
$$h_\epsilon(y):=\epsilon^\alpha h(z+\epsilon y)\hbox{ and }f_\epsilon(y):=\epsilon^{2+\alpha} f(z+\epsilon y)$$
for all $y\in B_2(0)\setminus \bar{B}_{1/2}(0)$. We then have that
\begin{equation}\label{app:eq:he}
\Delta_{\tg_\epsilon}h_\epsilon=f_\epsilon\hbox{ in }B_2(0)\setminus \bar{B}_{1/2}(0),
\end{equation}
where $\tg_\epsilon=\tg(\epsilon\cdot)$. Since $\alpha>n-3$, we have with \eqref{app:bnd:f} that
\begin{equation}\label{app:bnd:fe}
|f_\epsilon(y)|\leq C \epsilon^{\alpha-(n-3)}|y|^{1-n}\leq 2^{n-1}C
\end{equation}
for all $y\in B_2(0)\setminus \bar{B}_{1/2}(0)$. We fix $p:=\frac{n}{\alpha+2}\in \left(1,\frac{n}{n-1}\right)$ and $q:=\frac{n}{\alpha}$. A change of variable, Sobolev's embedding theorem and \eqref{app:bnd:h:h2} yield
\begin{equation}\label{app:bnd:he}
\Vert h_\epsilon\Vert_{L^q(B_2(0)\setminus \bar{B}_{1/2}(0))}\leq C\Vert h\Vert_q\leq C\Vert h\Vert_{H_2^p}\leq C
\end{equation}
for all $\epsilon>0$ small. It then follows from \eqref{app:eq:he}, \eqref{app:bnd:fe} and $\eqref{app:bnd:he}$ that there exists $C>0$ such that
$$|h_\epsilon(y)|\leq C\hbox{ for all }y\in\rn\hbox{ such that }|y|=1.$$
Therefore, coming back to $h$, we get that $|h(y)|\leq C|y-z|^{-\alpha}$ for all $|y-z|=\epsilon$. Since $\epsilon$ can be chosen arbitrary small and $h$ is bounded outside $y$, the claim is proved.

\medskip\noindent We now set $\Gamma_z:=\frac{1}{K_z}\left(r^{2-n}-h\right)$. It follows from the above estimates that $\Gamma$ satisfies \eqref{app:ppty:gamma}. This ends Step 4.

\medskip\noindent We define $\mu_x:=\tilde{G}_x-\Gamma_x-\Gamma_{x^\star}$. It follows from Steps 2 and 3 above that
\begin{equation}\label{app:eq:mu}
\Delta_{\tg}\mu_x=-\frac{1}{|\Omega|}\hbox{ in }{\mathcal D}'(V).
\end{equation}
Moreover, we have that $\mu_x\in H_1^q(V\setminus \{x,x^\star\})$ for all $q>1$ and that
\begin{equation}\label{app:bnd:mu:lp}
\Vert \mu_x\Vert_p\leq C(p)\hbox{ for all }p\in \left(1,\frac{n}{n-2}\right).
\end{equation}

\medskip\noindent{\bf Step 5:}
We claim that for all $V'\subset V$, there exists $C(V')>0$ such that
\begin{equation}\label{app:bnd:mu:infinity}
\Vert \mu_x\Vert_{L^\infty(V')}\leq C(V'),
\end{equation}
where $C(V')$ is independent of $x$.

\medskip\noindent We prove the claim. Since $x\in \Omega\cap V$, we have that $\tg=\xi$ in a neighborhood of $x$, and then $\tg$ is hypoelliptic around $x$: therefore, it follows from  \eqref{app:eq:mu} that $\mu_x$ is $C^\infty$ around $x$. Similarly, around $x^\star\in V\cap \omegabar^c$, $\tg=(\varphi\circ \tpi\circ \varphi^{-1})^\star \xi$ is also hypoelliptic, and therefore, $\mu_x$ is $C^\infty$ around $x^\star$. It then follows that $\mu_x\in H_1^q(V)$ for $q>1$ and \eqref{app:eq:mu} rewrites
$$\int_V (\nabla\mu_x,\nabla\psi)_{\tg}\, dv_{\tg}=-\frac{1}{|\Omega|}\int_V \psi\, dv_g\hbox{ for all }\psi\in C^\infty_c(V).$$
Therefore, it follows from Theorem 8.17 of \cite{gt} that $\mu_x\in L^\infty_{loc}(V)$ and that there exists $C(V,V',p)>0$ such that
$$\Vert\mu_x\Vert_{L^\infty(V')}\leq C(V,V',p)\left(1+\Vert \mu_x\Vert_{L^p(V)}\right)$$
for all $p>1$. Taking $p\in \left(1,\frac{n}{n-2}\right)$ and using \eqref{app:bnd:mu:lp}, we get \eqref{app:bnd:mu:infinity} and the claim is proved.

\medskip\noindent{\bf Step 6:} We are now in position to conclude. It follows from the definition of $\mu_x$ from \eqref{app:bnd:mu:infinity} and from \eqref{app:ppty:gamma} that there exists $C(V')>0$ such that
$$|\tilde{G}_x(y)|\leq C+C|x-y|^{2-n}+|x^\star-y|^{2-n}$$
for all $x,y\in V'$ such that $x\neq y$. As easily checked, one has that $|x^\star-y|\geq c|x-y|$ for all $x,y\in V'\cap \Omega$, and therefore
\begin{equation}\label{app:bnd:G:bord}
|G_x(y)|\leq C|x-y|^{2-n}
\end{equation}
for all $x,y\in V'\cap\Omega$ such that $x\neq y$. Recall that $V'$ is a small neighborhood of $x_0\in \partial\Omega$. Combining \eqref{app:bnd:G:bord} with Step 1, we get that there exists $\delta(\Omega)>0$ such that \eqref{app:bnd:G:bord} holds for all $x,y\in \Omega$ distinct such that $|x-y|\leq \delta(\Omega)$. For points $x,y$ such that $|x-y|\geq \delta(\Omega)$, this is Step 2. This ends the proof of the pointwise estimate \eqref{app:est:green:pointwise}.\end{proof}

\medskip\noindent{\it A.1.4. Extension to the boundary and regularity with respect to the two variables.}\par
\noindent We are now in position to extend the Green's function to the boundary.
\begin{prop}\label{app:prop:co} The Green's function extends continuously to $\omegabar\times\omegabar\setminus\{(x,x)/\, x\in\omegabar\}\to\rr$.
\end{prop}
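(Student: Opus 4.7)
\medskip\noindent\emph{Proof plan.} The strategy is to reuse the decomposition developed for the pointwise bound \eqref{app:est:green:pointwise}, together with a Hölder estimate, to upgrade the locally uniform $L^\infty$ control on $\mu_x$ to locally uniform convergence as $x$ approaches $\partial\Omega$. As a preliminary reduction, I would first establish the symmetry $G(x,y)=G(y,x)$ for distinct $x,y\in\Omega$: applying Green's identity to the pair $(G_x,G_y)$ on $\Omega\setminus(\overline{B}_\varepsilon(x)\cup\overline{B}_\varepsilon(y))$ and using that $\Delta G_x=\Delta G_y=-1/|\Omega|$, $\partial_\nu G_x=\partial_\nu G_y=0$ on $\partial\Omega$, the bulk and outer boundary contributions drop out, while the two small spheres yield $G(y,x)-G(x,y)$ in the limit $\varepsilon\to 0$; the mean-zero normalization $\int_\Omega G_x\,dy=\int_\Omega G_y\,dx=0$ pins down the constant. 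With symmetry in hand, it suffices to continuously extend $x\mapsto G_x$ up to $\partial\Omega$ uniformly on compact subsets of $\omegabar\setminus\{x\}$.

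\medskip\noindent Fix $x_0\in\partial\Omega$ and choose a chart $\varphi$ on a neighborhood $V$ of $x_0$ as in Lemma \ref{app:lem:ext}. For $x\in V\cap\Omega$ the reflection $\tilde G_x$ is defined on $V$, and by Steps 3--5 of the pointwise bound argument,
\[
\tilde G_x=\Gamma_x+\Gamma_{x^\star}+\mu_x,\qquad \Delta_{\tg}\mu_x=-\tfrac{1}{|\Omega|}\text{ in }\mathcal D'(V),\qquad \|\mu_x\|_{L^\infty(V')}\le C(V')
\]
for every $V'\Subset V$, uniformly in $x$. Since $\tg$ is Lipschitz continuous on $V$, De Giorgi--Nash--Moser applied to this equation with bounded right-hand side and measurable coefficients yields a uniform estimate $\|\mu_x\|_{C^{0,\theta}(V'')}\le C(V'')$ for every $V''\Subset V'$, for some $\theta\in(0,1)$ independent of $x$.

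\medskip\noindent By the explicit formula for $\Gamma_z$ constructed in Step 4, the map $z\mapsto \Gamma_z$ is continuous in $C^0_{loc}(V\setminus\{z\})$, and $x\mapsto x^\star$ is continuous with $x^\star\to x_0$ as $x\to x_0$ through $\Omega$. Given any sequence $x_n\to x_0$ in $V\cap\Omega$, Arzelà--Ascoli applied to the equicontinuous family $\{\mu_{x_n}\}$ extracts a subsequence along which $\mu_{x_n}\to\mu_\infty$ in $C^0_{\mathrm{loc}}(V)$, with $\Delta_{\tg}\mu_\infty=-1/|\Omega|$. To upgrade to convergence of the full sequence I would identify $\mu_\infty$ by testing at a fixed interior point $y_1\in V\cap\Omega$, $y_1\neq x_0$: the value $\mu_x(y_1)=G(x,y_1)-\Gamma_x(y_1)-\Gamma_{x^\star}(y_1)$ is continuous in $x\in\Omega$ because $y\mapsto G(y,y_1)$ belongs to $C^{2,\alpha}(\omegabar\setminus\{y_1\})$ (by symmetry and interior regularity) and because $\Gamma_z(y_1)$ is continuous in $z$; combined with the mean-zero normalization $\int_\Omega G_x\,dy=0$ transported across the reflection, this fixes $\mu_\infty$ uniquely as a solution of $\Delta_{\tg}\mu_\infty=-1/|\Omega|$ with prescribed value at $y_1$ and prescribed global integral. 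Setting $G(x_0,y):=2\Gamma_{x_0}(y)+\mu_\infty(y)$ for $y\in V\setminus\{x_0\}$, and extending globally via symmetry and the interior regularity of $G_{x_0}$, yields the continuous extension, which also handles the case of both arguments on $\partial\Omega$ by symmetry.

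\medskip\noindent The principal obstacle is the uniqueness of the limit $\mu_\infty$: the equation $\Delta_{\tg}\mu=-1/|\Omega|$ in $V$ alone is underdetermined modulo $\tg$-harmonic functions, so the compactness argument must be paired with a continuous normalization. Carrying along the value at a fixed interior test point (equivalently, exploiting the global mean-zero condition on $G_x$ and the $L^p$ bound of Lemma \ref{app:lem:lp}) is what rules out the appearance of spurious harmonic additive terms and promotes subsequential convergence into full convergence.
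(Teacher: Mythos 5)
Your opening symmetry step is sound and is genuinely different from the paper (which proves symmetry \emph{after} continuity, by a duality/Fubini argument): the Green's identity computation on $\Omega\setminus(\overline{B}_\varepsilon(x)\cup\overline{B}_\varepsilon(y))$ only needs the local structure $G_x=k_n|\cdot-x|^{2-n}+u_x-\overline{H_x}$ from the construction, the Neumann condition, and the normalization $\int_\Omega G_x\,dy=0$ to kill the bulk term, so there is no circularity. The genuine gap is in the identification of the limit in your boundary-extension argument. After extracting $\mu_{x_n}\to\mu_\infty$ by De Giorgi--Nash--Moser and Arzel\`a--Ascoli, you claim that $\mu_\infty$ is ``fixed uniquely as a solution of $\Delta_{\tg}\mu_\infty=-1/|\Omega|$ with prescribed value at $y_1$ and prescribed global integral''. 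That uniqueness statement is false: the solutions of $\Delta_{\tg}\mu=-1/|\Omega|$ in $V$ form an affine space over the infinite-dimensional space of $\tg$-harmonic functions on $V$, and there are plenty of $\tg$-harmonic functions vanishing at $y_1$ and having zero integral over any fixed set. Moreover the mean-zero condition $\int_\Omega G_x\,dy=0$ involves $G_x$ outside $V$ as well, so it does not translate into a clean constraint on $\mu_\infty$ alone. As written, you have therefore not shown that two different subsequences produce the same $\mu_\infty$, i.e.\ full convergence of $G_{x_n}$ (hence the existence of the extension, independent of the approximating sequence) is not established.

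The gap is repairable with the very symmetry trick you introduced, applied at \emph{every} interior point rather than at a single $y_1$: for each fixed $y\in V\cap\Omega\setminus\{x_0\}$, symmetry gives $\mu_x(y)=G_y(x)-\Gamma_x(y)-\Gamma_{x^\star}(y)\to G_y(x_0)-2\Gamma_{x_0}(y)$ as $x\to x_0$, because $G_y\in C^{2,\alpha}(\omegabar\setminus\{y\})$; this pins down any subsequential uniform limit on the open set $V\cap\Omega$, hence on $V\cap\omegabar$ by continuity, and only the values on $\omegabar$ matter for the extension. You should note, however, that once symmetry and the uniform bound \eqref{app:bnd:c2} are in hand, the whole reflection/H\"older machinery is superfluous, and the paper's route is much lighter: \eqref{app:bnd:c2} gives uniform $C^2$ bounds on $G_{x_i}$ away from the singularity, Ascoli yields $C^1_{loc}(\omegabar\setminus\{x\})$ subsequential limits $G'$, the pointwise bound \eqref{app:est:green:pointwise} lets one pass to the limit in $\int_\Omega G_{x_i}\Delta u\,dy=u(x_i)-\bar{u}$, and Lemma \ref{app:lem:unic} together with the mean-zero normalization identifies $G'$ independently of the sequence --- no symmetry, reflection, or De Giorgi--Nash--Moser theory is needed.
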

\begin{proof} As above, we denote $G$ the Green's function for \eqref{app:NB}. We fix $x\in \partial\Omega$ and $y\in \omegabar\setminus\{x\}$ and we define
$$G_x(y):=\lim_{i\to +\infty}G(x_i,y) \hbox{ for all }y\in \omegabar\setminus\{x\},$$
where $(x_i)_i\in\Omega$ is any sequence such that $\lim_{i\to +\infty}x_i=x$.

\medskip\noindent We claim that this definition makes sense. It follows from \eqref{app:bnd:c2} that for all $\delta>0$, we have that
$$\Vert G_{x_i}\Vert_{C^{2}(\Omega\setminus \bar{B}_{\delta}(x))}\leq C(\delta)$$
for all $i$. Let $(i')$ be a subsequence of $i$: it then follows from Ascoli's theorem that there exists $G'\in C^{1}(\omegabar\setminus\{x\})$ and a subsequence $i"$ of $i'$ such that
$$\lim_{i\to +\infty}G_{x_i}=G'\hbox{ in }C^{1}_{loc}(\omegabar\setminus\{x\}).$$
Moreover, It follows from \eqref{app:est:green:pointwise} that
$|G'(y)|\leq C|x-y|^{2-n}$
for all $y\neq x$. We choose $u\in C^2(\omegabar)$ such that $\partial_\nu u=0$ on $\partial\Omega$. We then have that $\int_\Omega G_{x_i}\Delta u\, dy=u(x_i)-\bar{u}$ for all $i$. Letting $i\to +\infty$ yields
$$\int_\Omega G'\Delta u\, dy=u(x)-\bar{u},$$
and then it follows from Lemma \ref{app:lem:unic} that $G'$ does not depend of the choice of the sequence $(x_i)$ converging to $x$. We then let $G_x:=G'$ and the definition above makes sense.

\medskip\noindent We claim that $G\in C^0(\omegabar\times\omegabar\setminus\{(x,x)/\, x\in\omegabar\})$. We only sketch the proof since it is similar to the proof of the extension to the boundary. We fix $x\in \omegabar$ and we let $(x_i)_i$ be such that $\lim_{i\to +\infty}x_i=x$. Arguing as above, we get that any subsequence of $(G_{x_i})$ admits another subsequence that converges to some function $G"$ in $C^{1}_{loc}(\omegabar\setminus\{x\})$. We choose $u\in C^2(\omegabar)$ such that $\partial_\nu u$ vanishes on $\partial\Omega$ and we get that $\int_\Omega G_{x_i}\Delta u\, dy=u(x_i)-\bar{u}$ for all $i$. With the pointwise bound \eqref{app:est:green:pointwise}, we pass to the limit and get that $\int_\Omega G"\Delta u\, dy=u(x)-\bar{u}$: it then follows from Lemma \ref{app:lem:unic} that $G"=G_x$, and then $(G_{x_i})$ converges uniformly to $G_x$ outside $x$. The continuity of $G$ outside the diagonal follows immediately.
\end{proof}

\medskip\noindent It is essential to assume that $G$ satisfies point (ii) of the definition: indeed, for any $c:\Omega\to\rr$, the function $(x,y)\mapsto G(x,y)+c(x)$ satisfies (i) and (iii), but it is not continous outside the diagonal if $c$ is not continuous.

\medskip\noindent{\it A.1.5. Symmetry.}
\begin{prop}\label{app:sym}
Let $G$ be the Green's function for \eqref{app:NB}. Then $G(x,y)=G(y,x)$ for all $x,y\in \Omega\times\Omega$, $x\neq y$.
\end{prop}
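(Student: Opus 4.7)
The plan is to use Green's identity on a twice-punctured domain, excising small balls around both singularities. Fix distinct $x,y\in\Omega$ and choose $\epsilon>0$ small enough that $\overline{B_\epsilon(x)}$ and $\overline{B_\epsilon(y)}$ are disjoint and contained in $\Omega$. Set $\Omega_\epsilon:=\Omega\setminus(\overline{B_\epsilon(x)}\cup\overline{B_\epsilon(y)})$. By Theorem~\ref{app:th:green:exist} both $G_x$ and $G_y$ lie in $C^{2,\alpha}(\overline{\Omega_\epsilon})$; keeping in mind the sign convention $\Delta=-\mathrm{div}(\nabla)$, integrating by parts twice gives
$$\int_{\Omega_\epsilon}(G_x\Delta G_y-G_y\Delta G_x)\,dz=\int_{\partial\Omega_\epsilon}(G_y\,\partial_\nu G_x-G_x\,\partial_\nu G_y)\,d\sigma,$$
where $\nu$ is the outward unit normal of $\Omega_\epsilon$.

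Since $\Delta G_x=\Delta G_y=-1/|\Omega|$ on $\Omega_\epsilon$, the bulk term equals $|\Omega|^{-1}\int_{\Omega_\epsilon}(G_y-G_x)\,dz$; because $G_x,G_y\in L^1(\Omega)$ with vanishing mean (property (ii) of the Green's function), dominated convergence gives that this quantity tends to $0$ as $\epsilon\to 0$. For the right-hand side, the piece over $\partial\Omega$ drops out thanks to $\partial_\nu G_x=\partial_\nu G_y=0$. Near the singularities one invokes the decomposition carried out in Section~A.1.1: $G_x(z)=k_n|z-x|^{2-n}+\eta_x(z)$ with $\eta_x$ continuous on $\overline{\Omega}$ and $C^{2,\alpha}$ away from $x$, and symmetrically for $G_y$. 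On $\partial B_\epsilon(x)$ the outward normal of $\Omega_\epsilon$ is $\nu=-(z-x)/|z-x|$, so $\partial_\nu G_x=k_n(n-2)\epsilon^{1-n}+O(1)$. Combined with the continuity of $G_y$ at $x$, this produces
$$\int_{\partial B_\epsilon(x)}G_y\,\partial_\nu G_x\,d\sigma\longrightarrow k_n(n-2)\omega_{n-1}\,G_y(x)=G_y(x),$$
while $\int_{\partial B_\epsilon(x)}G_x\,\partial_\nu G_y\,d\sigma=O(\epsilon)\to 0$ since $G_x=O(\epsilon^{2-n})$ and $\partial_\nu G_y$ stays bounded near $x$. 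The symmetric analysis on $\partial B_\epsilon(y)$ produces a contribution of $-G_x(y)$.

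Passing to the limit $\epsilon\to 0$ in the Green identity therefore yields $0=G_y(x)-G_x(y)$, which is precisely the claimed symmetry. The main technical point is simply tracking the leading singular contribution on the small spheres and verifying, via the normalisation $k_n=1/((n-2)\omega_{n-1})$, that it produces the correct unit mass; once the pointwise bound \eqref{app:est:green:pointwise} and the continuity result of Proposition~\ref{app:prop:co} are available, all the remaining estimates are soft.
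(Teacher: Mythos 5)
Your proof is correct, but it follows a genuinely different route from the paper. You run the classical argument: Green's second identity on $\Omega\setminus(\overline{B_\epsilon(x)}\cup\overline{B_\epsilon(y)})$, the Neumann condition $\partial_\nu G_x=\partial_\nu G_y=0$ on $\partial\Omega$ to kill the outer boundary term, the expansion $G_x=k_n|\cdot-x|^{2-n}+\eta_x$ from the construction in Section A.1.1 to extract the unit masses $G_y(x)$ and $-G_x(y)$ on the two small spheres, and the normalisation $\int_\Omega G_x\,dy=0$ to make the bulk term $|\Omega|^{-1}\int_{\Omega_\epsilon}(G_y-G_x)\,dz$ vanish in the limit. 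The paper instead argues by duality: it tests against $f,g\in C^\infty_c(\Omega)$, solves the two auxiliary Neumann problems $\Delta\varphi=f-\bar f$, $\Delta\psi=g-\bar g$ via Theorem \ref{app:th:existence}, applies Fubini, and first obtains the intermediate identity $G(y,x)-G(x,y)=\overline{G_y}-\overline{G_x}$ (see \eqref{app:eq:sym}), from which symmetry follows by the same normalisation (ii); no sphere asymptotics or pointwise behaviour of $\nabla G$ near the pole are needed, only the $L^1$ bound, the continuity of Proposition \ref{app:prop:co} and the uniqueness Lemma \ref{app:lem:unic}. Your approach is more elementary and makes the role of the boundary condition and of the pole's unit mass explicit; the paper's is softer and stays at the level of the defining weak identity. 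One small clarification you should make: for the step $\partial_\nu G_x=k_n(n-2)\epsilon^{1-n}+O(1)$ on $\partial B_\epsilon(x)$ you need $\nabla\eta_x$ bounded up to $x$, which mere continuity of $\eta_x$ plus $C^{2,\alpha}$ regularity away from $x$ would not give; it does hold here because in the construction $\eta_x=u_x-\overline{H_x}$ with $u_x=u'_x-v_x\in C^{2,\alpha}(\overline{\Omega})$, so the regular part is in fact $C^{2,\alpha}$ across the pole. With that remark inserted, every step of your argument is justified by material already established in Appendix A.
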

\begin{proof}
Let $f\in C^\infty_c(\Omega)$ be a smooth compactly supported function. We define
$$F(x):=\int_\Omega G(y,x)(f-\bar{f})(y)\, dy\hbox{ for all }x\in\omegabar.$$
It follows from \eqref{app:est:green:pointwise} and Proposition \ref{app:prop:co} above that $F\in C^0(\omegabar)$. We fix $g\in C^\infty_c(\Omega)$ and we let $\varphi,\psi\in C^2(\omegabar)$ be such that
$$\left\{\begin{array}{ll}
\Delta \varphi=f-\bar{f}&\hbox{ in }\Omega\\
\partial_\nu \varphi=0&\hbox{ in }\partial\Omega\\
\bar{\varphi}=0&
\end{array}\right.\hbox{ and }\left\{\begin{array}{ll}
\Delta \psi=g-\bar{g}&\hbox{ in }\Omega\\
\partial_\nu \psi=0&\hbox{ in }\partial\Omega\\
\bar{\psi}=0&
\end{array}\right.$$
It follows from Fubini's theorem (which is valid here since $G\in L^1(\Omega\times\Omega)$ due to \eqref{app:est:green:pointwise} and Proposition \ref{app:prop:co}) that
\begin{eqnarray*}
\int_{\Omega}(F-\bar{F})g\, dx&=& \int_\Omega F(g-\bar{g})\, dx=\int_\Omega F\Delta\psi\, dx\\
&=&\int_\Omega (f-\bar{f})(y)\left(\int_\Omega G(y,x)\Delta\psi(x)\, dx\right)\, dy=\int_\Omega (\Delta\varphi)\psi\, dy\\
&=&\int_\Omega\varphi\Delta\psi\, dy=\int_\Omega \varphi(g-\bar{g})\, dy=\int_\Omega g\varphi\, dy,
\end{eqnarray*}
and therefore $\int_\Omega (F-\bar{F}-\varphi)g\, dx=0$ for all $g\in C^\infty_c(\Omega)$. Since $F,\varphi\in C^0(\omegabar)$, we then get that $F(x)=\varphi(x)+\bar{F}$ for all $x\in\Omega$. We now fix $x\in\Omega$. Using the definition of the Green's function and the definition of $F$, we then get that
$$\int_\Omega G(y,x)(f-\bar{f})(y)\, dy=\int_\Omega G(x,y)(f-\bar{f})(y)\, dy+\frac{1}{|\Omega|}\int_\Omega\left(\int_\Omega G(y,x)\, dx\right) (f-\bar{f})(y)\, dy,$$
and then, setting
$$H_x(y):=G(y,x)-G(x,y)-\frac{1}{|\Omega|}\int_\Omega G(y,z)\, dz$$
for all $y\in\Omega\setminus \{x\}$, we get that
$$0=\int_\Omega H_x (f-\bar{f})\, dy=\int_\Omega (H_x-\bar{H_x})f\, dy$$
for all $f\in C_c^\infty(\Omega)$. Therefore, $H_x\equiv \bar{H_x}$, which rewrites
$$G(y,x)-G(x,y)=\frac{1}{|\Omega|}\int_\Omega(G(y,z)-G(x,z))\, dz+h(x),$$
for all $x\neq y$, where $h(x):=\frac{1}{|\Omega|}\int_\Omega G(z,x)\, dz-\frac{1}{|\Omega^2|}\int_{\Omega\times\Omega}G(s,t)\, ds\, dt$ for all $x\in \omegabar$. Exchanging $x,y$ yields $h(x)+h(y)=0$ for all $x\neq y$, and then $h\equiv 0$ since $h$ is continuous. Therefore, we get that
\begin{equation}\label{app:eq:sym}
G(y,x)-G(x,y)=\frac{1}{|\Omega|}\int_\Omega(G(y,z)-G(x,z))\, dz=\bar{G_y}-\bar{G_y}
\end{equation}
for all $x\neq y$.  The normalization (i) in the definition of the Green's function then yields Proposition \ref{app:sym}.\end{proof}

\medskip\noindent If one does not impose the normalization (ii), we have already remarked that we just get $G': (x,y)\mapsto G(x,y)+c(x)$ where $G$ is the Green's function as defined in the definition and $c$ is any function. We then get that $G'(x,y)-G'(y, x)=c(x)-c(y)$ for all $x\neq y$, which is not vanishing when $c$ is nonconstant.

\medskip\noindent These different lemmae and estimates prove Theorem \ref{app:th:green:exist}.

\medskip\noindent{\bf A.2. Asymptotic analysis}\par
\noindent This section is devoted to the proof of general asymptotic estimates for the Green's function. As a byproduct, we will get the control \eqref{app:est:der:green} of the derivatives of Proposition \ref{app:prop:upp:bnd}. The following proposition is the main result  of this section:
\begin{prop}\label{app:prop:asymp:green}
Let $G$ be the Green's function for \eqref{app:NB}. Let $(\xa)_\alpha\in\Omega$ and let $(\ra)_{\alpha}\in (0,+\infty)$ be such that $\lim_{\alpha\to +\infty}\ra=0$.

\smallskip\noindent Assume that $$\lim_{\alpha\to +\infty}\frac{d(\xa,\partial\Omega)}{\ra}=+\infty.$$
Then for all $x,y\in\rn$, $x\neq y$, we have that
$$\lim_{\alpha\to +\infty}\ra^{n-2}G(\xa+\ra x,\xa+\ra y)=k_n|x-y|^{2-n}.$$
Moreover, for fixed $x\in\rn$, this convergence holds uniformly in $C^2_{loc}(\rn\setminus\{x\})$.

\smallskip\noindent Assume that $$\lim_{\alpha\to +\infty}\frac{d(\xa,\partial\Omega)}{\ra}=\rho\geq 0.$$
Then $\lim_{\alpha\to +\infty}\xa=x_0\in\partial\Omega$. We choose a chart $\varphi$ at $x_0$ as in Lemma \ref{app:lem:ext} and we let $(x_{\alpha, 1},x_\alpha')=\varphi^{-1}(\xa)$. Then for all $x,y\in\rn\cap\{x_1\leq 0\}$, $x\neq y$, we have that
$$\lim_{\alpha\to +\infty}\ra^{n-2}G(\varphi((0,\xa')+\ra x),\varphi((0,\xa')+\ra y)=k_n\left(|x-y|^{2-n}+|\pi^{-1}(x)-y|^{2-n}\right),$$
where $\pi^{-1}(x_1,x')=(-x_1,x')$.
Moreover, for fixed $x\in\overline{\rnm}$, this convergence holds uniformly in $C^2_{loc}(\overline{\rnm}\setminus\{x\})$.
\end{prop}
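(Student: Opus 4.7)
The strategy is a standard blow-up argument combined with a Liouville-type uniqueness theorem for the Newton kernel, closely paralleling the construction of $G$ in Section A.1 but carried out in the asymptotic limit.

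\textbf{Setup and uniform pointwise bounds.} In Case 1 I set $G_\alpha(x,y):=\ra^{n-2}G(\xa+\ra x,\xa+\ra y)$ on the rescaled domain $\ra^{-1}(\Omega-\xa)$, which exhausts $\rn$ since $d(\xa,\partial\Omega)/\ra\to+\infty$. In Case 2 I set $F_\alpha(x,y):=\ra^{n-2}G(\varphi((0,x_\alpha')+\ra x),\varphi((0,x_\alpha')+\ra y))$ for $x,y\in\overline{\rnm}$. The pointwise estimate \eqref{app:est:green:pointwise} (established in Section A.1.4), combined with the bi-Lipschitz character of $\varphi$ and the orthogonality of $d\varphi_0$, yields after rescaling the uniform bounds
$$|G_\alpha(x,y)|\leq C|x-y|^{2-n},\qquad |F_\alpha(x,y)|\leq C\bigl(|x-y|^{2-n}+|\pi^{-1}(x)-y|^{2-n}\bigr),$$
the second singularity appearing in Case 2 when one extends by the reflection $\tpi$ to include the points with $y_1>0$.

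\textbf{Distributional equations.} In Case 1, for $\phi\in C^\infty_c(\rn)$ and $\alpha$ large, $\psi_\alpha(y):=\phi((y-\xa)/\ra)$ is compactly supported in $\Omega$, hence trivially satisfies the Neumann condition, and a change of variables in the defining property of $G$ gives
$$\int_{\rn}G_\alpha(x,z)\,\Delta\phi(z)\,dz=\phi(x)-\frac{\ra^n}{|\Omega|}\int\phi\,dz.$$
In Case 2, the extension procedure of Lemma \ref{app:lem:ext} applied to $G_x$ gives exactly equation \eqref{app:eq:tG:dist}, that is $\Delta_{\tg}\tilde G_x=\delta_x+\delta_{x^*}-|\Omega|^{-1}$ in the distribution sense on the chart neighborhood. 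Pulling back to the chart and rescaling, the extended function satisfies
$$\Delta_{g_\alpha}\tilde F_\alpha(x,\cdot)=\delta_x+\delta_{\pi^{-1}(x)}-\frac{\ra^n}{|\Omega|}$$
in the distribution sense on arbitrarily large balls of $\rn$, where $g_\alpha(z):=(\varphi^\star\tg)((0,x_\alpha')+\ra z)$ is uniformly Lipschitz and converges locally uniformly to the Euclidean metric $\xi$, thanks to $d\varphi_0$ being orthogonal and $x_\alpha'\to 0$.

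\textbf{Compactness and identification of the limit.} Away from the singularities, the uniform pointwise bounds together with elliptic regularity (Schauder estimates in Case 1; boundary Schauder for the Neumann problem on the flat half-space in Case 2) yield uniform $C^{2,\theta}_{loc}$ bounds on $\rn\setminus\{x\}$, respectively $\overline{\rnm}\setminus\{x\}$. Ascoli and a diagonal extraction produce a subsequential limit $\Gamma$ converging in the appropriate $C^2_{loc}$. Passing to the limit in the distributional equation (justified by dominated convergence from the pointwise bounds, and by $\ra^n/|\Omega|\to 0$) identifies $\Delta\Gamma$ as $\delta_x$ (Case 1) respectively $\delta_x+\delta_{\pi^{-1}(x)}$ (Case 2) on all of $\rn$. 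The decay $|\Gamma(z)|=O(|z|^{2-n})$ at infinity together with a removable-singularity plus Liouville argument (subtract the explicit Newton potential with the same Dirac sources; the difference extends to a bounded harmonic function on $\rn$ vanishing at infinity, hence is identically zero) uniquely identifies $\Gamma$ as the announced limit. Uniqueness of the limit promotes subsequential convergence to convergence of the full sequence.

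\textbf{The main obstacle} is obtaining $C^2$, rather than merely $C^{1,\theta}$, convergence up to $\partial\rnm$ in Case 2: on all of $\rn$ the extended metric $\tg$ is only Lipschitz. This is resolved by noting that on $\overline{\rnm}$ one has $g_\alpha=\varphi^\star\xi$, which is smooth; the rescaled function satisfies there the honest Neumann problem on the flat half-space with smooth (and uniformly converging) coefficients, so boundary Schauder theory furnishes the required $C^{2,\theta}_{loc}(\overline{\rnm}\setminus\{x\})$ estimates, which is exactly the regularity claimed in the statement.
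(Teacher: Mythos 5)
Your proposal is correct and follows essentially the same route as the paper: rescale, invoke the uniform bound \eqref{app:est:green:pointwise}, extract a subsequential $C^2_{loc}$ limit by elliptic theory, identify it by passing to the limit in a distributional identity (with the reflected Dirac mass in the boundary case, using that $d\varphi_0$ is orthogonal) and a Liouville-type argument, and upgrade to full convergence by uniqueness of the limit. Your explicit observation that on $\overline{\rnm}$ the metric is the smooth $\varphi^\star\xi$, so boundary Schauder theory for the Neumann problem yields the $C^2$ regularity up to the boundary claimed in the statement, is a point the paper's proof passes over silently, but it does not alter the argument.
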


\smallskip\noindent{\it Proof of Proposition \ref{app:prop:asymp:green}:}

\smallskip\noindent{\bf Step 1:} We first assume that
\begin{equation}\label{app:lim:d:infty}
\lim_{\alpha\to +\infty}\frac{d(\xa,\partial\Omega)}{\ra}=+\infty.
\end{equation}
We define
$$\tilde{G}_\alpha(x,y):=\ra^{n-2}G(\xa+\ra x,\xa+\ra y)$$
for all $\alpha\in\nn$ and all $x,y\in \Omega_\alpha:=\ra^{-1}(\Omega-\xa)$, $x\neq y$. We fix $x\in\rn$. It follows from Theorem  \ref{app:th:green:exist} that $\tilde{G}_\alpha\in C^2(\Omega_\alpha\times\Omega_\alpha\setminus \{(x,x)/\,x\in\Omega_\alpha\})$ and that
\begin{equation}\label{app:eq:tGa}
\Delta (\tilde{G}_\alpha)_x=-\frac{\ra^n}{|\Omega|} \hbox{ in }\Omega_\alpha\setminus\{x\}
\end{equation}
for $\alpha\in\nn$ large enough. Moreover, it follows from \eqref{app:est:green} that there exists $C>0$ such that
\begin{equation}\label{app:est:tGa}
|(\tilde{G}_\alpha)_x(y)|\leq C|y-x|^{2-n}
\end{equation}
for all $\alpha\in\nn$ and all $y\in \Omega_\alpha\setminus\{x\}$. It then follows from \eqref{app:lim:d:infty}, \eqref{app:eq:tGa}, \eqref{app:est:tGa} and standard elliptic theory that, up to a subsequence, there exists $\tilde{G}_x\in C^2(\rn\setminus\{x\})$ such that
\begin{equation}\label{app:cv:tGa}
\lim_{\alpha\to +\infty}(\tilde{G}_\alpha)_x=\tilde{G}_x\hbox{ in }C^2_{loc}(\rn\setminus\{x\}).
\end{equation}
with
\begin{equation}\label{app:est:tG}
|\tilde{G}_x(y)|\leq C|y-x|^{2-n}
\end{equation}
for all $y\in\rn\setminus\{x\}$. We consider $f\in C^\infty_c(\rn)$ and we define $f_\alpha(y):=f(\ra^{-1}(y-\xa-\ra x))$: it follows from \eqref{app:lim:d:infty} that $f_\alpha\in C^\infty_c(\Omega)$ for $\alpha\in\nn$ large enough. Applying Green's representation formula yields
$$f_\alpha(\xa+\ra x)-\overline{f_\alpha}=\int_\Omega G(\xa+\ra x, z)\Delta f_\alpha(z)\, dz.$$
With a change of variable, this equality rewrites
$$f(x)=\int_{\rn}\tilde{G}_\alpha(x,y)\Delta f(y)\, dy+\overline{f_\alpha}$$
for $\alpha\in\nn$ large enough. With \eqref{app:est:tGa}, \eqref{app:cv:tGa} and the definition of $f_\alpha$, we get that
$$f(x)=\int_{\rn}\tilde{G}_x\Delta f\, dy,$$
and then
$$\Delta(\tilde{G}_x-k_n|\cdot-x|^{2-n})=0\hbox{ in }{\mathcal D}'(\rn).$$
The ellipticity of the Laplacian, \eqref{app:est:tG} and Liouville's Theorem yield
$$\tilde{G}_x(y)=k_n|y-x|^{2-n}\hbox{ for all }y\neq x.$$
This ends Step 1.

\medskip\noindent{\bf Step 2:}
\begin{equation*}
\lim_{\alpha\to +\infty}\frac{d(\xa,\partial\Omega)}{\ra}=\rho\geq 0.
\end{equation*}
We take $\varphi$ as in the statement of the Proposition and we define
$$\tilde{G}_\alpha(x,y):=\ra^{n-2}G(\varphi((0,\xa')+\ra x),\varphi((0,\xa')+\ra y)$$
for all $x,y\in\rnm$, $x\neq y$ with $\alpha\in\nn$ large enough. We fix $x\in\rnm$ and we symmetrize $\tilde{G}$ as usual:
$$\hat{G}_\alpha(x,y):=\tilde{G}_\alpha(x,\tpi(y))$$
for all $y\in\rn$ close enough to $0$ and where, as above, $\tpi: \rn\to\overline{\rnm}$. For simplicity, we assume that $x\in\overline{\rnm}$ (only the notation has to be change in case $x\in\rn_+$). As in the first case, we get that there exists $C>0$ such that
\begin{equation*}
|\hat{G}_\alpha(x,y)|\leq C\left(|y-x|^{2-n}+|y-\pi^{-1}(x)|^{2-n}\right)
\end{equation*}
for all $y\neq x,\tilde{\pi}(x)$ and there exists $\hat{G}_x\in C^2(\rn\setminus\{x,\pi^{-1}(x)\})$ such that
$$\lim_{\alpha\to +\infty}(\hat{G}_{\alpha})_x=\hat{G}_x\hbox{ in }C^2_{loc}(\rn\setminus\{x,\pi^{-1}(x)\}).$$
Moreover, letting $L=d\varphi_0$ be the differential of $\varphi$ at $0$, arguing again as in the first case, we have that
$$\Delta_{L^\star\xi}\hat{G}_x=\delta_{x}+\delta_{\pi^{-1}(x)}\hbox{ in }{\mathcal D}'(\rnm).$$
Therefore, with a change of variable, we get that
$$\Delta_{\xi}(\hat{G}_x\circ L^{-1})=\delta_{L(x)}+\delta_{L\circ\pi^{-1}(x)}\hbox{ in }{\mathcal D}'(\rnm),$$
and then
$$\Delta_{\xi}\left(\hat{G}_x\circ L^{-1}-k_n\left(|L(x)-\cdot|^{2-n}+|L\circ\pi^{-1}(x)-\cdot|^{2-n}\right)\right)=0\hbox{ in }{\mathcal D}'(\rnm),$$
Arguing as above, we get that $\hat{G}_x\circ L^{-1}=k_n\left(|L(x)-y|^{2-n}+|L\circ\pi^{-1}(x)-y|^{2-n}\right)$, and then
$$\hat{G}_x=k_n\left(|\cdot-x|^{2-n}+|\cdot-\pi^{-1}(x)|^{2-n}\right)$$
since $L$ is an orthogonal transformation. This ends Step 2.

\medskip\noindent Proposition \ref{app:prop:asymp:green} is a direct consequence of Steps 1 and 2.\hfill$\Box$

\medskip\noindent We now prove Proposition \ref{app:prop:upp:bnd}:

\begin{cor} Let $G$ be the Green's function for \eqref{app:NB}. Then there $C,M>0$ such that
$$\frac{1}{C|x-y|^{n-2}}-M\leq G(x,y)\leq \frac{C}{|x-y|^{n-2}}$$
and
$$|\nabla_yG(x,y)|\leq \frac{C}{|x-y|^{n-1}}$$
for all $x,y\in\Omega$, $x\neq y$.
\end{cor}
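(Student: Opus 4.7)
The upper bound $G(x,y)\le C|x-y|^{2-n}$ is already established as \eqref{app:est:green:pointwise} in Section A.1.4, and continuity of $G$ away from the diagonal (Proposition \ref{app:prop:co}) combined with the interior $C^2$ estimate \eqref{app:bnd:c2} handles both the lower bound and the gradient bound on every region $\{|x-y|\ge\delta_0\}$ by absorbing a bounded error into the constant $M$. The plan thus reduces to establishing, for some $c_0, C_0, \delta_0>0$, the two estimates
\[
G(x,y)\ge c_0 |x-y|^{2-n},\qquad |\nabla_y G(x,y)|\le C_0|x-y|^{1-n}
\]
whenever $x,y\in\Omega$ with $0<|x-y|\le\delta_0$. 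I would obtain both by rescaling and contradiction through Proposition \ref{app:prop:asymp:green}.

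For the lower bound, suppose no such $c_0,\delta_0$ exist; then there are sequences $(x_\alpha,y_\alpha)\in\Omega\times\Omega$ with $r_\alpha:=|x_\alpha-y_\alpha|\to 0$ and $r_\alpha^{n-2}G(x_\alpha,y_\alpha)\to 0$. Passing to a subsequence, I would distinguish the two alternatives of Proposition \ref{app:prop:asymp:green}. If $d(x_\alpha,\partial\Omega)/r_\alpha\to+\infty$, write $y_\alpha=x_\alpha+r_\alpha z_\alpha$ with $|z_\alpha|=1$ and extract $z_\alpha\to z_\infty$; then Proposition \ref{app:prop:asymp:green} gives $r_\alpha^{n-2}G(x_\alpha,y_\alpha)\to k_n|z_\infty|^{2-n}=k_n>0$, a contradiction. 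If instead $d(x_\alpha,\partial\Omega)/r_\alpha\to\rho\in[0,+\infty)$, use the chart $\varphi$ of Lemma \ref{app:lem:ext} to write $x_\alpha=\varphi((x_{\alpha,1},x_\alpha'))$ and $y_\alpha=\varphi((0,x_\alpha')+r_\alpha\tilde z_\alpha)$; since $d\varphi_0$ is orthogonal, $|\tilde z_\alpha-(x_{\alpha,1}/r_\alpha,0)|\to 1$, so up to a subsequence $\tilde z_\alpha\to\tilde z_\infty\in\overline{\rnm}$ with $|\tilde z_\infty-(-\rho,0)|=1$, and Proposition \ref{app:prop:asymp:green} yields
\[
r_\alpha^{n-2}G(x_\alpha,y_\alpha)\to k_n\bigl(|\tilde z_\infty-(-\rho,0)|^{2-n}+|\tilde z_\infty-(\rho,0)|^{2-n}\bigr)\ge k_n>0,
\]
again a contradiction. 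This gives the lower bound near the diagonal.

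For the gradient bound, I would run the same scheme: if $r_\alpha^{n-1}|\nabla_y G(x_\alpha,y_\alpha)|\to+\infty$ for a sequence with $r_\alpha\to 0$, set $\tilde G_\alpha(x,y):=r_\alpha^{n-2}G(x_\alpha+r_\alpha x,x_\alpha+r_\alpha y)$, or its chart-based analogue near the boundary. The $C^2_{loc}$ convergence of Proposition \ref{app:prop:asymp:green} away from the singularities implies that $\nabla_y(\tilde G_\alpha)_0(z_\alpha)$ stays bounded, since the limit $k_n|y|^{2-n}$ (or the reflected version) has smooth gradient on the unit sphere. The chain rule, using that $d\varphi_0$ is orthogonal so that no nontrivial Jacobian appears in the limit, then gives $r_\alpha^{n-1}|\nabla_y G(x_\alpha,y_\alpha)|=|\nabla_y(\tilde G_\alpha)_0(z_\alpha)|+o(1)=O(1)$, contradicting the assumption.

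The main technical obstacle is the boundary regime: I need $\tilde z_\alpha$ to remain bounded away from both the rescaled singularity at $(-\rho,0)$ and its reflection $(\rho,0)$ so that the limiting expression is finite and the $C^2_{loc}$ convergence applies at the evaluation point. These facts hold because $y_\alpha\in\Omega$ forces $\tilde z_\alpha\in\overline{\rnm}$, while $|\tilde z_\alpha-(-\rho,0)|\to 1$ prevents collapse to the singularity; meanwhile the metric $\tg$ of Lemma \ref{app:lem:ext} is only Lipschitz, but the $C^2_{loc}$ statement in Proposition \ref{app:prop:asymp:green} is exactly tailored to permit the exchange of limit and differentiation in $y$ required above.
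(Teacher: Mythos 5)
Your proposal is correct and follows essentially the same route as the paper: the upper bound is quoted from \eqref{app:est:green:pointwise}, and both the lower bound and the gradient bound are obtained by contradiction, rescaling at rate $r_\alpha=|x_\alpha-y_\alpha|$ and invoking Proposition \ref{app:prop:asymp:green} in its interior and boundary regimes, with \eqref{app:bnd:c2} and the continuity of $G$ handling the region $|x-y|\geq\delta_0$. The only difference is organizational: the paper first proves a uniform lower bound $G\geq -m$ and then shows $|x-y|^{n-2}(G+M)\geq c$ globally, whereas you split near/far from the diagonal and absorb the far-regime bound into $M$ — a cosmetic variation of the same argument.
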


\smallskip\noindent{\it Proof of the corollary:} We claim that there exists $m\in\rr$ such that
\begin{equation}\label{app:lower:bnd:G}
G(x,y)\geq -m\hbox{ for all }x,y\in\Omega, \; x\neq y.
\end{equation}
We argue by contradiction and we assume that there exists $(\xa)_\alpha,(\ya)_{\alpha}\in\Omega$ such that
\begin{equation}\label{app:hyp:moins:infty}
\lim_{\alpha\to +\infty}G(\xa,\ya)=-\infty.
\end{equation}
Assume that $\lim_{\alpha\to +\infty}|\ya-\xa|=0$. We then define $\ra:=|\ya-\xa|$ and we apply Proposition \ref{app:prop:asymp:green}:

\smallskip\noindent If $\lim_{\alpha\to +\infty}\frac{d(\xa,\partial\Omega)}{\ra}=+\infty$, we have that
$$|\ya-\xa|^{n-2}G(\xa,\ya)=\ra^{n-2}G\left(\xa, \xa+\ra\frac{\ya-\xa}{|\ya-\xa|}\right)=k_n+o(1)$$
when $\alpha\to +\infty$. This contradicts \eqref{app:hyp:moins:infty}.

\smallskip\noindent If $d(\xa,\partial\Omega)=O(\ra)$ when $\alpha\to +\infty$, we get also a contradiction.

\medskip\noindent This proves that $\lim_{\alpha\to +\infty}|\xa-\ya|\neq 0$. Therefore, with \eqref{app:est:green}, we get that $G(\xa,\ya)=O(1)$ when $\alpha\to +\infty$: this contradicts \eqref{app:hyp:moins:infty}. Therefore, there exists $m$ such that \eqref{app:lower:bnd:G} holds.

\medskip\noindent We define $M:=m+1$. With \eqref{app:est:green}, there exists also $C>0$ such that $|G(x,y)|\leq C|x-y|^{2-n}$ for all $x\neq y$. We claim that there exists $c>0$ such that
\begin{equation}\label{app:bnd:low:G}
G(x,y)+M\geq c|x-y|^{2-n}
\end{equation}
for all $x\neq y$. Here again, we argue by contradiction and we assume that there exists $(\xa)_{\alpha}, (\ya)_{\alpha}\in\Omega$ such that
\begin{equation}\label{app:hyp:G:0}
\lim_{\alpha\to +\infty}|\xa-\ya|^{n-2}(G(\xa,\ya)+M)=0.
\end{equation}
Since $G+M\geq 1$, it follows from \eqref{app:hyp:G:0} that $\lim_{\alpha\to+\infty}|\xa-\ya|=0$. Therefore, as above, we get that the limit of the left-hand-side in \eqref{app:hyp:G:0} is positive: a contradiction. This proves that \eqref{app:bnd:low:G} holds. In particular, this proves the first part of the corollary.

\medskip\noindent Concerning the estimate of the gradient, we argue by contradiction and we use again Proposition \ref{app:prop:asymp:green}. We just sketch the proof. Assume by contradiction that there exists $(\xa)_{\alpha}, (\ya)_{\alpha}\in\Omega$ such that
$$\lim_{\alpha\to +\infty}|\ya-\xa|^{n-1}|\nabla_y G(\xa,\ya)|=+\infty.$$
It follows from \eqref{app:bnd:c2} that $\lim_{\alpha\to +\infty}|\ya-\xa|=0$. We set $\ra:=|\ya-\xa|$. Assume that $\ra=o(d(\xa,\partial\Omega))$ when $\alpha\to +\infty$. It then follows from Proposition \ref{app:prop:asymp:green} that
$$\lim_{\alpha\to +\infty}|\ya-\xa|^{n-1}|\nabla_y G(\xa,\ya)|=\frac{1}{\omega_{n-1}},$$
which contradicts the hypothesis. The proof goes the same way when $d(\xa,\partial\Omega)=O(\ra)$ when $\alpha\to +\infty$. This ends the proof of the gradient estimate.\hfill$\Box$

\section*{Appendix B: Projection of the test functions}\label{sec:proj}

\begin{prop}\label{lemmaprojection}
Let $\left(\xa\right)$ be a sequence of points in $\bar{\Om}$ and let $\left(\ma\right)$ be a sequence of positive real numbers such that $\ma\to 0$ as $\epstozero$. We set
$$\Ua(x)=\ma^{\frac{n-2}{2}} \left(\left\vert x-\xa\right\vert^2 +\ma^2\right)^{1-\frac{n}{2}}\hskip.1cm.$$
There exists $\Va \in C^1(\bar{\Om})$ which satisfies
\begin{equation}\label{def:Va}
\left\{\begin{array}{ll}
\Delta \Va = c_n\Ua^{2^\star-1} - c_n\left\vert \Om\right\vert^{-1} \int_\Om \Ua^{2^\star-1}\, dx&\hbox{ in }\Om\\
\partial_\nu \Va=0&\hbox{ on }\partial \Om
\end{array}\right.
\end{equation}
such that the following asymptotics hold for any sequence of points $\left(\ya\right)$ in $\bar{\Om}$~:

\smallskip (i) If $\xa\in \partial \Om$, then
$$\Va \left(\ya\right) = \bigl(1+o(1)\bigr) \Ua\left(\ya\right) + O\left(\ma^{\frac{n-2}{2}}\right)\hskip.1cm.$$

\smallskip (ii) If $d\left(\xa,\partial \Om\right)\not\to 0$, then
$$\Ua \left(\ya\right) = (1+o(1))\Ua\left(\ya\right) + O\left(\ma^{\frac{n-2}{2}}\right)\hskip.1cm.$$

\smallskip (iii) If $d\left(\xa,\partial\Om\right)\to 0$ but $\frac{d\left(\xa,\partial\Om\right)}{\ma}\to +\infty$ as $\epstozero$, then
$$\Va\left(\ya\right) =  \bigl(1+o(1)\bigr) \Ua\left(\ya\right) + \tUe\left(\ya\right) + O\left(\ma^{\frac{n-2}{2}}\right)$$
where
$$\tUe(x) = \ma^{\frac{n-2}{2}}\left(\ma^2 + \left\vert x-\pip^{-1}\left(\xa\right)\right\vert^2\right)^{1-\frac{n}{2}}$$
with $\pip:=\varphi\circ\pi\circ\varphi^{-1}$ where $\varphi$ is a chart at $x_0:=\lim_{\alpha\to +\infty}\xa$ as in Lemma \ref{lem:ext}.

\smallskip\noindent In addition, we have that
$$\left\{\begin{array}{ll}
\Ua-\Ua=o(\Ua) + O(\ma^{\frac{n-2}{2}}) &\hbox{ in cases (i) and (ii)}\\
|\Va-\Ua|\leq C \left(\frac{\ma}{\ma^2+d(\xa,\partial\Omega)^2}\right)^{\frac{n-2}{2}}+ o(\Ua) + O(\ma^{\frac{n-2}{2}})&\hbox{ in case (iii).}
\end{array}\right.$$

\smallskip In any case, there exists $C>0$ such that
\begin{equation}\label{ineq:Ua:Va}
\frac{1}{C}\Ua \le \Va\le C \Ua\hskip.1cm.
\end{equation}

\end{prop}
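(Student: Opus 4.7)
The natural approach is to build $V_\alpha$ from the Green's function of Appendix A and then exploit the fact that the standard bubble $U_\alpha$ is itself a Newton potential on all of $\rn$. Concretely, since $\int_\Omega (c_n U_\alpha^{2^\star-1} - c_n|\Omega|^{-1}\int_\Omega U_\alpha^{2^\star-1}\,dx)\, dx = 0$, Theorem \ref{app:th:existence} produces a classical solution to \eqref{def:Va}, unique up to an additive constant. I would fix that constant by \emph{defining}
\begin{equation*}
V_\alpha(x) := c_n \int_\Omega \bigl(G(x,y) + m(\Omega)+1\bigr)\,U_\alpha(y)^{2^\star-1}\, dy,
\end{equation*}
with $m(\Omega)$ from Proposition \ref{app:prop:upp:bnd}. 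Differentiating under the integral and using $\Delta_x G_x = -|\Omega|^{-1}$ with $\partial_\nu G_x = 0$ (Theorem \ref{app:th:green:exist}) shows this $V_\alpha$ satisfies \eqref{def:Va}; the shift $+m(\Omega)+1$ makes the kernel positive, ensuring $V_\alpha > 0$.

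\textbf{Extraction of the main term.} The key identity is that $\Delta U_\alpha = c_n U_\alpha^{2^\star-1}$ on $\rn$ with $U_\alpha \to 0$ at infinity, hence by the Newton potential formula
\begin{equation*}
U_\alpha(x) = c_n k_n \int_{\rn} |x-y|^{2-n}\, U_\alpha(y)^{2^\star-1}\, dy \qquad \text{for all } x \in \rn.
\end{equation*}
Since $\int_\Omega U_\alpha^{2^\star-1}\, dy = O(\mu_\alpha^{(n-2)/2})$ (rescale $y = x_\alpha + \mu_\alpha z$), the additive constant $m(\Omega)+1$ inside the kernel contributes at most $O(\mu_\alpha^{(n-2)/2})$ to $V_\alpha(y_\alpha)$. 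The mass of $U_\alpha^{2^\star-1}$ is concentrated in a $\mu_\alpha$-neighborhood of $x_\alpha$ (the tail outside gives $O(\mu_\alpha^{(n+2)/2})$), so on that neighborhood I would replace $G(y_\alpha,\cdot)$ by its asymptotic profile from Proposition \ref{app:prop:asymp:green}. In case (ii), $G(y_\alpha,y) = k_n|y_\alpha-y|^{2-n} + O(1)$ yields, after integrating and extending to $\rn$ at the cost of $O(\mu_\alpha^{(n-2)/2})$, the term $(1+o(1))U_\alpha(y_\alpha)$. In case (iii), the local reflected structure $G(y_\alpha,y) = k_n(|y_\alpha-y|^{2-n} + |\tpip^{-1}(y_\alpha)-y|^{2-n}) + O(1)$ produces the additional contribution $(1+o(1))\tilde U_\alpha(y_\alpha)$ (using that $|\tpip^{-1}(y_\alpha)-y| = (1+o(1))|y_\alpha - \pip^{-1}(y)|$ and that $U_\alpha$ is essentially a function of $|y-x_\alpha|$). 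In case (i), $x_\alpha \in \partial\Omega$ forces $\pip^{-1}(x_\alpha) = x_\alpha$, so both kernel contributions concentrate at the same point, but the integration domain is (locally) only a half-neighborhood; by the reflection symmetry of $U_\alpha^{2^\star-1}$, each half contributes $\frac{1}{2}U_\alpha(y_\alpha)$ for a total of $(1+o(1))U_\alpha(y_\alpha)$.

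\textbf{Pointwise bounds.} For \eqref{ineq:Ua:Va}, I would use Proposition \ref{app:prop:upp:bnd} directly: the kernel $G + m(\Omega)+1$ is sandwiched between $c|x-y|^{2-n}$ and $C|x-y|^{2-n}$. Extending the integral to $\rn$ gives
\begin{equation*}
c\, U_\alpha(x) \le V_\alpha(x) \le C\, U_\alpha(x) + C'\mu_\alpha^{(n-2)/2},
\end{equation*}
and since $\Omega$ is bounded, $U_\alpha(x) \ge c_\Omega\,\mu_\alpha^{(n-2)/2}$ uniformly on $\Omega$, absorbing the remainder. For $x$ at intermediate distance from $x_\alpha$, $|x-y| \asymp |x-x_\alpha|$ on the concentration scale, so the integral is $\gtrsim |x-x_\alpha|^{2-n}\mu_\alpha^{(n-2)/2} \asymp U_\alpha(x)$. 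The improved estimate $|V_\alpha - U_\alpha| = o(U_\alpha) + O(\mu_\alpha^{(n-2)/2})$ in cases (i)-(ii), and the additional $\tilde U_\alpha$ term in case (iii), follow by subtracting the main contribution identified above and estimating the residue on both scales (near and far from $x_\alpha$).

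\textbf{Main difficulty.} The delicate point is the boundary asymptotics in case (iii): one must justify that the reflected kernel $|\tpip^{-1}(y_\alpha)-y|^{2-n}$, integrated against $U_\alpha^{2^\star-1}$ near $x_\alpha$, produces exactly $(1+o(1))\tilde U_\alpha(y_\alpha)$ uniformly across the two regimes $|y_\alpha-x_\alpha| = O(\mu_\alpha)$ and $|y_\alpha-x_\alpha| \gg \mu_\alpha$. This requires that the chart $\varphi$ from Lemma \ref{lem:ext}, which is only a diffeomorphism (not an isometry), distorts distances by a factor $1+o(1)$ on the shrinking scale $\mu_\alpha$, and that the $C^2_{loc}$ convergence of rescaled Green's functions in Proposition \ref{app:prop:asymp:green} is used at the correct scale. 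A further subtlety is the uniformity of the remainder $O(\mu_\alpha^{(n-2)/2})$: one must verify it does not hide terms larger than $\tilde U_\alpha(y_\alpha)$ when $y_\alpha$ is far from $x_\alpha$ — which is what the precise pointwise estimate $|V_\alpha - U_\alpha| \le C(\mu_\alpha/(\mu_\alpha^2+d(x_\alpha,\partial\Omega)^2))^{(n-2)/2} + \cdots$ in case (iii) captures.
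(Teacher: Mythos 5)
Your proposal is correct and follows essentially the same route as the paper: Green's representation of $\Va$, a tail estimate showing the mass of $\Ua^{2^\star-1}$ concentrates at scale $\ma$, the rescaled Green's function asymptotics of Proposition \ref{app:prop:asymp:green} (with the reflected kernel near the boundary) to extract $(1+o(1))\Ua$ and the $\tUe$ term, and a normalization making the kernel effectively positive to get \eqref{ineq:Ua:Va}. The only difference is cosmetic: you fix the additive constant by shifting the kernel by $m(\Omega)+1$, whereas the paper prescribes the average $\overline{\Va}=(K+1)\ma^{\frac{n-2}{2}}$; both yield the same two-sided comparison with $\Ua$.
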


\medskip\noindent {\it Proof of Proposition \ref{lemmaprojection}:} We let $\Va\in C^2(\omegabar)$ be as in \eqref{def:Va}. Indeed, $\Va$ is defined up to the addition of a constant: therefore, $\overline{\Va}$ will be determined later on. Let $(\ya)_\alpha\in\Omega$: Green's representation formula yields

$$\Va(\ya)-\overline{\Va}=\int_\Omega G(\ya, y)\left(c_n\Ua^{\crit-1}-c_n|\Omega|^{-1}\int_\Omega \Ua^{\crit-1}\right)\, dy$$
for all $\alpha\in\nn$ where $G$ is the Green's function for \eqref{app:NB} with vanishing average. With the explicit expression of $\Ua$, we get that
\begin{equation}\label{est:Va:1}
\Va(\ya)-\overline{\Va}=c_n\int_\Omega G(\ya, y)\Ua^{\crit-1}\, dy+O(\ma^{\frac{n-2}{2}})
\end{equation}
for all $\alpha\in\nn$. The estimate of $\Va(\ya)$ goes through five steps.

\medskip\noindent{\bf Step \ref{lemmaprojection}.1.} We first assume that $\lim_{\alpha\to +\infty}|\ya-\xa|\neq 0$. It then follows from \eqref{est:Va:1}, the pointwise estimates \eqref{app:est:green} on the Green's function and the explicit expression of $\Ua$ that
\begin{eqnarray*}
\Va(\ya)&=&\overline{\Va}+(G(\ya,\xa)+o(1))\int_\Omega c_n\Ua^{\crit-1}\, dx\\
&&-\left(\int_\Omega G(\ya,x)\, dx\right)\int_\Omega c_n\Ua^{\crit-1}\, dx
\end{eqnarray*}
when $\alpha\to +\infty$. It then follows from this estimate that
\begin{equation*}
\Va(\ya)=\overline{\Va}+O(\ma^{\frac{n-2}{2}})
\end{equation*}
when $\alpha\to +\infty$ and that there exists $K>0$ independent of $(\ya)_\alpha$ such that
\begin{equation*}
\Va(\ya)\geq \overline{\Va}-K\ma^{\frac{n-2}{2}}
\end{equation*}
for all $\alpha\in\nn$.

\medskip\noindent{\bf Step \ref{lemmaprojection}.2.} We claim that
\begin{equation}\label{lim:R:a:ratio}
\lim_{R\to +\infty}\lim_{\alpha\to +\infty}\frac{\int_{\Omega\setminus B_{R\ma}(\xa)} G(\ya, y)\Ua^{\crit-1}\, dy}{\Ua(\ya)}=0\hbox{ if }\lim_{\alpha\to +\infty}|\ya-\xa|=0.
\end{equation}
We prove the claim. We let $R_0>0$ such that $\Omega\subset B_{R_0}(\xa)$ for all $\alpha\in\nn$. It follows from the explicit expression of $\Ua$ and of \eqref{app:est:green} that
\begin{equation}\label{ineq:proj:1}
\left| \int_{\Omega\setminus B_{R\ma}(\xa)} G(\ya, y)\Ua^{\crit-1}\, dy\right|\leq C\int_{B_{R_0}(\xa)\setminus B_{R\ma}(\xa)}|\ya-x|^{2-n}\frac{\ma^{\frac{n+2}{2}}}{\left(\ma^2+|x-\xa|^2\right)^{\frac{n+2}{2}}}\, dx
\end{equation}
for all $\alpha\in\nn$. We define
$$D_\alpha:=\left\{x\in\rn/\, |x-\ya|\geq \frac{1}{2}\sqrt{\ma^2+|\xa-\ya|^2}\right\}\hbox{ for all }\alpha\in\nn.$$
We split the RHS of \eqref{ineq:proj:1} in two terms. On the one hand, we have that
\begin{eqnarray}
&&\int_{D_\alpha\cap(B_{R_0}(\xa)\setminus B_{R\ma}(\xa))}|\ya-x|^{2-n}\frac{\ma^{\frac{n+2}{2}}}{\left(\ma^2+|x-\xa|^2\right)^{\frac{n+2}{2}}}\, dx\nonumber\\
&& \leq\frac{C}{\left(\ma^2+|\xa-\ya|^2\right)^{\frac{n-2}{2}}}\int_{\rn \setminus B_{R\ma}(\xa)}\frac{\ma^{\frac{n+2}{2}}}{\left(\ma^2+|x-\xa|^2\right)^{\frac{n+2}{2}}}\, dx\nonumber\\
&&\leq C \Ua(\ya)\int_{\rn \setminus B_{R}(0)}\frac{1}{\left(1+|x|^2\right)^{\frac{n+2}{2}}}\, dx\label{ineq:proj:2}
\end{eqnarray}
for all $\alpha\in\nn$. On the other hand, as easily checked, there exists $\epsilon_0>0$ such that
$$x\not\in D_\alpha\;\Rightarrow\; |x-\xa|^2+\ma^2\geq \epsilon_0\left(|\ya-\xa|^2+\ma^2\right)$$
for all $\alpha\in\nn$. Consequently, we have that
\begin{eqnarray}
&&\int_{D_\alpha^c\cap(B_{R_0}(\xa)\setminus B_{R\ma}(\xa))}|\ya-x|^{2-n}\frac{\ma^{\frac{n+2}{2}}}{\left(\ma^2+|x-\xa|^2\right)^{\frac{n+2}{2}}}\, dx\nonumber\\
&& \frac{C\ma^\frac{n+2}{2}}{\left(\ma^2+|\xa-\ya|^2\right)^{\frac{n+2}{2}}}\int_{D_\alpha^c}|\ya-x|^{2-n}\, dy\nonumber\\
&&\leq C \Ua(\ya)\frac{\ma^2}{\ma^2+|\xa-\ya|^2}=o(\Ua(\ya))\label{ineq:proj:3}
\end{eqnarray}
if $\ma=o(|\xa-\ya|)$ when $\alpha\to +\infty$. In case $|\ya-\xa|=O(\ma)$ when $\alpha\to +\infty$, it is easily checked that for $R$ large enough, $D_\alpha^c\cap(B_{R_0}(\xa)\setminus B_{R\ma}(\xa))=\emptyset$ for all $\alpha\in\nn$. Therefore \eqref{ineq:proj:3} always holds.

\medskip\noindent Plugging \eqref{ineq:proj:2} and \eqref{ineq:proj:3} into \eqref{ineq:proj:1} yields \eqref{lim:R:a:ratio}. This ends Step \ref{lemmaprojection}.2.

\medskip\noindent It follows from \eqref{est:Va:1} and \eqref{lim:R:a:ratio} that
\begin{equation}\label{est:Va:2}
\Va(\ya)=\overline{\Va}+c_n\int_{\Omega\cap B_{R\ma}(\xa)} G(\ya, y)\Ua^{\crit-1}\, dy+(o(1)+\epsilon_R)\Ua(\ya)
\end{equation}
if $\lim_{\alpha\to +\infty}|\ya-\xa|=0$ when $\alpha\to +\infty$ where $\lim_{R\to +\infty}\epsilon_R=0$.

\medskip\noindent{\bf Step \ref{lemmaprojection}.3.} Assume that
\begin{equation}\label{hyp:step:3}
\lim_{\alpha\to +\infty}|\ya-\xa|=0\hbox{ and }\lim_{\alpha\to +\infty}\frac{d(\xa,\partial\Omega)}{\ma}=+\infty.
\end{equation}
We claim that
\begin{equation}\label{concl:3}
\Va(\ya)=\overline{\Va}+\left\{\begin{array}{ll}
(1+o(1))\Ua(\ya) &\hbox{ if }\lim_{\alpha\to +\infty}d(\xa, \partial\Omega)\neq 0\\
(1+o(1))(\Ua(\ya)+\tUe(\ya)) &\hbox{ if }\lim_{\alpha\to +\infty}d(\xa, \partial\Omega)=0
\end{array}\right.
\end{equation}
when $\alpha\to +\infty$.

\medskip\noindent The proof of \eqref{concl:3} goes through several steps. First note that due to \eqref{hyp:step:3}, we have that $\Omega\cap B_{R\ma}(\xa)=B_{R\ma}(\xa)$ for $\alpha\in\nn$ large enough. Therefore,  with a change of variable, \eqref{est:Va:2} rewrites
\begin{eqnarray}\label{est:Va:3}
&&\Va(\ya)=\overline{\Va}\nonumber\\
&&+\Ua(\ya)\left(\int_{B_{R}(0)} \left(\ma^2+|\ya-\xa|^2\right)^{\frac{n-2}{2}}G(\ya, \xa+\ma x)c_nU_0^{\crit-1}\, dx\right)\nonumber\\
&&+\left(o(1)+\epsilon_R\right)\Ua(\ya)
\end{eqnarray}
for all $R>>1$ and $\alpha\to +\infty$. We distinguish two cases:

\medskip\noindent{\bf Case \ref{lemmaprojection}.3.1:} We assume that
\begin{equation}
|\ya-\xa|=O(\ma)\hbox{ when }\alpha\to +\infty.
\end{equation}
Then we claim that \eqref{concl:3} holds. We prove the claim. We define $\theta_\alpha:=\ma^{-1}(\ya-\xa)$ for all $\alpha\in\nn$, and we let $\theta_\infty:=\lim_{\alpha\to +\infty}\theta_\alpha$. Let $K$ be a compact subset of $\rn\setminus\{\theta_\infty\}$: it follows from Proposition \ref{app:prop:asymp:green} that
$$\ma^{n-2}G(\ya, \xa+\ma x)=(k_n+o(1))|x-\theta_\alpha|^{2-n}$$
when $\alpha\to +\infty$ uniformly for all $x\in K$. Moreover, the LHS is uniformly bounded from above by the RHS on bounded domains of $\rn$ when $\alpha\to +\infty$. It then follows from Lebesgue's theorem that \eqref{est:Va:3} rewrites
\begin{eqnarray*}
\Va(\ya)&=&\overline{\Va}+\Ua(\ya)\left(\int_{B_{R}(0)} \left(1+|\theta_\infty|^2\right)^{\frac{n-2}{2}}k_nc_nU_0^{\crit-1}(x)|x-\theta_\infty|^{2-n}\, dx+o(1)+\epsilon_R\right)\\
&=& \overline{\Va}+\Ua(\ya)\left(U_0(\theta_\infty)^{-1}\int_{B_{R}(0)} k_n\Delta U_0 (x)|x-\theta_\infty|^{2-n}\, dx+o(1)+\epsilon_R\right)\\
&=& \overline{\Va}+\Ua(\ya)\left(1+o(1)+\epsilon_R\right)
\end{eqnarray*}
since $\Delta(k_n|\cdot|^{2-n})=\delta_0$ in the distribution sense. Letting $R\to +\infty$ and $\alpha\to +\infty$ yields
$$\Va(\ya)=\overline{\Va}+\Ua(\ya)(1+o(1))
$$
when $\alpha\to +\infty$. As easily checked, this estimate yields \eqref{concl:3} in Case \ref{lemmaprojection}.3.1.

\medskip\noindent{\bf Case \ref{lemmaprojection}.3.2:} We assume that
\begin{equation*}
\lim_{\alpha\to +\infty}\frac{|\ya-\xa|}{\ma}=+\infty.
\end{equation*}
We claim that \eqref{concl:3} holds. We prove the claim.  We define $\ra:=|\ya-\xa|=o(1)$ when $\alpha\to +\infty$. Given $x\in B_R(0)$, we define
$$A_\alpha(x):=\left(\ma^2+|\ya-\xa|^2\right)^{\frac{n-2}{2}}G(\ya, \xa+\ma x)$$
for all $\alpha\in\nn$.

\medskip\noindent{\bf Case \ref{lemmaprojection}.3.2.1:} We assume in addition that
$$\lim_{\alpha\to +\infty}\frac{d(\xa,\partial\Omega)}{\ra}=+\infty.$$
We claim that in this case, we have that
\begin{equation}\label{lim:Aalpha:1}
\lim_{\alpha\to +\infty}A_\alpha(x)=k_n
\end{equation}
uniformly when $\alpha\to +\infty$. We prove the claim. Indeed, letting $\theta_\alpha:=\ra^{-1}(\ya-\xa)$ and using that $G$ is symmetric, we have that
$$A_\alpha(x)=(1+o(1))\ra^{n-2}G(\xa+\ra\frac{\ma x}{\ra}, \xa+\ra\theta_\alpha)$$
for all $\alpha\in\nn$ uniformly for $x$ in any fixed compact of $\rn$. Then \eqref{lim:Aalpha:1} follows from Proposition \ref{app:prop:asymp:green}.

\medskip\noindent{\bf Case \ref{lemmaprojection}.3.2.2:} We assume here that
$$\lim_{\alpha\to +\infty}\frac{d(\xa,\partial\Omega)}{\ra}=\rho\geq 0.$$
In this case, $\tUe$ is well defined. We claim that in this case, we have that
\begin{equation}\label{lim:Aalpha:2}
A_\alpha(x)=(k_n+o(1))\left(1+\frac{\tUe(\ya)}{\Ua(\ya)}\right)
\end{equation}
uniformly for $x$ in any fixed compact of $\rn$ when $\alpha\to +\infty$. We prove the claim. We denote $\varphi$ a chart as in Lemma \ref{lem:ext} and we define $(\xaun,\xa'):=\varphi^{-1}(\xa)$ and $(\yaun,\ya'):=\varphi^{-1}(\ya)$ for all  $\alpha\in\nn$. Defining
$$X_\alpha:=\left(\frac{\xaun}{\ra},0\right)+o(1)\hbox{ and }Y_\alpha:=\left(\frac{\yaun}{\ra},\frac{\ya'-\xa'}{\ra}\right),$$
using Proposition \ref{app:prop:asymp:green} and the symmetry of $G$, we get that
\begin{eqnarray}
A_\alpha(x)&=& (1+o(1))\ra^{n-2}G(\xa+\ma x, \ya)+o(1)\nonumber\\
&=& (1+o(1))\ra^{n-2}G(\varphi((0,\xa')+\ra X_\alpha), \varphi((0,\xa')+\ra Y_\alpha))+o(1)\nonumber\\
&=& k_n\left(\left|X_\alpha-Y_\alpha\right|^{2-n}+\left|Y_\alpha-\pi^{-1}(X_\alpha)\right|^{2-n}\right)+o(1)\nonumber\\
&=& k_n\left(1+\left|\left(\frac{(\yaun,\ya')}{\ra}-\frac{\pi^{-1}(\xaun,\xa')}{\ra}\right)\right|^{2-n}\right)+o(1)\label{lim:Aalpha:3}
\end{eqnarray}
since $d\varphi_0$ is an orthogonal transformation. independently, using again that $d\varphi_0$ is orthogonal, we have that
\begin{eqnarray*}
\frac{\tUe(\ya)}{\Ua(\ya)}&=&\left(\frac{\ma^2+|\ya-\pip^{-1}(\xa)|^2}{\ma^2+|\ya-\xa|^2}\right)^{-\frac{n-2}{2}}\\
&=& (1+o(1))\left|\left(\frac{\varphi^{-1}(\ya)-\pi^{-1}(\varphi^{-1}(\xa))}{\ra}\right)\right|^{2-n}
\end{eqnarray*}
when $\alpha\to +\infty$. Plugging this estimate into \eqref{lim:Aalpha:3} yields \eqref{lim:Aalpha:2}. This proves the claim.

\medskip\noindent Since
$$\int_{B_{R}(0)} c_nU_0^{\crit-1}\, dx=\int_{B_{R}(0)}\Delta U_0\, dx=-\int_{\partial B_{R}(0)} \partial_\nu U_0\, dx=\frac{(n-2)\omega_{n-1}R^n}{(1+R^2)^{n/2}}$$
for all $R>0$, it follows from \eqref{est:Va:3},  Cases \ref{lemmaprojection}.3.2.1 and \ref{lemmaprojection}.3.2.2 that
\begin{equation*}
\Va(\ya)=\overline{\Va}+\left\{\begin{array}{ll}
(1+o(1))\Ua(\ya) &\hbox{ if }\ra=o(d(\xa, \partial\Omega))\hbox{ when }\alpha\to +\infty\\
(1+o(1))(\Ua(\ya)+\tUe(\ya)) &\hbox{ if }d(\xa, \partial\Omega)=O(\ra)\hbox{ when }\alpha\to +\infty
\end{array}\right.
\end{equation*}
These estimates and a careful evaluation of the quotient $\Ua(\ya)^{-1}\tUe(\ya)$ yields \eqref{concl:3} in Case \ref{lemmaprojection}.3.2. This ends Case \ref{lemmaprojection}.3.2.

\medskip\noindent{\bf Step \ref{lemmaprojection}.4.} We assume that
\begin{equation}\label{hyp:step:4}
\lim_{\alpha\to +\infty}|\ya-\xa|=0\hbox{ and }\xa\in\partial\Omega.
\end{equation}
We claim that
\begin{equation}\label{concl:4}
\Va(\ya)=\overline{\Va}+\Ua(\ya)(1+o(1))
\end{equation}
when $\alpha\to +\infty$. We choose a chart $\varphi$ as in Lemma \ref{lem:ext}. In this case, \eqref{est:Va:2} rewrites
\begin{eqnarray}\label{est:Va:4}
&&\Va(\ya)=\overline{\Va}\nonumber\\
&&+\Ua(\ya)\left(\int_{B_{R}(0)\cap\rnm} c_n(1+o(1)){\mathcal T}_\alpha\, dx\right)\nonumber\\
&&+(o(1)+\epsilon_R)\Ua(\ya)
\end{eqnarray}
for all $R>>1$ and $\alpha\to +\infty$, where
$${\mathcal T}_\alpha(x):=\left(\ma^2+|\ya-\xa|^2\right)^{\frac{n-2}{2}}G(\ya, \varphi((0,\xa')+\ma x))U_0^{\crit-1}(x).$$
Here again, we have to distinguish two cases.

\medskip\noindent{\bf Case \ref{lemmaprojection}.4.1:} Assume that $\ya-\xa=O(\ma)$ when $\alpha\to +\infty$. We define $\theta_{\alpha}:=\ma^{-1}(\ya-\xa)$ for all $\alpha\in\nn$. Using Proposition \ref{app:prop:asymp:green}, we get as in Step \ref{lemmaprojection}.3.2.1 that for all $x\in B_R(0)\cap\overline{\rnm}\setminus\{\theta_\infty\}$,
\begin{equation*}
\lim_{\alpha\to +\infty}\ma^{n-2}G(\ya, \varphi((0,\xa')+\ma x))=k_n\left(|x-\theta_\infty|^{2-n}+|x-\pi^{-1}(\theta_\infty)|^{2-n}\right)
\end{equation*}
and this convergence holds uniformly with respect to $x$. Plugging this limit into \eqref{est:Va:4} yields
\begin{eqnarray*}
&&\Va(\ya)=\overline{\Va}\\
&&+\Ua(\ya)\left((1+|\theta_\infty|^2)^{\frac{n-2}{2}}\int_{B_R(0)\cap\rnm}k_n\left(|x-\theta_\infty|^{2-n}+|x-\pi^{-1}(\theta_\infty)|^{2-n}\right)\Delta U_0(x)\, dx\right)\\
&&+\left(\epsilon_R+o(1)\right)\Ua(\ya)
\end{eqnarray*}
when $\alpha\to +\infty$.  With a change of variable and using that $U_0$ is radially symmetrical, we get that
\begin{eqnarray*}
&&\int_{B_R(0)\cap\rnm}k_n\left(|x-\theta_\infty|^{2-n}+|x-\pi^{-1}(\theta_\infty)|^{2-n}\right)\Delta U_0(x)\, dx\\
&&=\int_{B_R(0)}k_n|x-\theta_\infty|^{2-n}\Delta U_0(x)\, dx
\end{eqnarray*}
for all $R>0$. The, arguing as in Step \ref{lemmaprojection}.3.2.2, we get that \eqref{concl:4} holds in Case \ref{lemmaprojection}.4.1.

\medskip\noindent{\bf Case \ref{lemmaprojection}.4.2:} Assume that $\lim_{\alpha\to +\infty}\ma^{-1}|\ya-\xa|=+\infty$. Using again Proposition \ref{app:prop:asymp:green} and arguing as in Step \ref{lemmaprojection}.3.2.1, we get that (we omit the details)
$$\lim_{\alpha\to +\infty}\left(\ma^2+|\ya-\xa|^2\right)^{\frac{n-2}{2}}G(\ya, \varphi((0,\xa')+\ma x))=2k_n$$
uniformly for all $x\in B_R(0)$. Plugging this limit into \eqref{est:Va:4} yields
\begin{eqnarray*}
\Va(\ya)&=&\overline{\Va}+\Ua(\ya)\left(\int_{B_R(0)\cap\rnm}2k_n\Delta U_0(x)\, dx+\epsilon_R+o(1)\right)\\
&=&\overline{\Va}+\Ua(\ya)\left(\int_{B_R(0)}k_n\Delta U_0(x)\, dx+\epsilon_R+o(1)\right)
\end{eqnarray*}
when $\alpha\to +\infty$.  We then get that \eqref{concl:4} holds in Case \ref{lemmaprojection}.4.2.

\medskip\noindent{\bf Step \ref{lemmaprojection}.5.} We are now in position to prove Proposition \ref{lemmaprojection}. We let $K>0$ be as in Step 1 and we let $\Va$ be the unique solution to \eqref{def:Va} such that
$$\overline{\Va}:=(K+1)\ma^{\frac{n-2}{2}}$$
for all $\alpha\in\nn$. Clearly points (i), (ii) and (iii) of Proposition \ref{lemmaprojection} hold. Moreover, we immediately get with the estimates above that $\lim_{\alpha\to +\infty}\frac{\Va(\ya)}{\Ua(\ya)}$ is a positive real number. This proves Proposition \ref{lemmaprojection}.\hfill$\Box$


\begin{thebibliography}{12}
\bibitem{amy} Adimurthi; Mancini, G.; Yadava, S. L. The role of the mean curvature in semilinear Neumann problem involving critical exponent. {\it Comm. Partial Differential Equations}, {\bf  20}, (1995), 591-631.
\bibitem{APY}  Adimurthi; F. Pacella; S.L. Yadava,
Interaction between the geometry of the boundary and positive solutions of a
semilinear Neumann problem with critical nonlinearity. {\em J. Funct.
Anal.}, {\bf 113}, (1993), 318-350.

\bibitem{adimurthiyadava91} Adimurthi; Yadava, S.L. Existence and nonexistence of positive radial solutions of Neumann problems with critical Sobolev exponents. {\it Arch. Rational Mech. Anal.}, {\bf 115}, (1991), 275-296.
\bibitem{adimurthiyadava97} Adimurthi; Yadava, S. L. Nonexistence of positive radial solutions of a quasilinear Neumann problem with a critical Sobolev exponent. {\it Arch. Rational Mech. Anal.}, {\bf 139}, (1997), 239-253.
\bibitem{adn} Agmon, S.; Douglis, A.; Nirenberg, L. Estimates near the boundary for solutions of elliptic partial differential equations satisfying general boundary conditions. I.  {\it Comm. Pure Appl. Math.}, {\bf 12}, (1959), 623-727.
\bibitem{cgs} Caffarelli, L.; Gidas, B.; Spruck, J. Asymptotic symmetry and local behavior of semilinear elliptic equations with critical Sobolev growth. {\it Comm. Pure Appl. Math.}, {\bf 42}, (1989), 271-297.
    
\bibitem{DY}  E.N. Dancer, S. Yan,  Interior and boundary peak
 solutions for a mixed boundary value problem. {\em Indiana Univ. Math.
J. },  {\bf 48}, (1999),  1177-1212.

\bibitem{DMP} del Pino, M.; Musso, M.; Pistoia, A. Supercritical boundary bubbling in a semilinear Neumann problem. {\em Ann. Inst. H. Poincar\'e Anal. Non Lin\'eaire}, {\bf 22}, (1), (2005), 45-82.
    
\bibitem{druetjdg} Druet, O. From one bubble to several bubbles: the low-dimensional case. {\it Journal of Differential Geometry}, {\bf 63}, (2003), 399-473.
\bibitem{druetimrn}  Druet, O. Compactness for Yamabe metrics in low dimensions. {\it Int. Math. Res. Not.}, (2004), 1143-1191.
\bibitem{druethebey} Druet, O.; Hebey, E. Stability and instability for Einstein-scalar field Lichnerowicz
equations on compact Riemannian manifolds. {\it Math. Z.}, {\bf 263}, (2009), 33-67.
\bibitem{dhr}  Druet, O.; Hebey, E.; Robert, F. Blow-up theory for elliptic PDEs in Riemannian geometry. Mathematical Notes, 45. {\it Princeton University Press, Princeton, NJ}, 2004. viii+218 pp.
\bibitem{f} Faddeev, M. D. Asymptotic behavior of the Green function for the Neumann problem near a boundary point. Questions in quantum field theory and statistical physics, 4. {\it Zap. Nauchn. Sem. Leningrad. Otdel. Mat. Inst. Steklov. (LOMI)}, {\bf  131}, (1983), 142-147.
    \bibitem{GG}  Ghoussoub, N.; Gui, C.  Multi-peak solutions
 for a semilinear Neumann problem involving the critical Sobolev
exponent.  {\em Math. Z.},  {\bf 229}, (1998), 443-474.

\bibitem{GhRoGAFA}  Ghoussoub, N.; Robert, F. The effect of curvature on the best constant in the Hardy-Sobolev inequalities. {\it Geom. Funct. Anal.}, {\bf 16}, (2006), 1201-1245.
\bibitem{GhRoIMRP}  Ghoussoub, N.; Robert, F. Concentration estimates for Emden-Fowler equations with boundary singularities and critical growth. {\it IMRP Int. Math. Res. Pap.}, (2006), 21867, 1-85.


\bibitem{gt}  Gilbarg, D.; Trudinger, N. Elliptic partial differential equations of second order. Reprint of the 1998 edition. Classics in Mathematics. {\it Springer-Verlag, Berlin}, 2001. xiv+517 pp.
\bibitem{giraud} Giraud, G. Sur le probl\`eme de Dirichlet g\'en\'eralis\'e. {\it Ann. Sci. \'Ecole Norm. Sup.}, {\bf 46}, (1929), 131-245.
\bibitem{guilin} Gui, C.; Lin, C.-S. Estimates for boundary-bubbling solutions to an elliptic Neumann problem.
{\it J. Reine Angew. Math.}, {\bf 546}, (2002), 201-235.

\bibitem{GW}  Gui, C.; Wei, J.  On multiple mixed interior
and boundary peak solutions for some singularly perturbed Neumann
problems. {\em Canad. J. Math.},  {\bf 52}, (2000), 522-538.


\bibitem{GWW} Gui, C.;Wei, J.;Winter, M.  Multiple boundary
peak solutions for some singularly
perturbed Neumann problems. {\em Ann. Inst. H. Poincar\'e, Anal.
Non-lin\'eaire},  {\bf 17}, (2000), 47-82.

\bibitem{hr} Hebey, E.; Robert, F. Asymptotic analysis for fourth order Paneitz equations with critical growth. {\it Advances in the Calculus of Variations}, to appear.
\bibitem{hrw} Hebey, E.; Robert, F.; Wen, Y. Compactness and global estimates for a fourth order equation of critical Sobolev growth arising from conformal geometry.  {\it Commun. Contemp. Math.}, {\bf 8}, (2006), 9-65.
\bibitem{kms}  Khuri, M. A.; Marques, F. C.; Schoen, R. M. A compactness theorem for the Yamabe problem. {\it J. Differential Geom.}, {\bf 81}, (2009), 143-196.
\bibitem{lizhu} Li, Y.-Y.; Zhu, M. Yamabe type equations on three-dimensional Riemannian manifolds. {\it Commun. Contemp. Math.}, {\bf 1}, (1999), 1-50.
\bibitem{linni}  Lin, C.-S.; Ni, W.-M. On the diffusion coefficient of a semilinear Neumann problem. {\it Calculus of variations and partial differential equations (Trento, 1986)}, 160-174, Lecture Notes in Math., 1340, Springer, Berlin, 1988.
\bibitem{linnitakagi} Lin, C.-S.; Ni, W.-M.; Takagi, I. Large amplitude stationary solutions to a chemotaxis system. {\it J. Differential Equations}, {\bf 72}, (1988), 1-27.
\bibitem{lww}  Lin, C.-S.; Wang, L.; Wei, J. Bubble accumulations in an elliptic Neumann problem with critical Sobolev exponent. {\it Calc. Var. Partial Differential Equations}, {\bf 30}, (2007), 153-182.
 \bibitem{LNW} Lin, F.-H.; Ni, W.-M.; Wei, J. On the Number of Interior Peak Solutions for a Singularly Perturbed Neumann Problem. {\it Communications on Pure and Applied Mathematics}, {\bf 60}, (2007), no. 2, 252-281.


\bibitem{pohozaev} Pohozaev, S. Eigenfunctions of the equations $\Delta u + \lambda f(u) = 0$. {\it Soviet. Math. Dokl.}, {\bf 6}, (1965), 1408-1411.
\bibitem{nisurvey} Ni, W.-M. Qualitative properties of solutions to elliptic problems. {\it Stationary partial differential equations. Vol. I}, 157-233, Handb. Differ. Equ., {\it North-Holland, Amsterdam}, 2004.
\bibitem{nitakagicpam} Ni, W.-M; Takagi, I. On the shape of least-energy solutions to a semilinear Neumann problem.  {\it Comm. Pure Appl. Math.},   {\bf 44},  (1991),  no. 7, 819-851.
\bibitem{nitakagiduke} Ni, W.-M.; Takagi, I. Locating the peaks of least-energy solutions to a semilinear Neumann problem.  {\it Duke Math. J.}, {\bf 70},  (1993),  no. 2, 247-281.
    \bibitem{NPT} Ni, W.-M.;Pan, X.B.;Takagi, I.  Singular
behavior of
least-energy solutions of a semi-linear Neumann problem involving
critical Sobolev exponents. {\em  Duke Math. J.},  {\bf 67}, (1992), 1-20.

\bibitem{RW} Rey, O.; Wei, J.  Blow-up solutions for an
elliptic neumann problem with sub-or-supcritical nonlinearity, II:
$N \geq 4$,  {\em Ann. Inst. H. Poincar\'e, Anal.
Non-lin\'eaire}, {\bf 22}, (2005), 459-484.


\bibitem{ReyWei} Rey, O.; Wei, J. Arbitrary number of positive solutions for an elliptic problem with critical nonlinearity. {\it J. Eur. Math. Soc. (JEMS)}, {\bf 7}, (2005), 449-476.
\bibitem{robert:green} Robert, F. Existence et asymptotiques optimales des fonctions de Green des op\'erateurs elliptiques d'ordre deux (Construction and asymptotics for the Green's function with Neumann boundary condition). Unpublished informal notes (in French), (2009), available at http://www.iecn.u-nancy.fr/$\raisebox{-0.05cm}{\~{}}$frobert.
\bibitem{schoen} Schoen, R. On the number of constant scalar curvature metrics in a conformal class. {\it Differential Geometry: A Symposium in Honor of Manfredo do Carmo}, 311-320, Pitman Monographs and Surveys in Pure and Applied Mathematics, Vol. 52 {\it Longman Sci. Tech., Harlow}, 1991.
\bibitem{struwe} Struwe, M. A global compactness result for elliptic boundary problems involving limiting nonlinearities. {\it Math. Z.}, {\bf 187}, (1984), 511-517.
\bibitem{wwy} Wang, L.; Wei, J.; Yan, S. A Neumann Problem with Critical Exponent in Non-convex Domains and Lin-Ni's Conjecture. {\it Transaction of Amer. Math. Soc.}, {\bf 361}, (2009), 1189-1208.
    \bibitem{wwy2} Wang, L.; Wei, J.; Yan, S. On the Lin-Ni's conjecture in convex domains. {\it Proc. London Math. Soc.}, to appear.
  \bibitem{WY} Wei, J.; Yan, S., Arbitrary many boundary peak solutions for An Elliptic Neumann problem with critical growth. {\it  J. Math. Pures Appl.}, {\bf (9) 88}, (2007), no.4, 350-378.

\bibitem{weisurvey} Wei, J. Existence and stability of spikes for the Gierer-Meinhardt system. {\it Handbook of differential equations: stationary partial differential equations. Vol. V}, 487-585, Handb. Differ. Equ., {\it Elsevier/North-Holland, Amsterdam}, 2008.
\bibitem{weixu} Wei, J.; Xu, X. Uniqueness and a priori estimates for some nonlinear elliptic Neumann equations in $\mathbb{R}^3$. {\it Pacific J. Math.}, {\bf 221}, (2005), 159-165.
\bibitem{zhu}  Zhu, M. Uniqueness results through a priori estimates. I. A three-dimensional Neumann problem. {\it J. Differential Equations}, {\bf 154}, (1999), 284-317.
\end{thebibliography}
\end{document}